\newtheorem{theorem}{Theorem}[section]
\newtheorem{proposition}{Proposition}[section]
\newtheorem{lemma}{Lemma}[section]
\newtheorem{remark}{Remark}[section]
\newtheorem{corollary}{Corollary}[section]
\numberwithin{equation}{section}
\numberwithin{figure}{section}
\title{\Large  \bf Provably Positive High-Order Schemes for Ideal Magnetohydrodynamics: Analysis on General Meshes} 
\author{{\sc Kailiang Wu}\thanks{Department of Mathematics, The Ohio State University, Columbus, OH 43210, USA ({\tt wu.3423@osu.edu}).}{\sc~~~and~~Chi-Wang Shu}\thanks{Division of Applied Mathematics, Brown University, Providence, RI 02912, USA ({\tt shu@dam.brown.edu}). Research is supported in part by ARO grant W911NF-15-1-0226 and NSF grant DMS-1719410. }}
\date{\today} 
\begin{document}
\maketitle

\vspace{-6mm}

\begin{abstract}
	This paper proposes and analyzes 
	arbitrarily high-order discontinuous Galerkin (DG) and finite volume methods which provably preserve the positivity of density and pressure for the ideal  magnetohydrodynamics (MHD) on general meshes.
	Unified auxiliary theories are built for 
	rigorously analyzing 
	the positivity-preserving (PP) property of numerical 
	MHD schemes with a 
	Harten--Lax--van Leer (HLL) 
	type flux on polytopal meshes in any space dimension. 
	The main challenges overcome here  
	include establishing certain relation between 
	the PP property and a discrete divergence of magnetic field on general meshes, and 
	estimating proper wave speeds in the HLL flux to ensure  
	the PP property. 
	In the 1D case, we prove that the standard DG and finite volume methods with the proposed HLL flux are PP, under a condition accessible by a PP limiter. 
	For the multidimensional conservative MHD system, the 
	standard DG methods with a PP limiter are not PP in general, due to the effect of unavoidable divergence error in the magnetic field. 
	We construct provably PP high-order DG and finite volume schemes by  
	proper discretization of 
	 the symmetrizable MHD system, with two 
	 divergence-controlling techniques: 
	the locally divergence-free elements and an important penalty term. 
	The former technique leads to zero divergence within each cell, while the latter controls the divergence error across cell interfaces. 
	Our analysis reveals in theory that a coupling of these two techniques 
	is very important for positivity preservation, as they exactly 
	contribute the discrete divergence terms 
	which are absent in standard multidimensional DG schemes but 
	crucial for ensuring the PP property.
	Several numerical tests further confirm the PP property 
	and the effectiveness of the proposed PP schemes. 
	Unlike the conservative MHD system, the exact smooth solutions 
	of the symmetrizable MHD system are proved to retain the positivity even if the divergence-free condition is not satisfied. 
	Our analysis and findings further the understanding, 
	at both discrete and continuous levels, of the relation 
	between the PP property and the divergence-free constraint.
\end{abstract}



\section{Introduction}

This paper is concerned with 
highly accurate and robust numerical
methods 
for the ideal compressible magnetohydrodynamics (MHD), 
which play an important role in many fields including astrophysics, plasma physics and space physics. 
When viscous, resistive and relativistic effects can be neglected, the governing equations of ideal MHD, which combine
the equations of gas dynamics with the Maxwell equations,
have been widely used to model the dynamics of electrically conducting fluids in the presence of magnetic field.
The ideal MHD system can be written as 
\begin{equation}\label{eq:MHD}
{\bf U}_t + \nabla \cdot {\bf F} ( {\bf U} )  = {\bf 0},
\end{equation}
with an additional divergence-free constraint on the magnetic field
\begin{equation}\label{eq:2D:BxBy0}
\nabla \cdot {\bf B} =0. 
\end{equation}
The conservative vector ${\bf U} = ( \rho,\rho {\bf v},{\bf B},E )^{\top}$; in the $d$-dimensional case,  the divergence operator 
$\nabla \cdot = \sum_{i = 1}^d \frac{\partial } {\partial x_i}$, and the flux ${\bf F} =( {\bf F}_1, \dots, {\bf F}_d )$ with 
\begin{equation*}
{\bf F}_i({\bf U}) = \Big( \rho v_i,~\rho v_i {\bf v}  -  B_i {\bf B}   + p_{\rm tot}  {\bf e}_i,~v_i {\bf B} - B_i {\bf v},~v_i(E+p_{\rm tot} ) -  B_i({\bf v}\cdot {\bf B})
\Big)^{\top}.
\end{equation*}
Here $\rho$ is the density,  ${\bf v}=(v_1,v_2,v_3)$ is 
the fluid velocity, ${\bf B}=(B_1,B_2,B_3)$ denotes the magnetic field,
$p_{\rm tot}=p+\frac{|{\bf B}|^2}2$ is the total pressure which consists of 
the gas pressure $p$ and the magnetic pressure, 
the row vector ${\bf e}_i$ denotes the $i$th row of the unit matrix of size 3, 
$E=  \frac12 \left( \rho |{\bf v}|^2 + |{\bf B}|^2 \right)+\rho e$ is the total energy 
consisting of kinetic, magnetic and thermal energies, and $e$ denotes the specific internal energy.
The system \eqref{eq:MHD} 
is closed with an equation of state (EOS). 
Although the ideal EOS,  
$p = (\gamma-1) \rho e$,  
with a constant adiabatic index $\gamma$, is the most widely used choice, 
there are situations where it is more suitable to use other EOSs.  
A general EOS can be expressed as $p = p(\rho,e)$, which is assumed to satisfy 
(cf.~\cite{zhang2011}): 
\begin{equation}\label{eq:assumpEOS}
\mbox{if}\quad \rho > 0,\quad \mbox{then}\quad e > 0~\Leftrightarrow~p(\rho,e)>0. 
\end{equation}
This is a reasonable condition, and it holds for the ideal EOS with $\gamma>1$.






Although the satisfaction of the divergence-free condition
\eqref{eq:2D:BxBy0} is not explicitly included in 
the 
system \eqref{eq:MHD}, 
the exact solution of \eqref{eq:MHD} always preserves zero divergence in future time if the initial divergence is zero.  
However, due to truncation errors, most 
of the numerical MHD schemes for $d\ge 2$  
lead to a nonzero numerical divergence in the magnetic field, even if the initial data satisfy  \eqref{eq:2D:BxBy0}. 
As it is widely known, 
large divergence error 
can lead to numerical instabilities or nonphysical features in the numerical solutions, cf.~\cite{Brackbill1980,Evans1988,BalsaraSpicer1999,Toth2000,Li2005}. 
In the past several decades, many numerical techniques were proposed to control the divergence error or 
enforce the divergence-free condition in the discrete sense, including but not limited to: 
the projection method \cite{Brackbill1980}, the locally divergence-free methods \cite{Li2005,Yakovlev2013}, 
the hyperbolic divergence cleaning method \cite{Dedner2002}, 
the constrained transport method \cite{Evans1988} and its variants 
(e.g.,  \cite{Ryu1998,BalsaraSpicer1999,Balsara2004,Gardiner2005,Torrilhon2005,Artebrant2008,Li2011,Christlieb2014,Fu2018}), 
and the eight-wave methods (e.g., \cite{Powell1994,Powell1995,Chandrashekar2016,LiuShuZhang2017}).
The eight-wave method was first proposed by Powell \cite{Powell1994,Powell1995}, based on appropriate discretization of 
 the modified MHD equations of Godunov \cite{Godunov1972}: 
\begin{equation}\label{eq:MHD:GP}
{\bf U}_t + \nabla \cdot {\bf F} ({\bf U}) 
= -(\nabla \cdot {\bf B} )~{\bf S} ( {\bf U} ),
\end{equation}
where  
${\bf S} ({\bf U}) = ( 0,~{\bf B},~{\bf v},~{\bf v} \cdot {\bf B} )^\top.$  
In some literature, \eqref{eq:MHD:GP} is also called Powell's MHD system.  
The right-hand side term of \eqref{eq:MHD:GP}, 
termed as the Godunov--Powell source term in the following,  is proportional 
to $\nabla \cdot {\bf B}$. 
This means, at the continuous level, the Godunov form \eqref{eq:MHD:GP} and conservative form \eqref{eq:MHD} are equivalent 
under the condition \eqref{eq:2D:BxBy0}. 
However, the Godunov--Powell source term 
modifies the character of the MHD equations, 
making 
the system \eqref{eq:MHD:GP} 
Galilean invariant (cf.~\cite{Dellar2001}), symmetrizable \cite{Godunov1972} and useful 
for designing entropy stable schemes (see, e.g., \cite{Chandrashekar2016,LiuShuZhang2017}). 
These good properties do not hold anymore if the source term is dropped. 
%
As first demonstrated by Powell \cite{Powell1995}, the 
inclusion of the source term also helps advect   
the divergence away with the flow.   
This renders 
the eight-wave method a stable approach 
to control the divergence error, although 
some drawbacks \cite{Toth2000} 
can be caused due to the 
loss of conservativeness. 

In physics, the density, pressure and internal energy are positive. 
An equivalent mathematical description is that, the conservative vector ${\bf U}$ should stay in the {\em set of physically admissible states} defined by 
\begin{equation}\label{eq:DefG}
{\mathcal G} = \left\{   {\bf U} = (\rho,{\bf m},{\bf B},E)^\top: \rho > 0,~
{\mathcal E}(  {\bf U}  ) := E- \frac12 \left( \frac{|{\bf m}|^2}{\rho} + |{\bf B}|^2 \right) > 0 \right\},
\end{equation}
where the condition \eqref{eq:assumpEOS} has been used, and ${\mathcal E} ({\bf U}) =  \rho e$ denotes the internal energy. 
We are interested in {\em positivity-preserving} (PP) numerical schemes whose solutions always stay in ${\mathcal G}$.  
The motivation comes from that, once the negative density or negative pressure (internal energy) appears in the numerical solution,
the corresponding discrete problem becomes ill-posed because of the loss of hyperbolicity, often causing the breakdown of the simulation codes. 
However, most the existing schemes for the ideal MHD are not PP in general, and may have a large risk of failure in solving MHD problems with low internal energy, low density, small  plasma-beta and/or strong discontinuity. 
A few efforts were made to reduce this risk. Balsara and Spicer \cite{BalsaraSpicer1999a} tried to maintain positive pressure by switching the Riemann solvers for different wave situations. 
Janhunen \cite{Janhunen2000} 
noticed the challenge of designing PP schemes for the conservative system \eqref{eq:MHD}, so he 
proposed a modified MHD system, which is similar to the Godunov form \eqref{eq:MHD:GP} but includes only the source term in 
the induction equation. By the discretization of his modified system, 
Janhunen \cite{Janhunen2000} developed a 1D HLL-type Riemann solver, and numerically demonstrated its PP property which has not been proven yet. 
Bouchut, Klingenberg, and Waagan \cite{Bouchut2007} skillfully derived several approximate multiwave Riemann solvers 
for the 1D ideal MHD,  
and gave the sufficient conditions for the solvers to satisfy the PP property and discrete entropy inequalities. 
Those conditions are met by explicit wave speeds estimated in \cite{Bouchut2010}, where the solvers were also implemented and multidimensional extension was discussed based on Janhunen's modified system.
Waagan \cite{Waagan2009} noticed the importance of proper discretization on Janhunen's modified system, 
and designed a positive second-order MUSCL-Hancock scheme by the approximate Riemann solvers of \cite{Bouchut2007,Bouchut2010} and a new linear reconstruction. 
The robustness of that scheme 
was further demonstrated in \cite{waagan2011robust} by extensive numerical 
tests and comparisons. 
In the last few years, significant advances have been made in developing bound-preserving high-order schemes for hyperbolic systems; see the pioneer works by Zhang and Shu \cite{zhang2010,zhang2010b,zhang2012maximum}, and more recent works, e.g.,  \cite{Hu2013,Xu2014,Liang2014,christlieb2015high,WuTang2015,Wu2017,Xu2017,ZHANG2017301}. 
Balsara \cite{Balsara2012} proposed a self-adjusting PP limiter 
to enforce the positivity of  
the reconstructed solutions in a finite volume method for \eqref{eq:MHD}. 
Cheng et al.~\cite{cheng} extended the PP limiter of 
\cite{zhang2010b,zhang2011} to enforce the positivity of DG solutions for  \eqref{eq:MHD}.  
These PP limiters \cite{Balsara2012,cheng} are based on a presumed proposition that the cell-averaged solutions computed by those schemes always belong to $\mathcal G$. 
Such a proposition has not yet been rigorously proven for the methods in \cite{Balsara2012,cheng},   
although it could be deduced for the 1D schemes in \cite{cheng} 
under some 
assumptions. 
Using the presumed PP property of the Lax--Friedrichs (LF) scheme, Christlieb et al.\ \cite{Christlieb,Christlieb2016} developed PP 
high-order accurate finite difference schemes for \eqref{eq:MHD} by extending the parametrized flux limiters \cite{Xu2014,Xiong2016,seal2016explicit}. 
It was numerically demonstrated that all the PP techniques mentioned above could enhance the robustness of some MHD codes, but 
few theoretical evidences were provided, especially in the multidimensional cases, to 
completely prove the PP property of full discretized schemes. 
In fact, 
finite numerical tests could be insufficient 
to genuinely demonstrate that a scheme is always PP under all circumstances.  
Therefore, it is highly significant  
to develop 
{\em provably} PP schemes and rigorous PP analysis techniques for the ideal MHD. 

Seeking  
provably PP schemes for the ideal MHD 
is quite difficult, largely due to 
the intrinsic complexity of the MHD equations as well as the 
lack of sufficient knowledge about the underlying relation between the PP property and the divergence-free condition \eqref{eq:2D:BxBy0}. 
It can seen from \eqref{eq:DefG} that 
the difficulties mainly 
lie in maintaining the positivity of internal energy, 
whose calculation nonlinearly involves  
all the conservative   
variables. 
In most of the numerical MHD methods, the conservative 
quantities are themselves evolved according to their own conservation laws, which are seemingly
unrelated to and numerically do not necessarily guarantee the positivity of the computed internal energy. In theory, it is indeed a challenge to make an a priori judgment on whether a
scheme is always PP under all circumstances or not.

Recently, 
two 
progresses \cite{Wu2017a,WuShu2018}   
were made to rigorously analyze, understand and design 
provably PP methods for the ideal MHD. 
The first rigorous PP analysis 
was carried out in \cite{Wu2017a} for  
conservative finite volume and 
DG schemes for \eqref{eq:MHD}. 
The analysis revealed in theory that 
a discrete divergence-free (DDF) condition is crucial for designing the PP conservative schemes for \eqref{eq:MHD}. 
This finding is consistent with the relativistic MHD case \cite{WuTangM3AS}. 
It was also proved in \cite{Wu2017a} that if the proposed DDF condition is slightly violated, even the first-order
multidimensional LF scheme for \eqref{eq:MHD} is generally not PP, 
and using very small CFL number or 
many times larger numerical viscosity  
does not help to prevent this effect. 
The DDF condition 
relies on a combination of the information on adjacent cells, and thus   
is not ensured by a locally
divergence-free approach.                   
As a result, in the multidimensional cases, a usual PP limiter 
does
not genuinely guarantee the PP property of the standard DG schemes for  \eqref{eq:MHD}, even if the locally divergence-free DG element \cite{Li2005} is employed. 
Interestingly, on the other hand, at the PDE level 
the positivity preservation and 
the divergence-free constraint \eqref{eq:MHD} are also 
inextricably linked for the ideal MHD equations. 
For the conservative system \eqref{eq:MHD}, 
Janhunen \cite{Janhunen2000} noticed that 
the exact solutions to 1D Riemann
problems  
can have negative pressure if the initial data has 
a jump in the normal component of the magnetic field (i.e., a nonzero divergence). 
Recently in \cite{WuShu2018}, we first observed  
that  
the exact smooth solution of \eqref{eq:MHD} 
may also fail to be PP  
if the divergence-free constraint \eqref{eq:2D:BxBy0} is (slightly) violated.  
Fortunately, in the present paper we find 
that the smooth solutions of 
the modified system \eqref{eq:MHD:GP} 
always retain the desired positivity even if the magnetic field is not divergence-free. 
{\em All these findings 
	motivate us to seek  
	the multidimensional  PP 
	methods via 
	proper discretization of 
	the modified system \eqref{eq:MHD:GP} 
	rather than 
	the conservative system \eqref{eq:MHD}.}  
Using the analysis techniques proposed in \cite{Wu2017a}, 
we first successfully developed in \cite{WuShu2018}  the multidimensional provably PP high-order DG methods for  \eqref{eq:MHD:GP}. 
Note that the study in \cite{Wu2017a,WuShu2018} 
was restricted to 
the schemes with the {\em global LF flux} on {\em uniform Cartesian meshes}. 
It is desirable to construct provably PP high-order schemes with lower dissipative numerical fluxes 
and on more general/unstructured meshes.

The aim of this paper is to 
present the rigorous analysis and a general framework for constructing 
provably PP high-order DG and finite volume methods with the HLL-type flux 
for the ideal MHD on general meshes. 
The contributions and significant innovations of this work 
are outlined as follows:
\begin{enumerate}
	\item We present   
	unified auxiliary theories for PP analysis of  
	schemes with the HLL-type flux on general  
	meshes for the ideal MHD  in any space dimension. 
	These provide a novel way to analytically extract the underlying relation between the PP property and 
	the discrete divergence of magnetic field on an arbitrary polytopal mesh.  Explicit estimates of  
	the wave speeds in the HLL flux are technically derived to guarantee the provably PP property. 
	\item For the 1D MHD system \eqref{eq:MHD}, we prove the PP property of the standard finite volume and DG methods with the proposed HLL flux, under a condition accessible by a simple PP limiter. 
	\item In the multidimensional cases, 
	we construct provably PP high-order DG methods based on the proposed HLL flux, a PP limiter \cite{cheng}, and 
	a proper discretization of the modified MHD system \eqref{eq:MHD:GP} 
	with two divergence-controlling techniques: 
	the locally divergence-free elements and a novel discretization of the Godunov--Powell source term in an upwind manner according to the associated local wave speeds in the HLL flux.  
	The former technique leads to zero divergence within each cell, while the latter controls the divergence error across cell interfaces. 
	Our analysis clearly reveals in theory that a coupling of these two techniques 
	is very important for positivity preservation, as they exactly 
	contribute the discrete divergence terms 
	which are absent in standard multidimensional DG schemes but 
	crucial for ensuring the PP property.
	We also generalize  
	the DDF condition of \cite{Wu2017a} to general meshes 
	and derive sufficient conditions for achieving PP conservative schemes in the multiple dimensions.
	\item We prove that the strong solution  
	to the initial-value problem of the modified MHD system \eqref{eq:MHD:GP} preserves the positivity of density and pressure even if the divergence-free condition \eqref{eq:2D:BxBy0} is not satisfied. This feature, 
	not enjoyed by the conservative system \eqref{eq:MHD} (see \cite{WuShu2018}), can serve as a justification for designing provably PP multidimensional schemes 
	based on the modified system \eqref{eq:MHD:GP}.
\end{enumerate}
The efforts mentioned above are novel and highly nontrivial. 
A key difficulty is to  
analytically 
quantify the relation of the PP property 
to the discrete divergence on general meshes. 
Especially, in the analysis of   
the positivity of ${\mathcal E}({\bf U})$, 
the discrete equations for the conservative variables are 
nonlinearly coupled, 
and the limiting values of the numerical solution at the interfaces of each cell  
are intrinsically connected  
by the discrete divergence. These make the PP analysis in the MHD case very complicated especially in the multidimensional cases, and 
some standard analysis techniques (cf.~\cite{zhang2010b}) are  inapplicable as demonstrated in \cite{Wu2017a}. 
We will skillfully address these challenges by a novel equivalent form of the set $\mathcal G$ and highly technical estimates.  
Note that a LF flux can be considered as a special HLL flux.  
Therefore, all the analyses in the this paper directly 
apply to the local and global LF fluxes. 
It is also worth mentioning 
that many multi-state or multi-wave HLL-type fluxes   
were developed or applied to the ideal MHD in the literature (e.g., \cite{Janhunen2000,Gurski2004,LI2005344,MIYOSHI2005315,Bouchut2007,BALSARA20101970,fuchs2011,BALSARA2014172}), 
but only a few of them (cf.~\cite{Gurski2004,MIYOSHI2005315,Bouchut2007})
were shown to be PP for some 1D schemes.  
Moreover, their PP property 
for higher order schemes, in the multidimensional cases, 
and its relation to the divergence-free condition in the discrete sense have not yet been rigorously proved.


The paper is organized as follows. After  
establishing the auxiliary theories for our PP analysis on general meshes 
in Section \ref{sec:theory}, we present  
the 1D and multidimensional provably PP methods 
in Sections \ref{sec:1D} and \ref{sec:2D}, respectively.  
We conduct numerical tests in Section \ref{app:num} to verify the PP property and the effectiveness of the proposed PP techniques, before concluding the paper in Section \ref{sec:conclusion}.  
The positivity of strong solutions of the 
modified MHD system \eqref{eq:MHD:GP} 
is shown in Appendix \ref{app:1}.

\section{Auxiliary theories}\label{sec:theory}




In this section, we present the auxiliary results for our PP analysis on general meshes.

\subsection{Properties of admissible state set}
The function ${\mathcal E} ({\bf U})$ in \eqref{eq:DefG} is nonlinear with respect to $\bf U$,  complicating the analysis of  
the PP property of a given scheme. 
The following equivalent form of ${\mathcal G}$ was proposed in \cite{Wu2017a}. 
\begin{lemma}
	\label{theo:eqDefG}
	The admissible state set ${\mathcal G}$ is equivalent to
	\begin{equation}\label{eq:newDefG}
	{\mathcal G}_* = \left\{   {\bf U} = (\rho,{\bf m},{\bf B},E)^\top : \rho > 0,~
	{\bf U} \cdot {\bf n}^* + \frac{|{\bf B}^*|^2}{2} > 0,~\forall{\bf v}^*, {\bf B}^* \in {\mathbb {R}}^3 \right\},
	\end{equation}
	where 
	\begin{equation*}
	{\bf n}^* = \bigg( \frac{|{\bf v}^*|^2}2,~- {\bf v}^*,~-{\bf B}^*,~1 \bigg)^\top.
	\end{equation*}
\end{lemma}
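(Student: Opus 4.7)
The plan is to reduce the equivalence to a direct algebraic identity obtained by completing squares in $\mathbf{v}^*$ and $\mathbf{B}^*$. First, I would expand the scalar quantity of interest for an arbitrary $\mathbf{U}=(\rho,\mathbf{m},\mathbf{B},E)^\top$ with $\rho>0$:
\begin{equation*}
\mathbf{U}\cdot\mathbf{n}^* + \tfrac{|\mathbf{B}^*|^2}{2}
= \tfrac{\rho}{2}|\mathbf{v}^*|^2 - \mathbf{m}\cdot\mathbf{v}^* - \mathbf{B}\cdot\mathbf{B}^* + E + \tfrac{|\mathbf{B}^*|^2}{2}.
\end{equation*}
Treating the right-hand side as a quadratic in $(\mathbf{v}^*,\mathbf{B}^*)$ and completing the square with respect to each variable separately should give the clean identity
\begin{equation*}
\mathbf{U}\cdot\mathbf{n}^* + \tfrac{|\mathbf{B}^*|^2}{2}
= \tfrac{\rho}{2}\Bigl|\mathbf{v}^* - \tfrac{\mathbf{m}}{\rho}\Bigr|^2 + \tfrac{1}{2}\bigl|\mathbf{B}^*-\mathbf{B}\bigr|^2 + \mathcal{E}(\mathbf{U}),
\end{equation*}
where the constant term collapses, by the definition of $\mathcal{E}(\mathbf{U})$ in \eqref{eq:DefG}, to exactly the internal-energy functional. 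This identity is the one observation doing all the work.

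With this identity in hand, both inclusions are immediate. For $\mathcal{G}\subseteq\mathcal{G}_*$, if $\rho>0$ and $\mathcal{E}(\mathbf{U})>0$, then the right-hand side is a sum of a nonnegative quadratic in $(\mathbf{v}^*,\mathbf{B}^*)$ and a strictly positive constant, hence strictly positive for all $\mathbf{v}^*,\mathbf{B}^*\in\mathbb{R}^3$. For the converse $\mathcal{G}_*\subseteq\mathcal{G}$, it suffices to evaluate the identity at the single critical point $\mathbf{v}^* = \mathbf{m}/\rho$, $\mathbf{B}^* = \mathbf{B}$, which makes the two squares vanish and yields $\mathcal{E}(\mathbf{U})>0$ directly from the defining inequality of $\mathcal{G}_*$; combined with $\rho>0$, this places $\mathbf{U}$ in $\mathcal{G}$.

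There is no genuine obstacle here; the only nontrivial step is seeing why one should write $\mathbf{n}^*$ in this particular form in the first place, and why the extra $\tfrac12|\mathbf{B}^*|^2$ is added to the half-space condition. Both are dictated precisely by the goal of making the quadratic in $(\mathbf{v}^*,\mathbf{B}^*)$ factor into standard squared terms. Once the identity is written out, the lemma follows from the fact that a nonnegative quadratic plus a constant is positive for all arguments if and only if that constant is positive, which is exactly the equivalence between $\mathcal{E}(\mathbf{U})>0$ and the quantifier-over-$(\mathbf{v}^*,\mathbf{B}^*)$ condition defining $\mathcal{G}_*$.
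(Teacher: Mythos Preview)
Your proof is correct and matches the paper's approach exactly. The paper does not spell out a proof of this lemma (it cites \cite{Wu2017a}), but it records precisely your completing-the-square identity as equation \eqref{eq:IDT1}, namely ${\bf U}\cdot{\bf n}^* + \tfrac{|{\bf B}^*|^2}{2} = |{\bm\theta}|^2$ with ${\bm\theta} = \tfrac{1}{\sqrt{2}}\big({\bf B}-{\bf B}^*,\,\sqrt{\rho}({\bf v}-{\bf v}^*),\,\sqrt{2\rho e}\big)^\top$, which is exactly your decomposition $\tfrac{\rho}{2}|{\bf v}^*-{\bf m}/\rho|^2 + \tfrac{1}{2}|{\bf B}^*-{\bf B}|^2 + \mathcal{E}({\bf U})$.
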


The proof of Lemma \ref{theo:eqDefG} can be found in \cite{Wu2017a}. As we can see, the equivalent set ${\mathcal G}_*$ is defined 
{\em with two constraints linear with respect to $\bf U$}, which give it advantages over the 
natural definition \eqref{eq:DefG} in showing the PP property of numerical schemes.  
This novel equivalent form is 
a cornerstone of our PP analysis.




The convexity of admissible state set is desired and useful in bound-preserving analysis,
as it helps simplify the analysis if the scheme can be reformulated 
into certain convex combinations; see e.g., \cite{zhang2010b,zhang2012maximum,WuTang2017ApJS,Wu2017}. We have the convexity of ${\mathcal G}_*$, cf.~\cite{Wu2017a}.

\begin{lemma}
	\label{theo:MHD:convex}
	The set ${\mathcal G}_*$ is convex. 
	Moreover, $\lambda {\bf U}_1 + (1-\lambda) {\bf U}_0 \in {\mathcal G}_*$
	for any ${\bf U}_1 \in {\mathcal G}_*, {\bf U}_0 \in \overline{\mathcal G}_*$ and $\lambda \in (0,1]$, where 
	$\overline{\mathcal G}_*$ is the closure of ${\mathcal G}_*$.
\end{lemma}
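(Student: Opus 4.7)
The plan is to exploit the crucial feature already highlighted by Lemma~\ref{theo:eqDefG}: every defining constraint of $\mathcal{G}_*$ is \emph{linear} in $\bf U$. Concretely, $\mathcal{G}_*$ is the intersection of the open half-space $\{\rho>0\}$ with the family of open half-spaces
\[
H_{{\bf v}^*,{\bf B}^*} \;=\; \Bigl\{\,{\bf U}:\; \ell_{{\bf v}^*,{\bf B}^*}({\bf U}) \;:=\; {\bf U}\cdot {\bf n}^* + \tfrac{|{\bf B}^*|^2}{2} > 0 \,\Bigr\}, \qquad ({\bf v}^*,{\bf B}^*)\in \mathbb{R}^3\times\mathbb{R}^3,
\]
where each $\ell_{{\bf v}^*,{\bf B}^*}$ is an affine function of $\bf U$. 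Since every open half-space is convex and arbitrary intersections of convex sets are convex, convexity of $\mathcal{G}_*$ follows immediately.

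For the second assertion, I would fix ${\bf U}_1\in \mathcal{G}_*$, ${\bf U}_0\in \overline{\mathcal{G}_*}$, and $\lambda\in(0,1]$, set ${\bf U}_\lambda := \lambda {\bf U}_1 + (1-\lambda){\bf U}_0$, and verify both defining strict inequalities by linearity. For the density component,
\[
\rho_\lambda \;=\; \lambda \rho_1 + (1-\lambda)\rho_0 \;\geq\; \lambda \rho_1 \;>\; 0,
\]
because $\rho_1>0$, $\rho_0\geq 0$, and $\lambda>0$. For every choice of ${\bf v}^*,{\bf B}^*\in \mathbb{R}^3$, the affinity of $\ell_{{\bf v}^*,{\bf B}^*}$ gives
\[
\ell_{{\bf v}^*,{\bf B}^*}({\bf U}_\lambda) \;=\; \lambda\, \ell_{{\bf v}^*,{\bf B}^*}({\bf U}_1) \;+\; (1-\lambda)\,\ell_{{\bf v}^*,{\bf B}^*}({\bf U}_0) \;>\; 0,
\]
since $\ell_{{\bf v}^*,{\bf B}^*}({\bf U}_1)>0$, $\ell_{{\bf v}^*,{\bf B}^*}({\bf U}_0)\geq 0$, and $\lambda>0$. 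Quantifying over all $({\bf v}^*,{\bf B}^*)$ yields ${\bf U}_\lambda \in \mathcal{G}_*$. Taking ${\bf U}_0 \in \mathcal{G}_* \subset \overline{\mathcal{G}_*}$ recovers convexity as a special case, so the second statement in fact subsumes the first.

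There is no real analytic obstacle here — the whole argument is a one-line consequence of the linear-constraint reformulation in Lemma~\ref{theo:eqDefG}, which is exactly why that equivalent form was introduced. The only subtlety worth handling carefully is the interaction between the strict inequality at ${\bf U}_1$ and the weak inequality at ${\bf U}_0$: one must note that the convex combination is taken with coefficient $\lambda>0$ on the strict side (this is why the hypothesis is $\lambda\in(0,1]$ rather than $[0,1]$), so that the strict part of the bound is preserved. If instead $\lambda=0$ one can only conclude ${\bf U}_\lambda \in \overline{\mathcal{G}_*}$, consistent with the stated range of $\lambda$.
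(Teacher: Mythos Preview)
Your proof is correct and is exactly the natural argument that the linear-constraint reformulation of Lemma~\ref{theo:eqDefG} is designed to enable; the paper itself does not spell out a proof here but simply cites \cite{Wu2017a}, where the same reasoning is used. The one point you handle implicitly but should perhaps state is that ${\bf U}_0\in\overline{\mathcal{G}_*}$ implies $\rho_0\ge 0$ and $\ell_{{\bf v}^*,{\bf B}^*}({\bf U}_0)\ge 0$ for every $({\bf v}^*,{\bf B}^*)$; this follows by taking a sequence in $\mathcal{G}_*$ converging to ${\bf U}_0$ and passing to the limit in each (continuous) affine functional.
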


\subsection{Technical estimates relative to flux}

\subsubsection{Main estimates}
We summarize our main estimate result in this subsection with the proof 
of it given later.

For the sake of 
convenience, we introduce the following notations, 
which will be frequently used in this paper.  
For any vector 
${\bm \xi}=(\xi_1,\cdots,\xi_d) \in {\mathbb{R}}^d$, we define the inner products
\begin{equation*}
\langle {\bm \xi }, {\bf v} \rangle 
:= \sum_{k=1}^d {\xi }_k v_k,
\qquad
\langle {\bm \xi }, {\bf B} \rangle 
:= \sum_{k=1}^d {\xi }_k B_k,
\qquad \langle {\bm \xi }, {\bf F} \rangle 
:= \sum_{k=1}^d {\xi }_k {\bf F}_k.
\end{equation*}
For any unit vector ${\bm \xi}\in {\mathbb R}^d$,  
define 
\begin{equation*}
{\mathscr{C}} ({\bf U};{\bm \xi}) := \frac{1}{\sqrt{2}}  \left[ {\mathscr{C}}_s^2 + \frac{ |{\bf B}|^2}{\rho} + \sqrt{ \left( {\mathscr{C}}_s^2 + \frac{ |{\bf B}|^2}{\rho} \right)^2 - 4 \frac{ {\mathscr{C}}_s^2 \langle {\bm \xi}, {\bf B} \rangle^2}{\rho}  } \right]^\frac12,
\end{equation*}
where ${\mathscr{C}}_s :=\frac{p}{\rho \sqrt{2e}}$. Note that, for the ideal EOS,  ${\mathscr{C}}_s = \sqrt{\frac{(\gamma-1)p}{2\rho}}$.

Recall that a technical inequality constructed in 
\cite[Lemma 2.6]{Wu2017a} has played a pivotal role in 
the PP analysis on Cartesian meshes in \cite{Wu2017a,WuShu2018}. That inequality involves 
two states, which correspond to the numerical solutions at a couple of symmetric quadrature points 
on cell interfaces. The cells of a general mesh are generally non-symmetric, so that the results in 
\cite{Wu2017a} are inapplicable to the present analysis. 
To carry out PP
analysis on a general mesh, we need to construct a (general) ``multi-state''
inequality, which is derived in the following theorem.

\begin{theorem}\label{lem:main}
	For $ 1\le j \le N$, let $s_j>0$ and the unit vector ${\bm \xi }^{(j)} \in {\mathbb R}^d$ satisfy 
	\begin{equation}\label{eq:sumUpsilon1}
	\sum_{j=1}^N s_{j} {\bm \xi}^{(j)} ={\bf 0}.
	\end{equation}
	Given $N$ admissible states      
	${\bf U}^{(j)}$, 
	$1\le j \le N$, we define 
	\begin{equation}\label{eq:aPP2}
	\begin{split}
	\widehat \alpha_{j} & := 
	\max \left\{ \big\langle {\bm \xi}^{(j)}, {\bf v}^{(j)} \big\rangle , 
	\frac{1}{\sum\limits_{i=1}^Ns_i}
	\sum_{i=1}^N s_i 
	\left \langle
	{\bm \xi}^{(j)} - {\bm \xi}^{(i)}, 
	\frac{ \sqrt{\rho^{(j)}} {\bf v }^{(j)} + 
		\sqrt{\rho^{(i)}} {\bf v }^{(i)} } 
	{ \sqrt{\rho^{(j)}}  +	\sqrt{\rho^{(i)}}  } \right \rangle
	\right\} 
	\\ & \qquad  
	+ {\mathscr C}( {\bf U}^{(j)};{{\bm \xi}^{(j)}} ) + 
	\frac{2}{\sum\limits_{i=1}^Ns_i}
	\sum_{i=1}^N s_i \frac{ |{\bf B}^{(j)}- {\bf B}^{(i)}| }{ \sqrt{\rho^{(j)}}  +	\sqrt{\rho^{(i)}}  }.
	\end{split}
	\end{equation}	
	Then for any $ \alpha_{j} \ge \widehat \alpha_{j} $, the state
	\begin{align}\label{eq:defOverlineU}
	\overline{\bf U}: =  \frac{1}{{\sum\limits_{j = 1}^{N}  { s_j   \alpha_{j}   } }}\sum\limits_{j = 1}^{N} { { { s_j \bigg( \alpha_{j} {{\bf U}^{(j)}  - 
					\Big\langle {\bm \xi}^{(j)}, {\bf F} ({\bf U}^{(j)} ) \Big\rangle
				} \bigg)} } } , 
	\end{align}
	belongs to ${\mathcal G}_\rho :=  \{ {\bf U} = (\rho,{\bf m},{\bf B},E)^\top : \rho >0  \}$, and satisfies   
	\begin{equation}\label{eq:IEQ:multi-states22}
	\overline{\bf U} \cdot {\bf n}^* 
	+ \frac{|{\bf B}^*|^2}{2} \ge - \frac{ {\bf v}^* \cdot {\bf B}^*  }{{\sum\limits_{j = 1}^{N}  { s_j   \alpha_{j}   } }} 
	\sum_{j=1}^{ N } 
	s_j   \big\langle {\bm \xi}^{(j)}, {\bf B}^{(j)} \big\rangle,\quad \forall {\bf v}^*,{\bf B}^* \in {\mathbb{R}}^3.
	\end{equation}
	Furthermore, $\overline{\bf U} \in \overline {\mathcal G}_*$ if  
	\begin{equation}\label{eq:DDFtheorem}
	\sum_{j=1}^{ N } 
	s_j   \big\langle {\bm \xi}^{(j)}, {\bf B}^{(j)} \big\rangle = 0.
	\end{equation}
\end{theorem}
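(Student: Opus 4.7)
The plan is to exploit Lemma~\ref{theo:eqDefG}, which expresses admissibility through a family of \emph{linear} inequalities parametrized by $({\bf v}^*,{\bf B}^*)$. This linearization converts the nonlinear pressure positivity into something additive over states, which is what makes a multi-state estimate feasible. I would begin by verifying density positivity directly: the density component of $\overline{\bf U}$ equals $(\sum_j s_j\alpha_j)^{-1}\sum_j s_j(\alpha_j - \langle{\bm \xi}^{(j)},{\bf v}^{(j)}\rangle)\rho^{(j)}$, which is strictly positive because the first entry in the max in \eqref{eq:aPP2} enforces $\alpha_j\ge\langle{\bm \xi}^{(j)},{\bf v}^{(j)}\rangle$ while ${\mathscr C}({\bf U}^{(j)};{\bm \xi}^{(j)})>0$ for any admissible state makes this strict. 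Hence $\overline{\bf U}\in{\mathcal G}_\rho$.

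Next, I would analyze the scalar
\[
Q({\bf v}^*,{\bf B}^*) := \sum_{j=1}^N s_j\left[\alpha_j\left({\bf U}^{(j)}\cdot{\bf n}^* + \tfrac{|{\bf B}^*|^2}{2}\right) - \big\langle{\bm \xi}^{(j)},{\bf F}({\bf U}^{(j)})\big\rangle\cdot{\bf n}^*\right]
\]
and aim to bound it below by $-({\bf v}^*\cdot{\bf B}^*)\sum_j s_j\langle{\bm \xi}^{(j)},{\bf B}^{(j)}\rangle$, after which \eqref{eq:IEQ:multi-states22} follows by dividing by $\sum_j s_j\alpha_j>0$. The key algebraic identity is ${\bf U}\cdot{\bf n}^* + |{\bf B}^*|^2/2 = \tfrac{\rho}{2}|{\bf v}-{\bf v}^*|^2+\tfrac12|{\bf B}-{\bf B}^*|^2+\rho e$, which exposes a manifestly nonnegative core. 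Expanding $\langle{\bm \xi}^{(j)},{\bf F}({\bf U}^{(j)})\rangle\cdot{\bf n}^*$ with the short-hands $u_j=\langle{\bm \xi}^{(j)},{\bf v}^{(j)}\rangle$, $b_j=\langle{\bm \xi}^{(j)},{\bf B}^{(j)}\rangle$, each summand of $Q$ reorganizes into a positive ``$(\alpha_j-u_j)\times(\text{kinetic}+\text{magnetic}+\text{thermal})$'' piece, plus terms linear in ${\bf v}^*,{\bf B}^*$ that carry the pressure-like coefficients $p_{\rm tot}^{(j)}{\bm \xi}^{(j)}$ and the divergence-like coefficient $b_j$. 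Here the mesh condition \eqref{eq:sumUpsilon1}, i.e.\ $\sum_j s_j{\bm \xi}^{(j)}={\bf 0}$, is crucial: it annihilates every contribution proportional to a \emph{state-independent} scalar times ${\bm \xi}^{(j)}$, so the only coupling that survives and cannot be absorbed into a single-state positive piece is precisely $-({\bf v}^*\cdot{\bf B}^*)\sum_j s_j b_j$, the discrete-divergence residual.

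The main technical step is to show that after this cancellation, the remaining cross-terms are dominated by the positive cores provided $\alpha_j\ge\widehat\alpha_j$. For the portion that involves only state $j$, the estimate $\alpha_j-u_j\ge{\mathscr C}({\bf U}^{(j)};{\bm \xi}^{(j)})$ together with the explicit form of the fast magnetosonic speed ${\mathscr C}$ dominates the gas-pressure and $|{\bf B}^{(j)}|^2$-type couplings through a Young-type inequality, paralleling the classical HLL positivity argument for the Euler system. The genuinely new difficulty is the \emph{pairwise} $(i,j)$ coupling produced by ${\bf v}^*$- and ${\bf B}^*$-linear terms on a non-symmetric mesh: I would symmetrize by inserting the Roe-type average $(\sqrt{\rho^{(j)}}{\bf v}^{(j)}+\sqrt{\rho^{(i)}}{\bf v}^{(i)})/(\sqrt{\rho^{(j)}}+\sqrt{\rho^{(i)}})$, using that a constant velocity shift can be inserted freely thanks to $\sum_i s_i{\bm \xi}^{(i)}={\bf 0}$, and then apply Cauchy--Schwarz against the kinetic-energy term $\tfrac{\rho^{(j)}}{2}|{\bf v}^{(j)}-{\bf v}^*|^2$ in the positive core. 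The analogous Cauchy--Schwarz against $\tfrac12|{\bf B}^{(j)}-{\bf B}^*|^2$ produces the $|{\bf B}^{(j)}-{\bf B}^{(i)}|/(\sqrt{\rho^{(j)}}+\sqrt{\rho^{(i)}})$ terms. These two estimates reproduce exactly the second entry of the max and the $|{\bf B}|$-difference correction in \eqref{eq:aPP2}; in particular, the $\sqrt{\rho}$-weighting in the Roe average is not cosmetic but is the optimal Young's-inequality weight that equalizes the kinetic and pressure contributions.

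Assembling these bounds yields $Q\ge-({\bf v}^*\cdot{\bf B}^*)\sum_j s_j b_j$ for all $({\bf v}^*,{\bf B}^*)\in{\mathbb R}^3\times{\mathbb R}^3$, which is \eqref{eq:IEQ:multi-states22}; under \eqref{eq:DDFtheorem} the right-hand side vanishes and Lemma~\ref{theo:eqDefG} immediately gives $\overline{\bf U}\in\overline{\mathcal G}_*$. The hardest obstacle I expect is bookkeeping the Cauchy--Schwarz splits so that (a) the residual coupling is \emph{exactly} the divergence term, not an overshoot that would obscure the connection between positivity and the discrete $\nabla\!\cdot\!{\bf B}$ and (b) the resulting wave-speed bound $\widehat\alpha_j$ remains an explicitly computable, sharp quantity accessible in practice by a PP limiter, rather than a crude dominating majorant.
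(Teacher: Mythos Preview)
Your proposal is correct and follows essentially the same route as the paper: the density positivity via $\alpha_j>\langle{\bm\xi}^{(j)},{\bf v}^{(j)}\rangle$, the quadratic-form identity ${\bf U}\cdot{\bf n}^*+\tfrac12|{\bf B}^*|^2=|{\bm\theta}|^2$, the single-state bound on $\langle{\bm\xi},{\bf F}({\bf U})\rangle\cdot{\bf n}^*$ absorbing ${\mathscr C}({\bf U};{\bm\xi})$, the use of $\sum_j s_j{\bm\xi}^{(j)}={\bf 0}$ to recast the magnetic ${\bf v}^*$-coupling as pairwise differences, and the Roe-weighted Cauchy--Schwarz (with $\delta=\sqrt{\rho^{(j)}}/(\sqrt{\rho^{(j)}}+\sqrt{\rho^{(i)}})$) followed by $i\leftrightarrow j$ symmetrization are exactly the paper's Lemmas~2.3--2.6 and the assembly in Section~\ref{sec:proofT}. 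The only place where your wording is slightly looser than the paper is the single-state step: the sharp constant ${\mathscr C}({\bf U};{\bm\xi})$ arises not from a termwise Young inequality but from identifying the cross terms as a quadratic form $\tilde{\bm\theta}^\top{\bf A}\tilde{\bm\theta}$ whose spectral radius is precisely ${\mathscr C}$, so be sure to use that argument rather than a cruder pairwise Young split.
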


The proof of Theorem \ref{lem:main} and   
the construction of the inequality \eqref{eq:IEQ:multi-states22} are highly nontrivial and  technical.  
For better legibility, we put the proof  
in Section \ref{sec:proofT}. 
Here, we would like to briefly  
explain the result in Theorem \ref{lem:main}, whose meaning will become more clear in 
the PP analysis in Sections \ref{sec:1D} and \ref{sec:2D}.  
Let us consider a cell of the computational mesh, and assume it is a non-self-intersecting $d$-polytope with $N$ edges ($d=2$) or 
faces ($d=3$). The index $j$ 
on the variables in Theorem \ref{lem:main} represents 
the $j$th edge or face of the polytope, and $s_j$ and ${\bm \xi}^{(j)}$ respectively correspond to the $(d-1)$-dimensional Hausdorff measure and the unit 
outward normal vector of the $j$th edge or face. One can verify that 
the condition \eqref{eq:sumUpsilon1} holds naturally. In addition, 
${\bf U}^{(j)}$ stands for the approximate values of  
${\bf U}$ on the $j$th edge or face. 
The condition \eqref{eq:DDFtheorem} is actually  
a DDF condition over the polytope. 

\begin{remark}\label{rem:sumsposi}
	In Theorem \ref{lem:main}, 
	$
	\sum_{j=1}^Ns_j \alpha_{j} 
	$ is always positive, because
	$$
	\sum\limits_{j=1}^Ns_j \widehat \alpha_{j}> 
	\frac{1}{\sum\limits_{i=1}^Ns_i} \sum\limits_{j=1}^Ns_j
	\sum_{i=1}^N s_i 
	\left \langle
	{\bm \xi}^{(j)} - {\bm \xi}^{(i)}, 
	\frac{ \sqrt{\rho^{(j)}} {\bf v }^{(j)} + 
		\sqrt{\rho^{(i)}} {\bf v }^{(i)} } 
	{ \sqrt{\rho^{(j)}}  +	\sqrt{\rho^{(i)}}  } \right \rangle = 0.
	$$
\end{remark}

\begin{remark}
	Theorem \ref{lem:main}, particularly the inequality \eqref{eq:IEQ:multi-states22},  clearly establishes 
	a connection between the PP property and the discrete divergence of magnetic field, i.e., $\sum_{j=1}^{ N } 
	s_j   \langle {\bm \xi}^{(j)}, {\bf B}^{(j)} \rangle$. This will be a key point of our PP analysis.    
	The right-hand side term of \eqref{eq:IEQ:multi-states22} 
	is very important. The construction of this term is highly technical. If it is dropped, the inequality  \eqref{eq:IEQ:multi-states22} would become invalid. 
	As we will see, this term provides 
	a way to take into account the discrete divergence 
	in the PP analysis.
\end{remark}

The following results are immediate corollaries of Theorem \ref{lem:main}, which are useful for  
deriving PP numerical fluxes.  

For any unit vector ${\bm \xi}\in {\mathbb{R}}^{d}$,  
and any pair of admissible states ${\bf U} $ and $\tilde{\bf U}$, we define 
\begin{align}\label{eq:DefAlphaSR}
& \alpha_r ( {\bf U}, \tilde{\bf U}; {\bm \xi} )  := \max \left\{ \langle {\bm \xi}, {\bf v}\rangle,   \frac{\sqrt{\rho} \langle {\bm \xi}, {\bf v} \rangle + \sqrt{\tilde \rho}  \langle {\bm \xi}, \tilde {\bf v} \rangle }{\sqrt{\rho}+\sqrt{\tilde \rho}}   \right\} + {\mathscr C} (  {\bf U};{\bm \xi} ) +  
\frac{|{\bf B}-\tilde {\bf B }|}{\sqrt{\rho}+\sqrt{\tilde \rho}},
\\ \label{eq:DefAlphaSL}
& 
\alpha_l ( {\bf U}, \tilde{\bf U};  {\bm \xi} )  :=  \min \left\{ \langle {\bm \xi}, {\bf v}\rangle,   \frac{\sqrt{\rho} \langle {\bm \xi}, {\bf v} \rangle + \sqrt{\tilde \rho}  \langle {\bm \xi}, \tilde {\bf v} \rangle }{\sqrt{\rho}+\sqrt{\tilde \rho}}   \right\} - {\mathscr C} (  {\bf U};{\bm \xi} ) -  
\frac{|{\bf B}-\tilde {\bf B }|}{\sqrt{\rho}+\sqrt{\tilde \rho}},
\end{align}
and 
\begin{equation}\label{eq:DefAlphaS}
\alpha_\star  ( {\bf U}, \tilde{\bf U}; {\bm \xi} )  := \max \left\{ |\langle {\bm \xi}, {\bf v}\rangle|,   \left|\frac{\sqrt{\rho} \langle {\bm \xi}, {\bf v} \rangle + \sqrt{\tilde \rho}  \langle {\bm \xi}, \tilde {\bf v} \rangle }{\sqrt{\rho}+\sqrt{\tilde \rho}} \right|   \right\} + {\mathscr C} (  {\bf U};{\bm \xi} ) +  
\frac{|{\bf B}-\tilde {\bf B }|}{\sqrt{\rho}+\sqrt{\tilde \rho}}.
\end{equation}

\begin{corollary}\label{main:1D}
	For any $ {\bf U}, \tilde{\bf U} \in {\mathcal G}$, any unit vector $ {\bm \xi}\in  {\mathbb{R}}^{d}$, 
	and 
	$$\forall \alpha \ge \alpha_r  ( {\bf U}, \tilde{\bf U}; {\bm \xi} ),\quad  \forall \tilde \alpha \le  \alpha_l 
	( \tilde{\bf U}, {\bf U}; {\bm \xi} ),$$ 
	the state 
	\begin{equation*}
	\overline{\bf U}:=\frac{ 1 }{ \alpha - \tilde \alpha } 
	\Big(   \alpha {\bf U}
	- \langle {\bm \xi}, {\bf F} (  {\bf U} ) \rangle
	- \tilde \alpha \tilde {\bf U}
	+  \langle {\bm \xi}, {\bf F} ( \tilde {\bf U} ) \rangle
	\Big),
	\end{equation*}
	belongs to ${\mathcal G}_\rho $ and satisfies 
	\begin{equation}\label{eq:KeyIEQ1D}
	\overline{\bf U} \cdot {\bf n}^* 
	+ \frac{|{\bf B}^*|^2}{2} + \frac{{\bf v}^* \cdot {\bf B}^*}{ \alpha - \tilde \alpha} \left( \langle {\bm \xi},  {\bf B} \rangle - \langle {\bm \xi}, \tilde {\bf B} \rangle \right) \ge 0, \quad \forall {\bf v}^*,{\bf B}^* \in \mathbb R^3.
	\end{equation}
	Furthermore, if 
	$
	\langle {\bm \xi},  {\bf B} \rangle - \langle {\bm \xi}, \tilde {\bf B} \rangle=0,
	$ then $\overline{\bf U} \in \overline {\mathcal G}_*$.
\end{corollary}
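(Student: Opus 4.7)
The plan is to obtain Corollary \ref{main:1D} as a direct specialization of Theorem \ref{lem:main} to $N=2$, with a carefully chosen pair of unit vectors that forces the general ``multi-state'' bound to collapse onto the two-state HLL setting. Specifically, I will set ${\bf U}^{(1)}={\bf U}$, ${\bf U}^{(2)}=\tilde{\bf U}$, $s_1=s_2=1$, ${\bm \xi}^{(1)}={\bm \xi}$, and ${\bm \xi}^{(2)}=-{\bm \xi}$. Condition \eqref{eq:sumUpsilon1} then holds automatically since ${\bm \xi}+(-{\bm \xi})={\bf 0}$.

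Next I will compute $\widehat\alpha_1$ and $\widehat\alpha_2$ from \eqref{eq:aPP2} and show they coincide with $\alpha_r({\bf U},\tilde{\bf U};{\bm \xi})$ and $-\alpha_l(\tilde{\bf U},{\bf U};{\bm \xi})$, respectively. The key observation is that the $i=j$ term in the double sum vanishes (both the inner product and the magnetic difference are zero), so for $j=1$ only the $i=2$ contribution survives, producing ${\bm \xi}^{(1)}-{\bm \xi}^{(2)}=2{\bm \xi}$ and ${\bf B}^{(1)}-{\bf B}^{(2)}={\bf B}-\tilde{\bf B}$; the factor $2$ cancels against $\sum_i s_i=2$ in the denominators, and one recovers \eqref{eq:DefAlphaSR} verbatim. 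For $j=2$, ${\bm \xi}^{(2)}-{\bm \xi}^{(1)}=-2{\bm \xi}$ flips the sign inside the $\max$, converting it into $-\min\{\cdot,\cdot\}$, and I will use the invariance ${\mathscr C}({\bf U};-{\bm \xi})={\mathscr C}({\bf U};{\bm \xi})$ (since ${\mathscr C}$ depends on ${\bm \xi}$ only through $\langle {\bm \xi},{\bf B}\rangle^2$) to match \eqref{eq:DefAlphaSL}, so that $\widehat\alpha_2=-\alpha_l(\tilde{\bf U},{\bf U};{\bm \xi})$.

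With these identifications, set $\alpha_1=\alpha$ and $\alpha_2=-\tilde\alpha$. The hypotheses $\alpha\ge\alpha_r({\bf U},\tilde{\bf U};{\bm \xi})$ and $\tilde\alpha\le\alpha_l(\tilde{\bf U},{\bf U};{\bm \xi})$ translate to $\alpha_j\ge\widehat\alpha_j$, and Remark \ref{rem:sumsposi} yields $\sum_j s_j\alpha_j=\alpha-\tilde\alpha>0$, so the denominator in \eqref{eq:defOverlineU} is positive. Plugging in, \eqref{eq:defOverlineU} collapses to exactly the state $\overline{\bf U}$ given in the corollary, since $\langle -{\bm \xi},{\bf F}(\tilde{\bf U})\rangle=-\langle {\bm \xi},{\bf F}(\tilde{\bf U})\rangle$. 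Theorem \ref{lem:main} immediately delivers $\overline{\bf U}\in{\mathcal G}_\rho$, while the right-hand side of \eqref{eq:IEQ:multi-states22} reduces to $-\frac{{\bf v}^*\cdot{\bf B}^*}{\alpha-\tilde\alpha}\bigl(\langle {\bm \xi},{\bf B}\rangle-\langle {\bm \xi},\tilde{\bf B}\rangle\bigr)$, which is \eqref{eq:KeyIEQ1D} after moving that term to the left. The final assertion $\overline{\bf U}\in\overline{\mathcal G}_*$ under $\langle {\bm \xi},{\bf B}\rangle=\langle {\bm \xi},\tilde{\bf B}\rangle$ follows from the last clause of Theorem \ref{lem:main}, since condition \eqref{eq:DDFtheorem} becomes precisely $\langle {\bm \xi},{\bf B}\rangle-\langle {\bm \xi},\tilde{\bf B}\rangle=0$.

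There is no real obstacle here; the proof is essentially bookkeeping. The only place that requires some care is the sign manipulation needed to convert the $\max$ in $\widehat\alpha_2$ into the $\min$ appearing in the definition of $\alpha_l$, together with checking that ${\mathscr C}$ is even in ${\bm \xi}$. Once those minor points are handled, everything reduces mechanically to the $N=2$ instance of Theorem \ref{lem:main}.
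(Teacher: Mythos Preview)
Your proposal is correct and follows exactly the same approach as the paper: specialize Theorem \ref{lem:main} to $N=2$ with $s_1=s_2=1$, ${\bm \xi}^{(1)}=-{\bm \xi}^{(2)}={\bm \xi}$, ${\bf U}^{(1)}={\bf U}$, ${\bf U}^{(2)}=\tilde{\bf U}$, $\alpha_1=\alpha$, $\alpha_2=-\tilde\alpha$. Your detailed verification that $\widehat\alpha_1=\alpha_r({\bf U},\tilde{\bf U};{\bm \xi})$ and $\widehat\alpha_2=-\alpha_l(\tilde{\bf U},{\bf U};{\bm \xi})$ (including the evenness of ${\mathscr C}$ in ${\bm \xi}$ and the $\max\to -\min$ conversion) is more explicit than the paper's one-line proof, but the argument is identical.
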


\begin{proof}
	This directly follows from 
	Theorem \ref{lem:main} with $N=2$, by taking 
\begin{equation*}
	s_1=s_2=1,  \ \, {\bm \xi}^{(1)}=-{\bm \xi}^{(2)}={\bm \xi}, \ \, {\bf U}^{(1)} = {\bf U}, \ \ 
	{\bf U}^{(2)} = \tilde {\bf U}, \ \ \alpha_{1}= \alpha, \ \ 
	\alpha_{2}= - \tilde \alpha. \tag*{\qedhere} 
\end{equation*}
\end{proof}

\begin{corollary}\label{cor:1D}
	Let $ {\bf U}, \tilde{\bf U} \in {\mathcal G}$, unit vector $ {\bm \xi}\in  {\mathbb{R}}^{d}$. For $\forall \alpha \ge \alpha_\star  ( {\bf U}, \tilde{\bf U}; {\bm \xi} )$, $\forall \tilde \alpha \ge  \alpha_\star 
	( \tilde{\bf U}, {\bf U}; {\bm \xi} )$, 
	the state 
	\begin{equation*}
	\overline{\bf U}:=\frac{ 1 }{ \alpha + \tilde \alpha } 
	\Big(   \alpha {\bf U}
	- \langle {\bm \xi}, {\bf F} (  {\bf U} ) \rangle
	+ \tilde \alpha \tilde {\bf U}
	+  \langle {\bm \xi}, {\bf F} ( \tilde {\bf U} ) \rangle
	\Big),
	\end{equation*}
	belongs to ${\mathcal G}_\rho $ and satisfies 
	\begin{equation}\label{eq:KeyIEQ1D2}
	\overline{\bf U} \cdot {\bf n}^* 
	+ \frac{|{\bf B}^*|^2}{2} + \frac{{\bf v}^* \cdot {\bf B}^*}{ \alpha + \tilde \alpha} \left( \langle {\bm \xi},  {\bf B} \rangle - \langle {\bm \xi}, \tilde {\bf B} \rangle \right) \ge 0, \quad \forall {\bf v}^*,{\bf B}^* \in \mathbb R^3.
	\end{equation}
	Furthermore, if 
	$
	\langle {\bm \xi},  {\bf B} \rangle - \langle {\bm \xi}, \tilde {\bf B} \rangle=0,
	$ then $\overline{\bf U} \in \overline {\mathcal G}_*$.
\end{corollary}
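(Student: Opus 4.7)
The plan is to deduce Corollary \ref{cor:1D} as a specialization of Theorem \ref{lem:main} with $N=2$, mirroring the derivation of Corollary \ref{main:1D} but with a sign convention that makes both wave-speed parameters ``right-going.'' The key observation is that the definition \eqref{eq:DefAlphaS} of $\alpha_\star$ differs from \eqref{eq:DefAlphaSR} and from the negation of \eqref{eq:DefAlphaSL} only by replacing the signed velocity components inside the outer max with their absolute values, so the trivial inequality $|x|\ge \pm x$ yields immediately
\[
\alpha_\star  ( {\bf U}, \tilde{\bf U}; {\bm \xi} )  \ge \alpha_r ( {\bf U}, \tilde{\bf U}; {\bm \xi} ), \qquad \alpha_\star  ( \tilde{\bf U}, {\bf U}; {\bm \xi} )  \ge - \alpha_l ( \tilde{\bf U}, {\bf U}; {\bm \xi} ).
\]

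Concretely, I would invoke Theorem \ref{lem:main} with
\[
s_1 = s_2 = 1, \quad {\bm \xi}^{(1)} = -{\bm \xi}^{(2)} = {\bm \xi}, \quad {\bf U}^{(1)} = {\bf U}, \quad {\bf U}^{(2)} = \tilde{\bf U}, \quad \alpha_{1} = \alpha, \quad \alpha_{2} = \tilde \alpha,
\]
so that the balance condition \eqref{eq:sumUpsilon1} is trivially satisfied. A routine unfolding of \eqref{eq:aPP2}, using the evenness ${\mathscr C}(\cdot;-{\bm \xi}) = {\mathscr C}(\cdot;{\bm \xi})$ (the definition depends on ${\bm \xi}$ only through $\langle {\bm \xi},{\bf B}\rangle^2$), then yields $\widehat{\alpha}_1 = \alpha_r({\bf U},\tilde{\bf U};{\bm \xi})$ and $\widehat{\alpha}_2 = -\alpha_l(\tilde{\bf U},{\bf U};{\bm \xi})$, so the hypotheses $\alpha \ge \widehat{\alpha}_1$ and $\tilde\alpha \ge \widehat{\alpha}_2$ required by Theorem \ref{lem:main} are exactly the two displayed inequalities above.

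It then remains to unwind the conclusions of Theorem \ref{lem:main} with this data. Plugging into \eqref{eq:defOverlineU} reproduces the stated expression for $\overline{\bf U}$, since the sign flip ${\bm \xi}^{(2)}=-{\bm \xi}$ turns $-\langle {\bm \xi}^{(2)},{\bf F}(\tilde{\bf U})\rangle$ into $+\langle {\bm \xi},{\bf F}(\tilde{\bf U})\rangle$, while \eqref{eq:IEQ:multi-states22} transforms into \eqref{eq:KeyIEQ1D2} once the right-hand side is moved to the left and the identity $\sum_{j=1}^{2} s_j\langle {\bm \xi}^{(j)},{\bf B}^{(j)}\rangle = \langle {\bm \xi},{\bf B}\rangle - \langle {\bm \xi},\tilde{\bf B}\rangle$ is used. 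The final clause, that $\overline{\bf U} \in \overline{\mathcal G}_*$ under $\langle {\bm \xi},{\bf B}\rangle = \langle {\bm \xi},\tilde{\bf B}\rangle$, follows directly from the DDF branch \eqref{eq:DDFtheorem} of Theorem \ref{lem:main}. I do not foresee any substantive obstacle: all the heavy lifting is inside Theorem \ref{lem:main}, and the only point requiring a moment of thought is recognizing that the absolute values in the definition of $\alpha_\star$ are precisely what is needed so that both $\alpha_1$ and $\alpha_2$ may be taken as nonnegative upper bounds when the sign of $\alpha_2$ is flipped relative to Corollary \ref{main:1D}.
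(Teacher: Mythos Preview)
Your proposal is correct and amounts to the same argument as the paper's, just one step less compressed: the paper writes only ``This is a direct consequence of Corollary~\ref{main:1D},'' which unpacks to exactly the substitution $\tilde\alpha \mapsto -\tilde\alpha$ together with the observation $\alpha_\star \ge \alpha_r$ and $\alpha_\star \ge -\alpha_l$ that you spell out. Your direct appeal to Theorem~\ref{lem:main} simply re-derives Corollary~\ref{main:1D} along the way rather than citing it.
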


\begin{proof}
	This is a direct consequence of Corollary \ref{main:1D}.
\end{proof}

\begin{remark}
	The inequalities \eqref{eq:IEQ:multi-states22}, \eqref{eq:KeyIEQ1D} and \eqref{eq:KeyIEQ1D2}  extend 
	the inequality constructed in \cite[Lemma 2.6]{Wu2017a}. 
	Corollaries \ref{main:1D} and \ref{cor:1D} are useful for estimating the wave speeds to ensure
	the PP property of the 
	HLL flux and local Lax-Friedrichs flux, respectively; see Theorem \ref{thm:HLLflux}. 
\end{remark}

\subsubsection{Proof of Theorem \ref{lem:main}}\label{sec:proofT}
We first establish several technical lemmas as the  
stepping stones on the path to prove Theorem \ref{lem:main}.

For any ${\bf U} \in {\mathcal G}$ and ${\bf v}^*,{\bf B}^* \in {\mathbb R}^3$, we 
define the nonzero vector $\bm \theta \in \mathbb{R}^7$ by 
\begin{equation*}
{\bm \theta} ({\bf U},{\bf v}^*,{\bf B}^*) 
:= \frac{1}{\sqrt 2} 
\Big( {\bf B}-{\bf B}^*,~\sqrt{\rho} ( {\bf v}-{\bf v}^* ),~\sqrt{2 \rho e}  \Big)^\top.  
\end{equation*} 
As a novel point, introducing such a vector will bring 
much convenience 
in the following estimates and analyses. It is easy to verify that 
\begin{equation}\label{eq:IDT1}
{\bf U}\cdot {\bf n}^* + \frac{|{\bf B}^*|^2}{2} = | {\bm \theta} |^2.
\end{equation}

\begin{lemma}\label{lem:densityflux}
	The set 
	\begin{equation*}
	{\mathcal G}_\rho := \big \{ {\bf U} = (\rho,{\bf m},{\bf B},E)^\top : \rho >0 \big \},
	\end{equation*}
	is a convex set. And for any $ {\bf U} \in {\mathcal G}_\rho$,  ${\bm \xi}  \in \mathbb{R}^d $ and  
	$\alpha> \langle {\bm \xi}, {\bf v} \rangle $, 
	it holds
	\begin{equation*}
	\alpha {\bf U}  - 
	\langle {\bm \xi}, {\bf F} ({\bf U} ) \rangle
	\in {\mathcal G}_\rho.
	\end{equation*}
\end{lemma}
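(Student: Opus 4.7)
The plan is to dispatch both claims by direct computation, since $\mathcal{G}_\rho$ is defined by a single linear inequality on a single component of $\bf U$. Nothing from the elaborate machinery of Theorem \ref{lem:main} is needed here; this lemma is essentially a warm-up that isolates the density argument, which will later be reused when one proves the positivity of $\rho$ for $\overline{\bf U}$ in Theorem \ref{lem:main}.

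For convexity, I would just note that $\mathcal{G}_\rho$ is the preimage of the open half-line $(0,\infty)$ under the linear functional ${\bf U}\mapsto \rho$, hence it is an open convex subset of $\mathbb{R}^8$. A one-line verification: if ${\bf U}_1,{\bf U}_2\in\mathcal{G}_\rho$ with densities $\rho_1,\rho_2>0$ and $\lambda\in[0,1]$, then the density of $\lambda{\bf U}_1+(1-\lambda){\bf U}_2$ equals $\lambda\rho_1+(1-\lambda)\rho_2>0$.

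For the flux statement, I would read off the density component of ${\bf F}_i({\bf U})$, which from the definition of ${\bf F}_i$ is $\rho v_i$. Hence the density component of $\langle{\bm\xi},{\bf F}({\bf U})\rangle=\sum_{i=1}^d \xi_i {\bf F}_i({\bf U})$ equals $\sum_{i=1}^d \xi_i\rho v_i=\rho\langle{\bm\xi},{\bf v}\rangle$. Therefore the density component of $\alpha{\bf U}-\langle{\bm\xi},{\bf F}({\bf U})\rangle$ is
\begin{equation*}
\alpha\rho - \rho\langle{\bm\xi},{\bf v}\rangle = \rho\bigl(\alpha - \langle{\bm\xi},{\bf v}\rangle\bigr),
\end{equation*}
which is strictly positive under the assumptions $\rho>0$ and $\alpha>\langle{\bm\xi},{\bf v}\rangle$. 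This puts $\alpha{\bf U}-\langle{\bm\xi},{\bf F}({\bf U})\rangle$ in $\mathcal{G}_\rho$. There is no real obstacle; the only thing to be careful about is extracting the density component of the flux cleanly, which is immediate from the given form of ${\bf F}_i({\bf U})$.
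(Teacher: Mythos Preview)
Your proof is correct and matches the paper's approach: the paper simply states ``The result can be easily verified'' and leaves the details implicit, and your direct computation of the density component of $\alpha{\bf U}-\langle{\bm\xi},{\bf F}({\bf U})\rangle$ is exactly the verification intended.
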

\begin{proof}
	The result can be easily verified.
\end{proof}

\begin{lemma}\label{lem:IDs}
	For any $ {\bf U} \in {\mathcal G}$, any $ {\bf v}^*,{\bf B}^* \in {\mathbb R}^3$ and all $ i\in \{1,2,3\}$,  
	we have  
	\begin{equation} \label{eq:wklieq1}
	{\bf F}_i({\bf U}) \cdot {\bf n}^* - B_i ( {\bf v}^* \cdot {\bf B}^* )
	\le v_i \sum_{k=4}^7 \theta_k^2 +  v_i^* \bigg( \frac{ 1 }{2} |{\bf B}|^2 -  {\bf B} \cdot {\bf B}^* \bigg) + {\mathscr C}_i |{\bm \theta}|^2,
	\end{equation}
	where ${\mathscr C}_i :={\mathscr C}({\bf U};{\bf e}_i)$, and the vector 
	${\bf e}_i$ is the $i$-th row of the unit matrix of size 3. 
\end{lemma}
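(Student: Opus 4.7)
The plan has three stages.

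\emph{Stage 1 (algebraic expansion).} First I would expand $\mathbf{F}_i(\mathbf{U})\cdot\mathbf{n}^* - B_i(\mathbf{v}^*\cdot\mathbf{B}^*)$ directly from the definitions of $\mathbf{F}_i$ and $\mathbf{n}^*$. The four mixed magnetic--velocity products $B_i[(\mathbf{B}\cdot\mathbf{v}^*) + (\mathbf{v}\cdot\mathbf{B}^*) - (\mathbf{v}\cdot\mathbf{B}) - (\mathbf{v}^*\cdot\mathbf{B}^*)]$ telescope to $-B_i(\mathbf{v}-\mathbf{v}^*)\cdot(\mathbf{B}-\mathbf{B}^*)$, and the kinetic block $\rho v_i\tfrac{|\mathbf{v}^*|^2}{2} - \rho v_i(\mathbf{v}\cdot\mathbf{v}^*) + v_i\tfrac{\rho|\mathbf{v}|^2}{2}$ completes the square to $\tfrac{\rho v_i}{2}|\mathbf{v}-\mathbf{v}^*|^2$, which together with the thermal term $v_i\rho e$ yields exactly $v_i\sum_{k=4}^{7}\theta_k^2$. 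Reshuffling the remaining total-pressure and $\mathbf{B}\cdot\mathbf{B}^*$ pieces isolates $v_i^*\bigl(\tfrac12|\mathbf{B}|^2 - \mathbf{B}\cdot\mathbf{B}^*\bigr)$, leaving the scalar residual
\[
R = (v_i - v_i^*)\,p + (\mathbf{B}-\mathbf{B}^*)\cdot\bigl[(v_i-v_i^*)\mathbf{B} - B_i(\mathbf{v}-\mathbf{v}^*)\bigr].
\]
The lemma then reduces to the sharp estimate $R \le \mathscr{C}_i\,|\boldsymbol{\theta}|^2$.

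\emph{Stage 2 (bounding $R$).} Set $\mathbf{a}=\mathbf{B}-\mathbf{B}^*$, $\mathbf{b}=\mathbf{v}-\mathbf{v}^*$, and split $\mathbf{B}=B_i\mathbf{e}_i+\mathbf{B}_\perp$, $\mathbf{b}=b_i\mathbf{e}_i+\mathbf{b}_\perp$. A short computation gives $b_i\mathbf{B}-B_i\mathbf{b}=b_i\mathbf{B}_\perp-B_i\mathbf{b}_\perp$, which lies in $\mathbf{e}_i^\perp$, so only $\mathbf{a}_\perp$ pairs with it. Cauchy--Schwarz yields
\[
R \le |b_i|\,p + |\mathbf{a}_\perp|\sqrt{b_i^2|\mathbf{B}_\perp|^2 + B_i^2|\mathbf{b}_\perp|^2 - 2 b_i B_i(\mathbf{B}_\perp\cdot\mathbf{b}_\perp)}.
\]
I would then invoke weighted AM--GM inequalities, using $p = \mathscr{C}_s\,\rho\sqrt{2e}$ to trade $|b_i|p$ for a product involving $\sqrt{\rho e}=\theta_7$. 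After optimisation, the sharp constant is characterised by the quadratic $X^2 - (\mathscr{C}_s^2 + |\mathbf{B}|^2/\rho)X + \mathscr{C}_s^2 B_i^2/\rho = 0$ obtained from squaring the formula for $\mathscr{C}(\mathbf{U};\mathbf{e}_i)$; its larger root is precisely $\mathscr{C}_i^2$. This delivers $R \le \mathscr{C}_i\bigl(\tfrac12|\mathbf{a}|^2 + \tfrac{\rho}{2}|\mathbf{b}|^2 + \rho e\bigr) = \mathscr{C}_i|\boldsymbol{\theta}|^2$. Combined with Stage 1, this proves \eqref{eq:wklieq1}.

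\emph{Main obstacle.} The work is concentrated in Stage 2. A naive Cauchy--Schwarz plus AM--GM produces a constant of the shape $|\mathbf{B}|/\sqrt{\rho} + \mathscr{C}_s$, which overshoots $\mathscr{C}_i$ whenever $\mathbf{B}$ is not aligned with $\mathbf{e}_i$, since it ignores the anisotropy captured by $B_i$ inside the fast-magnetosonic quadratic. Identifying the AM--GM weights that hit the fast mode exactly---equivalently, the eigendirection of the fast magnetosonic wave in direction $\mathbf{e}_i$---is the technical crux, and it is exactly what forces the particular form of $\mathscr{C}(\mathbf{U};\boldsymbol{\xi})$ used in the paper.
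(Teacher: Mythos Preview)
Your Stage~1 is correct and coincides with the paper's identity: the residual you call $R$ is exactly the paper's $\Phi_i$, and the lemma reduces to $\Phi_i\le\mathscr{C}_i|\boldsymbol\theta|^2$.

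For Stage~2 the paper takes a cleaner route that dissolves your ``main obstacle'' entirely. Instead of Cauchy--Schwarz followed by a search for optimal AM--GM weights, the paper observes that $\Phi_i$ is already a \emph{quadratic form} in the six variables $\tilde{\boldsymbol\theta}=(\theta_{3+i},\theta_{3+i_1},\theta_{3+i_2},\theta_{i_1},\theta_{i_2},\theta_7)$: writing $p(v_i-v_i^*)=2\mathscr{C}_s\theta_{3+i}\theta_7$ and similarly for the magnetic terms gives $\Phi_i=\tilde{\boldsymbol\theta}^\top{\bf A}\tilde{\boldsymbol\theta}$ for an explicit symmetric $6\times 6$ matrix ${\bf A}$. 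The bound $|\Phi_i|\le\|{\bf A}\|_2\,|\tilde{\boldsymbol\theta}|^2\le\|{\bf A}\|_2\,|\boldsymbol\theta|^2$ is then automatic, and a direct computation of the characteristic polynomial of ${\bf A}$ shows $\|{\bf A}\|_2=\mathscr{C}_i$. No weights need to be guessed: the fast-magnetosonic biquadratic you anticipate appears as the characteristic equation of ${\bf A}$.

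Your route is still viable. After your Cauchy--Schwarz step, bounding $|b_i{\bf B}_\perp-B_i{\bf b}_\perp|\le|b_i|\,|{\bf B}_\perp|+|B_i|\,|{\bf b}_\perp|$ and passing to the scalars $x=|\theta_{3+i}|$, $y=\theta_7$, $z=(\theta_{i_1}^2+\theta_{i_2}^2)^{1/2}$, $w=(\theta_{3+i_1}^2+\theta_{3+i_2}^2)^{1/2}$ reduces the problem to bounding $2(\mathscr{C}_s xy+\rho^{-1/2}|{\bf B}_\perp|xz+\rho^{-1/2}|B_i|zw)$ by a multiple of $x^2+y^2+z^2+w^2$. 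This is again a spectral-radius problem, now for a $4\times4$ nonnegative matrix whose characteristic polynomial is $\lambda^4-(\mathscr{C}_s^2+|{\bf B}|^2/\rho)\lambda^2+\mathscr{C}_s^2B_i^2/\rho$, the same biquadratic. So your ``optimization over AM--GM weights'' is equivalent to computing this spectral radius; the paper simply does that computation directly on the original $6\times6$ form, skipping the preliminary inequalities.
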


\begin{proof}	
	For any $i\in \{1,2,3\}$, we observe that 	
	\begin{equation}\label{eq:keyEQ}
	{\bf F}_i({\bf U}) \cdot {\bf n}^* - B_i ( {\bf v}^* \cdot {\bf B}^* )
	= v_i \sum_{k=4}^7 \theta_k^2 +  v_i^* \bigg( \frac{ 1 }{2} |{\bf B}|^2 -  {\bf B} \cdot {\bf B}^* \bigg) + 
	\Phi_i, 
	\end{equation}
	where 
	\begin{equation*}
	\Phi_i( {\bf U},{\bf v}^*,{\bf B}^* )  :=  p ( v_i-v_i^* ) + 
	\sum_{ \substack{1\le k \le 3 \\ k \neq i  } }
	\Big( B_k(v_i-v_i^*) - B_i (v_k-v_k^*)  \Big) (B_k - B_k^*).
	\end{equation*}
	Let us show that $\Phi_i$ is bounded by ${\mathscr C}_i |{\bm \theta}|^2$ from above.  
	We further observe that $\Phi_i$ is a quadratic form in the variables $\theta_k$, $1\le k \le 7$, and moreover, 
	the coefficients of the quadratic form do not depend on ${\bf v}^*$ and ${\bf B}^*$. Specifically, for the fixed $i$, we have 
	\begin{align*}
	& p ( v_i-v_i^* ) = 2 {\mathscr{C}}_s  \frac{\sqrt{\rho}}{\sqrt{2}} ( v_i - v_i^* )   \sqrt{\rho e} = 2  {\mathscr{C}}_s \theta_{3+i} \theta_7,
	\\
	& \Big( B_k(v_i-v_i^*) - B_i (v_k-v_k^*)  \Big) (B_k - B_k^*) 
	= 2\frac{B_k}{\sqrt{\rho}} \theta_{3+i} \theta_k 
	- 2\frac{B_i}{\sqrt{\rho}} \theta_{3+k} \theta_k,~~\forall k \neq i. 
	\end{align*} 
	Define $i_1:=i~{\rm mod}~3+1$ and $i_2:=(i+1)~{\rm mod}~3+1$, and  
	\begin{equation*}
	\tilde{\bm \theta}:= 
	\big(\theta_{3+i},~\theta_{3+i_1},~\theta_{3+i_2},~\theta_{i_1},~\theta_{i_2},~\theta_7 \big)^\top,
	\end{equation*}
	then 
	\begin{align*}
	\Phi_i = 2  {\mathscr{C}}_s \theta_{3+i} \theta_7 
	+ 2 \sum_{k \in\{i_1,i_2\}} \bigg(\frac{B_k}{\sqrt{\rho}} \theta_{3+i} \theta_k 
	- \frac{B_i}{\sqrt{\rho}} \theta_{3+k} \theta_k \bigg)
	= \tilde{\bm \theta}^\top {\bf A} \tilde{\bm \theta},
	\end{align*}
	where
	\begin{equation*}
	{\bf A}=
	\begin{pmatrix}
	0 & 0 & 0 & B_{i_1} \rho^{-\frac12} & B_{i_2} \rho^{-\frac12} & {\mathscr{C}}_s  \\
	0 & 0 & 0 & -B_i \rho^{-\frac12} & 0 & 0 \\
	0 & 0 & 0 & 0 & -B_i \rho^{-\frac12} & 0 \\
	B_{i_1} \rho^{-\frac12} & -B_i \rho^{-\frac12} & 0 & 0 & 0 & 0 \\
	B_{i_2} \rho^{-\frac12} & 0 & -B_i \rho^{-\frac12} & 0 & 0 & 0 \\
	{\mathscr{C}}_s  & 0 & 0 & 0 & 0 & 0
	\end{pmatrix}.
	\end{equation*}
	The spectral radius 
	of ${\bf A}$ is $ {\mathscr{C}}_i$. Therefore,
	\begin{equation*}
	|\Phi_i| \le | \tilde{\bm \theta}^\top {\bf A} \tilde{\bm \theta} |
	\le {\mathscr{C}}_i | \tilde{\bm \theta}|^2 
	=  {\mathscr C}_i  (|{\bm \theta}|^2-\theta_i^2) \le {\mathscr C}_i |{\bm \theta}|^2,
	\end{equation*}
	which along with the identity \eqref{eq:keyEQ} imply \eqref{eq:wklieq1}.  	
\end{proof}

For any unit vector ${\bm \xi}\in {\mathbb{R}}^{d}$, we introduce a matrix ${\bf T}_{\bm \xi}:={\rm diag} \big\{1,\widehat{\bf T}_{{\bm \xi}},\widehat{\bf T}_{{\bm \xi}},1 \big\}$,
with the rotational matrix $\widehat{\bf T}_{{\bm \xi}} $  defined as follows:
\\
(i). In $d=1$, ${\bm \xi}=\xi$ is a scalar of value 1 or $-1$, and $\widehat{\bf T}_{{\bm \xi}} $ is defined as ${\rm diag}\{\xi,1,1\}$.
\\
(ii). In $d=2$, let $( \cos \varphi, \sin \varphi )$ be the polar coordinate representation of ${\bm \xi} $, and 
\begin{equation*}
\widehat{\bf T}_{{\bm \xi}} :=  \begin{pmatrix}
\cos \varphi~ & ~\sin \varphi~ & ~0  \\
-\sin\varphi~ & ~\cos \varphi~ & ~0 \\
0~ & ~0~ & ~1
\end{pmatrix}.
\end{equation*}
(iii). In $d=3$, let $( \sin \phi \cos \varphi, \sin \phi \sin \varphi, \cos \phi  )$  be the spherical coordinate representation of ${\bm \xi}$, 
and 
\begin{equation*}
\widehat{\bf T}_{{\bm \xi}} :=  \begin{pmatrix}
\sin \phi \cos \varphi~ & ~\sin \phi \sin \varphi~ & ~\cos  \phi   \\
-\sin\varphi~ & ~\cos \varphi~ & ~0 \\
-\cos \phi \cos \varphi~ & ~-\cos \phi \sin \varphi~ & ~\sin  \phi
\end{pmatrix}.
\end{equation*}
The rotational invariance property of the $d$-dimensional MHD system \eqref{eq:MHD} implies 
\begin{equation}\label{eq:MHD:rotINV}
\langle {\bm \xi}, {\bf F} ({\bf U} ) \rangle  =  {\bf T}_{\bm \xi}^{-1} {\bf F}_1( {\bf T}_{\bm \xi} {\bf U}).
\end{equation}
This helps us extend Lemma \ref{lem:IDs} to the following general case. 



\begin{lemma}\label{lem:IEQ_WKL}
	For any ${\bf U} \in {\mathcal G}$, any ${\bf v}^*,{\bf B}^* \in {\mathbb R}^3$ 
	and any unit vector ${\bm \xi}  \in \mathbb{R}^d $, 
	it holds
	\begin{equation*}
	\langle {\bm \xi}, {\bf F} ( {\bf U} )  \rangle \cdot {\bf n}^*
	- \langle {\bm \xi},{\bf B} \rangle ({\bf v}^* \cdot {\bf B}^*) 
	\le \langle {\bm \xi}, {\bf v} \rangle \sum_{k=4}^7 \theta_k^2 + 
	\langle {\bm \xi}, {\bf v}^* \rangle \Big( \frac{1}{2}  |{\bf B}|^2 -  {\bf B} \cdot {\bf B}^* \Big) + {\mathscr C}({\bf U};{\bm \xi}) |{\bm \theta}|^2.
	\end{equation*}
\end{lemma}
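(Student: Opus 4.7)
My plan is to reduce the general-direction estimate to the axis-aligned case $i=1$ in Lemma~\ref{lem:IDs} by using the rotational invariance \eqref{eq:MHD:rotINV}. The orthogonality of the rotation matrix $\widehat{\bf T}_{\bm\xi}$ (hence of ${\bf T}_{\bm\xi}$) will make all the geometric quantities appearing in Lemma~\ref{lem:IDs} transform in exactly the right way.

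First, introduce the rotated state $\tilde{\bf U}:={\bf T}_{\bm\xi}{\bf U}$ and the rotated reference vectors $\tilde{\bf v}^*:=\widehat{\bf T}_{\bm\xi}{\bf v}^*$, $\tilde{\bf B}^*:=\widehat{\bf T}_{\bm\xi}{\bf B}^*$. Because $\widehat{\bf T}_{\bm\xi}$ is orthogonal, $\tilde{\bf U}\in{\mathcal G}$ (density and internal energy are unaffected), and the associated ${\bf n}$-vector $\tilde{\bf n}^*:=\big(|\tilde{\bf v}^*|^2/2,\,-\tilde{\bf v}^*,\,-\tilde{\bf B}^*,\,1\big)^\top$ satisfies $\tilde{\bf n}^*={\bf T}_{\bm\xi}{\bf n}^*$. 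Combining with \eqref{eq:MHD:rotINV} and ${\bf T}_{\bm\xi}^\top {\bf T}_{\bm\xi} ={\bf I}$ gives the crucial identity
\begin{equation*}
\langle{\bm\xi},{\bf F}({\bf U})\rangle\cdot{\bf n}^* \;=\; \bigl({\bf T}_{\bm\xi}^{-1}{\bf F}_1(\tilde{\bf U})\bigr)\cdot{\bf n}^* \;=\; {\bf F}_1(\tilde{\bf U})\cdot\tilde{\bf n}^*.
\end{equation*}

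Next, apply Lemma~\ref{lem:IDs} with $i=1$ to $\tilde{\bf U}$, $\tilde{\bf v}^*$, $\tilde{\bf B}^*$:
\begin{equation*}
{\bf F}_1(\tilde{\bf U})\cdot\tilde{\bf n}^* - \tilde B_1\,(\tilde{\bf v}^*\cdot\tilde{\bf B}^*)
\;\le\; \tilde v_1 \sum_{k=4}^{7}\tilde\theta_k^{\,2} + \tilde v_1^*\Bigl(\tfrac{1}{2}|\tilde{\bf B}|^2-\tilde{\bf B}\cdot\tilde{\bf B}^*\Bigr) + {\mathscr C}(\tilde{\bf U};{\bf e}_1)\,|\tilde{\bm\theta}|^2,
\end{equation*}
where $\tilde{\bm\theta}:={\bm\theta}(\tilde{\bf U},\tilde{\bf v}^*,\tilde{\bf B}^*)$. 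Then I translate every term back. By construction the first row of $\widehat{\bf T}_{\bm\xi}$ coincides with the embedding of ${\bm\xi}$ into $\mathbb{R}^3$, so $\tilde v_1=\langle{\bm\xi},{\bf v}\rangle$, $\tilde v_1^*=\langle{\bm\xi},{\bf v}^*\rangle$, and $\tilde B_1=\langle{\bm\xi},{\bf B}\rangle$. Orthogonality of $\widehat{\bf T}_{\bm\xi}$ yields $\tilde{\bf v}^*\cdot\tilde{\bf B}^*={\bf v}^*\cdot{\bf B}^*$, $|\tilde{\bf B}|^2=|{\bf B}|^2$, and $\tilde{\bf B}\cdot\tilde{\bf B}^*={\bf B}\cdot{\bf B}^*$. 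The same orthogonality gives $|\tilde{\bm\theta}|^2=|{\bm\theta}|^2$ and $\sum_{k=4}^{7}\tilde\theta_k^{\,2}=\tfrac{\rho}{2}|{\bf v}-{\bf v}^*|^2+\rho e=\sum_{k=4}^{7}\theta_k^{\,2}$. Finally, ${\mathscr C}(\tilde{\bf U};{\bf e}_1)={\mathscr C}({\bf U};{\bm\xi})$ straight from the definition of ${\mathscr C}$, since only $\rho$, $p$, $|{\bf B}|^2$, and $\langle\tilde{\bf B},{\bf e}_1\rangle^2=\langle{\bm\xi},{\bf B}\rangle^2$ enter.

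Substituting all these identifications into the inequality above yields exactly the claimed bound. The proof is essentially mechanical once the rotational correspondence is pinned down; the only point that needs verification rather than computation is that $\widehat{\bf T}_{\bm\xi}$ sends ${\bm\xi}$ to ${\bf e}_1$, which one checks directly from the three explicit formulas given for $d=1,2,3$. This last verification, together with keeping the two layers of notation ($\bf U$ vs.\ $\tilde{\bf U}$, ${\bf v}^*$ vs.\ $\tilde{\bf v}^*$) straight, is the only place where one could easily slip up, so I would state it as a short separate observation before invoking Lemma~\ref{lem:IDs}.
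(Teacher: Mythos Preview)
Your proof is correct and follows essentially the same approach as the paper: rotate via ${\bf T}_{\bm\xi}$ to reduce to the axis-aligned case, apply Lemma~\ref{lem:IDs} with $i=1$, and translate each term back using the orthogonality of $\widehat{\bf T}_{\bm\xi}$ and the fact that its first row is ${\bm\xi}$. The paper's writeup is slightly more compressed, but the logical structure and all the key identifications are identical.
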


\begin{proof}
	Let $\hat{\bf U}:={\bf T}_{\bm \xi} {\bf U}$, $\hat {\bf v}^* := \widehat{\bf T}_{\bm \xi} {\bf v}^*$, $\hat{\bf B}:=\widehat{\bf T}_{\bm \xi}{\bf B}^*$, $\hat{\bm \theta}:={\bm \theta} ( \hat{\bf U}, \hat {\bf v}^*, \hat{\bf B}^* )$, and 
	\begin{equation*}
	\hat{\bf n}^* := \bigg( \frac{|\hat{\bf v}^*|^2}2,~- \hat{\bf v}^*,~-\hat{\bf B}^*,~1 \bigg)^\top = {\bf T}_{\bm \xi} {\bf n}^*.
	\end{equation*}
	By the definition \eqref{eq:DefG}, one can easily verify  
	$\hat {\bf U} \in {\mathcal G}$, which, together with 
	the orthogonality of  
	${\bf T}_{\bm \xi}^{-1}$ and $\widehat{\bf T}_{\bm \xi}^{-1}$, 
	imply 
	\begin{align*}
	&\langle {\bm \xi}, {\bf F} ( {\bf U} )  \rangle \cdot {\bf n}^*
	- \langle {\bm \xi},{\bf B} \rangle ({\bf v}^* \cdot {\bf B}^*) 
	\\
	&  \overset{\eqref{eq:MHD:rotINV}}{=} 
	\big( {\bf T}_{\bm \xi}^{-1}  {\bf F}_1 ( \hat {\bf U} ) \big) \cdot \big( {\bf T}_{\bm \xi}^{-1} \hat{\bf n}^* \big)
	- \hat B_1 ( \widehat{\bf T}_{\bm \xi}^{-1} \hat{\bf v}^* ) \cdot ( \widehat{\bf T}_{\bm \xi}^{-1} \hat{\bf B}^*) 
	\\
	&  =
	{\bf F}_1 ( \hat {\bf U} )  \cdot  \hat{\bf n}^* 
	- \hat B_1 (  \hat {\bf v}^* \cdot  \hat {\bf B}^*) 
	\\
	&  \overset{\eqref{eq:wklieq1}}{\le}   
	\hat v_1 \sum_{k=4}^7 \hat \theta_k^2 +  \hat v_1^* \bigg( \frac{ 1 }{2} |\hat {\bf B}|^2 -  \hat {\bf B} \cdot \hat{\bf B}^* \bigg) 
	+ {\mathscr{C}}_1 (\hat{\bf U}) |\hat {\bm \theta}|^2
	\\
	&   = 
	\langle {\bm \xi}, {\bf v} \rangle \sum_{k=4}^7 \theta_k^2 + 
	\langle {\bm \xi}, {\bf v}^* \rangle \Big( \frac{1}{2}  |{\bf B}|^2 -  {\bf B} \cdot {\bf B}^* \Big) + {\mathscr C} ({\bf U};{\bm \xi}) |{\bm \theta}|^2.
	\end{align*}
	The proof is completed. 
\end{proof}

\begin{lemma}\label{lem:IEQparing}
	Assume that ${\bf U}=(\rho,\rho{\bf v},{\bf B},E)^\top\in {\mathcal G}$, 
	$\tilde{\bf U}=(\tilde \rho, \tilde \rho \tilde{\bf v}, \tilde {\bf B}, \tilde E)^\top \in {\mathcal G}$. For $\forall {\bf v}^*,{\bf B}^* \in {\mathbb R}^3$,  
	$\forall {\bm \xi}  \in \mathbb{R}^d $ and $\forall \delta \in \mathbb{R}$, 
	it holds
	\begin{equation}\label{eq:IEQparing}
	\begin{aligned}
	& \langle {\bm \xi}, {\bf v}^* \rangle
	\left[
	\bigg( \frac{\big|{\bf B}\big|^2}{2} -  {\bf B} \cdot {\bf B}^*  \bigg) 
	- \bigg(  \frac{\big|\tilde{\bf B}\big|^2}{2} -  \tilde{\bf B} \cdot {\bf B}^*
	\bigg) \right] 
	\\
	& \qquad
	\le \langle {\bm \xi}, \delta {\bf v} + (1-\delta) \tilde {\bf v} \rangle 
	\sum_{k=1}^3  
	\big( \theta_k^2
	- 
	\tilde \theta_k^2 \big) 
	+ |{\bm \xi}|  f( {\bf U}, \tilde {\bf U}; \delta ) 
	\big( |{{\bm \theta}}|^2 + 
	|{\tilde{\bm \theta}}|^2 \big),
	\end{aligned}
	\end{equation}
	where ${\bm \theta}:={\bm \theta} ( {\bf U},  {\bf v}^*,{\bf B}^* )$ and ${\tilde{\bm \theta}}:={\bm \theta} ( {{\tilde{\bf U}}},  {\bf v}^*, {\bf B}^* )$, and $f( {\bf U}, \tilde {\bf U}; \delta)$ is defined by
	\begin{equation}\label{eq:Deff}
	f( {\bf U}, \tilde {\bf U}; \delta) := \frac{ |\tilde{\bf B}-{\bf B}| }{\sqrt{2}} \sqrt{  \frac{\delta^2}{\rho} + \frac{ (1-\delta)^2 }{\tilde \rho}  }.
	\end{equation}
\end{lemma}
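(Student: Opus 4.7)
\textbf{Proof plan for Lemma \ref{lem:IEQparing}.}

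The plan is to first collapse the bracketed expression on the left-hand side into a pure function of the $\theta_k$'s, then split the velocity weight $\langle\bm\xi,\mathbf v^*\rangle$ against $\langle\bm\xi,\delta\mathbf v+(1-\delta)\tilde{\mathbf v}\rangle$, and finally bound the remainder by Cauchy--Schwarz and AM--GM.

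First I would use the elementary identity $\tfrac12|\mathbf B|^2-\mathbf B\cdot\mathbf B^*=\tfrac12|\mathbf B-\mathbf B^*|^2-\tfrac12|\mathbf B^*|^2$ applied to both $\mathbf B$ and $\tilde{\mathbf B}$. The two copies of $\tfrac12|\mathbf B^*|^2$ cancel, so in view of $\theta_k=\tfrac{1}{\sqrt2}(B_k-B_k^*)$ (and likewise for $\tilde\theta_k$) with $k=1,2,3$, the bracket equals $\sum_{k=1}^3(\theta_k^2-\tilde\theta_k^2)$. The left-hand side of \eqref{eq:IEQparing} therefore becomes $\langle\bm\xi,\mathbf v^*\rangle\sum_{k=1}^3(\theta_k^2-\tilde\theta_k^2)$, and after subtracting the first term of the right-hand side the inequality reduces to
\begin{equation*}
\big\langle\bm\xi,\,\mathbf v^*-\delta\mathbf v-(1-\delta)\tilde{\mathbf v}\big\rangle\sum_{k=1}^3(\theta_k^2-\tilde\theta_k^2)\ \le\ |\bm\xi|\,f(\mathbf U,\tilde{\mathbf U};\delta)\big(|\bm\theta|^2+|\tilde{\bm\theta}|^2\big).
\end{equation*}

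Next I would rewrite $\mathbf v^*-\delta\mathbf v-(1-\delta)\tilde{\mathbf v}=\delta(\mathbf v^*-\mathbf v)+(1-\delta)(\mathbf v^*-\tilde{\mathbf v})$. The definition of $\bm\theta$ gives $v_k-v_k^*=\sqrt{2/\rho}\,\theta_{k+3}$, and a plain Cauchy--Schwarz in $k=1,\dots,d$ (extended to $k=1,2,3$ by non-negativity of the missing squares) yields $|\langle\bm\xi,\mathbf v^*-\mathbf v\rangle|\le|\bm\xi|\sqrt{2/\rho}\,\Theta_v$, where $\Theta_v:=(\sum_{k=1}^3\theta_{k+3}^2)^{1/2}$, and analogously with tildes. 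Combining the two contributions and applying Cauchy--Schwarz one more time in the $(\delta,1-\delta)$ coefficient pair gives
\begin{equation*}
\big|\langle\bm\xi,\mathbf v^*-\delta\mathbf v-(1-\delta)\tilde{\mathbf v}\rangle\big|\ \le\ |\bm\xi|\,\sqrt{2}\,\sqrt{\tfrac{\delta^2}{\rho}+\tfrac{(1-\delta)^2}{\tilde\rho}}\,\sqrt{\Theta_v^2+\tilde\Theta_v^2}.
\end{equation*}

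For the second factor I would factor $\theta_k^2-\tilde\theta_k^2=(\theta_k-\tilde\theta_k)(\theta_k+\tilde\theta_k)$, note that $\theta_k-\tilde\theta_k=\tfrac{1}{\sqrt2}(B_k-\tilde B_k)$ eliminates all dependence on $\mathbf B^*$, and then use Cauchy--Schwarz together with $(a+b)^2\le2(a^2+b^2)$ to obtain $\bigl|\sum_{k=1}^3(\theta_k^2-\tilde\theta_k^2)\bigr|\le|\mathbf B-\tilde{\mathbf B}|\sqrt{\Theta_B^2+\tilde\Theta_B^2}$, with $\Theta_B:=(\sum_{k=1}^3\theta_k^2)^{1/2}$. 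Multiplying the two bounds recovers exactly $2|\bm\xi|\,f(\mathbf U,\tilde{\mathbf U};\delta)\sqrt{(\Theta_v^2+\tilde\Theta_v^2)(\Theta_B^2+\tilde\Theta_B^2)}$. The proof is closed by AM--GM: $2\sqrt{AB}\le A+B$ with $A=\Theta_v^2+\tilde\Theta_v^2$ and $B=\Theta_B^2+\tilde\Theta_B^2$, followed by the trivial $A+B=(\Theta_B^2+\Theta_v^2)+(\tilde\Theta_B^2+\tilde\Theta_v^2)\le|\bm\theta|^2+|\tilde{\bm\theta}|^2$ (the dropped piece being $\theta_7^2+\tilde\theta_7^2=\rho e+\tilde\rho\tilde e\ge0$).

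The only real subtlety I anticipate is keeping the constant $\sqrt2$ and the factor of $2$ straight through the three successive Cauchy--Schwarz/AM--GM steps so that the final bound matches the prescribed definition \eqref{eq:Deff} of $f$; in particular, the factor $1/\sqrt2$ inside $f$ is precisely what absorbs the factor $2$ that appears after the last AM--GM. No sign considerations arise because the reduction throughout is to absolute values, which is enough since the target inequality is one-sided.
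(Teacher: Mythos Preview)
Your proof is correct and follows essentially the same route as the paper: both reduce the bracket to $\sum_{k=1}^3(\theta_k^2-\tilde\theta_k^2)$, subtract the first right-hand term, split the velocity weight as $\delta(\mathbf v^*-\mathbf v)+(1-\delta)(\mathbf v^*-\tilde{\mathbf v})$, apply Cauchy--Schwarz twice (once on the velocity/$\delta$ pair, once on the magnetic-field factor via $(a+b)^2\le 2(a^2+b^2)$), and close with AM--GM followed by dropping $\theta_7^2+\tilde\theta_7^2$. The only difference is that the paper works directly with the vectors $\mathbf v-\mathbf v^*$, $\mathbf B-\mathbf B^*$ rather than your $\Theta_v,\Theta_B$ shorthand, and it writes the product of the two factors as a single expression before bounding; the constants and the sequence of inequalities are identical.
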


\begin{proof}
	With the aid of the Cauchy-Schwarz inequality, we have  
	\begin{align*}
	&\langle {\bm \xi}, {\bf v}^* \rangle
	\left[
	\bigg( \frac{\big|{\bf B}\big|^2}{2} -  {\bf B} \cdot {\bf B}^*  \bigg) 
	- \bigg(  \frac{\big|\tilde{\bf B}\big|^2}{2} -  \tilde{\bf B} \cdot {\bf B}^*
	\bigg) \right] - \langle {\bm \xi}, \delta {\bf v} + (1-\delta) \tilde {\bf v} \rangle 
	\sum_{k=1}^3  
	\big( \theta_k^2- \tilde \theta_k^2 \big)
	\\
	& 
	=\left( \frac{\delta}{2} \langle {\bm \xi}, {\bf v}-{\bf v}^* \rangle
	+ \frac{1-\delta}{2} \langle {\bm \xi}, \tilde {\bf v}-{\bf v}^* \rangle 
	\right) ( {\bf \tilde B}-{\bf B} ) \cdot 
	\big( {\bf B} + \tilde{\bf B} -2{\bf B}^* \big)
	\\[2mm]
	& \le \frac{|{\bm \xi}|}{2} \left( \frac{|\delta|}{\sqrt{\rho}} \sqrt{\rho} | {\bf v}-{\bf v}^*|
	+ \frac{|1-\delta|}{\sqrt{\tilde\rho}} \sqrt{\tilde \rho} | \tilde {\bf v}-{\bf v}^*|
	\right) | {\bf \tilde B}-{\bf B} | 
	\big( | {\bf B} - {\bf B}^* |+ |\tilde{\bf B} -{\bf B}^*| \big)
	\\[2mm]
	& \le 
	\frac{|{\bm \xi}|}{2}\sqrt{ \frac{\delta^2}{\rho} + \frac{ (1-\delta)^2 }{\tilde \rho}  }
	\sqrt{ \rho | {\bf v}-{\bf v}^*|^2 + \tilde \rho | \tilde {\bf v}-{\bf v}^*|^2 } 
	| {\bf \tilde B}-{\bf B} |  \sqrt{2 ( | {\bf B} - {\bf B}^* |^2+ |\tilde{\bf B} -{\bf B}^*|^2 ) }
	\\
	& 
	= 2 |{\bm \xi}| f( {\bf U}, \tilde {\bf U}; \delta)  \sqrt{ \sum_{k=4}^6 \big( \theta_k^2 + \tilde \theta_k^2 \big) } \sqrt{ \sum_{k=1}^3 \big( \theta_k^2 + \tilde \theta_k^2 \big) }
	\\
	& \le |{\bm \xi}| f( {\bf U}, \tilde {\bf U}; \delta) \sum_{k=1}^6 \big( \theta_k^2 + \tilde \theta_k^2 \big) \le |{\bm \xi}|  f( {\bf U}, \tilde {\bf U}; \delta ) 
	\big( |{{\bm \theta}}|^2 + 
	|{\tilde{\bm \theta}}|^2 \big).
	\end{align*}
	The proof is completed.
\end{proof}

We are now ready to prove Theorem \ref{lem:main}. 

\begin{proof}
	Note that $\alpha_j \ge \widehat \alpha_j > \langle {\bm \xi}^{(j)}, {\bf v}^{(j)} \rangle$. 
	It follows from Lemma \ref{lem:densityflux} 
	that 
	$ 
	\alpha_j {\bf U}^{(j)}
	- \langle {\bm \xi}^{(j)}, {\bf F} (  {\bf U}^{(j)} ) \rangle \in {\mathcal G}_\rho $, 
	and furthermore $\overline{\bf U} \in {\mathcal G}_\rho$, 
	by noting that $ \sum_{j=1}^Ns_j \alpha_j>0$ (see Remark \ref{rem:sumsposi}).

	We then focus on proving the inequality
	\eqref{eq:IEQ:multi-states22}, or equivalently,
	\begin{equation}\label{eq:equiProof}
	\sum_{j=1}^{N} s_j \Pi^{(j)} 
	\le \sum_{j=1}^{N}  \alpha_j |{\bm \theta}^{(j)}|^2,
	\end{equation}
	where ${\bm \theta}^{(j)}:={\bm \theta} ( {\bf U}^{(j)},  {\bf v}^*,{\bf B}^* )$, and 
	\begin{align*}
	&	\Pi^{(j)} := 	\langle {\bm \xi}^{(j)}, {\bf F} ( {\bf U}^{(j)} )  \rangle \cdot {\bf n}^*
	- \langle {\bm \xi}^{(j)},{\bf B}^{(j)} \rangle ({\bf v}^* \cdot {\bf B}^*).
	\end{align*}
	Using Lemma \ref{lem:IEQ_WKL} gives 
	\begin{align} \nonumber
	\sum_{j=1}^{N} s_j \Pi^{(j)} 
	& \le 
	\left\{	\sum_{j=1}^{N} s_j  
	\langle {\bm \xi}^{(j)}, {\bf v}^{(j)} \rangle \sum_{k=4}^7 \big|\theta_k^{(j)} \big|^2 \right\}
	+ \left\{	\sum_{j=1}^{N} s_j  {\mathscr C}({\bf U}^{(j)};{\bm \xi}) |{\bm \theta}^{(j)}|^2 \right\} 
	\\
	\nonumber
	& \quad + \left\{ 
	\sum_{j=1}^{N} s_j 
	\langle {\bm \xi}^{(j)}, {\bf v}^* \rangle \Big( \frac{1}{2}  |{\bf B}^{(j)}|^2 -  {\bf B}^{(j)} \cdot {\bf B}^* \Big) \right\} 
	\\ \label{eq:proofwkl11}
	& =: \Pi_1 + \Pi_2 + \Pi_3.
	\end{align} 
	Noting that, for any $1\le i \le N$, the hypothesis \eqref{eq:sumUpsilon1} implies 
	$$
	\sum_{j=1}^{N} s_j 
	\langle {\bm \xi}^{(j)}, {\bf v}^* \rangle  
	=  
	\left\langle \sum_{j=1}^{N} s_j  {\bm \xi}^{(j)}, {\bf v}^* \right \rangle = 0. 
	$$
	Thus we can reformulate $\Pi_3$ as 
	\begin{align*}
	\Pi_3 & = \sum_{j=1}^{N} s_j 
	\langle {\bm \xi}^{(j)}, {\bf v}^* \rangle \Big( \frac{1}{2}  |{\bf B}^{(j)}|^2 -  {\bf B}^{(j)} \cdot {\bf B}^* \Big) 
	- \sum_{j=1}^{N} s_j 
	\langle {\bm \xi}^{(j)}, {\bf v}^* \rangle \Big( \frac{1}{2}  |{\bf B}^{(i)}|^2 -  {\bf B}^{(i)} \cdot {\bf B}^* \Big) 
	\\
	&= \sum_{j=1}^{N} s_j 
	\langle {\bm \xi}^{(j)}, {\bf v}^*  \rangle
	\left[
	\Big( \frac{1}{2}  |{\bf B}^{(j)}|^2 -  {\bf B}^{(j)} \cdot {\bf B}^* \Big) - \Big( \frac{1}{2}  |{\bf B}^{(i)}|^2 -  {\bf B}^{(i)} \cdot {\bf B}^* \Big) 
	\right]=: \sum_{j=1}^{N} s_j \Pi_3^{(ji)},
	\end{align*} 
	for any $1\le i\le N$. 
	For any $\delta \in \mathbb R$, it follows from Lemma \ref{lem:IEQparing} that 
	\begin{equation}\label{eq:paringProof}
	\begin{split}
	\Pi_3^{(ji)} & \le \langle {\bm \xi}^{(j)}, \delta {\bf v}^{(j)} + (1-\delta)  {\bf v}^{(i)} \rangle 
	\sum_{k=1}^3  
	\left( |\theta_k^{(j)}|^2
	-  |\theta_k^{(i)}|^2 \right) 
	\\
	&\quad +   f( {\bf U}^{(j)}, {\bf U}^{(i)}; \delta ) 
	\big( |{\bm \theta}^{(j)}|^2 + 
	|{\bm \theta}^{(i)}|^2 \big).
	\end{split}
	\end{equation}
	In particular, we take the free variable $\delta$ as  ${\sqrt{\rho^{(j)}}}/\big( \sqrt{\rho^{(j)}} + \sqrt{\rho^{(i)}} \big)$, which gives the Roe-type weighted average.  
	Let
	$$
	\bar{\bf v}^{(ji)} := \frac{ \sqrt{\rho^{(j)}} {\bf v }^{(j)} + 
		\sqrt{\rho^{(i)}} {\bf v }^{(i)} } 
	{ \sqrt{\rho^{(j)}}  +	\sqrt{\rho^{(i)}}  },
	$$
	then the inequality \eqref{eq:paringProof} becomes 
	\begin{equation}\label{eq:paringProof2}
	\Pi_3^{(ji)}  \le \left\langle {\bm \xi}^{(j)}, \bar{\bf v}^{(ji)}  \right \rangle 
	\sum_{k=1}^3  
	\left( |\theta_k^{(j)}|^2
	-  |\theta_k^{(i)}|^2 \right) 
	+  \frac{ |{\bf B}^{(j)} - {\bf B}^{(i)} | }{ \sqrt{\rho^{(j)}}  +	\sqrt{\rho^{(i)}}  }
	\big( |{\bm \theta}^{(j)}|^2 + 
	|{\bm \theta}^{(i)}|^2 \big).
	\end{equation}
	It follows that 
	\begin{equation}\label{eq:proofabc}
	\begin{aligned}
	\left(\sum_{i=1}^N s_i \right) \Pi_3 
	& = \sum_{i=1}^N \sum_{j=1}^{N} s_i s_j \Pi_3^{(ji)}
	\\
	& \le \sum_{i=1}^N \sum_{j=1}^{N} s_i s_j  \left\langle {\bm \xi}^{(j)},  \bar{\bf v}^{(ji)}  \right \rangle 
	\sum_{k=1}^3  
	\left( |\theta_k^{(j)}|^2
	-  |\theta_k^{(i)}|^2 \right) 
	\\
	& \quad 
	+ \sum_{i=1}^N \sum_{j=1}^{N} s_i s_j  \frac{ |{\bf B}^{(j)} - {\bf B}^{(i)} | }{ \sqrt{\rho^{(j)}}  +	\sqrt{\rho^{(i)}}  }
	\big( |{\bm \theta}^{(j)}|^2 + 
	|{\bm \theta}^{(i)}|^2 \big).
	\end{aligned} 
	\end{equation}
	By $\bar{\bf v}^{(ji)}=\bar{\bf v}^{(ij)}$ and the technique of exchanging indexes $i$ and $j$, we obtain 
	\begin{align*}
	&	\sum_{i=1}^N \sum_{j=1}^{N} s_i s_j  \left\langle {\bm \xi}^{(j)},  \bar{\bf v}^{(ji)}  \right \rangle 
	\sum_{k=1}^3  
	|\theta_k^{(i)}|^2 = \sum_{i=1}^N \sum_{j=1}^{N} s_i s_j  \left\langle {\bm \xi}^{(i)},  \bar{\bf v}^{(ji)} \right \rangle 
	\sum_{k=1}^3  
	|\theta_k^{(j)}|^2,
	\\
	& \sum_{i=1}^N \sum_{j=1}^{N} s_i s_j  \frac{ |{\bf B}^{(j)} - {\bf B}^{(i)} | }{ \sqrt{\rho^{(j)}}  +	\sqrt{\rho^{(i)}}  }
	|{\bm \theta}^{(i)}|^2 
	=  \sum_{i=1}^N \sum_{j=1}^{N} s_i s_j  \frac{ |{\bf B}^{(j)} - {\bf B}^{(i)} | }{ \sqrt{\rho^{(j)}}  +	\sqrt{\rho^{(i)}}  }
	|{\bm \theta}^{(j)}|^2.
	\end{align*}
	Therefore, the inequality \eqref{eq:proofabc} can be rewritten as 
	\begin{align*}
	\left(\sum_{i=1}^N s_i \right) \Pi_3 
	&  \le 
	\sum_{i=1}^N \sum_{j=1}^{N} s_i s_j  \left\langle {\bm \xi}^{(j)} - {\bm \xi}^{(i)},  \bar{\bf v}^{(ji)}  \right \rangle 
	\sum_{k=1}^3  
	|\theta_k^{(j)}|^2 
	+ 2 \sum_{i=1}^N \sum_{j=1}^{N} s_i s_j  \frac{ |{\bf B}^{(j)} - {\bf B}^{(i)} | }{ \sqrt{\rho^{(j)}}  +	\sqrt{\rho^{(i)}}  }
	|{\bm \theta}^{(j)}|^2, 
	\end{align*}
	which further yields 
	\begin{equation}\label{keyaaaa}
	\begin{split}
	\Pi_3  & \le 
	\sum_{j=1}^{N} s_j 
	\left( 
	\frac{1}{ \sum_{i=1}^N s_i } \sum_{i=1}^Ns_i  \left\langle {\bm \xi}^{(j)} - {\bm \xi}^{(i)},  \bar{\bf v}^{(ji)}  \right \rangle 
	\right) \sum_{k=1}^3  
	|\theta_k^{(j)}|^2 
	\\
	& \quad + \sum_{j=1}^{N} s_j \left(  \frac{2}{ \sum_{i=1}^N s_i } \sum_{i=1}^N s_i   \frac{ |{\bf B}^{(j)} - {\bf B}^{(i)} | }{ \sqrt{\rho^{(j)}}  +	\sqrt{\rho^{(i)}}  }
	\right) |{\bm \theta}^{(j)}|^2.
	\end{split}
	\end{equation}
	Note that 
	\begin{align*}
	&	\Pi_1 + 
	\sum_{j=1}^{N} s_j 
	\left( 
	\frac{1}{ \sum_{i=1}^N s_i } \sum_{i=1}^Ns_i  \left\langle {\bm \xi}^{(j)} - {\bm \xi}^{(i)},  \bar{\bf v}^{(ji)}  \right \rangle 
	\right) \sum_{k=1}^3  
	|\theta_k^{(j)}|^2  
	\\
	& \quad \le 
	\sum_{j=1}^{N} s_j \max \left\{ \big\langle {\bm \xi}^{(j)}, {\bf v}^{(j)} \big\rangle , 
	\frac{1}{\sum_{i=1}^Ns_i}
	\sum_{i=1}^N s_i 
	\left \langle
	{\bm \xi}^{(j)} - {\bm \xi}^{(i)}, 
	\bar{\bf v}^{(ji)} \right \rangle
	\right\} \sum_{k=1}^7  
	|\theta_k^{(j)}|^2,
	\end{align*}
	which along with \eqref{eq:proofwkl11} and \eqref{keyaaaa} 
	imply 
	$$
	\sum_{j=1}^{N} s_j \Pi^{(j)} 
	\le \sum_{j=1}^{N} \widehat \alpha_j |{\bm \theta}^{(j)}|^2 
	\le \sum_{j=1}^{N} \alpha_j |{\bm \theta}^{(j)}|^2.
	$$
	Hence the inequality \eqref{eq:equiProof} holds.
	
	Under the condition \eqref{eq:DDFtheorem}, the inequality \eqref{eq:IEQ:multi-states22} becomes 
	$\overline{\bf U}\cdot {\bf n}^* + \frac{|{\bf B}^*|^2}{2}\ge 0,$ 
	$\forall {\bf v}^*,{\bf B}^*\in {\mathbb R}^3$, 
	which together with $\overline{\bf U} \in {\mathcal G}_\rho$ 
	imply $\overline{\bf U} \in \overline {\mathcal G}_*$. 
	The proof is completed.
\end{proof}

\subsection{Estimates relative to source term}

We also need the following lemma, which was proposed in \cite{WuShu2018}. 

\begin{lemma}
	For any ${\bf U} \in {\mathcal G}$ and any ${\bf v}^*,{\bf B}^* \in {\mathbb R}^3$, we have  
	\begin{align} \label{eq:indentityS}
	&{\bf S}({\bf U}) \cdot {\bf n}^* = ( {\bf v} - {\bf v}^* ) \cdot ( {\bf B} - {\bf B}^* ) - {\bf v}^* \cdot  {\bf B}^*, 
	\\ \label{eq:widelyusedIEQ}
	& |\sqrt{\rho}( {\bf v} - {\bf v}^* ) \cdot ( {\bf B} - {\bf B}^* )| 
	< {\bf U} \cdot {\bf n}^* + \frac{|{\bf B}^*|^2}{2}.
	\end{align}
	Furthermore, for any $b \in {\mathbb R}$, it holds  
	\begin{equation}\label{eq:widelyusedIEQ2}
	-b ( {\bf S}({\bf U}) \cdot {\bf n}^* ) \ge    b ( {\bf v}^* \cdot  {\bf B}^* ) 
	- \frac{|b|}{\sqrt{\rho}} \left( {\bf U} \cdot {\bf n}^* + \frac{|{\bf B}^*|^2}{2} \right).
	\end{equation}
\end{lemma}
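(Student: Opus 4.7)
The lemma bundles three claims, and each one follows fairly directly once we exploit the vector $\bm{\theta}({\bf U},{\bf v}^*,{\bf B}^*)$ introduced earlier together with the identity ${\bf U}\cdot{\bf n}^* + |{\bf B}^*|^2/2 = |\bm{\theta}|^2$. My plan is to handle the three identities/inequalities in the order \eqref{eq:indentityS}, \eqref{eq:widelyusedIEQ}, \eqref{eq:widelyusedIEQ2}, since each builds on its predecessor.

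First I would verify \eqref{eq:indentityS} by a direct componentwise expansion. From ${\bf S}({\bf U}) = (0,{\bf B},{\bf v},{\bf v}\cdot{\bf B})^\top$ and ${\bf n}^* = (|{\bf v}^*|^2/2, -{\bf v}^*, -{\bf B}^*, 1)^\top$, the dot product collapses to ${\bf v}\cdot{\bf B} - {\bf v}^*\cdot{\bf B} - {\bf v}\cdot{\bf B}^*$, which coincides with $({\bf v}-{\bf v}^*)\cdot({\bf B}-{\bf B}^*) - {\bf v}^*\cdot{\bf B}^*$ after expanding and cancelling the ${\bf v}^*\cdot{\bf B}^*$ terms. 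This step is essentially bookkeeping and not an obstacle.

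For \eqref{eq:widelyusedIEQ}, I would first recall
\begin{equation*}
|\bm{\theta}|^2 \;=\; \tfrac{1}{2}|{\bf B}-{\bf B}^*|^2 + \tfrac{\rho}{2}|{\bf v}-{\bf v}^*|^2 + \rho e \;=\; {\bf U}\cdot{\bf n}^* + \tfrac{|{\bf B}^*|^2}{2},
\end{equation*}
which is \eqref{eq:IDT1}. Then Cauchy--Schwarz plus the AM--GM inequality give
\begin{equation*}
\bigl|\sqrt{\rho}({\bf v}-{\bf v}^*)\cdot({\bf B}-{\bf B}^*)\bigr| \;\le\; \tfrac{\rho}{2}|{\bf v}-{\bf v}^*|^2 + \tfrac{1}{2}|{\bf B}-{\bf B}^*|^2 \;=\; |\bm{\theta}|^2 - \rho e.
\end{equation*}
Because ${\bf U}\in\mathcal G$ forces $\rho e>0$ by \eqref{eq:assumpEOS}, the inequality is strict, yielding \eqref{eq:widelyusedIEQ}. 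The only subtlety here is remembering that we need the \emph{strict} inequality, which is supplied by the positivity of the internal-energy term rather than by Cauchy--Schwarz itself.

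Finally, for \eqref{eq:widelyusedIEQ2} I would substitute \eqref{eq:indentityS} to obtain
\begin{equation*}
-b\,\bigl({\bf S}({\bf U})\cdot{\bf n}^*\bigr) \;=\; b({\bf v}^*\cdot{\bf B}^*) \;-\; b\,({\bf v}-{\bf v}^*)\cdot({\bf B}-{\bf B}^*),
\end{equation*}
then bound the last term by $-|b|\cdot|({\bf v}-{\bf v}^*)\cdot({\bf B}-{\bf B}^*)|$ and invoke \eqref{eq:widelyusedIEQ} divided by $\sqrt{\rho}>0$ to get $|({\bf v}-{\bf v}^*)\cdot({\bf B}-{\bf B}^*)| < \rho^{-1/2}({\bf U}\cdot{\bf n}^*+|{\bf B}^*|^2/2)$. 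Rearranging yields \eqref{eq:widelyusedIEQ2}. The only potential pitfall is keeping track of signs so that the bound holds for \emph{both} signs of $b$ simultaneously, which is precisely why $|b|$ (rather than $b$) appears on the right-hand side; using $-bx \ge -|b|\,|x|$ handles this uniformly. Overall the result is elementary modulo the introduction of $\bm{\theta}$, and I anticipate no serious obstacle.
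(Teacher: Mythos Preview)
Your proof is correct. The paper itself does not prove this lemma but simply cites it from \cite{WuShu2018}; your argument via direct expansion for \eqref{eq:indentityS}, Cauchy--Schwarz plus AM--GM and the strict positivity of $\rho e$ for \eqref{eq:widelyusedIEQ}, and the sign bound $-bx\ge -|b|\,|x|$ for \eqref{eq:widelyusedIEQ2} is exactly the natural route and matches what one finds in that reference.
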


\subsection{Properties of the HLL flux}

The Harten--Lax--van Leer (HLL) flux is derived from an 
approximate Riemann solver in the direction normal to each cell interface. Let ${\bm \xi} \in \mathbb R^d$ be the unit normal vector of the interface. 
Then the HLL flux at the interface is given by 
\begin{equation}\label{eq:1DHLL1}
\hat {\bf F} (  {\bf U}^-,  {\bf U}^+; {\bm \xi} )
= \begin{cases}
\langle {\bm \xi},{\bf F} ({\bf U}^-) \rangle,& 0\le \sigma_l < \sigma_r,\\[1mm]
\displaystyle
\frac{\sigma_r \langle {\bm \xi}, {\bf F}({\bf U}^-) \rangle - \sigma_l \langle {\bm \xi}, {\bf F}( {\bf U}^+ ) \rangle 
	+\sigma_l \sigma_r ({\bf U}^+ - {\bf U}^-) }{\sigma_r - \sigma_l},&\sigma_l < 0 < \sigma_r,\\[1mm]
\langle {\bm \xi}, {\bf F} ({\bf U}^+) \rangle,& \sigma_l < \sigma_r \le 0.
\end{cases}	
\end{equation}
Here $\sigma_l ({\bf U}^-,{\bf U}^+;{\bm \xi}) $ and $\sigma_r({\bf U}^-,{\bf U}^+;{\bm \xi})$ are functions of ${\bf U}^-$, ${\bf U}^+$ 
and ${\bm \xi}$,  
denoting the estimates of the leftmost and rightmost wave speeds 
in the (rotated) Riemann problem in the direction of ${\bm \xi}$,  where ${\bf U}^-$ and ${\bf U}^+$ are the left and right  initial states respectively. We require $\sigma_r > \sigma_l$, and 
\begin{equation}\label{eq:HLLconservative}
\sigma_r ({\bf U}^-,{\bf U}^+;{\bm \xi}) = - \sigma_l
({\bf U}^+,{\bf U}^-;-{\bm \xi}),
\end{equation}
which ensures that the numerical flux \eqref{eq:1DHLL1} is conservative, that is,  
$$
\hat {\bf F} (  {\bf U}^-,  {\bf U}^+; {\bm \xi} ) 
+ \hat {\bf F} (  {\bf U}^+,  {\bf U}^-; -{\bm \xi} )=0.
$$ 
Let 
$$
\sigma^+= \max\{ 
\sigma_r, 0
\}, \quad \sigma^-= \min\{ 
\sigma_l, 0
\},
$$
then the flux \eqref{eq:1DHLL1} can be reformulated as 
\begin{equation}\label{eq:1DHLL2}
\hat {\bf F} (  {\bf U}^-,  {\bf U}^+; {\bm \xi} )=
\frac{\sigma^+ \langle {\bm \xi}, {\bf F}({\bf U}^-) \rangle - \sigma^- \langle {\bm \xi}, {\bf F}( {\bf U}^+ ) \rangle 
	+\sigma^- \sigma^+ ({\bf U}^+ - {\bf U}^-) }{\sigma^+ - \sigma^-}.
\end{equation}
Note that the LF flux can be considered as a special HLL flux with $\sigma_r=-\sigma_l=\sigma$, where $\sigma$ is the maximum wave speed. 
Therefore, all the analysis in the present paper also applies to the local LF flux and global LF flux.


The following property is derived for the HLL flux \eqref{eq:1DHLL1} in the ideal MHD case.

\begin{theorem}\label{thm:HLLflux}
	Assume ${\bf U}^-,{\bf U}^+ \in {\mathcal G}$. 
	If the parameters (approximate wave speeds) in the HLL flux \eqref{eq:1DHLL1} satisfy 
	\begin{equation}\label{eq:HLLsigma}
	\sigma_{ r} \ge \alpha_{ r} 
	( {\bf U}^+, {\bf U}^-; {\bm \xi} ),
	\qquad \sigma_l \le  
	\alpha_{ l} 
	( {\bf U}^-, {\bf U}^+; {\bm \xi} ),
	\end{equation}
	then   
	\begin{align}\label{eq:FH1}
	&\hat {\bf F} (  {\bf U}^-,  {\bf U}^+; {\bm \xi} ) 
	= \sigma^- {\bf H} (  {\bf U}^-,  {\bf U}^+; {\bm \xi} )  
	+ \langle {\bm \xi}, {\bf F}({\bf U}^-) \rangle - \sigma^- 
	{\bf U}^-,\\ \label{eq:FH2}
	& 
	\hat {\bf F} (  {\bf U}^-,  {\bf U}^+; {\bm \xi} ) 
	= \sigma^+ {\bf H} (  {\bf U}^-,  {\bf U}^+; {\bm \xi} )  
	+ \langle {\bm \xi}, {\bf F}({\bf U}^+) \rangle - \sigma^+ 
	{\bf U}^+,
	\end{align}
	and the intermediate state 
	\begin{equation}\label{eq:DefH}
	{\bf H} (  {\bf U}^-,  {\bf U}^+; {\bm \xi} ) 
	:= \frac{ 1 }{ \sigma^+ - \sigma^- } 
	\Big(   \sigma^+ {\bf U}^+
	- \langle {\bm \xi}, {\bf F} (  {\bf U}^+ ) \rangle
	- \sigma^-  {\bf U}^-
	+  \langle {\bm \xi}, {\bf F} (  {\bf U}^- ) \rangle
	\Big)
	\end{equation}
	belongs to ${\mathcal G}_\rho$ and satisfies 
	\begin{equation}\label{eq:KeyIEQ1D4}
	{\bf H} \cdot {\bf n}^* 
	+ \frac{|{\bf B}^*|^2}{2} + \frac{{\bf v}^* \cdot {\bf B}^*}{ \sigma^+ - \sigma^-} \big( \langle {\bm \xi},  {\bf B}^+ \rangle - \langle {\bm \xi}, {\bf B}^- \rangle \big) \ge 0, \quad \forall {\bf v}^*,{\bf B}^* \in \mathbb R^3.
	\end{equation}
	Furthermore, if $\langle {\bm \xi},  {\bf B}^+ \rangle = \langle {\bm \xi}, {\bf B}^- \rangle$, then 	
	${\bf H} \in \overline {\mathcal G}_*$. 
\end{theorem}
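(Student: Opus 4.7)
The plan is to recognize that Theorem \ref{thm:HLLflux} is essentially a consequence of Corollary \ref{main:1D} combined with two routine algebraic identities. The key observation is that the intermediate state ${\bf H}$ defined by \eqref{eq:DefH} has exactly the same structure as the state $\overline{\bf U}$ appearing in Corollary \ref{main:1D}, provided we set ${\bf U} \leftarrow {\bf U}^+$, $\tilde{\bf U} \leftarrow {\bf U}^-$, $\alpha \leftarrow \sigma^+$, and $\tilde\alpha \leftarrow \sigma^-$. I would therefore organize the proof in three steps: the flux identities, the positivity claim via the corollary, and the final $\overline{\mathcal G}_*$ membership.

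First, I would verify the identities \eqref{eq:FH1} and \eqref{eq:FH2} by direct substitution into the reformulated HLL flux \eqref{eq:1DHLL2}. Starting from $\sigma^- {\bf H} + \langle {\bm \xi}, {\bf F}({\bf U}^-)\rangle - \sigma^- {\bf U}^-$, I would multiply through by $\sigma^+ - \sigma^-$, expand, and observe that the $(\sigma^-)^2 {\bf U}^-$ and $\sigma^- \langle {\bm \xi}, {\bf F}({\bf U}^-)\rangle$ terms cancel, leaving exactly the numerator of \eqref{eq:1DHLL2}. Identity \eqref{eq:FH2} is obtained by the symmetric manipulation starting from $\sigma^+ {\bf H} + \langle {\bm \xi}, {\bf F}({\bf U}^+)\rangle - \sigma^+ {\bf U}^+$. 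These are one-line computations once written out.

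Next, I would invoke Corollary \ref{main:1D} with the substitutions above. The hypothesis condition requires $\sigma^+ \ge \alpha_r({\bf U}^+, {\bf U}^-; {\bm \xi})$ and $\sigma^- \le \alpha_l({\bf U}^-, {\bf U}^+; {\bm \xi})$. Since $\sigma^+ = \max\{\sigma_r, 0\}$ and by assumption $\sigma_r \ge \alpha_r$, we have $\sigma^+ \ge \sigma_r \ge \alpha_r$; if instead $\sigma_r < 0$, then $\alpha_r \le \sigma_r < 0 = \sigma^+$, so the bound still holds. Similarly $\sigma^- \le \alpha_l$ follows from $\sigma^- = \min\{\sigma_l, 0\}$ and $\sigma_l \le \alpha_l$. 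The corollary then yields ${\bf H} \in {\mathcal G}_\rho$ together with the inequality \eqref{eq:KeyIEQ1D4}, which is literally the conclusion \eqref{eq:KeyIEQ1D} rewritten with the present notation. The final sentence of the theorem — that $\langle {\bm \xi}, {\bf B}^+ \rangle = \langle {\bm \xi}, {\bf B}^- \rangle$ implies ${\bf H} \in \overline{\mathcal G}_*$ — is an immediate application of the corresponding clause in Corollary \ref{main:1D}.

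I do not expect a substantive obstacle here: the heavy lifting was already done in Theorem \ref{lem:main} (the multi-state PP inequality on general polytopes) and distilled into Corollary \ref{main:1D} for the two-state setting. The only delicate point worth noting carefully is the handling of the truncation $\sigma^\pm = \max/\min\{\sigma_{r,l}, 0\}$, as above, and the fact that $\sigma^+ > \sigma^-$ so that the denominator in \eqref{eq:DefH} is nonzero (which follows from $\sigma_r > \sigma_l$ combined with the definitions of $\sigma^\pm$). Aside from these brief sanity checks, the proof is a short reduction to Corollary \ref{main:1D}, with Theorem \ref{thm:HLLflux} simply packaging that result in a form tailored to the HLL numerical flux, ready to be plugged into the forthcoming multidimensional PP analysis.
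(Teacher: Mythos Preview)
Your proposal is correct and follows essentially the same approach as the paper's proof: verify the identities \eqref{eq:FH1}--\eqref{eq:FH2} directly from \eqref{eq:1DHLL2}, then observe that $\sigma^+ \ge \sigma_r \ge \alpha_r$ and $\sigma^- \le \sigma_l \le \alpha_l$ so that Corollary \ref{main:1D} applies with ${\bf U}={\bf U}^+$, $\tilde{\bf U}={\bf U}^-$, $\alpha=\sigma^+$, $\tilde\alpha=\sigma^-$. Your additional case analysis for $\sigma_r<0$ is harmless but unnecessary, since $\sigma^+=\max\{\sigma_r,0\}\ge\sigma_r$ holds unconditionally.
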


\begin{proof}
	The identities \eqref{eq:FH1}--\eqref{eq:FH2} can be verified by using \eqref{eq:1DHLL2}. 
	Under the condition \eqref{eq:HLLsigma}, 
	we have 
	$$
	\sigma^+ \ge \sigma_r \ge \alpha_{ r} ( {\bf U}^+, {\bf U}^-; {\bm \xi} ), \quad  
	\sigma^- \le \sigma_l \le 
	\alpha_{ l} ( {\bf U}^-,{\bf U}^+; {\bm \xi} ).
	$$
	It follows from Corollary \ref{main:1D} 
	that ${\bf H} (  {\bf U}^-,  {\bf U}^+; {\bm \xi} ) \in {\mathcal G}_\rho$ and satisfies \eqref{eq:KeyIEQ1D4}. 
\end{proof}

\begin{remark} 
	It is observed from \eqref{eq:KeyIEQ1D4} that the admissibility of the intermediate state  ${\bf H}$ is 
	closely related to
	the jump in the normal magnetic field across the cell interface. 
	If the jump is zero, then ${\bf H} \in \overline {\mathcal G}_*$; otherwise, 
	${\bf H}$ does not always belong to $\overline {\mathcal G}_*$ even if many times larger wave speeds are employed. 
	 However, in the multidimensional cases, a standard finite volume or DG method 
	cannot avoid jumps in normal magnetic field at cell interfaces although such jumps do not exist in the exact solution. 
	This causes some challenges essentially different from 1D case. 
	We will demonstrate that this issue can be overcome by 
	coupling two divergence-controlling techniques: the locally divergence-free  element and properly discretized Godunov--Powell source term. 
	The former technique leads to zero divergence within each cell, while the latter 
	controls the divergence error across cell interfaces. 
\end{remark}

\begin{remark} 
	The proposed condition \eqref{eq:HLLsigma} 
	for the wave speeds $\sigma_l$ and $\sigma_r$ 
	is crucial for the provably PP property of 
	our schemes presented later. 	
	The condition \eqref{eq:HLLsigma} is acceptable, because 
	$\alpha_{ l}$ 
	and $\alpha_{ r} $ are respectively  close to the minimum and maximum signal speeds of the system \eqref{eq:MHD:GP} in the direction of ${\bm \xi}$.  
	Let $\sigma_l^{\rm std}$ and $\sigma_r^{\rm std}$ 
	denote a standard choice of wave speeds in the HLL flux, for example, 
	Davis \cite{davis1988simplified} gave those speeds as
\begin{equation}\label{Davis:HLL}
	\sigma_l^{\rm std} = \min\{ \lambda_1({\bf U}^-;{\bm \xi}), \lambda_1({\bf U}^+;{\bm \xi})  \},\quad  \sigma_r^{\rm std} = \max \{ \lambda_8({\bf U}^-;{\bm \xi}), \lambda_8({\bf U}^+;{\bm \xi})  \},
\end{equation}
	or Einfeldt et al.~\cite{einfeldt1991godunov} suggested to use 
	$$
	\sigma_l^{\rm std} = \min\{ \lambda_1({\bf U}^-;{\bm \xi}), \lambda_1({\bf U}^{\tt Roe};{\bm \xi})  \},\quad  \sigma_r^{\rm std} = \max \{ \lambda_8({\bf U}^+;{\bm \xi}), \lambda_8({\bf U}^{\tt Roe};{\bm \xi})  \},
	$$	
	where $\lambda_1({\bf U};{\bm \xi})$ amd $\lambda_8({\bf U};{\bm \xi})$ 
	are the minimum and maximum eigenvalues of the Jacobi matrix of the system \eqref{eq:MHD:GP} in the direction of ${\bm \xi}$, 
	and $\lambda_i({\bf U}^{\tt Roe};{\bm \xi})$ is the estimate of eigenvalues based on the Roe matrix (cf.~\cite{Powell1995}).	
	These choices may not necessarily give a PP flux in the MHD case and probably not satisfy \eqref{eq:HLLsigma}.  
	In practice, by considering the stability and the PP property, we suggest to use   
	\begin{equation}\label{eq:choiceHLL} 
	\sigma_l =  \min\{
	\alpha_{ l} 
	( {\bf U}^-, {\bf U}^+; {\bm \xi} ), \sigma_l^{\rm std} \},
	\quad 
	\sigma_{ r} = \max\{ \alpha_{ r} 
	( {\bf U}^+, {\bf U}^-; {\bm \xi} ), \sigma_r^{\rm std} \}
	\end{equation} 
	in the HLL flux, and use  
	\begin{equation*} 
	\sigma_{ r} =- \sigma_l =  \max \big\{
	\alpha_\star 
	( {\bf U}^-, {\bf U}^+; {\bm \xi} ), 
	\alpha_\star 
	( {\bf U}^+, {\bf U}^-; {\bm \xi} ), \sigma^{\rm std} \big \},
	\end{equation*} 
	in the local LF flux, where $\sigma^{\rm std}$ denotes a standard  numerical viscosity parameter for the local LF flux.
\end{remark}

\section{Positivity-preserving schemes in one dimension} \label{sec:1D}
In this section, we present provably PP 
finite volume and DG schemes 
with the proposed HLL flux for 1D MHD equations \eqref{eq:MHD}. 
Let $x$ denote the spatial variable. The 
condition \eqref{eq:2D:BxBy0} and the fifth equation of \eqref{eq:MHD} imply $B_1(x,t)\equiv {\rm constant}$ (denoted by ${\tt B}_{\tt const}$) for all $x$ and $t \ge 0$.

Let $I_j=[x_{j-\frac{1}{2}},x_{j+\frac{1}{2}}]$, 
$I=\cup_j I_j$ be a partition of the spatial domain. 
Denote $\Delta x_j = x_{j+\frac{1}{2}} - x_{j-\frac{1}{2}}$. Let $\{t_0=0, t_{n+1}=t_n+\Delta t_{n}, n\geq 0\}$ 
be a partition of the time interval $[0,T]$, where 
the time step-size $\Delta t_{n}$ is determined by some Courant–Friedrichs–Lewy (CFL) condition. Let $\bar {\bf U}_j^n $ denote the numerical approximation to the cell average of the exact solution ${\bf U}( x,t)$ over $I_j$ at $t=t_n$. 
We would like to seek PP schemes with  
$\bar {\bf U}_j^n $ always preserved in the admissible state set $\mathcal G$.

\subsection{First-order scheme}
We consider the 1D first-order scheme  
\begin{equation}\label{eq:1DMHD:HLL}
\bar {\bf U}_j^{n+1} = \bar {\bf U}_j^{n} - \frac{\Delta t_n}{\Delta x_j}
\Big(  \hat {\bf F}_1 ( \bar {\bf U}_j^{n}, \bar {\bf U}_{j+1}^{n} )
- \hat {\bf F}_1 ( \bar {\bf U}_{j-1}^{n}, \bar {\bf U}_j^{n})
\Big),
\end{equation}
where $\hat {\bf F}_1 (  \bar{\bf U}_j^n,  \bar{\bf U}_{j+1}^n )
:= \hat {\bf F} (  \bar{\bf U}_j^n,  \bar{\bf U}_{j+1}^n ;1 )$ 
is taken as the HLL flux in \eqref{eq:1DHLL2}.

\begin{theorem}
	Assume that $\bar {\bf U}_j^0 \in{\mathcal G}$ and $\bar B_{1,j}^0 = {\tt B}_{\tt const}$ for all $j$, and
	the wave speeds in the HLL flux satisfy \eqref{eq:HLLsigma}. 
	Then the solution $\bar {\bf U}_j^n$, computed by the scheme \eqref{eq:1DMHD:HLL}  under the CFL condition
	\begin{equation}\label{eq:1DCFL:HLL}
	\left(  \sigma_{j-\frac12}^{n,+} - \sigma_{j+\frac12}^{n,-} \right) 
	\frac{\Delta t_n}{ \Delta x_j} < 1,\quad \forall j,
	\end{equation}
	belongs to ${\mathcal G}$ and satisfies $\bar B_{1,j}^n = {\tt B}_{\tt const} $ for all $j$
	and $n\in {\mathbb{N}}$, 
	where 
	$$\sigma_{j-\frac12}^{n,+} := \sigma^+( \bar{\bf U}_{j-1}^n, \bar{\bf U}_{j}^n;1 ), \qquad \sigma_{j+\frac12}^{n,-} := \sigma^-( \bar{\bf U}_{j}^n, 
	\bar{\bf U}_{j+1}^n;1).$$
\end{theorem}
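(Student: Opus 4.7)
The plan is to argue by induction on $n$. The base case $n=0$ is the hypothesis on initial data. Assume $\bar{\bf U}_j^n \in \mathcal G$ and $\bar B_{1,j}^n = {\tt B}_{\tt const}$ for all $j$; I need to establish both conclusions at level $n+1$.

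First I would verify that $\bar B_{1,j}^{n+1} = {\tt B}_{\tt const}$. The induction equation for $B_1$ has zero physical flux in 1D (the $v_1 B_1 - B_1 v_1$ component vanishes). Reading off the $B_1$ component of the HLL flux \eqref{eq:1DHLL2}, the only remaining contribution is the jump term $\sigma^-\sigma^+(B_1^+ - B_1^-)/(\sigma^+-\sigma^-)$; by the inductive hypothesis this jump is zero at every interface, so the $B_1$ component is unchanged by the update. In particular, the normal-magnetic-field jumps $\langle {\bm \xi}, {\bf B}^+\rangle - \langle {\bm \xi}, {\bf B}^-\rangle$ at both interfaces $x_{j\pm 1/2}$ are zero.

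Second, I would rewrite the scheme \eqref{eq:1DMHD:HLL} as a convex combination. Apply \eqref{eq:FH1} at the interface $j+\tfrac12$ (where $\bar{\bf U}_j^n$ plays the role of ${\bf U}^-$) and \eqref{eq:FH2} at $j-\tfrac12$ (where $\bar{\bf U}_j^n$ plays the role of ${\bf U}^+$). The physical flux terms $\langle 1,{\bf F}(\bar{\bf U}_j^n)\rangle$ cancel between the two interfaces, leaving
\begin{equation*}
\bar{\bf U}_j^{n+1} = \left(1 - \lambda_j\bigl(\sigma_{j-\frac12}^{n,+} - \sigma_{j+\frac12}^{n,-}\bigr)\right) \bar{\bf U}_j^n + \lambda_j \sigma_{j-\frac12}^{n,+}\, {\bf H}_{j-\frac12}^n + \lambda_j \bigl(-\sigma_{j+\frac12}^{n,-}\bigr)\, {\bf H}_{j+\frac12}^n,
\end{equation*}
where $\lambda_j = \Delta t_n/\Delta x_j$ and ${\bf H}_{j\pm 1/2}^n$ are the intermediate states of Theorem \ref{thm:HLLflux}. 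Since $\sigma^+\ge 0$ and $\sigma^-\le 0$ by construction, the last two coefficients are nonnegative, and the CFL condition \eqref{eq:1DCFL:HLL} makes the first coefficient strictly positive; the three coefficients sum to one.

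Third, I would invoke Theorem \ref{thm:HLLflux} to place each ${\bf H}_{j\pm 1/2}^n$ in $\overline{\mathcal G}_*$: the hypothesis \eqref{eq:HLLsigma} is satisfied by assumption, and the normal magnetic jumps vanish by the previous paragraph, so the ``furthermore'' clause of Theorem \ref{thm:HLLflux} applies. Finally, by the inductive hypothesis $\bar{\bf U}_j^n \in \mathcal G = \mathcal G_*$, and the coefficient on $\bar{\bf U}_j^n$ is strictly positive. The convexity statement in Lemma \ref{theo:MHD:convex} then yields $\bar{\bf U}_j^{n+1} \in \mathcal G_* = \mathcal G$, closing the induction. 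The argument is short once the machinery is in place; the only real task is the bookkeeping that identifies the update as a convex combination whose building blocks fall under Theorem \ref{thm:HLLflux}, and the mild subtlety that Lemma \ref{theo:MHD:convex} (rather than naive convexity) is what permits ${\bf H}_{j\pm 1/2}^n$ to lie only in the closure $\overline{\mathcal G}_*$.
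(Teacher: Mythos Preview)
Your proposal is correct and follows essentially the same route as the paper: induction on $n$, the convex-combination rewrite of the scheme via the identities \eqref{eq:FH1}--\eqref{eq:FH2}, Theorem~\ref{thm:HLLflux} to place each ${\bf H}_{j\pm 1/2}^n$ in $\overline{\mathcal G}_*$ (using the vanishing normal-$B_1$ jump from the induction hypothesis), and Lemma~\ref{theo:MHD:convex} to conclude. The only cosmetic difference is that the paper reads off $\bar B_{1,j}^{n+1}={\tt B}_{\tt const}$ from the fifth component of the convex combination itself, whereas you verify it directly from the flux formula; both are fine.
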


\begin{proof}
	Here we use the induction argument for the time level number $n$.
	The conclusion obviously holds for $n=0$ because of the hypothesis on the initial data.
	Let us assume that $\bar {\bf U}_j^n\in {\mathcal G}$ with $\bar B_{1,j}^n = {\tt B}_{\tt const} $ for all $j$,
	and verify the conclusion holds for $n+1$.  	
	Let $\lambda := \Delta t_n /\Delta x_j$, and ${\bf H}_{j+\frac12}^n := {\bf H} ( \bar{ \bf  U }_j^n, \bar{ \bf  U }_{j+1}^n;1 )$; see \eqref{eq:DefH} for the definition of ${\bf H}$. 
	Under the induction hypothesis, we have that 
	${\bf H}_{j+\frac12}^n \in \overline{\mathcal G}_*, \forall j  $ according to 
	Theorem \ref{thm:HLLflux}, and 
	the fifth component of ${\bf H}_{j+\frac12}^n $ is ${\tt B}_{\tt const}$ for all $j$ by noting that the fifth component of ${\bf F}_1$ is zero. 
	Using the identities \eqref{eq:FH1} and \eqref{eq:FH2}, one can rewrite the scheme \eqref{eq:1DMHD:HLL} as 
	\begin{align} \label{eq:HLL1Deq}
	\begin{split}
	\bar{\bf U}_j^{n+1} 
	&=\bar{\bf U}_j^n - \lambda 
	\Big[
	\left(\sigma_{ j+\frac12 }^{n,-} 
	{\bf H}_{j+\frac12}^n + {\bf F}_1 ( \bar{\bf U}_j^n ) - \sigma_{j+\frac12}^{n,-} \bar{\bf U}_{j}^n \right)
	\\  
	& \qquad \qquad 
	- 	\left(\sigma_{ j-\frac12 }^{n,+} 
	{\bf H}_{j-\frac12}^n + {\bf F}_1 ( \bar{\bf U}_j^n ) - \sigma_{j-\frac12}^{n,+} \bar{\bf U}_{j}^n \right)
	\Big]
	\\ 
	& = \Big( 1+ \lambda ( \sigma_{j+\frac12}^{n,-} 
	- \sigma_{j-\frac12}^{n,+} ) \Big) \bar{\bf U}_j^n 
	+  
	\Big(
	- \lambda \sigma_{j+\frac12}^{n,-} \Big)  {\bf H}_{j+\frac12}^n 
	+ \lambda \sigma_{ j-\frac12 }^{n,+} 
	{\bf H}_{j-\frac12}^n.
	\end{split}
	\end{align}
	Under the condition \eqref{eq:1DCFL:HLL}, $\bar{\bf U}_j^{n+1}$ 
	is a convex combination of $\bar{\bf U}_j^n $, 
	${\bf H}_{j+\frac12}^n$ and ${\bf H}_{j-\frac12}^n$. 
	Hence we have $\bar{\bf U}_j^{n+1} \in {\mathcal G}$ by Lemma \ref{theo:MHD:convex}. 
	The fifth equation of \eqref{eq:HLL1Deq} also implies  
	\begin{equation*}
	\bar B_{1,j}^{n+1} = 
	\Big( 1+ \lambda ( \sigma_{j+\frac12}^{n,-} 
	- \sigma_{j-\frac12}^{n,+} ) \Big) {\tt B}_{\tt const}  
	- \lambda\sigma_{j+\frac12}^{n,-} {\tt B}_{\tt const} 
	+ \lambda \sigma_{ j-\frac12 }^{n,+} 
	{\tt B}_{\tt const} 
	= {\tt B}_{\tt const}. 
	\end{equation*}
	Therefore, the conclusion holds for $n+1$. The proof is completed.
\end{proof}

\subsection{High-order schemes}

For convenience,  we first focus on the forward Euler method for time discretization and will discuss the 
high-order time discretization later. 
We consider the high-order finite volume schemes as well as the scheme satisfied by
the cell-averaged solution of a standard DG method for \eqref{eq:MHD}, 
which have the following form
\begin{equation}\label{eq:1DMHD:Hcellaverage}
\bar {\bf U}_j^{n+1} = \bar {\bf U}_j^{n} - \frac{\Delta t_n}{\Delta x_j}
\Big(  \hat {\bf F}_1 ( {\bf U}_{j+ \frac{1}{2}}^-, {\bf U}_{j+ \frac{1}{2}}^+ )
- \hat {\bf F}_1 ( {\bf U}_{j- \frac{1}{2}}^-, {\bf U}_{j-\frac{1}{2}}^+)
\Big) ,
\end{equation}
where $\hat {\bf F}_1 ( {\bf U}_{j+ \frac{1}{2}}^- ,  {\bf U}_{j+ \frac{1}{2}}^+ )
:= \hat {\bf F} (  {\bf U}_{j+ \frac{1}{2}}^- ,  {\bf U}_{j+ \frac{1}{2}}^+ ;1 )$ 
is taken as the HLL flux in \eqref{eq:1DHLL2}. 
The quantities ${\bf U}_{j + \frac{1}{2}}^-$ and ${\bf U}_{j + \frac{1}{2}}^+$ denote the high-order accurate approximations  
of the point values ${\bf U}( x_{j + \frac{1}{2}} ,t_n )$ within the cells $I_j$ and $I_{j+1}$, respectively, 
computed by
\begin{equation}\label{eq:DG1Dvalues}
{\bf U}_{j + \frac{1}{2}}^- = {\bf U}_j^n \big( x_{j + \frac{1}{2}}-0 \big), \quad {\bf U}_{j + \frac{1}{2}}^+ = {\bf U}_{j+1}^n \big( x_{j + \frac{1}{2}}+0 \big).
\end{equation}
Here the function ${\bf U}_j^n(x)$ is a polynomial vector of degree $k$ 
with the cell-averaged value of $\bar {\bf U}_j^n$ over the cell $I_j$. It 
approximates ${\bf U}( x,t_n)$ within $I_j$, and is either reconstructed in the finite volume schemes from $\{\bar {\bf U}_j^n\}$ or directly evolved in the DG schemes. 
The discrete evolution equations for the high-order ``moments'' of ${\bf U}_j^n(x)$ in the DG schemes are omitted. 

If $k=0$, i.e., ${\bf U}_j^n(x)=\bar{\bf U}_j^{n}$, $\forall x \in I_j$, then the scheme \eqref{eq:1DMHD:Hcellaverage} reduces to the 
first-order scheme \eqref{eq:1DMHD:HLL}, which has been proven to be PP under the CFL condition \eqref{eq:1DCFL:HLL}.

When $k\ge 1$, 
the solution $\bar{\bf U}_j^{n+1}$ of 
the high-order scheme \eqref{eq:1DMHD:Hcellaverage} 
does not always belong to $\mathcal G$ even if $\bar {\bf U}_j^{n} \in {\mathcal G}$ for all $j$. 
In the following theorem, we give a satisfiable condition for achieving the provably PP property of the scheme \eqref{eq:1DMHD:Hcellaverage} when $k\ge 1$. 

Let $\{ \widehat x_j^{(\mu)} \}_{\mu=1}^{ {\tt L}}$ be the {\tt L}-point Gauss--Lobatto quadrature points in the interval $I_j$. The associated weights are denoted by $\{\widehat \omega_\mu\}_{\mu=1} ^{\tt L}$ with $\sum_{\mu=1}^{\tt L} \widehat\omega_\mu = 1$.  
Following \cite{zhang2010,zhang2010b}, we 
take ${\tt L}=\lceil \frac{k+3}{2} \rceil$.

\begin{theorem} \label{thm:PP:1DMHD}
	Let the wave speeds in the HLL flux satisfy \eqref{eq:HLLsigma}. 
	If the polynomial vectors $\{{\bf U}^n_j(x)\}$ satisfy 
	\begin{align}\label{eq:1DDG:con1}
	&
	B_{1,j+\frac12}^{\pm} = {\tt B}_{\tt const}  ,\quad \forall j,  
	\\ \label{eq:1DDG:con2}
	& {\bf U}_j^n ( \widehat x_j^{(\mu)} ) \in {\mathcal G}, \quad \forall \mu \in \{1,2,\cdots,{\tt L}\}, ~\forall j,
	\end{align}
	then the high-order scheme \eqref{eq:1DMHD:Hcellaverage} is PP under the CFL condition
	\begin{equation}\label{eq:CFL:1DMHD}
	\frac{ \Delta t_n }{ \Delta x_j} \max\left\{
	\alpha_j^\star 
	+ \sigma_{j-\frac12}^{n,+}, 
	\alpha_j^\star 
	- \sigma_{j+\frac12}^{n,-}
	\right\} \le \widehat \omega_1,\quad \forall j,
	\end{equation}
	where $\sigma_{j+\frac12}^{n,\pm} := \sigma^{\pm}( {\bf U}_{j+\frac12}^-,{\bf U}_{j+\frac12}^+; 1 )$, 
	and 
	$$\alpha_j^\star:=\max\left\{ \alpha_\star ( {\bf U}_{j-\frac12}^+,{\bf U}_{j+\frac12}^-;1 ) , \alpha_\star ( {\bf U}_{j+\frac12}^-, {\bf U}_{j-\frac12}^+ ;1 )   \right\}.$$ 
\end{theorem}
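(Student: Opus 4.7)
The plan is to follow the Zhang--Shu style decomposition, writing $\bar{\mathbf U}_j^{n+1}$ as a convex combination of states that can be shown to lie in $\overline{\mathcal G}_*$ by means of Theorem \ref{thm:HLLflux} and Corollary \ref{cor:1D}, and then invoke the convexity result (Lemma \ref{theo:MHD:convex}). Write $\omega := \widehat\omega_1 = \widehat\omega_{\tt L}$ and $\lambda := \Delta t_n/\Delta x_j$. First, I would use the ${\tt L}$-point Gauss--Lobatto quadrature, exact for polynomials of degree $\le 2{\tt L}-3 \ge k$, to decompose
\begin{equation*}
\bar{\mathbf U}_j^n \;=\; \omega\, {\mathbf U}_{j-\frac12}^+ \;+\; \omega\, {\mathbf U}_{j+\frac12}^- \;+\; \sum_{\mu=2}^{{\tt L}-1}\widehat\omega_\mu\, {\mathbf U}_j^n\bigl(\widehat x_j^{(\mu)}\bigr),
\end{equation*}
since the first and last nodes coincide with $x_{j\mp\frac12}$.

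Next I would rewrite the two numerical fluxes in the scheme \eqref{eq:1DMHD:Hcellaverage} using the two equivalent HLL identities from Theorem \ref{thm:HLLflux}: apply \eqref{eq:FH1} at the right interface so that $\hat{\mathbf F}_1({\mathbf U}_{j+\frac12}^-,{\mathbf U}_{j+\frac12}^+) = \sigma_{j+\frac12}^{n,-}{\mathbf H}_{j+\frac12}^n + {\mathbf F}_1({\mathbf U}_{j+\frac12}^-) - \sigma_{j+\frac12}^{n,-}{\mathbf U}_{j+\frac12}^-$, and apply \eqref{eq:FH2} at the left interface so that $\hat{\mathbf F}_1({\mathbf U}_{j-\frac12}^-,{\mathbf U}_{j-\frac12}^+) = \sigma_{j-\frac12}^{n,+}{\mathbf H}_{j-\frac12}^n + {\mathbf F}_1({\mathbf U}_{j-\frac12}^+) - \sigma_{j-\frac12}^{n,+}{\mathbf U}_{j-\frac12}^+$. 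Substituting the decomposition of $\bar{\mathbf U}_j^n$ together with these identities, the two physical fluxes at the boundary quadrature nodes regroup with the corresponding boundary contributions of $\bar{\mathbf U}_j^n$, producing
\begin{equation*}
\bar{\mathbf U}_j^{n+1} \;=\; \sum_{\mu=2}^{{\tt L}-1}\widehat\omega_\mu\,{\mathbf U}_j^n\bigl(\widehat x_j^{(\mu)}\bigr) \;+\; \Xi_j \;+\; \lambda\sigma_{j-\frac12}^{n,+}\,{\mathbf H}_{j-\frac12}^n \;-\; \lambda\sigma_{j+\frac12}^{n,-}\,{\mathbf H}_{j+\frac12}^n,
\end{equation*}
where $\Xi_j := \bigl(\omega+\lambda\sigma_{j+\frac12}^{n,-}\bigr){\mathbf U}_{j+\frac12}^- - \lambda{\mathbf F}_1({\mathbf U}_{j+\frac12}^-) + \bigl(\omega-\lambda\sigma_{j-\frac12}^{n,+}\bigr){\mathbf U}_{j-\frac12}^+ + \lambda{\mathbf F}_1({\mathbf U}_{j-\frac12}^+)$.

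The heart of the argument is to recognise $\Xi_j$ as $\lambda$ times a state of the local Lax--Friedrichs form appearing in Corollary \ref{cor:1D}, with ${\mathbf U}={\mathbf U}_{j+\frac12}^-$, $\tilde{\mathbf U}={\mathbf U}_{j-\frac12}^+$, ${\bm\xi}=1$, and $\alpha=\omega/\lambda+\sigma_{j+\frac12}^{n,-}$, $\tilde\alpha=\omega/\lambda-\sigma_{j-\frac12}^{n,+}$. The CFL condition \eqref{eq:CFL:1DMHD} is tailored exactly so that $\alpha,\tilde\alpha\ge\alpha_j^\star$, which is the hypothesis of Corollary \ref{cor:1D}. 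Since condition \eqref{eq:1DDG:con1} implies $B_{1,j+\frac12}^- = B_{1,j-\frac12}^+ = {\tt B}_{\tt const}$, the zero normal-magnetic-jump clause of Corollary \ref{cor:1D} applies, placing the normalised $\Xi_j/(\lambda(\alpha+\tilde\alpha))$ in $\overline{\mathcal G}_*$. The same condition \eqref{eq:1DDG:con1} forces $B_{1,j\pm\frac12}^+=B_{1,j\pm\frac12}^-$, so Theorem \ref{thm:HLLflux} yields ${\mathbf H}_{j\pm\frac12}^n\in\overline{\mathcal G}_*$.

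Finally, I would verify that the total weight telescopes correctly: the coefficient on $\Xi_j$ is $1$, so the coefficients in the rewritten $\bar{\mathbf U}_j^{n+1}$ are $\widehat\omega_\mu$, $\lambda\sigma_{j-\frac12}^{n,+}$, $-\lambda\sigma_{j+\frac12}^{n,-}$ (both $\ge 0$ by the sign conventions of $\sigma^\pm$), and $\lambda(\alpha+\tilde\alpha)=2\omega+\lambda\sigma_{j+\frac12}^{n,-}-\lambda\sigma_{j-\frac12}^{n,+}$; these are non-negative under \eqref{eq:CFL:1DMHD} and sum to $\sum_\mu\widehat\omega_\mu=1$. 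Hence $\bar{\mathbf U}_j^{n+1}$ is a genuine convex combination of states in $\overline{\mathcal G}_*$, and because the interior nodes contribute strictly admissible values from $\mathcal G=\mathcal G_*$ with positive weights (and for ${\tt L}=2$ with $k=1$ the strict admissibility already follows from Corollary \ref{cor:1D} via density positivity together with the $\Xi_j$ inequality), Lemma \ref{theo:MHD:convex} gives $\bar{\mathbf U}_j^{n+1}\in\mathcal G$. The hardest part is engineering the regrouping in Step 2--3 so that the Lax--Friedrichs-type splitting required by Corollary \ref{cor:1D} appears naturally and the CFL bound \eqref{eq:CFL:1DMHD} matches both Corollary hypotheses at once; the remaining bookkeeping (coefficient non-negativity, telescoping, invocation of Theorem \ref{thm:HLLflux}) is routine.
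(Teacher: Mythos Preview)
Your proof is correct and follows essentially the same approach as the paper's: the Gauss--Lobatto decomposition of $\bar{\mathbf U}_j^n$, the rewriting of the two interface fluxes via the HLL identities \eqref{eq:FH1}--\eqref{eq:FH2}, the identification of the residual term $\Xi_j$ as the state handled by Corollary~\ref{cor:1D} with $\alpha=\omega/\lambda+\sigma_{j+\frac12}^{n,-}$ and $\tilde\alpha=\omega/\lambda-\sigma_{j-\frac12}^{n,+}$, and the convexity conclusion via Lemma~\ref{theo:MHD:convex} all match the paper's argument step for step. Your parenthetical remark about the ${\tt L}=2$ case is a nice touch that the paper itself leaves implicit.
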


\begin{proof}
	Using \eqref{eq:FH1}--\eqref{eq:FH2}, we can reformulate the numerical fluxes in \eqref{eq:1DMHD:Hcellaverage} as 
	\begin{align}\label{1DHLL1}
	\hat {\bf F}_1 ( {\bf U}_{j+ \frac{1}{2}}^- ,  {\bf U}_{j+ \frac{1}{2}}^+ ) 
	= \sigma_{j+\frac12}^{n,-} {\bf H}_{j+\frac12} 
	+ {\bf F}_1( {\bf U}_{j+\frac12}^- ) - \sigma_{j+\frac12}^{n,-} {\bf U}_{j+\frac12}^-,
	\\ \label{1DHLL2}
	\hat {\bf F}_1 ( {\bf U}_{j- \frac{1}{2}}^- ,  {\bf U}_{j- \frac{1}{2}}^+ ) 
	= \sigma_{j-\frac12}^{n,+} {\bf H}_{j-\frac12} 
	+ {\bf F}_1( {\bf U}_{j-\frac12}^+ ) - \sigma_{j-\frac12}^{n,+} {\bf U}_{j-\frac12}^+,	
	\end{align}
	where ${\bf H}_{j+\frac12}={\bf H} ( {\bf U}_{j+\frac12}^-,{\bf U}_{j+\frac12}^+; 1 )$. 
	Under the conditions \eqref{eq:1DDG:con1}--\eqref{eq:1DDG:con2}, we 
	have ${\bf H}_{j+\frac12} \in \overline{\mathcal G}_* $
	for all $j$ by using Theorem \ref{thm:HLLflux}. 
	The exactness of the $\tt L$-point Gauss--Lobatto quadrature rule for the polynomials of degree $k$ implies
	$$
	\bar{\bf U}_j^n = \frac{1}{\Delta x_j} \int_{I_j} {\bf U}_j^n ({x}) d x = \sum \limits_{\mu=1}^{\tt L} \widehat \omega_\mu {\bf U}_j^n (\widehat { x}_j^{(\mu)} ).
	$$
	Noting $\widehat \omega_1 = \widehat \omega_{\tt L}$ and $\widehat{ x}_j^{1,{\tt L}}={ x}_{j\mp\frac12}$ and using \eqref{1DHLL1}--\eqref{1DHLL2}, we can rewrite the scheme \eqref{eq:1DMHD:Hcellaverage} into the following convex combination form
	\begin{equation}\label{eq:1DMHD:convexsplit}
	\begin{aligned} 
	\bar{\bf U}_j^{n+1} & =
	\sum \limits_{\mu=2}^{{\tt L}-1} \widehat \omega_\mu {\bf U}_j^n ( \widehat {x}_j^{(\mu)} )  	+  \Big(2\widehat \omega_1 + \lambda \sigma_{j+\frac12}^{n,-} 
	- \lambda \sigma_{ j-\frac12 }^{n,+}  \Big) {\bf \Xi}
	\\
	& \quad 
	+ \Big(-\lambda \sigma_{j+\frac12}^{n,-}\Big)  {\bf H}_{j+\frac12}
	+ \lambda \sigma_{ j-\frac12 }^{n,+} 
	{\bf H}_{j-\frac12},
	\end{aligned}
	\end{equation}
	where $\lambda :=  \Delta t_n/\Delta x_j$, 
	and
	\begin{align*}
	& {\bf \Xi} := \frac{
		\left( \lambda^{-1} \widehat \omega_1 + \sigma^{n,-}_{j+\frac12} \right)
		{\bf U}_{j+\frac{1}{2}}^- 
		- {\bf F}_1 \big( {\bf U}_{j+\frac{1}{2}}^- \big) 
		+ \left( \lambda^{-1} \widehat \omega_1 - \sigma^{n,+}_{j-\frac12} \right)
		{\bf U}_{j-\frac{1}{2}}^+ 
		+ {\bf F}_1 \big( {\bf U}_{j-\frac{1}{2}}^+ \big) 
	}{  
		\lambda^{-1} \widehat \omega_1 + \sigma^{n,-}_{j+\frac12} 
		+  \lambda^{-1} \widehat \omega_1 - \sigma^{n,+}_{j-\frac12}  }.
	\end{align*}
	The condition \eqref{eq:CFL:1DMHD} implies 
	$$
	\lambda^{-1} \widehat \omega_1 + \sigma^{n,-}_{j+\frac12} \ge 
	\alpha_j^\star \ge \alpha_\star ( {\bf U}_{j+\frac12}^-, {\bf U}_{j-\frac12}^+ ;1 ),~ \lambda^{-1} \widehat \omega_1 - \sigma^{n,+}_{j-\frac12} \ge \alpha_j^\star \ge \alpha_\star ( {\bf U}_{j-\frac12}^+,{\bf U}_{j+\frac12}^-;1 ),
	$$
	which together with the condition \eqref{eq:1DDG:con1} yield ${\bf \Xi} \in \overline{\mathcal G}_*$
	by Corollary \ref{cor:1D}. 
	We therefore conclude $\bar{\bf U}_j^{n+1} \in {\mathcal G} $ from \eqref{eq:1DMHD:convexsplit}
	according to the convexity of ${\mathcal G}_*$ and Lemma \ref{theo:eqDefG}.
\end{proof} 

\begin{remark}
	In practice, it is easy to ensure 
	the condition \eqref{eq:1DDG:con1}, since the exact solution $B_1( x,t)\equiv {\tt B}_{\tt const}$ and
	the flux for $B_1$ in the $x$-direction is zero. The condition \eqref{eq:1DDG:con2} can also be easily enforced by a simple scaling limiter, which was 
	designed in \cite{cheng} by extending the  
	techniques in \cite{zhang2010,zhang2010b,zhang2011}. For readers' convenience, the PP limiter is briefly reviewed in Appendix \ref{app:PPlimit}.  
\end{remark}

The above PP schemes and analysis are focused on first-order time discretization. 
In fact, the high-order explicit time discretization are also be 
applied by 
using strong stability-preserving (SSP)  methods  (cf.~\cite{Gottlieb2009}). The PP analysis remains valid, because $\mathcal G$ is convex 
and an SSP method is a convex combination of the forward Euler method.

\section{Positivity-preserving schemes in multiple dimensions}\label{sec:2D}
In this section, we develop   
provably PP methods for 
the multidimensional ideal MHD. 
We remark that the design of multidimensional 
PP schemes have challenges essentially different from the 1D case, due to the divergence-free condition \eqref{eq:2D:BxBy0}.  
For the sake of clarity, we shall restrict ourselves 
to the 2D case ($d=2$), 
keeping in mind that our PP methods and 
analyses are extendable to the 3D case. 
We will use ${\bf x}  \in \mathbb R^d$ to denote the spatial coordinate vector.


\begin{figure}[htbp]
	\centering
	\includegraphics[width=0.43\textwidth]{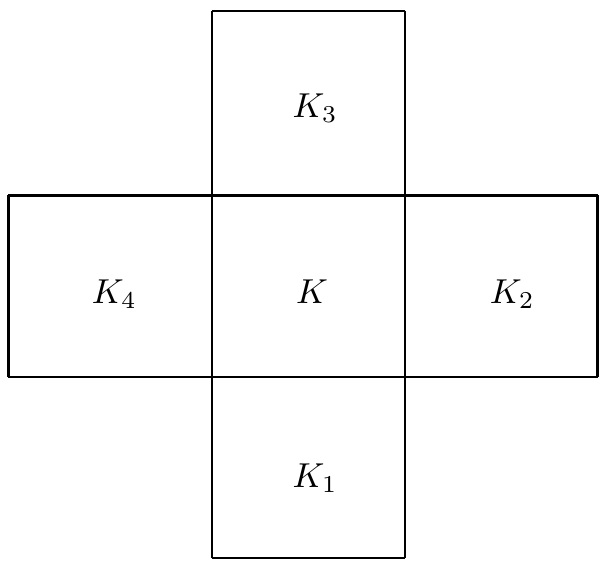}~~~~~~~~
	\includegraphics[width=0.37\textwidth]{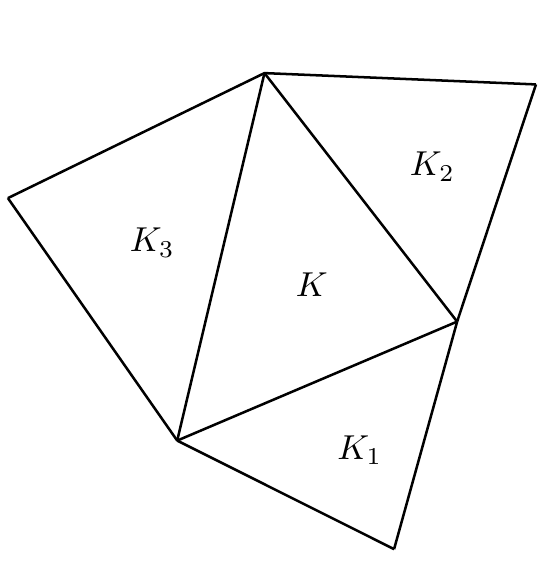}
	\caption{\small
		Illustration of a 
		rectangular mesh (left) and a triangular mesh (right).
	}\label{fig:ILLMesh}
\end{figure}

Assume that the 2D spatial domain is partitioned into a mesh ${\mathcal T}_h$, which can be unstructured 
and consists of 
polygonal cells. An illustration of two special meshes is given in Fig.\ \ref{fig:ILLMesh}. 
Let $K \in \mathcal{T}_h$ be a polygonal cell 
with edges   
$\mathscr{E}_{K}^{j}$, $j=1,\cdots,N_K$, 
and $K_j$ be the adjacent cell which shares the edge $\mathscr{E}_{K}^{j}$ with $K$.  
We denote by ${\bm \xi }_K^{(j)}=\big(\xi_{1,K}^{(j)},\cdots,\xi_{d,K}^{(j)} \big)$ the unit normal vector of $\mathscr{E}_K^j$ pointing from $K$ to $K_j$. 
The notations 
$|K|$ and $|{\mathscr E}_{K}^{j}|$ are used to denote the area of $K$ and the length of ${\mathscr E}_{K}^{j}$, respectively. 
The time interval is also divided into the mesh $\{t_0=0, t_{n+1}=t_n+\Delta t_{n}, n\geq 0\}$
with the time step-size $\Delta t_{n}$ determined by some CFL condition.  


\subsection{First-order schemes}


We consider the 
following first-order scheme for the Godunov form \eqref{eq:MHD:GP} of the ideal MHD equations
\begin{equation}\label{eq:1stschemeGP}
\bar{\bf U}_{K}^{n+1}
= \bar{\bf U}_{K}^{n} - \frac{\Delta t_n}{|K|}   \sum_{j=1}^{N_K}
\left|{\mathscr E}_{K}^{j}\right| \hat{\bf F} \big(  \bar{\bf U}_K^n , \bar{\bf U}_{K_j}^n  ; {\bm \xi}^{(j)}_K \big) 
- \Delta t_n \big( {\rm div} _{K} \bar {\bf B}^n \big) {\bf S}(\bar{\bf U}^n_K),
\end{equation}
where $\bar {\bf U}_{K}^n$ is the numerical approximation 
to the cell average of ${\bf U}({\bf x},t_n)$ over the cell $K$, and the numerical flux $\hat {\bf F}$ is taken as the HLL flux in \eqref{eq:1DHLL2}. 
As a discretization of the Godunov--Powell source, the last term at the right-hand side of \eqref{eq:1stschemeGP} is a penalty term, with $\mbox{\rm div} _{K} \bar {\bf B}^n$ defined by 
\begin{equation}\label{eq:DefDisDivB}
\mbox{\rm div} _{K} \bar {\bf B}^n := \frac{1}{|K|}
\sum_{j=1}^{N_K} \big| {\mathscr E}_{K}^{j} \big|
\left\langle {\bm \xi}_{K}^{(j)},  
\frac{ \sigma_{K,j}^{n,+} \bar {\bf B}_K^{n}  - \sigma_{K,j}^{n,-} \bar{\bf B}_{K_j}^n  }{
	\sigma_{K,j}^{n,+} - \sigma_{K,j}^{n,-} 
} \right\rangle,
\end{equation} 
where $\sigma_{K,j}^{n,\pm}:= 
\sigma^\pm (  \bar{\bf U}_K^n , \bar{\bf U}_{K_j}^n  ; {\bm \xi}^{(j)}_K )
$. 
The quantity  
$\mbox{\rm div} _{K} \bar {\bf B}^n$ can be considered as a discrete divergence of magnetic field, because it is  
a first-order accurate approximation to the left-hand side of 
\begin{equation*}
\frac{1}{|K|} \sum_{j=1}^{N_K} \int_{{\mathscr E}_{K}^{j}}  
\left\langle {\bm \xi}_{K}^{(j)}, {\bf B}({\bf x},t_n) \right\rangle ds
=
\frac{1}{|K|}\int_K \nabla \cdot {\bf B} d {\bf x}=0.
\end{equation*}
In the special case of using the LF type fluxes, $\sigma_{K,j}^{n,+}=-\sigma_{K,j}^{n,-}$, then the discrete divergence becomes 
\begin{equation*}
\mbox{\rm div} _{K} \bar {\bf B}^n = \frac{1}{|K|}
\sum_{j=1}^{N_K} \big| {\mathscr E}_{K}^{j} \big|
\left\langle {\bm \xi}_{K}^{(j)},  
\frac{  \bar {\bf B}_K^{n}  + \bar{\bf B}_{K_j}^n  }{
	2
} \right\rangle,
\end{equation*} 
which is consistent with the one introduced in \cite{Wu2017a,WuShu2018} on the Cartesian meshes.

The PP property of the scheme \eqref{eq:1stschemeGP} is shown as follows.

\begin{theorem}\label{thm:2DLF1st}
	Let the wave speeds in the HLL flux satisfy \eqref{eq:HLLsigma}.  	
	If $\bar{\bf U}_K^{n} \in {\mathcal G}$, $\forall K \in {\mathcal T}_h$, 
	then the solution $\bar{\bf U}^{n+1}_K$ of \eqref{eq:1stschemeGP} 
	belongs to $\mathcal G$ for all $ K \in {\mathcal T}_h$ under the CFL-type condition 
	\begin{equation}\label{eq:2D1stCFLGP}
	\Delta t_n \left( \frac{1}{|K|} 
	\sum_{j=1}^{N_K}   \big| {\mathscr E}_{K}^{j} \big|
	\left( - \sigma_{K,j}^{n,-} \right) + \frac{ \left|{\rm div} _{K} \bar {\bf B}^n\right| }{ \sqrt{\bar \rho_K^n} } \right) < 1,\quad \forall K \in {\mathcal T}_h. 
	\end{equation}
\end{theorem}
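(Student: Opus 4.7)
The plan is to rewrite \eqref{eq:1stschemeGP} as a weighted combination of $\bar{\bf U}_K^n$ and the HLL intermediate states ${\bf H}_{K,j} := {\bf H}(\bar{\bf U}_K^n, \bar{\bf U}_{K_j}^n; {\bm \xi}_K^{(j)})$ coming from each edge, plus the Godunov--Powell penalty contribution. Applying the identity \eqref{eq:FH1} to each numerical flux and using the closed-polygon identity $\sum_{j=1}^{N_K} |\mathscr{E}_K^j|\, {\bm \xi}_K^{(j)} = \mathbf{0}$ to annihilate the $\langle {\bm \xi}_K^{(j)}, {\bf F}(\bar{\bf U}_K^n)\rangle$ contributions, I obtain
\begin{equation*}
\bar{\bf U}_K^{n+1} = \Bigl(1 - \sum_{j=1}^{N_K} \lambda_{K,j} (-\sigma_{K,j}^{n,-})\Bigr) \bar{\bf U}_K^n + \sum_{j=1}^{N_K} \lambda_{K,j}(-\sigma_{K,j}^{n,-})\, {\bf H}_{K,j} - \Delta t_n\, (\mbox{\rm div}_K \bar{\bf B}^n)\, {\bf S}(\bar{\bf U}_K^n),
\end{equation*}
where $\lambda_{K,j} := \Delta t_n |\mathscr{E}_K^j|/|K|$. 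Since the first component of ${\bf S}$ vanishes, the density $\bar\rho_K^{n+1}$ is a nonnegative combination of $\bar\rho_K^n$ and the positive densities of the ${\bf H}_{K,j}$ (positivity of each ${\bf H}_{K,j} \in \mathcal{G}_\rho$ is provided by Theorem \ref{thm:HLLflux}); under the CFL bound \eqref{eq:2D1stCFLGP} the leading coefficient remains positive, so $\bar\rho_K^{n+1}>0$.

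The harder step is to verify that $\bar{\bf U}_K^{n+1} \in \overline{\mathcal G}_*$, which by Lemma \ref{theo:eqDefG} suffices for $\bar{\bf U}_K^{n+1}\in\mathcal{G}$. I test against arbitrary ${\bf v}^*,{\bf B}^*\in\mathbb{R}^3$ and bound each ${\bf H}_{K,j}\cdot {\bf n}^* + |{\bf B}^*|^2/2$ from below using \eqref{eq:KeyIEQ1D4}; this produces, edge by edge, a term linear in ${\bf v}^*\cdot {\bf B}^*$ with coefficient $\lambda_{K,j}(-\sigma_{K,j}^{n,-})/(\sigma_{K,j}^{n,+}-\sigma_{K,j}^{n,-})$ times $\langle {\bm \xi}_K^{(j)}, \bar{\bf B}_{K_j}^n - \bar{\bf B}_K^n \rangle$. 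Simultaneously, \eqref{eq:widelyusedIEQ2} with $b = \Delta t_n\, \mbox{\rm div}_K \bar{\bf B}^n$ bounds the source contribution by a further ${\bf v}^*\cdot {\bf B}^*$ term plus a residual controlled by $\Delta t_n |\mbox{\rm div}_K \bar{\bf B}^n|/\sqrt{\bar\rho_K^n}$ times the preserved quantity $\bar{\bf U}_K^n\cdot {\bf n}^* + |{\bf B}^*|^2/2$ at time $n$.

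The crux of the argument, and the very reason for the upwind-weighted definition \eqref{eq:DefDisDivB}, is that these two ${\bf v}^*\cdot{\bf B}^*$ contributions cancel exactly. After a short algebraic simplification the combined coefficient collapses to $\langle \sum_j \lambda_{K,j}\, {\bm \xi}_K^{(j)},\, \bar{\bf B}_K^n\rangle$, which vanishes again by the closed-polygon identity. What survives is
\begin{equation*}
\bar{\bf U}_K^{n+1}\cdot {\bf n}^* + \tfrac{1}{2}|{\bf B}^*|^2 \;\geq\; \Bigl(1 - \sum_{j=1}^{N_K} \lambda_{K,j}(-\sigma_{K,j}^{n,-}) - \frac{\Delta t_n |\mbox{\rm div}_K \bar{\bf B}^n|}{\sqrt{\bar\rho_K^n}}\Bigr)\Bigl(\bar{\bf U}_K^n\cdot {\bf n}^* + \tfrac{1}{2}|{\bf B}^*|^2\Bigr),
\end{equation*}
which is strictly positive precisely under \eqref{eq:2D1stCFLGP}, since $\bar{\bf U}_K^n\in\mathcal{G}$. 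The main obstacle I expect is locating this exact cancellation: it is not accidental but is the defining feature of the wave-speed-weighted discrete divergence in \eqref{eq:DefDisDivB}, and a naive centered discretization of $\nabla\cdot{\bf B}$ would leave a residual linear in ${\bf v}^*\cdot{\bf B}^*$ of indefinite sign, destroying the PP estimate.
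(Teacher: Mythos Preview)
Your proposal is correct and follows essentially the same route as the paper: rewrite the scheme via \eqref{eq:FH1} and the closed-polygon identity, use Theorem~\ref{thm:HLLflux} to bound each ${\bf H}_{K,j}\cdot{\bf n}^*+\tfrac12|{\bf B}^*|^2$, handle the source term via the estimate \eqref{eq:widelyusedIEQ2}, and observe that the two ${\bf v}^*\cdot{\bf B}^*$ contributions collapse to $\langle\sum_j\lambda_{K,j}{\bm\xi}_K^{(j)},\bar{\bf B}_K^n\rangle=0$ precisely because of the upwind-weighted definition \eqref{eq:DefDisDivB}. The paper organizes this slightly differently---it uses the identity \eqref{eq:indentityS} to split the source into the ${\bf v}^*\cdot{\bf B}^*$ part (grouped with the ${\bf H}$-terms into a quantity $\Pi_1\ge 0$) and the remainder (grouped with $\bar{\bf U}_K^n$ into $\Pi_2>0$ via \eqref{eq:widelyusedIEQ})---but since \eqref{eq:widelyusedIEQ2} is just \eqref{eq:indentityS} combined with \eqref{eq:widelyusedIEQ}, the arguments are equivalent. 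One small wording fix: you write that showing $\bar{\bf U}_K^{n+1}\in\overline{\mathcal G}_*$ ``suffices for $\bar{\bf U}_K^{n+1}\in\mathcal G$'', but membership in the closure does not; what you actually establish is the strict inequality, hence $\bar{\bf U}_K^{n+1}\in\mathcal G_*=\mathcal G$.
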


\begin{proof}
	Let ${\bf H}_{K,j}^n:={\bf H}( \bar{ \bf  U }_K^n,\bar{ \bf  U }_{K_j}^n;{\bm \xi}_K^{(j)} )$. Then the identity  \eqref{eq:FH1} implies 
	\begin{equation}\label{eq:aa1}
	\hat{\bf F} \big(  \bar{\bf U}_K^n , \bar{\bf U}_{K_j}^n  ; {\bm \xi}^{(j)}_K \big) = \sigma_{K,j}^{n,-} 
	{\bf H}_{K,j}^n 
	+ \left\langle {\bm \xi}_K^{(j)}, {\bf F}( \bar{\bf U}_K^n )
	\right\rangle - \sigma_{K,j}^{n,-} \bar {\bf U}_K^n.
	\end{equation}
	Using \eqref{eq:aa1} and the identity 
	\begin{equation}\label{eq:sumxi0}
	\sum_{j=1}^{N_k} \big| {\mathscr E}_{K}^{j} \big| {\bm \xi}_K^{(j)} ={\bf 0},
	\end{equation}
	one can rewrite the scheme \eqref{eq:1stschemeGP} as  
	\begin{equation}\label{eq:2DLFrewGP}
	\bar { \bf  U }_K^{n+1}  = 
	\frac{\Delta t_n}{|K|} 
	\sum_{j=1}^{N_K}   \big| {\mathscr E}_{K}^{j} \big|
	\left( - \sigma_{K,j}^{n,-} \right) {\bf H}_{K,j}^n 
	+ ( 1 - \lambda_K  ) \bar{ \bf  U }_K^n   - \Delta t_n \big( {\rm div} _{K} \bar {\bf B}^n \big) {\bf S}(\bar{\bf U}^n_K),
	\end{equation} 
	where 
	$
	\lambda_K := \frac{\Delta t_n}{|K|} 
	\sum_{j=1}^{N_K}   \big| {\mathscr E}_{K}^{j} \big|
	\big( - \sigma_{K,j}^{n,-} \big) \in [0,1).
	$ 
	Thanks to Theorem \ref{thm:HLLflux}, we have 
	${\bf H}_{K,j}^n  \in {\mathcal G}_\rho$ and for any  
	${\bf v}^*,{\bf B}^*\in {\mathbb R}^3$, 
	\begin{equation}\label{eq:Hnstar}
	{\bf H}_{K,j}^n \cdot {\bf n}^* 
	+ \frac{|{\bf B}^*|^2}{2}  \ge 
	-\frac{{\bf v}^* \cdot {\bf B}^*}{ \sigma_{K,j}^{n,+} - \sigma_{K,j}^{n,-} } \left \langle {\bm \xi}_K^{(j)},  
	\bar{\bf B}_{K_j}^n - \bar{\bf B}_K^n 
	\right  \rangle.
	\end{equation}
	
	Since ${\bf H}_{K,j}^n  \in {\mathcal G}_\rho$ and 
	the first component of ${\bf S}(\bar{\bf U}^n_K)$ is zero, 
	we have $\bar \rho^{n+1}_K \ge (1-\lambda_K) \bar \rho^n_K > 0$. 
	For any $ {\bf v}^*,{\bf B}^* \in \mathbb R^3$, using 
	\eqref{eq:indentityS} we derive from \eqref{eq:2DLFrewGP} that 
	\begin{equation*}
	\bar {\bf U}_{K}^{n+1} \cdot {\bf n}^* + \frac{ |{\bf B}^*|^2 }{2} = \Pi_1 + \Pi_2,
	\end{equation*}
	where 
	\begin{align*}
	\Pi_1 & :=  
	\frac{\Delta t_n}{|K|} 
	\sum_{j=1}^{N_K}   \big| {\mathscr E}_{K}^{j} \big|
	\left( - \sigma_{K,j}^{n,-} \right) \left( {\bf H}_{K,j}^n 
	\cdot {\bf n}^* + \frac{|{\bf B}^*|^2}{2} \right)  
	+ \Delta t_n \big( \mbox{\rm div} _{K} \bar {\bf B}^n \big) \big( {\bf v}^* \cdot {\bf B}^* \big),\\ 
	\Pi_2 & := 
	(1-\lambda_K) \left( \bar {\bf U}_{K}^n \cdot {\bf n}^* + \frac{|{\bf B}^*|^2}{2} \right)
	- \Delta t_n \big( {\rm div}_{K} \bar{\bf B}^n \big) ( \bar{\bf v}_{K}^n - {\bf v}^*  )
	\cdot  ( \bar{\bf B}_{K}^n - {\bf B}^*  ).
	\end{align*}
	Let us estimate the lower bounds of $\Pi_1$ and $\Pi_2$ respectively. 
	Using \eqref{eq:Hnstar} and \eqref{eq:sumxi0} gives 
	\begin{align*}
	\Pi_1 & 
	\overset { \eqref{eq:Hnstar} }
	{\ge}   
	\frac{\Delta t_n}{|K|} 
	\sum_{j=1}^{N_K}   \big| {\mathscr E}_{K}^{j} \big|
	\sigma_{K,j}^{n,-}   \frac{
		\big \langle {\bm \xi}_K^{(j)},  
		\bar{\bf B}_{K_j}^n -  \bar{\bf B}_K^n  \big  \rangle 
	}{ \sigma_{K,j}^{n,+} - \sigma_{K,j}^{n,-} } 
	\big( {\bf v}^* \cdot {\bf B}^* \big)
	+ \Delta t_n \big( \mbox{\rm div} _{K} \bar {\bf B}^n \big) \big( {\bf v}^* \cdot {\bf B}^* \big)
	\\
	& \overset { \eqref{eq:DefDisDivB} }{=} 
	\frac{\Delta t_n}{|K|} 
	\sum_{j=1}^{N_K}   \big| {\mathscr E}_{K}^{j} \big|
	\left( 
	\sigma_{K,j}^{n,-}\frac{
		\big \langle {\bm \xi}_K^{(j)}, \bar{\bf B}_{K_j}^n -  \bar{\bf B}_K^n 
		\big  \rangle 
	}{ \sigma_{K,j}^{n,+} - \sigma_{K,j}^{n,-} } 
	+ \left\langle {\bm \xi}_{K}^{(j)},  
	\frac{ \sigma_{K,j}^{n,+} \bar {\bf B}_K^{n}  - \sigma_{K,j}^{n,-} \bar{\bf B}_{K_j}^n  }{
		\sigma_{K,j}^{n,+} - \sigma_{K,j}^{n,-} 
	} \right\rangle
	\right)
	\big( {\bf v}^* \cdot {\bf B}^* \big)
	\\
	& = \frac{\Delta t_n}{|K|} 
	\sum_{j=1}^{N_K}   \big| {\mathscr E}_{K}^{j} \big| 
	\left \langle {\bm \xi}_K^{(j)}, 
	\bar{\bf B}_K^n \right \rangle \big( {\bf v}^* \cdot {\bf B}^* \big) \overset { \eqref{eq:sumxi0} } {=} 0.
	\end{align*}
	It follows from \eqref{eq:widelyusedIEQ} that  
	\begin{equation*}
	\begin{aligned}
	\Pi_2
	& \ge (1-\lambda_K) \left( \bar {\bf U}_{K}^n \cdot {\bf n}^* + \frac{|{\bf B}^*|^2}{2} \right)
	- \Delta t_n \frac{\left| {\rm div}_{K} \bar{\bf B}^n \right|}{\sqrt{\bar \rho ^n_K}} \Big| \sqrt{\bar \rho ^n_K}  ( \bar{\bf v}_{K}^n - {\bf v}^*  )
	\cdot  ( \bar{\bf B}_{K}^n - {\bf B}^*  ) \Big|
	\\
	& \overset {\eqref{eq:widelyusedIEQ} }{\ge} \left(1-\lambda_K - \Delta t_n \frac{\left| {\rm div}_{K} \bar{\bf B}^n \right|}{\sqrt{\bar \rho ^n_K}} \right) \left( \bar {\bf U}_{K}^n \cdot {\bf n}^* + \frac{|{\bf B}^*|^2}{2} \right) > 0.
	\end{aligned}
	\end{equation*}
	Therefore, $\bar {\bf U}_{K}^{n+1} \cdot {\bf n}^* + \frac{ |{\bf B}^*|^2 }{2}>0$, $\forall {\bf v}^*,{\bf B}^* \in \mathbb R^3$. 
	
	Hence $\bar {\bf U}_{K}^{n+1}\in \mathcal G$ by Lemma \ref{theo:eqDefG}.  
\end{proof}

It is worth emphasizing that the penalty term 
is crucial for guaranteeing the PP property of the scheme \eqref{eq:1stschemeGP}. 
While the scheme \eqref{eq:1stschemeGP} 
without this term reduces to the 2D HLL scheme for 
the conservative MHD system \eqref{eq:MHD}, specifically, 
\begin{equation}\label{eq:1stscheme}
\bar{\bf U}_{K}^{n+1}
= \bar{\bf U}_{K}^{n} - \frac{\Delta t_n}{|K|}   \sum_{j=1}^{N_K}
\big|{\mathscr E}_{K}^{j}\big| \hat{\bf F} \big(  \bar{\bf U}_K^n , \bar{\bf U}_{K_j}^n  ; {\bm \xi}^{(j)}_K \big). 
\end{equation} 
For the LF flux, 
the analysis in \cite{Wu2017a} on Cartesian meshes 
showed that the scheme \eqref{eq:1stscheme} 
is generally not PP, unless a discrete divergence-free (DDF) condition 
is satisfied. 
We find that, on a general mesh ${\mathcal T}_h$, the corresponding DDF condition is 
\begin{equation}\label{eq:DisDivB}
\mbox{\rm div} _{K} \bar {\bf B}^n 
= 0,\quad \forall K \in {\mathcal T}_h.
\end{equation} 
As a direct consequence of Theorem \ref{thm:2DLF1st}, we immediately have the following  
corollary. 

\begin{corollary}\label{prop:2D1st}
	Let the wave speeds in the HLL flux satisfy \eqref{eq:HLLsigma}.  	 
	If $\bar{\bf U}_K^{n} \in {\mathcal G}$, $\forall K \in {\mathcal T}_h$, 
	and satisfy the DDF condition \eqref{eq:DisDivB}, 
	then under the CFL condition 
	\begin{equation*}
	\frac{\Delta t_n}{|K|} 
	\sum_{j=1}^{N_K}   \big| {\mathscr E}_{K}^{j} \big|
	\left( - \sigma_{K,j}^{n,-} \right) < 1,\quad \forall K \in {\mathcal T}_h, 
	\end{equation*}
	the solution $\bar{\bf U}^{n+1}_K$ of \eqref{eq:1stscheme} 
	belongs to $\mathcal G$ for all $ K \in {\mathcal T}_h$. 
\end{corollary}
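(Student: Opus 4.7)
The plan is to reduce Corollary \ref{prop:2D1st} directly to Theorem \ref{thm:2DLF1st} by exploiting the fact that, under the DDF condition \eqref{eq:DisDivB}, the penalty term in the scheme \eqref{eq:1stschemeGP} vanishes identically. First I would observe that when $\mbox{\rm div}_{K} \bar{\bf B}^n = 0$ for every cell $K \in \mathcal{T}_h$, the source-term contribution $-\Delta t_n (\mbox{\rm div}_K \bar{\bf B}^n){\bf S}(\bar{\bf U}_K^n)$ in \eqref{eq:1stschemeGP} drops out, so the updated cell average produced by \eqref{eq:1stschemeGP} coincides exactly with that produced by \eqref{eq:1stscheme}. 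Thus the two schemes give the same $\bar{\bf U}_K^{n+1}$ under the corollary's hypothesis.

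Next I would verify that the CFL hypothesis of the corollary is at least as strong as the CFL-type hypothesis \eqref{eq:2D1stCFLGP} required by Theorem \ref{thm:2DLF1st}. Indeed, because $\mbox{\rm div}_K \bar{\bf B}^n = 0$, the extra term $|\mbox{\rm div}_K \bar{\bf B}^n|/\sqrt{\bar\rho_K^n}$ in \eqref{eq:2D1stCFLGP} is zero, and \eqref{eq:2D1stCFLGP} collapses precisely to the condition stated in Corollary \ref{prop:2D1st}. Combining this with the wave-speed assumption \eqref{eq:HLLsigma} and the assumption $\bar{\bf U}_K^n \in \mathcal{G}$, all hypotheses of Theorem \ref{thm:2DLF1st} are met.

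Applying Theorem \ref{thm:2DLF1st} then yields $\bar{\bf U}_K^{n+1} \in \mathcal{G}$ for every $K \in \mathcal{T}_h$, which is the desired conclusion. I do not anticipate any genuine obstacle here: the corollary is essentially a bookkeeping consequence of the stronger theorem, the only point to check being that the vanishing of the discrete divergence makes the penalized scheme \eqref{eq:1stschemeGP} coincide pointwise with the unpenalized scheme \eqref{eq:1stscheme} and simultaneously simplifies the CFL condition to the form stated. No new convex-combination decomposition or inequality is required beyond what Theorem \ref{thm:2DLF1st} already provides.
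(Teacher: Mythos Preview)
Your proposal is correct and matches the paper's approach exactly: the paper states that Corollary~\ref{prop:2D1st} follows ``as a direct consequence of Theorem~\ref{thm:2DLF1st},'' and your argument spells out precisely why---the DDF condition \eqref{eq:DisDivB} kills the penalty term so that \eqref{eq:1stschemeGP} and \eqref{eq:1stscheme} coincide, and simultaneously reduces the CFL condition \eqref{eq:2D1stCFLGP} to the one in the corollary.
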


If ${\mathcal T}_h$ is a Cartesian mesh and the numerical flux $\hat {\bf F}$ is taken as the global LF flux, then 
the scheme \eqref{eq:1stscheme} preserves the DDF condition \eqref{eq:DisDivB} provided that the DDF condition is satisfied by the initial data \cite{Wu2017a}. 
It was also shown in \cite{Wu2017a} 
that even slightly violating the DDF condition 
can cause the failure of the scheme \eqref{eq:1stscheme} to preserve the positivity of pressure. 
Unfortunately, on general meshes 
the scheme \eqref{eq:1stscheme} does not necessarily preserve 
the DDF condition \eqref{eq:DisDivB}, and it is generally not PP.


\subsection{High-order schemes}
We are now in the position to discuss provably PP high-order schemes for the multidimensional ideal MHD. We mainly focus on 
the PP high-order DG methods, keeping in mind that 
the analysis and framework also apply 
to high-order finite volume schemes.

\subsubsection{Locally divergence-free schemes}\label{sec:LDF}
We first propose locally divergence-free schemes 
for the modified MHD system \eqref{eq:MHD:GP}, as  
they are the base schemes of our PP high-order schemes presented later.  
Towards achieving high-order spatial accuracy, the exact solution 
${\bf U}({\bf x},t_n)$ is approximated with a discontinuous piecewise polynomial function 
${\bf U}_h^n({\bf x})$, which is sought in the locally divergence-free DG 
space \cite{Li2005} 
\begin{equation*}
{\mathbb V}_h^{k} 
= \left\{ {\bf u}=(u_1,\cdots,u_8)^\top~\Big|~ u_\ell \big|_{K} \in {\mathbb P}^{k} (K),
\forall \ell,~   
\sum_{i=1}^d \frac{ \partial u_{4+i}}{\partial {x_i}} 
\bigg|_{K}  = 0,~\forall K \in {\mathcal T}_h   \right\},
\end{equation*}
where ${\mathbb P}^{k} (K)$ is the space of polynomials in $K$ of degree at most $k$.

%
%
%
%
%

Our ${\mathbb P}^{k}$-based locally divergence-free DG method 
is obtained by proper discretization of the Godunov form \eqref{eq:MHD:GP}. Specifically, our DG solution ${\bf U}_h^n \in {\mathbb V}_h^{k} $ is explicitly evolved by 
\begin{equation}\label{eq:2DDGUh}
\begin{split}
& \int_{K} {\bf u} \cdot  \frac{{\bf U}_h^{n+1}- {\bf U}_h^{n}}{\Delta t_n}   d x d y
=  \int_{K}   \nabla {\bf u} \cdot  
{\bf F}  ( {\bf U}_h^{n} )  d {\bf x}  
\\
& \quad  - \sum_{j=1}^{N_K} \int_{ {\mathscr E}_K^j } 
{\bf u}^{{\rm int}(K)} \cdot \bigg\{
\hat{\bf F} \left( {\bf U}_h^{n,{\rm int}(K)}, {\bf U}_h^{n,{\rm ext}(K)}; {\bm \xi}^{(j)}_{K} \right)   
\\ 
& \qquad 
- \left[
\eta_K ({\bf x})
\left \langle {\bm \xi}^{(j)}_K, {\bf B}_h^{n,{\rm ext}(K)} - {\bf B}_h^{n,{\rm int}(K)} \right \rangle 
{\bf S} \big({\bf U}_h^{n,{\rm int}(K)} \big)  \right] \bigg\} ds,\quad \forall {\bf u} \in {\mathbb V}_h^{k},
\end{split} 
\end{equation} 
where the numerical flux $\hat{\bf F}$ is taken as the HLL flux in \eqref{eq:1DHLL2}, and the factor  
$$
\eta_K ({\bf x}) := 
\frac{ \sigma^- \big(  {\bf U}_h^{ n,{\rm int} (K) } , {\bf U}_h^{ n,{\rm ext} (K) }   ; {\bm \xi}^{(j)}_K \big) }{ \sigma^+ \big(  {\bf U}_h^{ n,{\rm int} (K) } , {\bf U}_h^{ n,{\rm ext} (K) }   ; {\bm \xi}^{(j)}_K \big) 
	-  \sigma^- \big(  {\bf U}_h^{ n,{\rm int} (K) } , {\bf U}_h^{ n,{\rm ext} (K) }   ; {\bm \xi}^{(j)}_K \big) }, \quad \forall {\bf x} \in {\mathscr E}_K^j.
$$
Here the superscripts ``${\rm int}(K)$'' and ``${\rm ext}(K)$'' indicate that the associated limits at the interface ${\mathscr E}_K^j$ are taken from the interior and exterior of $K$, respectively. 
The term inside the bracket in \eqref{eq:2DDGUh} is a penalty term discretized from the Godunov--Powell source term. The factor $\eta_K$ is carefully devised in an upwind 
manner according to the local wave speeds in the HLL flux. 
This is motivated from our theoretical analysis, and is very important for achieving the provably PP property, as we will see the proof of Theorem \ref{thm:PP:2DMHD} and Remark \ref{eq:remark3}.  
If the LF flux is employed, i.e., $\sigma^-=-\sigma^+$, 
then $\eta_K({\bf x}) \equiv -\frac12$, 
and the penalty term reduces to 
the one used in \cite{WuShu2018}.

In the practical computations, the boundary and element integrals at the right-hand side of \eqref{eq:2DDGUh} 
are discretized by certain quadratures of sufficiently high order accuracy (specifically, the algebraic degree of accuracy should be at least $2k$). For example, we can 
employ the Gauss quadrature  
with $Q=k+1$ points for the boundary integral: 
\begin{align*}
\begin{split}
& \int_{ {\mathscr E}_K^j } {\bf u}^{{\rm int}(K)} \cdot 
\bigg[
\hat{\bf F} \left( {\bf U}_h^{n,{\rm int}(K)}, {\bf U}_h^{n,{\rm ext}(K)}; {\bm \xi}^{(j)}_{K} \right)  
\\ 
& \qquad 
-\eta_K ({\bf x})  
\left \langle {\bm \xi}^{(j)}_K, {\bf B}_h^{n,{\rm ext}(K)} - {\bf B}_h^{n,{\rm int}(K)} \right \rangle 
{\bf S} \big({\bf U}_h^{n,{\rm int}(K)} \big)  \bigg]  ds
\end{split}
\\
\begin{split}
& \quad  \approx |{\mathscr E}_K^j| \sum_{q=1}^Q \omega_q
{\bf u}^{{\rm int}(K)} ( {\bf x}_K^{(jq)} ) \cdot 
\bigg[
\hat{\bf F} \left( {\bf U}_h^{n,{\rm int}(K)} ( {\bf x}_K^{(jq)} ), {\bf U}_h^{n,{\rm ext}(K)} ( {\bf x}_K^{(jq)} ); {\bm \xi}^{(j)}_{K} \right)  
\\ 
& \qquad 
- \eta_K ( {\bf x}_K^{(jq)} )
\left \langle {\bm \xi}^{(j)}_K, {\bf B}_h^{n,{\rm ext}(K)} ( {\bf x}_K^{(jq)} ) - {\bf B}_h^{n,{\rm int}(K)} ( {\bf x}_K^{(jq)} ) \right \rangle 
{\bf S} \left({\bf U}_h^{n,{\rm int}(K)} ( {\bf x}_K^{(jq)} ) \right)  \bigg],
\end{split}
\end{align*}
where 
$ \{{\bf x}_K^{(jq)}\}_{1\le q \le Q}$ are the quadrature points on the interface ${\mathscr E}_K^j$, 
and $\{\omega_q\}_{1\le q \le Q}$ are the associated weights. 


Let 
\begin{equation*}
{\bf U}_h^n\big|_{K}=: {\bf U}_{K}^n ({\bf x}),
\end{equation*}
and its cell average over $K$ be $\bar{\bf U}_K^n$. Then we can obtain from 
\eqref{eq:2DDGUh} the 
evolution equations for the cell averages $\{\bar {\bf U}_{K}^n\}$ as follows 
\begin{equation}\label{eq:2DMHD:cellaverage}
\bar {\bf U}_{K}^{n+1}  = \bar {\bf U}_{K}^{n} + \Delta t_n {\bf L}_{K} ( {\bf U}_h^n  ),
\end{equation}
where 
\begin{equation*}
\begin{split}
{\bf L}_{K} ( {\bf U}_h^n  ) & := 
- \frac{1}{|K|} 
\sum_{j=1}^{N_K} \sum_{q=1}^Q |{\mathscr E}_K^j|
\omega_q
\bigg[
\hat{\bf F} \left( {\bf U}_h^{n,{\rm int}(K)} ( {\bf x}_K^{(jq)} ), {\bf U}_h^{n,{\rm ext}(K)} ( {\bf x}_K^{(jq)} ); {\bm \xi}^{(j)}_{K} \right)  
\\ 
& 
- \eta_K ( {\bf x}_K^{(jq)} )  
\left \langle {\bm \xi}^{(j)}_K, {\bf B}_h^{n,{\rm ext}(K)} ( {\bf x}_K^{(jq)} ) - {\bf B}_h^{n,{\rm int}(K)} ( {\bf x}_K^{(jq)} ) \right \rangle 
{\bf S} \left({\bf U}_h^{n,{\rm int}(K)} ( {\bf x}_K^{(jq)} ) \right)  \bigg].
\end{split}
\end{equation*}

The scheme \eqref{eq:2DMHD:cellaverage} can also be derived from a finite volume method for \eqref{eq:MHD:GP}, if the approximate function 
${\bf U}_h^n  $ in \eqref{eq:2DMHD:cellaverage} 
is reconstructed from  $\{\bar {\bf U}_{K}^n\}$ by using a locally 
divergence-free reconstruction approach (cf.~\cite{ZhaoTang2017,xu2016divergence}) such that ${\bf U}_h^n \in {\mathbb V}_h^k $. 

If choosing $k=0$, 
the above finite volume and DG schemes reduce to 
the first-order scheme \eqref{eq:1stschemeGP}, whose PP property has been proven in Theorem \ref{thm:2DLF1st}. 
If taking $k\ge 1$, 
the above high-order accurate DG and finite volume schemes are generally not PP. 
However, we find that these locally divergence-free schemes have the weak positivity, that is, they 
can be rendered provably PP by a simple limiting procedure, as demonstrated in the following.  Note that the 
standard multidimensional DG schemes does not have the weak positivity, even if the locally divergence-free element is used.

\subsubsection{Positivity-preserving schemes}\label{sec:2DhstPP}
We first assume that there exists a special 2D quadrature on each cell $K \in {\mathcal T}_h$ satisfying:
\begin{itemize}
	\item The quadrature rule is with positive weights and 
	exact for integrals of polynomials of degree up to $k$ on the cell $K$.
	\item The set of the quadrature points, denoted by ${\mathbb S}_K$, must include all the Gauss quadrature points $ {\bf x}_K^{(jq)}$, $j=1,\dots,N_K$, $q=1,\dots,Q$, 
	on the cell interface. 
\end{itemize}
In other words, we would like to have a special quadrature such that 
\begin{equation}\label{eq:decomposition}
\frac{1}{|K|}\int_K u({\bf x}) d{\bf x}  = 
\sum_{j=1}^{N_K} \sum_{q=1}^Q \varpi_{jq} u ( {\bf x}_K^{(jq)} ) + \sum_{ q=1 }^{\widetilde Q} \widetilde \varpi_q   
u ( \widetilde {\bf x}_K^{(q)} ), \quad \forall 
u \in {\mathbb P}^{k}(K) ,
\end{equation}
where $\{\widetilde {\bf x}_K^{(q)}\}$ are the other (possible) quadrature nodes in $K$, and the quadrature weights 
$\varpi_{jq},\widetilde \varpi_q $ are positive and satisfy 
$
\sum_{j=1}^{N_K} \sum_{q=1}^Q \varpi_{jq} + \sum_{ q=1 }^{\widetilde Q} \widetilde \varpi_q =1.
$
For rectangular cells, such a quadrature was constructed in \cite{zhang2010,zhang2010b} by tensor products of Gauss quadrature and Gauss--Lobatto quadrature. 
For triangular cells, it can be constructed by a Dubinar transform from rectangles to triangles \cite{zhang2012maximum}. 
For more general polygonal cells, one can always decompose 
the polygons into non-overlapping triangles, and then build the above quadrature rule by gathering those on the small triangles; see, for example,   \cite{VILAR2016416,du2018positivity}. 
An illustration of the special quadrature on rectangle and triangle for $k=2$ is shown in Fig.\ \ref{fig:GS}, 
where the (red) solid points 
are $\{{\bf x}_K^{(jq)}\}$ and the (blue) hollow circles 
denote $\{\widetilde {\bf x}_K^{(q)}\}$.
We remark that such a special quadrature is not employed for computing any integral, but only used in the PP limiter 
and theoretical analysis as it decomposes the cell average into a convex combination of the desired point values.

\begin{figure}[htbp]
	\centering
	\includegraphics[width=0.99\textwidth]{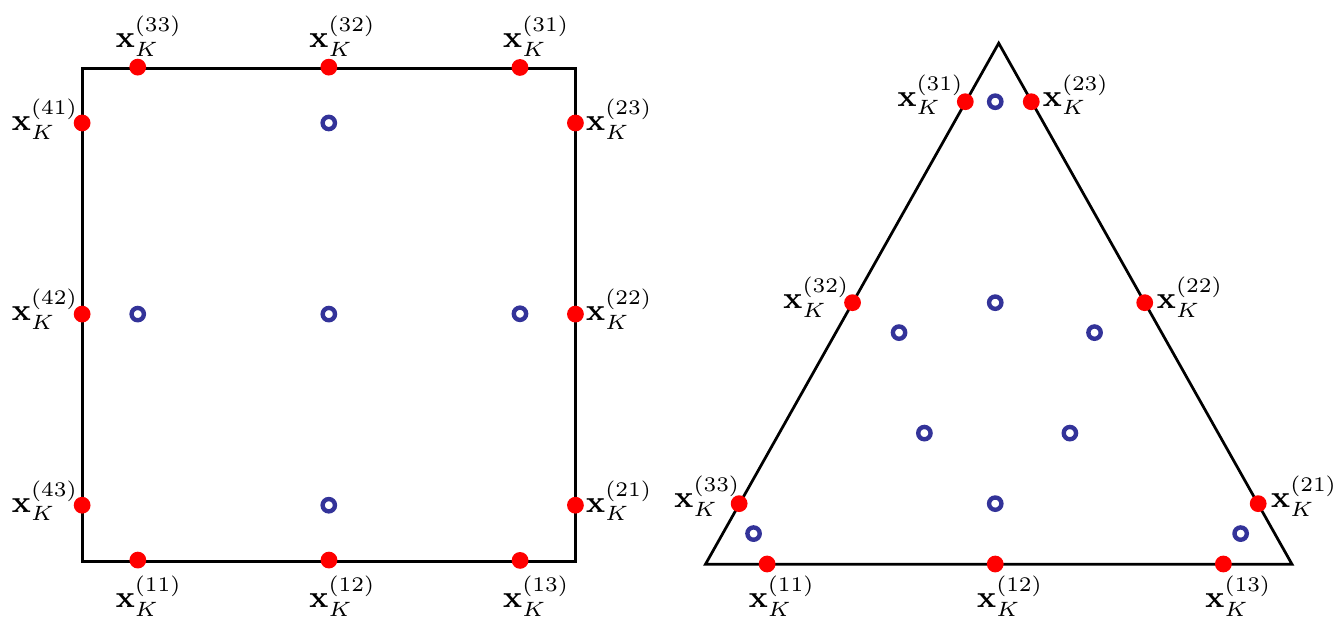}
	\caption{\small
		Illustration of the quadrature  \eqref{eq:decomposition} on a rectangular cell (left) and 
		a triangular cell (right) for $k=2$. The (red) solid points 
		are $\{{\bf x}_K^{(jq)}\}$ and the (blue) hollow circles 
		denote $\{\widetilde {\bf x}_K^{(q)}\}$; all of them  constitute the point set ${\mathbb S}_K$. 
	}\label{fig:GS}
\end{figure}

Based on the above special quadrature and the high-order locally divergence-free schemes in Section \ref{sec:LDF},  
our provably PP high-order DG and finite volume schemes are constructed  
as follows. The rigorous proof of the PP property is very technical and  will be presented later. 

\vspace{2mm}
\noindent
{\bf Step 0.} Initialization. After setting $t=0$ and $n=0$,  
we compute the cell averages $\{\bar {\bf U}_{K}^0\}$ 
and the polynomial functions $\{ {\bf U}_{K}^0 ({\bf x}) \}$
by a local $L^2$-projection of the initial data onto $ {\mathbb V}_h^{k}$, so that 
$\bar {\bf U}_{K}^0 \in {\mathcal G}$ is ensured by the convexity of $\mathcal G$, 
and ${\bf U}_h^0 \in  {\mathbb V}_h^{k} $ is also 
guaranteed.

\vspace{2mm}
\noindent
{\bf Step 1.} Given admissible cell-averaged solution $\big\{\bar {\bf U}_{K}^n\big\}$ 
and ${\bf U}_h^n \in   {\mathbb V}_h^{k} $, perform the PP limiting procedure. 
The PP limiter \cite{cheng} is employed to modify the polynomials $\big\{ {\bf U}_{K}^n ({\bf x})  \big\}$, such that the modified polynomials $\big\{\widetilde {\bf U}_{K}^n ({\bf x})  \big\}$ satisfy
\begin{equation}\label{eq:FVDGsuff}
\widetilde {\bf U}_{K}^n ({\bf x}) 
\in {\mathcal G}, \quad 
\forall  {\bf x}  \in {\mathbb S}_{K} := 
\left\{ \widetilde {\bf x}_K^{(q)} \right\}_{ 1\le q\le \widetilde Q } \bigcup 
\left\{ {\bf x}_K^{(jq)} \right\}_{ 1\le j \le N_K, 1\le q \le Q }.
\end{equation}
For readers' convenience, the PP limiter is briefly reviewed in Appendix \ref{app:PPlimit}. 
Let $\widetilde{\bf U}_h^n ( {\bf x} )$ be the (discontinuous)  piecewise polynomial function
corresponding to $\widetilde{\bf U}_{K}^n ({\bf x})$. 
Note that the limited function 
$\widetilde{\bf U}_h^n  \in {\mathbb V}_h^{k} $,
since the PP limiter only involves certain local 
convex combination of ${\bf U}_{K}^n ({\bf x})$ and its cell average over $K$.

\vspace{2mm}
\noindent
{\bf Step 2.} Update the cell averages by the scheme
\begin{equation}\label{eq:PP2DMHD:cellaverage}
\bar {\bf U}_{K}^{n+1}  = \bar {\bf U}_{K}^{n} + \Delta t_n {\bf L}_{K} ( \widetilde{\bf U}_h^n  ),
\end{equation}
{\em As will be shown in Theorem \ref{thm:PP:2DMHD}, 
	because of the weak positivity of our locally divergence-free schemes, 
	the PP limiting procedure in {Step 1} can ensure 
	$\bar {\bf U}_{K}^{n+1} \in {\mathcal G}$, which meets the requirement of performing the PP limiting procedure
	in the next time-forward step.}

\vspace{2mm}
\noindent
{\bf Step 3.} Build the piecewise polynomial function 
${\bf U}_h^{n+1}$. For our $\mathbb{P}^{k}$-based DG method $({k}\ge 1)$, 
 the high-order ``moments'' of the polynomials $\{ {\bf U}_{K}^{n+1}({\bf x}) \}$ are evolved by \eqref{eq:2DDGUh} with   
${\bf U}_h^n$ replaced with $\widetilde{\bf U}_h^n$. 
For a high-order finite volume scheme, 
 the approximate solution polynomials $\{{\bf U}_{K}^{n+1}({\bf x})\}$ are reconstructed from the
cell averages $\big\{ \bar{\bf U}_{K}^{n+1} \big\}$ by a locally 
divergence-free method such that ${\bf U}_h^{n+1} \in {\mathbb V}_h^{k}  $. We here omit the details, because these does not affect the PP property of the  schemes.

\vspace{2mm}
\noindent
{\bf Step 4.} Set $t_{n + 1}  = t_n  + \Delta t_n$. If $t_{n + 1}  < T$, assign $n \leftarrow n+1$ and go to {Step 1}, where 
$\bar {\bf U}_{K}^{n+1} \in {\mathcal G}$ 
has been guaranteed in Step 2; otherwise, output results.

Now, we give the proof of the PP property of the above schemes,
i.e., prove that 
the cell average $\bar {\bf U}_{K}^{n+1}$ 
computed by the scheme \eqref{eq:PP2DMHD:cellaverage} always stays  $\mathcal G$ under the condition \eqref{eq:FVDGsuff}. 
It is worth emphasizing that 
the locally divergence-free spatial discretization 
and the penalty term in \eqref{eq:2DDGUh}  
are crucial for achieving the provably PP scheme, as will be seen from the proof of Theorem \ref{thm:PP:2DMHD}. 

To shorten the notations, we define 
\begin{equation*}
{\bf U}^{{\rm int}(K)}_{jq}:=
\widetilde{\bf U}_h^{n,{\rm int}(K)} ( {\bf x}_K^{(jq)} ),\qquad {\bf U}^{{\rm ext}(K)}_{jq}:=
\widetilde {\bf U}_h^{n,{\rm ext}(K)} ( {\bf x}_K^{(jq)} ),
\end{equation*}
where the dependence on $n$ is omitted. 
Let 
$$\sigma_{jq}^{K,\pm}:= \sigma^{\pm} 
\big(  { \bf  U }_{jq}^{{\rm int}(K)},{ \bf  U }_{jq}^{{\rm ext}(K)} ; {\bm \xi}^{(j)}_K \big).
$$
For $\forall K \in {\mathcal T}_h$, we define 
\begin{equation*}
\begin{split}
& \widehat \alpha_{jq}^{{\rm int}(K)}  := 
{\mathscr C}( {\bf U}_{jq}^{{\rm int}(K)};{\bm \xi}_K^{(j)} ) + 
\frac{2}{|\partial K|}
\sum_{i=1}^{N_K} |{\mathscr E}_K^i| \frac{ |{\bf B}_{jq}^{{\rm int}(K)}- {\bf B}_{iq}^{{\rm int}(K)}| }{ \sqrt{\rho_{jq}^{{\rm int}(K)}}  +	\sqrt{\rho_{iq}^{{\rm int}(K)}}  } 
\\ &  
+ \max \left\{ \big\langle {\bm \xi}^{(j)}_K, {\bf v}^{{\rm int}(K)}_{jq} \big\rangle , 
\frac{1}{|\partial K| }
\sum_{i=1}^{N_K} |{\mathscr E}_K^i| 
\left \langle
{\bm \xi}^{(j)}_K - {\bm \xi}^{(i)}_K, 
\frac{ \sqrt{\rho_{jq}^{{\rm int}(K)}} {\bf v }_{jq}^{{\rm int}(K)} + 
	\sqrt{\rho_{iq}^{{\rm int}(K)}} {\bf v }_{iq}^{{\rm int}(K)} } 
{ \sqrt{\rho_{jq}^{{\rm int}(K)}}  +	\sqrt{\rho_{iq}^{{\rm int}(K)}}  } \right \rangle \right\},
\end{split}
\end{equation*}	
with $|\partial K|:=\sum_{i=1}^{N_K} |{\mathscr E}_K^i| $ denoting the circumference of the cell $K$.

\begin{theorem} \label{thm:PP:2DMHD}
	Let the wave speeds in the HLL flux satisfy \eqref{eq:HLLsigma}.  	
	If the polynomial vectors $\{\widetilde{\bf U}_{K}^n({\bf x})\}$ are locally divergence-free and satisfy 
	the condition \eqref{eq:FVDGsuff}, 
	then the scheme \eqref{eq:PP2DMHD:cellaverage} preserves 
	$\bar{\bf U}_{K}^{n+1} \in {\mathcal G}$ under the CFL-type condition
	\begin{equation}\label{eq:CFL:2DMHD}
	\Delta t_n \frac{| {\mathscr E}_K^j |}{|K|} 
	\alpha_{jq}^{K} < \frac{ \varpi_{jq}}{\omega_q},\qquad \forall
	K\in{\mathcal T}_h,~1\le j \le N_K,~1\le q \le Q,
	\end{equation}
	with 
	\begin{equation}\label{eq:alphamax}
	\alpha_{jq}^{K}:=
	\widehat \alpha_{jq}^{{\rm int}(K)}  - \sigma_{jq}^{K,-}
	- \eta_K \big( {\bf x}_K^{(jq)} \big) 
	\Big( {\rho_{jq}^{{\rm int}(K)} } \Big)^{-\frac12} \left| 
	\big\langle 
	{\bm \xi}_K^{(j)},  
	{\bf B}_{jq}^{{\rm int}(K)} - {\bf B}_{jq}^{{\rm ext}(K)} 
	\big\rangle
	\right|.
	\end{equation}
\end{theorem}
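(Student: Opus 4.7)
The plan is to combine the special quadrature decomposition \eqref{eq:decomposition} with the HLL flux identity \eqref{eq:FH1} so as to reduce $\bar{\bf U}_K^{n+1}$ to a convex-like combination whose positivity can be deduced from the multi-state inequality Theorem \ref{lem:main}. First I would substitute \eqref{eq:decomposition} into the scheme \eqref{eq:PP2DMHD:cellaverage}, splitting $\bar{\bf U}_K^n$ into an interior-node part $\sum_q \widetilde{\varpi}_q \widetilde{\bf U}_K^n(\widetilde{\bf x}_K^{(q)})$ (which lies in $\mathcal{G}$ by the PP limiting condition \eqref{eq:FVDGsuff}) and a boundary-node part $\sum_{j,q}\varpi_{jq}{\bf U}_{jq}^{{\rm int}(K)}$. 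Then, using \eqref{eq:FH1} to write each HLL flux as $\sigma_{jq}^{K,-}{\bf H}_{jq}+\langle{\bm\xi}_K^{(j)},{\bf F}({\bf U}_{jq}^{{\rm int}(K)})\rangle-\sigma_{jq}^{K,-}{\bf U}_{jq}^{{\rm int}(K)}$, I would regroup the remaining terms so that the boundary contribution becomes, up to a positive normalization, the weighted sum $\sum_{j,q}s_{jq}\bigl(\alpha_{jq}^{K}{\bf U}_{jq}^{{\rm int}(K)}-\langle{\bm\xi}_K^{(j)},{\bf F}({\bf U}_{jq}^{{\rm int}(K)})\rangle\bigr)$ with weights $s_{jq}=|{\mathscr E}_K^j|\omega_q$ and the coefficients $\alpha_{jq}^K$ defined in \eqref{eq:alphamax}, plus a separate contribution coming from the $\sigma^-{\bf H}$ and penalty terms.

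Next, I would check that the hypothesis of Theorem \ref{lem:main} applies to the collection $\{s_{jq},{\bm\xi}_K^{(j)},{\bf U}_{jq}^{{\rm int}(K)}\}$: the identity $\sum_{j}|{\mathscr E}_K^j|{\bm\xi}_K^{(j)}={\bf 0}$ (closed polygon) together with $\sum_q\omega_q=1$ immediately gives $\sum_{j,q}s_{jq}{\bm\xi}_K^{(j)}={\bf 0}$, and by construction $\alpha_{jq}^{K}\ge\widehat{\alpha}_{jq}^{{\rm int}(K)}$ since $-\sigma_{jq}^{K,-}\ge 0$ and the last term in \eqref{eq:alphamax} is nonnegative. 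Theorem \ref{lem:main} then yields density positivity and the inequality \eqref{eq:IEQ:multi-states22}, whose right-hand side contains $-({\bf v}^*\!\cdot{\bf B}^*)\sum_{j,q}s_{jq}\langle{\bm\xi}_K^{(j)},{\bf B}_{jq}^{{\rm int}(K)}\rangle$. Here the locally divergence-free property of $\widetilde{\bf U}_K^n$ plays a pivotal role: applying the divergence theorem on $K$ (exactly, since the $Q$-point Gauss quadrature with $Q=k+1$ integrates polynomials of degree $\le 2k+1$) gives
\begin{equation*}
\sum_{j,q}s_{jq}\langle{\bm\xi}_K^{(j)},{\bf B}_{jq}^{{\rm int}(K)}\rangle=\int_{\partial K}\langle{\bm\xi},{\bf B}_h\rangle ds=\int_K\nabla\cdot{\bf B}_h\,d{\bf x}=0,
\end{equation*}
so the intra-cell divergence residual vanishes.

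The main obstacle will be handling the jumps in normal ${\bf B}$ across cell interfaces: the intermediate states ${\bf H}_{jq}$ appearing through \eqref{eq:FH1} are \emph{not} directly in $\overline{\mathcal{G}}_*$, but rather satisfy \eqref{eq:KeyIEQ1D4}, which leaves an unwanted ${\bf v}^*\!\cdot{\bf B}^*$ term proportional to $(\sigma_{jq}^{K,+}-\sigma_{jq}^{K,-})^{-1}\langle{\bm\xi}_K^{(j)},{\bf B}_{jq}^{{\rm ext}(K)}-{\bf B}_{jq}^{{\rm int}(K)}\rangle$. This is precisely where the upwind penalty factor $\eta_K({\bf x})=\sigma^{-}/(\sigma^{+}-\sigma^{-})$ is engineered to match: pairing the penalty term with ${\bf n}^*$ via the identity \eqref{eq:indentityS} produces a contribution $\eta_K\langle{\bm\xi}_K^{(j)},{\bf B}_{jq}^{{\rm ext}(K)}-{\bf B}_{jq}^{{\rm int}(K)}\rangle({\bf v}^*\!\cdot{\bf B}^*)$ whose $({\bf v}^*\!\cdot{\bf B}^*)$-part cancels exactly the residual left by \eqref{eq:KeyIEQ1D4} for each $(j,q)$, while its remaining part is controlled by the inequality \eqref{eq:widelyusedIEQ2}, absorbed by the extra summand $-\eta_K(\rho_{jq}^{{\rm int}(K)})^{-1/2}|\langle{\bm\xi}_K^{(j)},{\bf B}_{jq}^{{\rm int}(K)}-{\bf B}_{jq}^{{\rm ext}(K)}\rangle|$ in the definition \eqref{eq:alphamax} of $\alpha_{jq}^K$. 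Finally, the CFL condition \eqref{eq:CFL:2DMHD} is precisely what makes every coefficient in the resulting decomposition nonnegative, so by the convexity of $\overline{\mathcal{G}}_*$ (Lemma \ref{theo:MHD:convex}) together with Lemma \ref{theo:eqDefG}, the sum lies in $\mathcal{G}$, completing the proof.
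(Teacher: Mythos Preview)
Your overall strategy is correct and mirrors the paper's proof: split $\bar{\bf U}_K^n$ via the quadrature \eqref{eq:decomposition}, rewrite the HLL fluxes through \eqref{eq:FH1}, apply Theorem~\ref{lem:main} to the flux-minus-$\widehat\alpha{\bf U}$ part (whose divergence residual vanishes by the locally divergence-free property), bound the ${\bf H}$ contribution via \eqref{eq:KeyIEQ1D4}, and cancel the resulting $({\bf v}^*\cdot{\bf B}^*)$ residual against the penalty term using \eqref{eq:widelyusedIEQ2}, the cost of the latter being absorbed by the last summand in $\alpha_{jq}^K$.

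There is, however, one genuine gap in how you invoke Theorem~\ref{lem:main}. You propose to apply it to the \emph{full} collection $\{s_{jq},{\bm\xi}_K^{(j)},{\bf U}_{jq}^{{\rm int}(K)}\}_{1\le j\le N_K,\,1\le q\le Q}$ of size $N_KQ$, and then assert that $\alpha_{jq}^K\ge\widehat\alpha_{jq}^{{\rm int}(K)}$ suffices. But the threshold $\widehat\alpha$ produced by \eqref{eq:aPP2} for that full collection involves averages over \emph{all} pairs $(i,p)$, whereas the quantity $\widehat\alpha_{jq}^{{\rm int}(K)}$ defined just above the theorem sums only over $i=1,\dots,N_K$ with the \emph{same} quadrature index $q$. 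These two thresholds are in general incomparable, so the inequality $\alpha_{jq}^K\ge\widehat\alpha_{jq}^{{\rm int}(K)}$ does not verify the hypothesis of Theorem~\ref{lem:main} for the full collection, and you would end up proving a CFL condition different from \eqref{eq:CFL:2DMHD}. The fix---which is exactly what the paper does---is to apply Theorem~\ref{lem:main} \emph{separately for each fixed $q$}, i.e.\ with $N=N_K$ states $\{{\bf U}_{jq}^{{\rm int}(K)}\}_{1\le j\le N_K}$ and weights $s_j=|{\mathscr E}_K^j|$; then the threshold from \eqref{eq:aPP2} is precisely $\widehat\alpha_{jq}^{{\rm int}(K)}$, and summing the resulting $Q$ inequalities against the weights $\omega_q$ yields the needed bound on $\Pi_2$. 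With this correction your argument goes through.
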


Note that $\sigma_{jq}^{K,-}\le 0$ and $-1\le \eta_K \big( {\bf x}_K^{(jq)} \big)  \le 0$. The last term in \eqref{eq:alphamax} 
is relatively small compared to the maximum signal speed, and thus 
does not cause strict restriction on the time step-size; see 
the detailed justification and numerical evidence in \cite{WuShu2018}. 

We now present the proof of Theorem \ref{thm:PP:2DMHD}. 
\begin{proof}
	Recalling the identity  \eqref{eq:FH1} and Theorem \ref{thm:HLLflux}, one has 
	\begin{equation*}
	\begin{split}
	&	\hat{\bf F} \big(  { \bf  U }_{jq}^{{\rm int}(K)},{ \bf  U }_{jq}^{{\rm ext}(K)} ; {\bm \xi}^{(j)}_K \big) = \sigma_{jq}^{K,-} 
	{\bf H}_{jq}^K 
	+ \left\langle {\bm \xi}_K^{(j)}, {\bf F}( {\bf U}_{jq}^{{\rm int}(K)} )
	\right\rangle - \sigma_{jq}^{K,-}  {\bf U}_{jq}^{{\rm int}(K)}
	\\
	& =  \big( \widehat \alpha_{jq}^{{\rm int}(K)} - \sigma_{jq}^{K,-} \big)  {\bf U}_{jq}^{{\rm int}(K)}
	- \left(  \widehat \alpha_{jq}^{{\rm int}(K)} {\bf U}_{jq}^{{\rm int}(K)} - \big\langle {\bm \xi}_K^{(j)}, {\bf F}( {\bf U}_{jq}^{{\rm int}(K)} )
	\big\rangle \right)
	+
	\sigma_{jq}^{K,-} 
	{\bf H}_{jq}^K, 	
	\end{split}
	\end{equation*}
	where 
	${\bf H}_{jq}^K:={\bf H}( { \bf  U }_{jq}^{{\rm int}(K)},{ \bf  U }_{jq}^{{\rm ext}(K)};{\bm \xi}_K^{(j)} ) \in {\mathcal G}_\rho$ and for $\forall {\bf v}^*,{\bf B}^* \in \mathbb R^3$, 
	\begin{equation}\label{eq:Hnstar2}
	{\bf H}_{jq}^K \cdot {\bf n}^* 
	+ \frac{|{\bf B}^*|^2}{2}  \ge 
	-\frac{{\bf v}^* \cdot {\bf B}^*}{ \sigma_{jq}^{K,+} - \sigma_{jq}^{K,-} } \left \langle {\bm \xi}_K^{(j)},  
	{\bf B}_{jq}^{{\rm ext}(K)} - {\bf B}_{jq}^{{\rm int}(K)} 
	\right  \rangle.
	\end{equation}
	Plugging the above formula of $\hat{\bf F}$ into 
	\eqref{eq:PP2DMHD:cellaverage}, 
	we can rewrite  the scheme
	\eqref{eq:PP2DMHD:cellaverage} as  
	\begin{equation}\label{eq:decomp}
	\bar {\bf U}_{K}^{n+1}  
	= \bar {\bf U}_{K}^{n} + {\bf \Xi}_1 + 
	{\bf \Xi}_2 + {\bf \Xi}_3  + {\bf \Xi}_4,
	\end{equation}	
	with  
	\begin{align*}
	& {\bf \Xi}_1 := \frac{\Delta t_n}{|K|} 
	\sum_{j=1}^{N_K} \sum_{q=1}^Q |{\mathscr E}_K^j|
	\omega_q \left( \sigma_{jq}^{K,-}- \widehat \alpha_{jq}^{{\rm int}(K)}  \right)  {\bf U}_{jq}^{{\rm int}(K)}
	\\
	& {\bf \Xi}_2 := \frac{\Delta t_n}{|K|} 
	\sum_{j=1}^{N_K} \sum_{q=1}^Q |{\mathscr E}_K^j|
	\omega_q \left(  \widehat \alpha_{jq}^{{\rm int}(K)} {\bf U}_{jq}^{{\rm int}(K)} - \big\langle {\bm \xi}_K^{(j)}, {\bf F}( {\bf U}_{jq}^{{\rm int}(K)} )
	\big\rangle \right),
	\\
	&
	{\bf \Xi}_3 := 
	\frac{\Delta t_n}{|K|} 
	\sum_{j=1}^{N_K} \sum_{q=1}^Q |{\mathscr E}_K^j|
	\omega_q \left( - \sigma_{jq}^{K,-} \right) {\bf H}_{jq}^K,		
	\\
	& {\bf \Xi}_4 := \frac{\Delta t_n}{|K|} \sum_{j=1}^{N_K} \sum_{q=1}^Q 
	\big|{\mathscr E}_K^j\big| \omega_q \eta_K \big( {\bf x}_K^{(jq)} \big)
	\left\langle {\bm \xi}_K^{(j)}, 
	{\bf B}_{jq}^{{\rm ext}(K)} - {\bf B}_{jq}^{{\rm int}(K)} 
	\right \rangle {\bf S} \big(  {\bf U}_{jq}^{{\rm int}(K)} \big). 	
	\end{align*}	
	For $1\le q \le Q$, let 
	$$
	\overline{{\bf U}}^{{\rm int}(K)}_q := \frac{1}{ \sum\limits_{j=1}^{N_K} \big|{\mathscr E}_K^j\big| \widehat \alpha_{jq}^{{\rm int}(K)} } \sum_{j=1}^{N_K} |{\mathscr E}_K^j|
	\left(  \widehat \alpha_{jq}^{{\rm int}(K)} {\bf U}_{jq}^{{\rm int}(K)} - \big\langle {\bm \xi}_K^{(j)}, {\bf F}( {\bf U}_{jq}^{{\rm int}(K)} )
	\big\rangle \right),
	$$
	then ${\bf \Xi}_2$ can be reformulated as 
	\begin{equation}\label{eq:Xi2}
	{\bf \Xi}_2 = 
	\frac{\Delta t_n}{|K|} 
	\sum_{q=1}^Q 
	\omega_q \left( \sum_{j=1}^{N_K} |{\mathscr E}_K^j| \widehat \alpha_{jq}^{{\rm int}(K)} \right) \overline{{\bf U}}^{{\rm int}(K)}_q.
	\end{equation}
	Thanks to Theorem \ref{lem:main} and Eq.~\eqref{eq:sumxi0}, we have, for all $1\le q \le Q$, 
	$\overline{{\bf U}}^{{\rm int}(K)}_q \in {\mathcal G}_\rho$ and    
	\begin{equation*}
	\overline{\bf U}^{{\rm int}(K)}_q \cdot {\bf n}^* 
	+ \frac{|{\bf B}^*|^2}{2} \ge - \frac{ {\bf v}^* \cdot {\bf B}^*  }{{\sum\limits_{j = 1}^{N_K}  { |{\mathscr E}_K^j|    \widehat \alpha_{jq}^{{\rm int}(K)}   } }} 
	\sum_{j=1}^{ N_K } 
	|{\mathscr E}_K^j|   \big\langle {\bm \xi}^{(j)}_K, {\bf B}_{jq}^{{\rm int}(K)} \big\rangle, \quad \forall {\bf v}^*,{\bf B}^* \in {\mathbb{R}}^3.
	\end{equation*}
	Note $\sum_{j=1}^{N_K} |{\mathscr E}_K^j| \widehat \alpha_{jq}^{{\rm int}(K)} >0$ as indicated in Remark \ref{rem:sumsposi}. Therefore, ${\bf \Xi}_2 \in {\mathcal G}_\rho$, and 
	\begin{equation}\label{eq:proofPi2}
	\begin{aligned} 
	{\Pi}_2 &:= \frac{\Delta t_n}{|K|} 
	\sum_{q=1}^Q 
	\omega_q \left( \sum_{j=1}^{N_K} |{\mathscr E}_K^j| \widehat \alpha_{jq}^{{\rm int}(K)} \right) 
	\left( \overline{{\bf U}}^{{\rm int}(K)}_q 
	\cdot {\bf n}^* + \frac{|{\bf B}^*|^2}{2}
	\right)
	\\  
	&\ge 
	-\frac{\Delta t_n}{|K|} \big({\bf v}^*\cdot{\bf B}^*\big) \sum_{q=1}^Q 
	\omega_q 
	\sum_{j=1}^{ N_K } 
	|{\mathscr E}_K^j|   \big\langle {\bm \xi}^{(j)}_K, {\bf B}_{jq}^{{\rm int}(K)} \big\rangle.
	\end{aligned}
	\end{equation}
	It follows that 
	\begin{equation}\label{eq:proofPi2>0}
	\begin{aligned} 
	\Pi_2 & \ge -\frac{\Delta t_n}{|K|} \big({\bf v}^*\cdot{\bf B}^*\big) 
	\sum_{j=1}^{ N_K } 
	\int_{{\mathscr E}_K^j}   \big\langle {\bm \xi}^{(j)}_K, \widetilde{\bf B}_{K}^n \big\rangle ds
	\\
	& = -\frac{\Delta t_n}{|K|} \big({\bf v}^*\cdot{\bf B}^*\big) 
	\int_K \big(
	\nabla \cdot \widetilde{\bf B}_{K}^n \big)
	d {\bf x} = 0,
	\end{aligned}
	\end{equation}
	where we have sequentially used the exactness of the $Q$-point quadrature rule on 
	each interface for polynomials of degree up to $k$, Green's theorem and the locally divergence-free 
	property of the polynomial vector 
	$\widetilde{\bf B}_{K}^n ({\bf x}) $.

	Now, we first show $\bar\rho_K^{n+1}>0$. 
	Recalling that the first component of ${\bf S}({\bf U})$ is zero, we know that the first component of ${\bf \Xi}_4$ is zero. 
	Since ${\bf \Xi}_2 \in {\mathcal G}_\rho$ and ${\bf H}_{jq}^K \in {\mathcal G}_\rho$, $1\le j \le N_K$, $1\le q \le Q$, we deduce from \eqref{eq:decomp} that 
 \begin{align*}
\bar{ \rho }_K^{n+1} & > \bar{ \rho }_K^{n} + \frac{\Delta t_n}{|K|} 
\sum_{j=1}^{N_K} \sum_{q=1}^Q |{\mathscr E}_K^j|
\omega_q \left( \sigma_{jq}^{K,-}- \widehat \alpha_{jq}^{{\rm int}(K)}  \right)  \rho_{jq}^{{\rm int}(K)}
\\
\begin{split}
& = \sum_{ q=1 }^{\widetilde Q} \widetilde \varpi_q   
\widetilde \rho_K^{n} ( \widetilde {\bf x}_K^{(q)} )
+ \sum_{j=1}^{N_K} \sum_{q=1}^Q \varpi_{jq} \rho_{jq}^{{\rm int}(K)} 
\\
& \quad + \frac{\Delta t_n}{|K|} 
\sum_{j=1}^{N_K} \sum_{q=1}^Q |{\mathscr E}_K^j|
\omega_q \left( \sigma_{jq}^{K,-}- \widehat \alpha_{jq}^{{\rm int}(K)}  \right)  \rho_{jq}^{{\rm int}(K)}
\end{split}
\\
& \ge  \sum_{j=1}^{N_K} \sum_{q=1}^Q \omega_q \rho_{jq}^{{\rm int}(K)}
\left( 
\frac{ \varpi_{jq} }{\omega_q} - \frac{\Delta t_n}{|K|} 
\big| {\mathscr E}_K^j \big|  \left( \widehat \alpha_{jq}^{{\rm int}(K)} - \sigma_{jq}^{K,-}
\right) \right) \ge 0,
\end{align*}
	where we have used in the above equality the exactness of the quadrature rule \eqref{eq:decomposition} for polynomials of degree up to $k$,  and in the last inequality the condition 
	\eqref{eq:CFL:2DMHD}.

	We then prove for any ${\bf v}^*,{\bf B}^* \in \mathbb R^3$ that 
	$\bar {\bf U}_K^{n+1} \cdot {\bf n}^* 
	+ \frac{|{\bf B}^*|^2}{2} > 0$. 
	It follows from \eqref{eq:decomp} 
	that
	\begin{equation}\label{eq:decompEQ}
	\bar {\bf U}_K^{n+1} \cdot {\bf n}^* 
	+ \frac{|{\bf B}^*|^2}{2} 
	= \Pi_0 + \Pi_1 + \Pi_2 + \Pi_3 + \Pi_4,
	\end{equation}	
	where $\Pi_2 \ge 0$ is defined in \eqref{eq:proofPi2}, $\Pi_4 := {\bf \Xi}_4 \cdot {\bf n}^*$, and 
	\begin{align}\label{eq:Pi00}
	& \Pi_0 :=  \bar {\bf U}_K^{n} \cdot {\bf n}^* 
	+ \frac{|{\bf B}^*|^2}{2},
	\\ \label{eq:Pi11}
	& \Pi_1 := \frac{\Delta t_n}{|K|} 
	\sum_{j=1}^{N_K} \sum_{q=1}^Q |{\mathscr E}_K^j|
	\omega_q \left( \sigma_{jq}^{K,-}- \widehat \alpha_{jq}^{{\rm int}(K)}  \right) \left(  {\bf U}_{jq}^{{\rm int}(K)} \cdot {\bf n}^* 
	+ \frac{|{\bf B}^*|^2}{2}   \right),
	\\ \label{eq:Pi33}
	& \Pi_3 := \frac{\Delta t_n}{|K|} 
	\sum_{j=1}^{N_K} \sum_{q=1}^Q |{\mathscr E}_K^j|
	\omega_q \left( - \sigma_{jq}^{K,-} \right) 
	\left( {\bf H}_{jq}^K \cdot {\bf n}^* 
	+ \frac{|{\bf B}^*|^2}{2}   \right).
	\end{align}
	We now estimate the lower bounds of $\Pi_0$, $\Pi_3$ and 
	$\Pi_4$ respectively. 	
	Based on the exactness of the quadrature rule 
	\eqref{eq:decomposition} for polynomials of degree up to $k$, we can decompose the cell average as 
	$$
	\bar {\bf U}_K^{n}  
	= \frac{1}{|K|}\int_K \widetilde {\bf U}_h^n({\bf x}) d{\bf x}  = \sum_{ q=1 }^{\widetilde Q} \widetilde \varpi_q   
	\widetilde {\bf U}_h^n ( \widetilde {\bf x}_K^{(q)} )
	+ \sum_{j=1}^{N_K} \sum_{q=1}^Q \varpi_{jq} 
	{\bf U}_{jq}^{ {\rm int}(K) }.
	$$
	It follows that 
	\begin{equation}\label{eq:Pi0}
	\begin{aligned}
	\Pi_0 & = 
	\sum_{ q=1 }^{\widetilde Q} \widetilde \varpi_q   
	\left( 
	\widetilde {\bf U}_h^n ( \widetilde {\bf x}_K^{(q)} ) 
	\cdot {\bf n}^* 
	+ \frac{|{\bf B}^*|^2}{2}
	\right)
	+ \sum_{j=1}^{N_K} \sum_{q=1}^Q \varpi_{jq} 
	\left( 
	{\bf U}_{jq}^{ {\rm int}(K) } \cdot {\bf n}^* 
	+ \frac{|{\bf B}^*|^2}{2} \right) 
	\\
	& \ge \sum_{j=1}^{N_K} \sum_{q=1}^Q \varpi_{jq} 
	\left( 
	{\bf U}_{jq}^{ {\rm int}(K) } \cdot {\bf n}^* 
	+ \frac{|{\bf B}^*|^2}{2} \right),
	\end{aligned}
	\end{equation}	
	where the inequality follows from Lemma \ref{theo:eqDefG} and	$\widetilde {\bf U}_h^n ( \widetilde {\bf x}_K^{(q)} ) \in {\mathcal G}$ according to \eqref{eq:FVDGsuff}.
	Noting $\sigma_{jq}^{K,-}\le 0$ and using \eqref{eq:Hnstar2} give a lower bound of 
	$\Pi_3$ as 
	\begin{equation}\label{eq:lowerPi3}
	\begin{aligned}
	\Pi_3 & \ge \frac{\Delta t_n}{|K|} ( {\bf v}^* \cdot {\bf B}^* ) 
	\sum_{j=1}^{N_K} \sum_{q=1}^Q |{\mathscr E}_K^j|
	\omega_q  
	\frac{\sigma_{jq}^{K,-}}{ \sigma_{jq}^{K,+} - \sigma_{jq}^{K,-} } \left \langle {\bm \xi}_K^{(j)},  
	{\bf B}_{jq}^{{\rm ext}(K)} - {\bf B}_{jq}^{{\rm int}(K)} 
	\right  \rangle
	\\
	& =  \frac{\Delta t_n}{|K|} ( {\bf v}^* \cdot {\bf B}^* ) 
	\sum_{j=1}^{N_K} \sum_{q=1}^Q |{\mathscr E}_K^j|
	\omega_q  
	\eta_K \big( {\bf x}_K^{(jq)} \big) \left \langle {\bm \xi}_K^{(j)},  
	{\bf B}_{jq}^{{\rm ext}(K)} - {\bf B}_{jq}^{{\rm int}(K)} 
	\right  \rangle. 
	\end{aligned}
	\end{equation}
	A lower bound of $\Pi_4$ can be derived 
	by using the inequality \eqref{eq:widelyusedIEQ2} as 
	\begin{align*}
	\begin{split}
	\Pi_4 & \ge 
	\frac{\Delta t_n}{|K|} \sum_{j=1}^{N_K} \sum_{q=1}^Q 
	\big|{\mathscr E}_K^j\big| \omega_q 
	\Bigg[ \eta_K \big( {\bf x}_K^{(jq)} \big)
	\left\langle {\bm \xi}_K^{(j)}, 
	{\bf B}_{jq}^{{\rm int}(K)} - {\bf B}_{jq}^{{\rm ext}(K)} 
	\right \rangle  ( {\bf v}^* \cdot {\bf B}^* )
	\\
	& \quad  - \left(\rho_{jq}^{{\rm int}(K)} \right)^{-\frac12} 
	\left| \eta_K \big( {\bf x}_K^{(jq)} \big) 
	\big\langle 
	{\bm \xi}_K^{(j)},  
	{\bf B}_{jq}^{{\rm int}(K)} - {\bf B}_{jq}^{{\rm ext}(K)} 
	\big\rangle
	\right|
	\left(
	{\bf U}_{jq}^{{\rm int}(K)} 
	\cdot {\bf n}^* 
	+ \frac{|{\bf B}^*|^2}{2} 
	\right)
	\Bigg],
	\end{split}
	\end{align*}
	which, along with \eqref{eq:lowerPi3} and $\eta_K ( {\bf x}_K^{(jq)} )\le 0$, further imply 
	\begin{equation}\label{eq:Pi3Pi4}
	\begin{aligned}
	\Pi_3+\Pi_4 &\ge 
	\frac{\Delta t_n}{|K|} \sum_{j=1}^{N_K} \sum_{q=1}^Q 
	\bigg[
	\big|{\mathscr E}_K^j\big| \omega_q  \eta_K \big( {\bf x}_K^{(jq)} \big) \left(\rho_{jq}^{{\rm int}(K)} \right)^{-\frac12} 
	\\
	&\quad \times
	\left| 
	\big\langle 
	{\bm \xi}_K^{(j)},  
	{\bf B}_{jq}^{{\rm int}(K)} - {\bf B}_{jq}^{{\rm ext}(K)} 
	\big\rangle
	\right| \left(
	{\bf U}_{jq}^{{\rm int}(K)} 
	\cdot {\bf n}^* 
	+ \frac{|{\bf B}^*|^2}{2} 
	\right)\bigg].
	\end{aligned}
	\end{equation}
	Combining the lower bounds in \eqref{eq:proofPi2>0}, 
	\eqref{eq:Pi0}, \eqref{eq:Pi3Pi4}, 
	with \eqref{eq:decompEQ}, we obtain 
	\begin{align*}
	\begin{split}
	\bar {\bf U}_K^{n+1} \cdot {\bf n}^* 
	+ \frac{|{\bf B}^*|^2}{2}   & \ge \sum_{j=1}^{N_K} \sum_{q=1}^Q \varpi_{jq} 
	\left( 
	{\bf U}_{jq}^{ {\rm int}(K) } \cdot {\bf n}^* 
	+ \frac{|{\bf B}^*|^2}{2} \right) 
	\\
	& \quad + \frac{\Delta t_n}{|K|} 
	\sum_{j=1}^{N_K} \sum_{q=1}^Q |{\mathscr E}_K^j|
	\omega_q \Bigg[ \left( \sigma_{jq}^{K,-}- \widehat \alpha_{jq}^{{\rm int}(K)}  \right)  +   \eta_K \big( {\bf x}_K^{(jq)}  \big)
	\\
	& \quad  \times 
	\frac{ \left| 
		\big\langle 
		{\bm \xi}_K^{(j)},  
		{\bf B}_{jq}^{{\rm int}(K)} - {\bf B}_{jq}^{{\rm ext}(K)} 
		\big\rangle
		\right| }{ \sqrt{\rho_{jq}^{{\rm int}(K)} }} \left(
	{\bf U}_{jq}^{{\rm int}(K)} 
	\cdot {\bf n}^* 
	+ \frac{|{\bf B}^*|^2}{2} 
	\right)\Bigg]
	\end{split}
	\\
	& = \sum_{j=1}^{N_K} \sum_{q=1}^Q 
	\left( \varpi_{jq} -  \frac{\Delta t_n}{|K|} 
	\big| {\mathscr E}_K^j \big| \omega_q 
	\alpha_{jq}^{K}   \right)
	\left(
	{\bf U}_{jq}^{{\rm int}(K)} 
	\cdot {\bf n}^* 
	+ \frac{|{\bf B}^*|^2}{2} 
	\right) > 0,
	\end{align*}
	where the CFL condition \eqref{eq:CFL:2DMHD} 
	and ${\bf U}_{jq}^{{\rm int}(K)} \in { \mathcal G}={\mathcal G}_*$ have been used in the last inequality. 
	Therefore, we have 
	$$\bar {\bf U}_K^{n+1} \cdot {\bf n}^* 
	+ \frac{|{\bf B}^*|^2}{2} > 0, \quad \forall {\bf v}^*,{\bf B}^* \in \mathbb R^3,$$ 
	which, along with $\bar\rho_{K}^{n+1}>0$, imply $\bar{\bf U}_{K}^{n+1} \in {\mathcal G}$ by Lemma \ref{theo:eqDefG}. 
	
	The proof is completed. 
\end{proof}

Let us further understand the above PP DG schemes and the result in Theorem \ref{thm:PP:2DMHD} 
on two special meshes.

{\bf Example 1.} Assume that the mesh is rectangular with cells $\{[x_{i-\frac12},x_{i+\frac12}]\times [y_{\ell-\frac12},y_{\ell+\frac12}] \}$ 
and spatial step-sizes $\Delta x_i:=x_{i+\frac12}-x_{i-\frac12}$ and $\Delta y_\ell:=y_{\ell+\frac12}-y_{\ell-\frac12}$ in $x$- and $y$-directions respectively, where $(x,y)$ denotes the 2D spatial coordinate variables. 
Let ${\mathbb S}_i^x=\{ x_i^{(q)}  \}_{q=1}^Q$
and ${\mathbb S}_\ell^y=\{ y_\ell^{(q)}  \}_{q=1}^Q$ 
denote the $Q$-point Gauss quadrature points in the intervals 
$[x_{i-\frac12},x_{i+\frac12}]$ and 
$[y_{\ell-\frac12},y_{\ell+\frac12}]$ respectively. 
For the cell $K=[x_{i-\frac12},x_{i+\frac12}]\times [y_{\ell-\frac12},y_{\ell+\frac12}]$, 
the point set $\mathbb S_K$ in \eqref{eq:FVDGsuff} is given by (cf.~\cite{zhang2010,zhang2010b})
\begin{equation}\label{eq:RectS}
{\mathbb S}_K = \big(  \widehat{\mathbb S}_i^x \otimes 
{\mathbb S}_\ell^y \big) \cup \big(  {\mathbb S}_i^x \otimes 
\widehat{\mathbb S}_\ell^y \big),
\end{equation}
where 
$\widehat{\mathbb S}_i^x=\{ \widehat x_i^{(\mu)}  \}_{\mu=1}^{\tt L}$
and $\widehat {\mathbb S}_\ell^y=\{\widehat y_\ell^{(\mu)}  \}_{\mu=1}^{\tt L}$ 
denote the $\tt L$-point Gauss--Lobatto quadrature points in the intervals 
$[x_{i-\frac12},x_{i+\frac12}]$ and 
$[y_{\ell-\frac12},y_{\ell+\frac12}]$ respectively, 
where ${\tt L} \ge \frac{k+3}2$ such that the associated quadrature has algebraic accuracy of at least degree $k$.  
See Fig.\ \ref{fig:GS} for an illustration of $\mathbb S_K$ for $k=2$. 
With $\mathbb S_K$ in \eqref{eq:RectS}, a special quadrature (cf.~\cite{zhang2010,zhang2010b}) satisfying \eqref{eq:decomposition} can be constructed:
\begin{equation} \label{eq:U2Dsplit}
\begin{split}
\frac{1}{|K|}\int_K u({\bf x}) d {\bf x}
&= \frac{ \Delta x_i \widehat \omega_1}{ \Delta x_i + \Delta y_\ell } \sum \limits_{q = 1}^{ Q}  \omega_q \left(  
u\big( x_i^{(q)},y_{\ell-\frac12} \big) 
+ u\big( x_i^{(q)},y_{\ell+\frac12} \big) 
\right)
\\
&
+ \frac{ \Delta y_\ell \widehat \omega_1}{ \Delta x_i + \Delta y_\ell } \sum \limits_{q = 1}^{ Q}  \omega_q \left( u \big(x_{i-\frac12},y_\ell^{(q)}\big) +
u\big(x_{i+\frac12},y_\ell^{(q)}\big) \right)
\\
& + \frac{\Delta x_i}{\Delta x_i + \Delta y_\ell} \sum \limits_{\mu = 2}^{{\tt L}-1} \sum \limits_{q = 1}^{Q}  \widehat \omega_\mu \omega_q  u\big(  x_i^{(q)},\widehat y_\ell^{(\mu)} \big)
\\
& + \frac{\Delta y_\ell}{\Delta x_i + \Delta y_\ell}  \sum \limits_{\mu = 2}^{{\tt L}-1} \sum \limits_{q = 1}^{ Q}  \widehat \omega_\mu \omega_q  u\big(\widehat x_i^{(\mu)},y_\ell^{(q)}\big),\quad \forall u \in {\mathbb P}^k(K),
\end{split}
\end{equation}
where $\{\widehat w_\mu\}_{\mu=1}^{\tt L}$ are the weights of the $\tt L$-point Gauss--Lobatto quadrature.
If labeling the bottom, right, top and left adjacent cells of $K$ as  
$K_1$, $K_2$, $K_3$ and $K_4$, respectively, as illustrated in Fig.~\ref{fig:ILLMesh}, then \eqref{eq:U2Dsplit} implies 
\begin{equation*}
\varpi_{jq} = \frac{ \Delta x_i \widehat \omega_1 \omega_q}{ \Delta x_i + \Delta y_\ell }, \quad j=1,3; \qquad 
\varpi_{jq} = \frac{ \Delta y_\ell \widehat \omega_1 \omega_q}{ \Delta x_i + \Delta y_\ell },\quad j=2,4.
\end{equation*}
Then according to Theorem \ref{thm:PP:2DMHD}, the CFL condition \eqref{eq:CFL:2DMHD} for our PP DG schemes on rectangular meshes is 
\begin{equation*}
\Delta t_n
\left( \frac{1}{\Delta x_i} 
+ \frac{1}{\Delta y_\ell} \right) 
\alpha_{jq}^{K} < \widehat \omega_1= \frac{1}{{\tt L}({\tt L}-1)},\quad \forall
K\in{\mathcal T}_h,~1\le j \le 4,~1\le q \le Q.
\end{equation*}

{\bf Example 2.} Assume that the mesh is triangular. A special quadrature satisfying \eqref{eq:decomposition} was introduced in 
\cite{zhang2012maximum}, with the point set $\mathbb S_K$, denoted by local barycentric coordinates, as 
\begin{equation*}
\begin{aligned}
& \Bigg\{
\bigg( \frac12 + \zeta_q, (\frac12 + \widehat\zeta_\mu)(\frac12 - \zeta_q),
(\frac12-\widehat \zeta_{\mu})(\frac12 - \zeta_{q})   \bigg), \\
& \quad
\bigg( (\frac12 - \widehat\zeta_\mu)(\frac12 - \zeta_q), \frac12 + \zeta_q,
(\frac12+\widehat \zeta_{\mu})(\frac12 - \zeta_{q})   \bigg),
\\
&
\quad
\bigg( (\frac12 + \widehat\zeta_\mu)(\frac12 - \zeta_q),
(\frac12-\widehat \zeta_{\mu})(\frac12 - \zeta_{q}) , \frac12 + \zeta_q  \bigg),
1\le q \le Q, 1\le \mu \le {\tt L}
\Bigg\},
\end{aligned}
\end{equation*}
where $\{\zeta_q\}_{q=1}^Q$ and $\{\widehat \zeta_{\mu}\}_{\mu=1}^{\tt L}$ are the Gauss quadrature points and the Gauss--Lobatto quadrature points on $\big[-\frac12,\frac12 \big]$ respectively, and ${\tt L} \ge \frac{k+3}2$.
For this quadrature,  \eqref{eq:decomposition} becomes  (cf.~\cite{zhang2012maximum}) 
\begin{equation}\label{eq:tridecop}
\frac{1}{|K|} \int_K u({\bf x}) d {\bf x}
= \frac{2}{3} \widehat \omega_1 \sum_{j=1}^3  \sum_{ q =1 }^Q \omega_q u ({\bf x}_{K}^{(jq)}) 
+ \sum_{ q=1 }^{\widetilde Q} \widetilde \varpi_q   
u ( \widetilde {\bf x}_K^{(q)} ), \quad \forall u \in {\mathbb P}^k(K),
\end{equation}
where $\widetilde Q= 3 ( {\tt L} -2 ) Q$. The specific expressions of the weights $\widetilde \varpi_q$ at quadrature points in the interior of $K$ are omitted here. Eq.~\eqref{eq:tridecop} implies 
$$
\varpi_{jq} = \frac{2}{3} \widehat \omega_1 \omega_q,\quad 1\le j \le 3.
$$
Then, according to Theorem \ref{thm:PP:2DMHD}, the CFL condition \eqref{eq:CFL:2DMHD} for our PP DG schemes on triangular meshes is 
\begin{equation*}
\Delta t_n \frac{| {\mathscr E}_K^j |}{|K|} 
\alpha_{jq}^{K} < \frac{2}{3} \widehat \omega_1= \frac{2}{3 {\tt L}( {\tt L}-1)},\quad \forall
K\in{\mathcal T}_h,~1\le j \le 3,~1\le q \le Q.
\end{equation*}

\begin{remark}
	Our PP schemes have two features: the locally divergence-free spatial discretization and the penalty term properly discretized from the Godunov--Powell source term. 
	The former feature ensures zero divergence of numerical magnetic field within each cell, while the latter 
	controls the divergence error across the cell interfaces.  
	The proof of Theorem \ref{thm:PP:2DMHD} clearly shows 
	that, thanks to these two features, the PP property of the proposed schemes is obtained without requiring the DDF condition, 
	which is needed for the PP property of the conservative schemes without the penalty term, see the following theorem. 	   
\end{remark}

The scheme \eqref{eq:PP2DMHD:cellaverage} without the penalty term becomes 
\begin{equation}\label{eq:2DMHD:cellaverage:con}
\bar {\bf U}_{K}^{n+1}  = \bar {\bf U}_{K}^{n}
- \frac{\Delta t_n}{|K|} 
\sum_{j=1}^{N_K} \sum_{q=1}^Q |{\mathscr E}_K^j|
\omega_q
\hat{\bf F} \left( {\bf U}_{jq}^{{\rm int}(K)}, {\bf U}_{jq}^{{\rm ext}(K)} ; {\bm \xi}^{(j)}_{K} \right),
\end{equation} 
which is a conservative finite volume scheme or the scheme satisfied by the cell averages of a DG 
method for the conservative MHD system \eqref{eq:MHD}. 
As mentioned before, even the first-order version ($k=0$) of the scheme \eqref{eq:2DMHD:cellaverage:con}, i.e., the scheme \ref{eq:1stscheme}, is generally not PP unless a DDF condition is satisfied by the numerical magnetic field. 
The DDF condition can also be generalized to high-order schemes ($k\ge 1$), as shown in Theorem \ref{thm:2DPPcon}.

\begin{theorem}\label{thm:2DPPcon}
	Let the wave speeds in the HLL flux satisfy \eqref{eq:HLLsigma}.  	
	If the polynomial vectors $\{\widetilde{\bf U}_{K}^n({\bf x})\}$ satisfy 
	the condition \eqref{eq:FVDGsuff}, then 
	under the CFL-type condition
	\begin{equation}\label{eq:CFL:2DMHDcon}
	\Delta t_n \frac{| {\mathscr E}_K^j |}{|K|} 
	\left( \widehat \alpha_{jq}^{{\rm int}(K)}  - \sigma_{jq}^{K,-} \right) < \frac{ \varpi_{jq}}{\omega_q},\qquad \forall
	K\in{\mathcal T}_h,~1\le j \le N_K,~1\le q \le Q,
	\end{equation}
	the solution $\bar{\bf U}_{K}^{n+1}$ of the scheme \eqref{eq:2DMHD:cellaverage:con} satisfies 
	that  
	$\bar \rho_{K}^{n+1} >0$ 
	and  
	\begin{equation}\label{eq:lowercalE}
	{\mathcal E}( \bar{\bf U}_{K}^{n+1} ) > -
	\Delta t_n 
	\left( \bar{ \rho}_K^{n+1} \right)^{-1} ( \bar{ \bf  m}_K^{n+1} \cdot \bar{ \bf  B }_K^{n+1} ) \left( \mbox{\rm div} _{K} \widetilde {\bf B}^n_h \right),
	\end{equation}
	where 
	$\mbox{\rm div} _{K} \widetilde {\bf B}^n_h$ is the discrete divergence defined by 
	\begin{equation}\label{eq:DefDisDivBH}
	\mbox{\rm div} _{K} \widetilde {\bf B}^n_h := \frac{1}{|K|}
	\sum_{j=1}^{N_K} \sum_{q=1}^Q \big| {\mathscr E}_{K}^{j} \big| \omega_q
	\left\langle {\bm \xi}_{K}^{(j)},  
	\frac{ \sigma_{jq}^{K,+}  {\bf B}_{jq}^{{\rm int}(K)}  - \sigma_{jq}^{K,-} {\bf B}_{jq}^{ {\rm ext}(K) }  }{
		\sigma_{jq}^{K,+} - \sigma_{jq}^{K,-} 
	} \right\rangle.
	\end{equation} 
	Furthermore, if the magnetic field $\widetilde{\bf B}_h^n({\bf x})$ satisfies the DDF condition
	\begin{equation}\label{eq:DDFhigh}
	\mbox{\rm div} _{K} \widetilde {\bf B}^n_h=0,
	\end{equation}
	then 
	$\bar {\bf U}_K^{n+1}\in {\mathcal G}$.
\end{theorem}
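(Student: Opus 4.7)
The plan is to follow the proof structure of Theorem \ref{thm:PP:2DMHD}, but since the penalty term is now absent from the scheme \eqref{eq:2DMHD:cellaverage:con}, the residual divergence contribution can no longer be cancelled; instead, I will track it carefully and show that it reassembles exactly into $\mbox{\rm div}_{K}\widetilde{\bf B}_h^n$ as defined in \eqref{eq:DefDisDivBH}. First, I would invoke identity \eqref{eq:FH1} from Theorem \ref{thm:HLLflux} to rewrite each HLL numerical flux in terms of the intermediate state ${\bf H}_{jq}^K = {\bf H}({\bf U}_{jq}^{{\rm int}(K)},{\bf U}_{jq}^{{\rm ext}(K)};{\bm \xi}_K^{(j)})$, and then decompose the scheme as $\bar{\bf U}_K^{n+1}=\bar{\bf U}_K^n+{\bf \Xi}_1+{\bf \Xi}_2+{\bf \Xi}_3$, mirroring \eqref{eq:decomp} without the penalty term ${\bf \Xi}_4$. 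Here ${\bf \Xi}_2$ is a convex-combination-like expression built from $\widehat\alpha_{jq}^{{\rm int}(K)}{\bf U}_{jq}^{{\rm int}(K)} - \langle {\bm \xi}_K^{(j)},{\bf F}({\bf U}_{jq}^{{\rm int}(K)})\rangle$, to which Theorem \ref{lem:main} can be applied quadrature-point-by-quadrature-point.

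For the density, the first component of the flux is $\rho v_i$ and is independent of ${\bf B}$, so ${\bf H}_{jq}^K \in {\mathcal G}_\rho$ together with the decomposition \eqref{eq:decomposition} of $\bar\rho_K^n$ and the CFL condition \eqref{eq:CFL:2DMHDcon} yield $\bar\rho_K^{n+1}>0$ by a direct convex-combination argument, exactly as in the density part of the proof of Theorem \ref{thm:PP:2DMHD}. For the ``energy'' component, I write out $\bar{\bf U}_K^{n+1}\cdot{\bf n}^*+|{\bf B}^*|^2/2$ as $\Pi_0+\Pi_1+\Pi_2+\Pi_3$ along the lines of \eqref{eq:Pi00}--\eqref{eq:Pi33}. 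Using the admissibility ${\bf U}_{jq}^{{\rm int}(K)}\in{\mathcal G}$ guaranteed by \eqref{eq:FVDGsuff} together with the decomposition \eqref{eq:decomposition}, the sum $\Pi_0+\Pi_1$ becomes a strictly positive combination of $({\bf U}_{jq}^{{\rm int}(K)}\cdot{\bf n}^*+|{\bf B}^*|^2/2)$ under \eqref{eq:CFL:2DMHDcon}. For $\Pi_2$, Theorem \ref{lem:main} delivers the bound \eqref{eq:proofPi2} with a right-hand side proportional to $\sum_j|\mathscr E_K^j|\langle{\bm \xi}_K^{(j)},{\bf B}_{jq}^{{\rm int}(K)}\rangle$; for $\Pi_3$, the inequality \eqref{eq:KeyIEQ1D4} applied to each ${\bf H}_{jq}^K$ contributes a term proportional to $\eta_K\langle{\bm \xi}_K^{(j)},{\bf B}_{jq}^{{\rm ext}(K)}-{\bf B}_{jq}^{{\rm int}(K)}\rangle$.

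The crux of the argument is an algebraic simplification: combining the two bounds for $\Pi_2$ and $\Pi_3$ and substituting $\eta_K = \sigma_{jq}^{K,-}/(\sigma_{jq}^{K,+}-\sigma_{jq}^{K,-})$ gives
$$\Pi_2+\Pi_3 \ge -\Delta t_n\,({\bf v}^*\cdot {\bf B}^*)\,\mbox{\rm div}_K\widetilde{\bf B}_h^n,$$
since the bracketed coefficient reduces exactly to $\bigl\langle{\bm \xi}_K^{(j)},\tfrac{\sigma_{jq}^{K,+}{\bf B}_{jq}^{{\rm int}(K)}-\sigma_{jq}^{K,-}{\bf B}_{jq}^{{\rm ext}(K)}}{\sigma_{jq}^{K,+}-\sigma_{jq}^{K,-}}\bigr\rangle$, matching \eqref{eq:DefDisDivBH}. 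Therefore
$$\bar{\bf U}_K^{n+1}\cdot {\bf n}^* + \tfrac{|{\bf B}^*|^2}{2} > -\Delta t_n\,({\bf v}^*\cdot{\bf B}^*)\,\mbox{\rm div}_K\widetilde{\bf B}_h^n, \quad \forall {\bf v}^*,{\bf B}^*\in\mathbb R^3.$$
Choosing ${\bf v}^*=\bar{\bf m}_K^{n+1}/\bar\rho_K^{n+1}$ and ${\bf B}^*=\bar{\bf B}_K^{n+1}$ converts the left-hand side into ${\mathcal E}(\bar{\bf U}_K^{n+1})$ by the identity in Lemma \ref{theo:eqDefG} (as discussed for \eqref{eq:IDT1}), yielding exactly \eqref{eq:lowercalE}. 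Under the DDF condition \eqref{eq:DDFhigh}, the divergence term vanishes and the lower bound is strictly positive for every $({\bf v}^*,{\bf B}^*)$, so $\bar{\bf U}_K^{n+1}\in{\mathcal G}_*={\mathcal G}$.

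The main obstacle, and the genuinely nontrivial step, is recognizing and carrying out the algebraic identity that turns $\Pi_2+\Pi_3$ into precisely $-\Delta t_n({\bf v}^*\cdot{\bf B}^*)\mbox{\rm div}_K\widetilde{\bf B}_h^n$; without this cancellation the ``int'' and ``ext'' contributions would not combine into a single interpretable discrete divergence, and the connection between the PP property and the DDF condition on general meshes would not emerge cleanly. Everything else—the density positivity, the CFL-driven convex-combination estimate of $\Pi_0+\Pi_1$, and the final reduction to ${\mathcal E}(\bar{\bf U}_K^{n+1})$ via the equivalent form of ${\mathcal G}$—is structurally parallel to the proof of Theorem \ref{thm:PP:2DMHD}, with the locally divergence-free/penalty pair replaced by the bookkeeping of one residual divergence term.
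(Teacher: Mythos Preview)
Your proposal is correct and follows essentially the same approach as the paper's own proof: the paper likewise drops ${\bf \Xi}_4$ from the decomposition in Theorem~\ref{thm:PP:2DMHD}, reuses the density argument verbatim, writes $\bar{\bf U}_K^{n+1}\cdot{\bf n}^*+|{\bf B}^*|^2/2=\Pi_0+\Pi_1+\Pi_2+\Pi_3$, combines the bounds \eqref{eq:proofPi2}, \eqref{eq:Pi0}, \eqref{eq:lowerPi3}, and then specializes $({\bf v}^*,{\bf B}^*)$ to obtain \eqref{eq:lowercalE}. Your explicit identification of the algebraic identity $(1+\eta_K){\bf B}^{{\rm int}}-\eta_K{\bf B}^{{\rm ext}}=(\sigma^+{\bf B}^{{\rm int}}-\sigma^-{\bf B}^{{\rm ext}})/(\sigma^+-\sigma^-)$ is exactly the mechanism behind the paper's terser ``combining the estimates'' step.
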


\begin{proof}
	Since the first component of ${\bf S}({\bf U})$ 
	is zero, the discrete equations for $\rho$ 
	in the two schemes \eqref{eq:PP2DMHD:cellaverage} 
	and \eqref{eq:2DMHD:cellaverage:con} are the same. Hence $\bar \rho_{K}^{n+1} >0$ directly follows from the proof of Theorem \ref{thm:2DPPcon}. 
	
	Similar to the proof of Theorem \ref{thm:2DPPcon}, 
	it can be derived for any ${\bf v}^*,{\bf B}^* \in \mathbb R^3$ that 
	\begin{equation}\label{eq:decompEQ:con}
	\bar {\bf U}_K^{n+1} \cdot {\bf n}^* 
	+ \frac{|{\bf B}^*|^2}{2} 
	= \Pi_0 + \Pi_1 + \Pi_2 + \Pi_3,
	\end{equation}	
	where $\Pi_2$ 
	is defined \eqref{eq:proofPi2}, 
	and $\Pi_0$, $\Pi_1$ and $\Pi_3$ are defined in 
	\eqref{eq:Pi00}--\eqref{eq:Pi33}, respectively. 
	Combining the estimates \eqref{eq:proofPi2}, 
	\eqref{eq:Pi0} and \eqref{eq:lowerPi3}, 
	gives   
	\begin{equation*}
	\begin{split}
	& \bar {\bf U}_K^{n+1} \cdot {\bf n}^* 
	+ \frac{|{\bf B}^*|^2}{2}   \ge - \Delta t_n ( {\bf v}^* \cdot {\bf B}^* ) \left( \mbox{\rm div} _{K} \widetilde {\bf B}^n_h\right)
	\\
	& \quad  
	+
	\sum_{j=1}^{N_K} \sum_{q=1}^Q 
	\left( \varpi_{jq} -  \frac{\Delta t_n}{|K|} 
	\big| {\mathscr E}_K^j \big| \omega_q 
	\big( \widehat \alpha_{jq}^{{\rm int}(K)}  - \sigma_{jq}^{K,-} \big)   \right)
	\left(
	{\bf U}_{jq}^{{\rm int}(K)} 
	\cdot {\bf n}^* 
	+ \frac{|{\bf B}^*|^2}{2} 
	\right)
	\\
	& \quad > - \Delta t_n ( {\bf v}^* \cdot {\bf B}^* ) \left( \mbox{\rm div} _{K} \widetilde {\bf B}^n_h \right).
	\end{split}
	\end{equation*}
	Taking ${\bf v}^*=\bar{ \bf  m}_K^{n+1}/\bar{ \rho}_K^{n+1}$ 
	and ${\bf B}^*=\bar{ \bf  B }_K^{n+1}$ gives \eqref{eq:lowercalE}.
	
	Under the DDF condition \eqref{eq:DDFhigh}, the estimate $\eqref{eq:lowercalE}$ becomes 
	${\mathcal E}( \bar {\bf U}_K^{n+1} )>0$, 
	which along with $\bar {\rho}_K^{n+1}>0$ imply 
	$\bar {\bf U}_K^{n+1}\in {\mathcal G}$. 
\end{proof}

\begin{remark}
	In practice, it is not easy to meet the DDF condition 
	\eqref{eq:DDFhigh}, because it depends on the limiting values of the magnetic field evaluated at the 
	adjacent cells of $K$. The locally divergence-free property cannot ensure the DDF condition 
	\eqref{eq:DDFhigh}. 
	If $ {\bf B}_h^n({\bf x})$ is globally divergence-free, that is,  it is locally divergence-free in each cell with normal magnetic component continuous across the cell interfaces, then by Green's theorem, the DDF condition
	$\mbox{\rm div} _{K}  {\bf B}^n_h=0$ is satisfied naturally. 
	However, the usual PP limiting technique (cf.~\cite{zhang2010b,cheng}) with local scaling can destroy the globally divergence-free property of $ {\bf B}_h^n({\bf x})$. Therefore, it is nontrivial and still open to devise a limiting procedure that can enforce 
	the two conditions \eqref{eq:FVDGsuff} and \eqref{eq:DDFhigh} 
	at the same time. 
\end{remark}

\begin{remark}\label{eq:remark3}
	We can split the discrete divergence into two parts:
	\begin{equation*}
	\begin{split}
	\mbox{\rm div} _{K} \widetilde {\bf B}^n_h &= \frac{1}{|K|}
	\sum_{j=1}^{N_K} \sum_{q=1}^Q \big| {\mathscr E}_{K}^{j} \big| \omega_q
	\left\langle {\bm \xi}_{K}^{(j)},  
	{\bf B}_{jq}^{{\rm int}(K)}  \right\rangle
	\\
	& \quad +\frac{1}{|K|}
	\sum_{j=1}^{N_K} \sum_{q=1}^Q \big| {\mathscr E}_{K}^{j} \big| \omega_q \eta_K( {\bf x}_K^{(jq)} )
	\left\langle {\bm \xi}_{K}^{(j)},  
	{\bf B}_{jq}^{{\rm int}(K)}  - {\bf B}_{jq}^{ {\rm ext}(K) }  \right\rangle.
	\end{split}
	\end{equation*} 	
	The first part becomes zero if the locally divergence-free discretization is used, while the second part, 
	which involves the divergence error across the cell interfaces, can be handled by including our properly discretized Godunov--Powell source term. As we have seen in the above analysis, a coupling of these two divergence-controlling techniques 
		is very important in our PP DG methods, because they exactly 
		contribute the discrete divergence terms 
		which are absent in a standard multidimensional DG scheme  \eqref{eq:2DMHD:cellaverage:con} but 
		crucial for ensuring the PP property. 
\end{remark}

\begin{remark}
	In the above discussions, we restrict ourselves to the first-order accurate forward Euler time discretization. One can also use SSP high-order accurate time discretizations (cf.~\cite{Gottlieb2009})
	to solve the ODE system $\frac{d}{dt}{\bf U}_h = {\bf L} ({\bf U}_h)$. 
	For instance, the explicit third-order SSP Runge-Kutta method reads 
	\begin{equation}\label{eq:SSP3}
	\begin{split}
	& {\bf U}_h^{*} = \widetilde {\bf U}_h^{n}+\Delta t_n {\bf L} ( \widetilde {\bf U}_h^{n} ),
	\\
	& {\bf U}_h^{**} = \frac34 \widetilde {\bf U}_h^{n}
	+ \frac14 \Big(  \widetilde {\bf U}_h^{*}+\Delta t_n {\bf L} ( \widetilde {\bf U}_h^{*} ) \Big),
	\\
	&{\bf U}_h^{n+1} = \frac13 \widetilde {\bf U}_h^{n}
	+ \frac23 \Big(  \widetilde {\bf U}_h^{**}+\Delta t_n {\bf L} ( \widetilde {\bf U}_h^{**} ) \Big),
	\end{split}
	\end{equation}
	where the approximate solution functions with ``$\sim$'' at above denote the PP limited solution.  
	Based on the the convexity of $\mathcal G$ and that an SSP method is a convex combination of the forward Euler  method, 
	the PP property of the full high-order scheme also holds.
\end{remark}











\section{Numerical tests}\label{app:num}
In this section, 
we present some numerical results of the proposed PP DG schemes 
for several extreme MHD problems involving low density, low pressure, low plasma-beta $\beta:=2p/|{\bf B}|^2$, and/or 
strong discontinuity, 
to verify the provenly PP property and to demonstrate the effectiveness of our HLL flux and the proposed discretization 
of the Godunov--Powell source term. 
The tests below are conducted 
on uniform 1D meshes or 2D rectangular meshes, 
while the implementation of 
our PP schemes on unstructured triangular meshes 
is ongoing and will be reported in a separate paper. 
Without loss of generality, we focus on the proposed PP third-order (${\mathbb P}^2$) DG methods with the SSP Runge-Kutta time discretization \eqref{eq:SSP3}. 
Although our analysis has suggested a CFL condition for guaranteeing the provably PP property, we observe that our PP DG methods still work robustly and maintain the desired positivity 
with suitably larger time step-size in the tested cases. 
Unless otherwise stated, 
the following computations are restricted to the ideal EOS $p=(\gamma-1)\rho e$ with $\gamma=1.4$, and the CFL number is set as $0.15$.  
The HLL flux is always used with the local wave speeds given by \eqref{eq:choiceHLL}. 

\subsection{Smooth problems}
A 1D and a 2D smooth problems are respectively
solved on the uniform meshes of $M^d$ cells 
to test the accuracy of the PP third-order DG methods. 
The 1D problem is similar to the one simulated in \cite{zhang2010b} for testing the PP
DG scheme for the Euler equations, and has the exact solution  
$$
(\rho,{\bf v},p,{\bf B})(x,t)=( 1+0.99\sin(x-t),~1,~,0,~,0,~1,~0.1,~0,~0 ),\quad x\in[0,2\pi],~t\ge 0,
$$
which describes a sine wave propagating with low density. The 2D problem is the vortex problem with the same setup as in \cite{Christlieb} 
 and has a extremely low pressure (about $5.3 \times 10^{-12}$) in the vortex center; the adiabatic index $\gamma=\frac53$; the computational domain
 is $[−10, 10]^2$ with periodic boundary conditions.
Fig.~\ref{fig:smooth} displays the numerical errors
obtained by our third-order DG scheme at different mesh resolutions. It is seen that the expected convergence order is achieved. 

	\begin{figure}[htb]
	\centering
	{\includegraphics[width=0.49\textwidth]{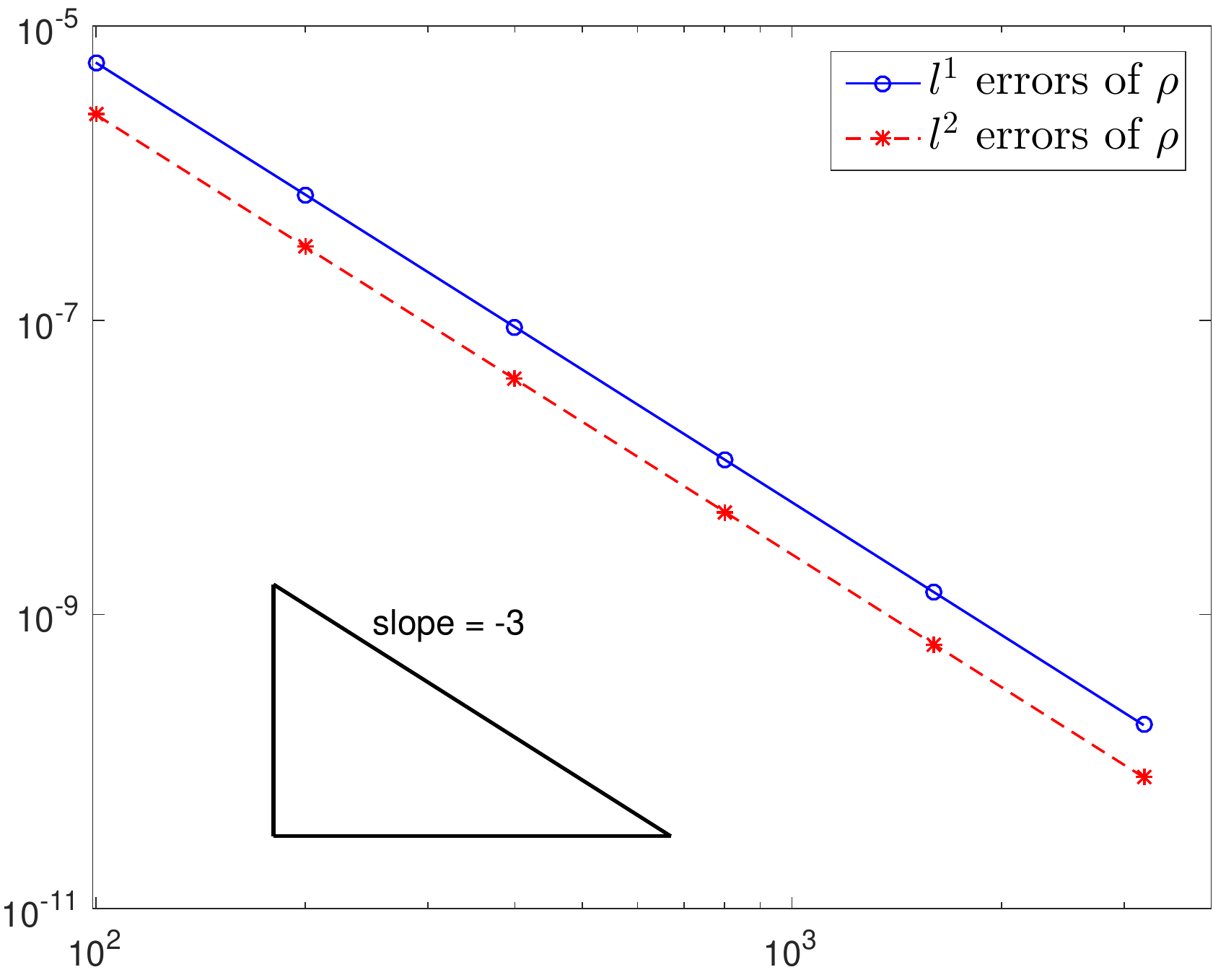}}
	{\includegraphics[width=0.49\textwidth]{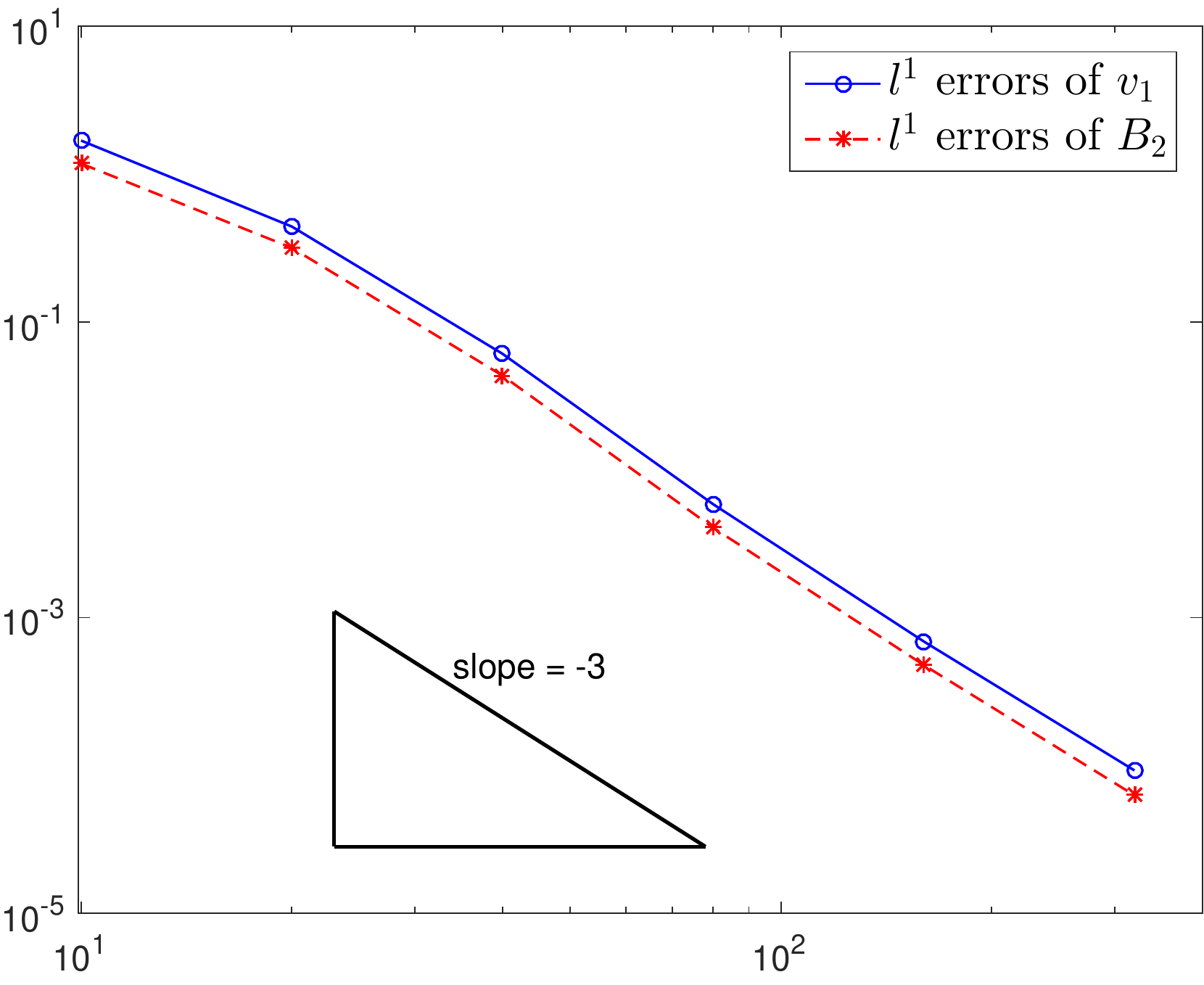}}
	\caption{\small Numerical errors obtained by the PP third-order DG scheme at different mesh resolutions
		with $M^d$ cells. Left: the 1D smooth problem at $t=0.1$; right: the 2D
		smooth problem at $t = 0.05$. The horizontal axis denotes the value of $M$.} 
	\label{fig:smooth}
\end{figure}

Next, we simulate several MHD problems involving discontinuities. 
 Before the PP limiting procedure, the WENO limiter \cite{Qiu2005} is also implemented with the aid of the local characteristic decomposition,  
to enhance the numerical stability of high-order DG schemes in resolving the strong discontinuities and their interactions. 
The 2D WENO limiter is combined with the locally divergence-free reconstruction approach in \cite{ZhaoTang2017}. 
The WENO limiter is only employed adaptively  
in the ``trouble'' cells detected by the indicator of \cite{Krivodonova}.

\subsection{Riemann problems}

	Two 1D Riemann problems are solved. 
	The first is a 1D vacuum shock tube
	problem (cf.~\cite{Christlieb}) with the initial data 
	\begin{equation*}
	(\rho,{\bf v},p,{\bf B}) (x,0)
	=\begin{cases}
	(10^{-12},~0,~0,~0,~10^{-12},~0,~0,~0), \quad & x<0,
	\\
	(1,~0,~0,~0,~0.5,~0,~1,~0), \quad & x>0.
	\end{cases}
	\end{equation*}
	It is used to demonstrate that our 1D PP DG scheme can handle extremely low density and pressure. 
	The adiabatic index $\gamma=\frac53$, and the computational domain 
	is set as $[-0.5,0.5]$. 
	Fig.~\ref{fig:1DRP1} shows the density and pressure of the numerical solution on, respectively, the mesh of $200$ cells as well as those of 
	a highly resolved solution with $5000$ cells at time $t=0.1$. 
	One can observe that the solutions of low
	resolution and high resolution are in quite good agreement. 
	We confirm that the low pressure and the low density are both correctly captured by comparing with the results in \cite{Christlieb}. 
	The PP third-order DG code is very robust during the simulation. 
	It is noticed that, if the PP limiter is not used to enforce the condition 
	\eqref{eq:1DDG:con2}, the code breaks down within a few time steps due to unphysical solution. 
	
		\begin{figure}[htb]
		\centering
		\includegraphics[width=0.49\textwidth]{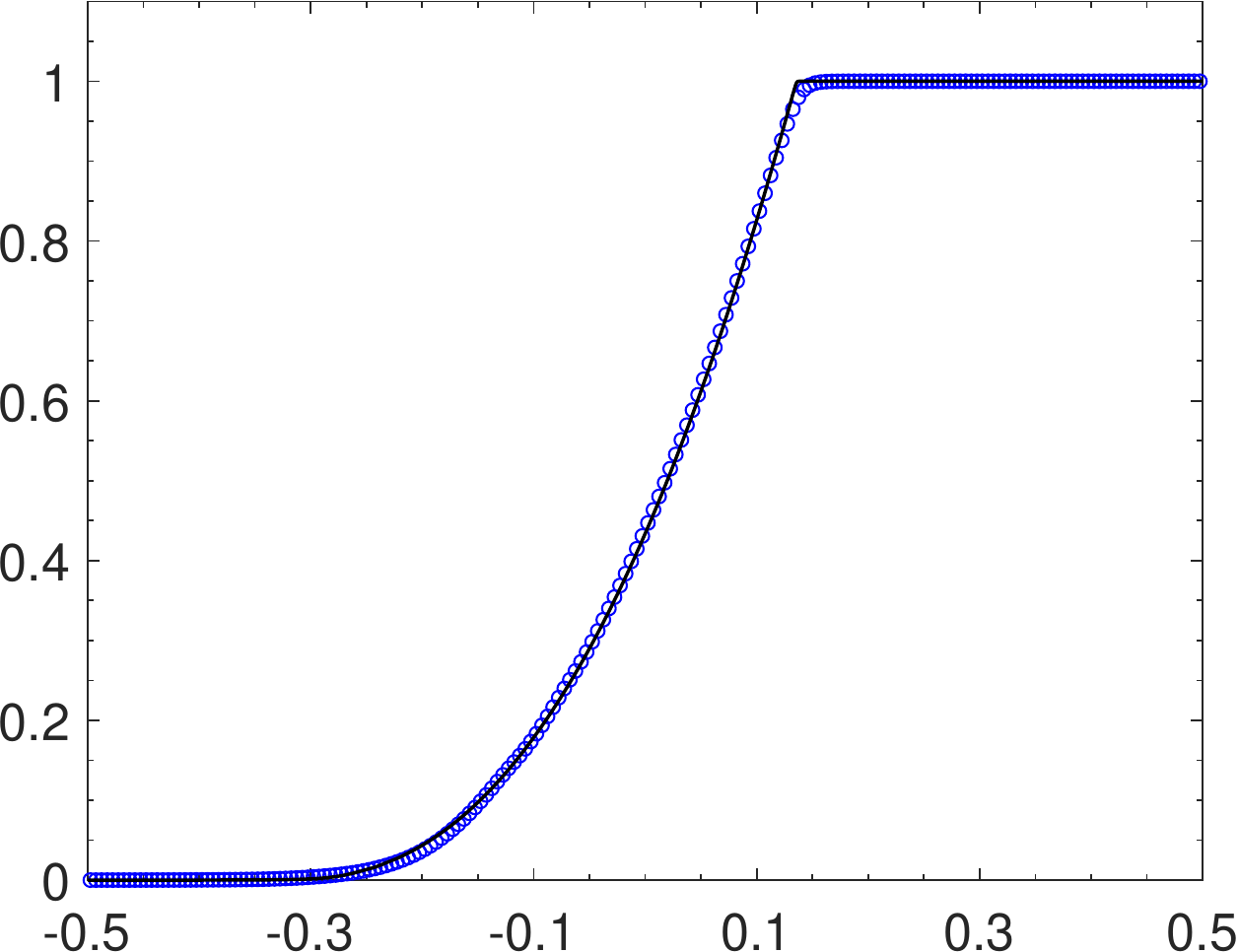}
		\includegraphics[width=0.49\textwidth]{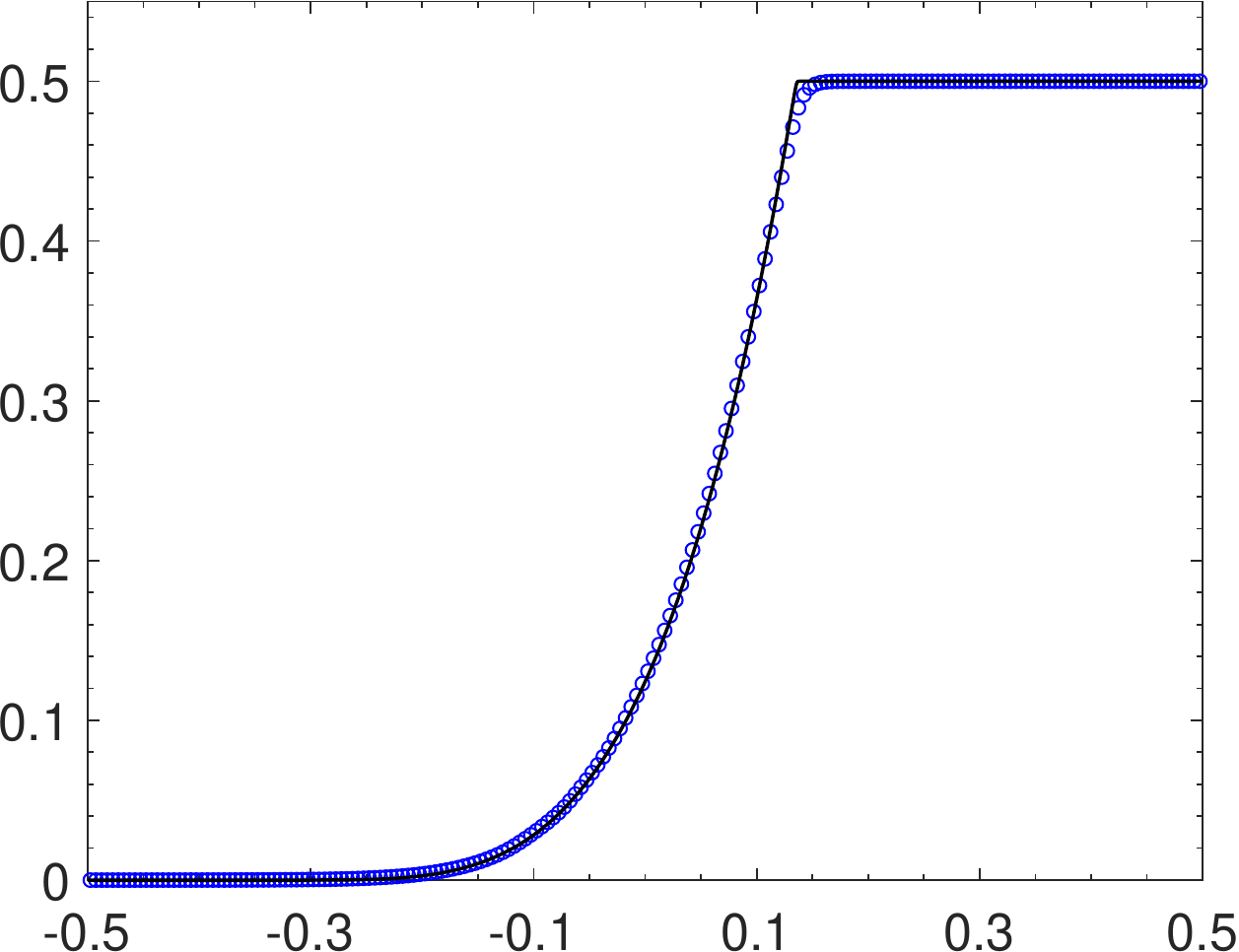}
		\caption{\small
			The density (left) and pressure (right) obtained by the PP third-order DG method 
			on the meshes of $200$ cells (symbols ``$\circ$'') and $5000$ cells (solid lines), respectively. 
		}\label{fig:1DRP1}
	\end{figure}

		\begin{figure}[htb]
		\centering
		\includegraphics[width=0.49\textwidth]{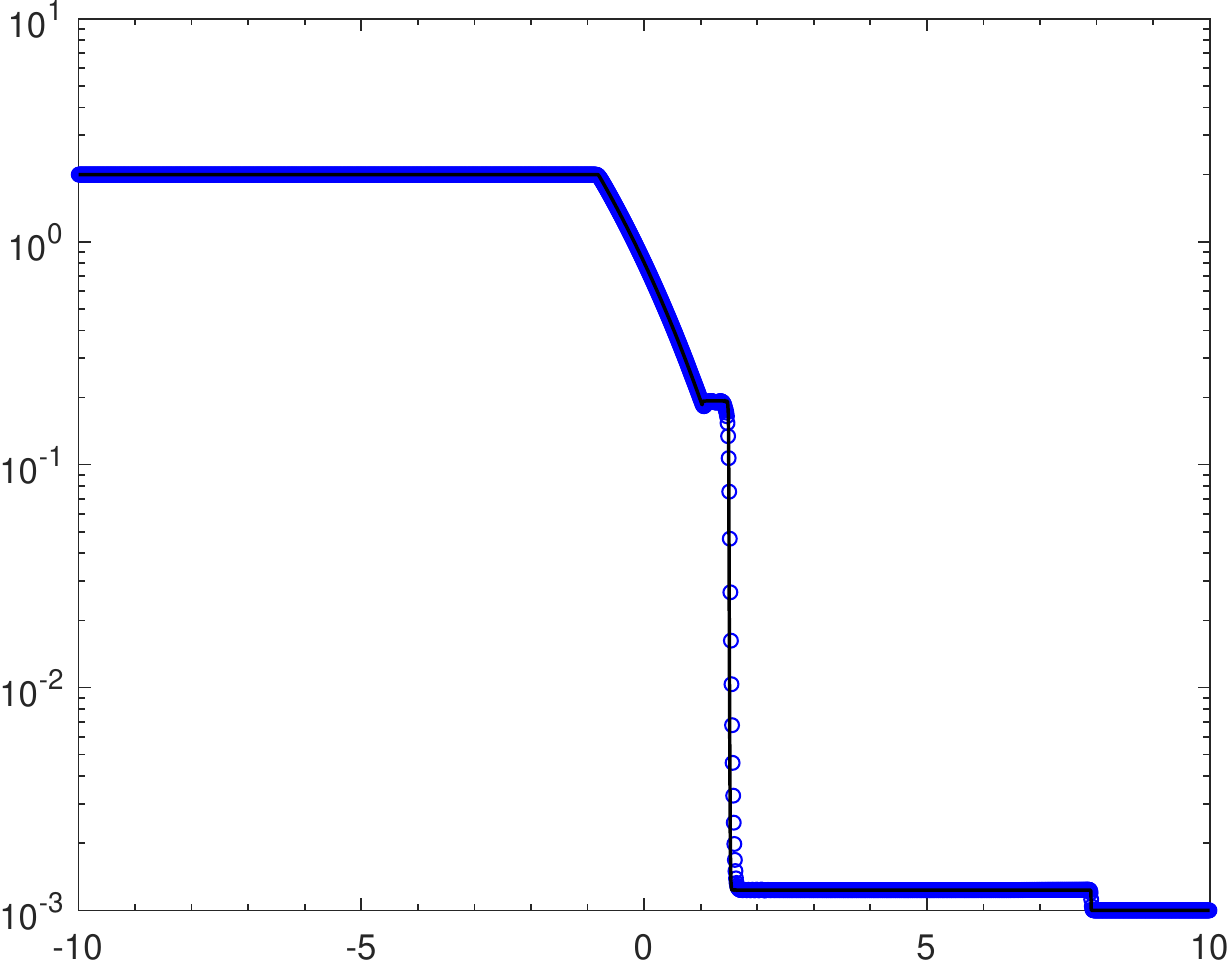}
		\includegraphics[width=0.49\textwidth]{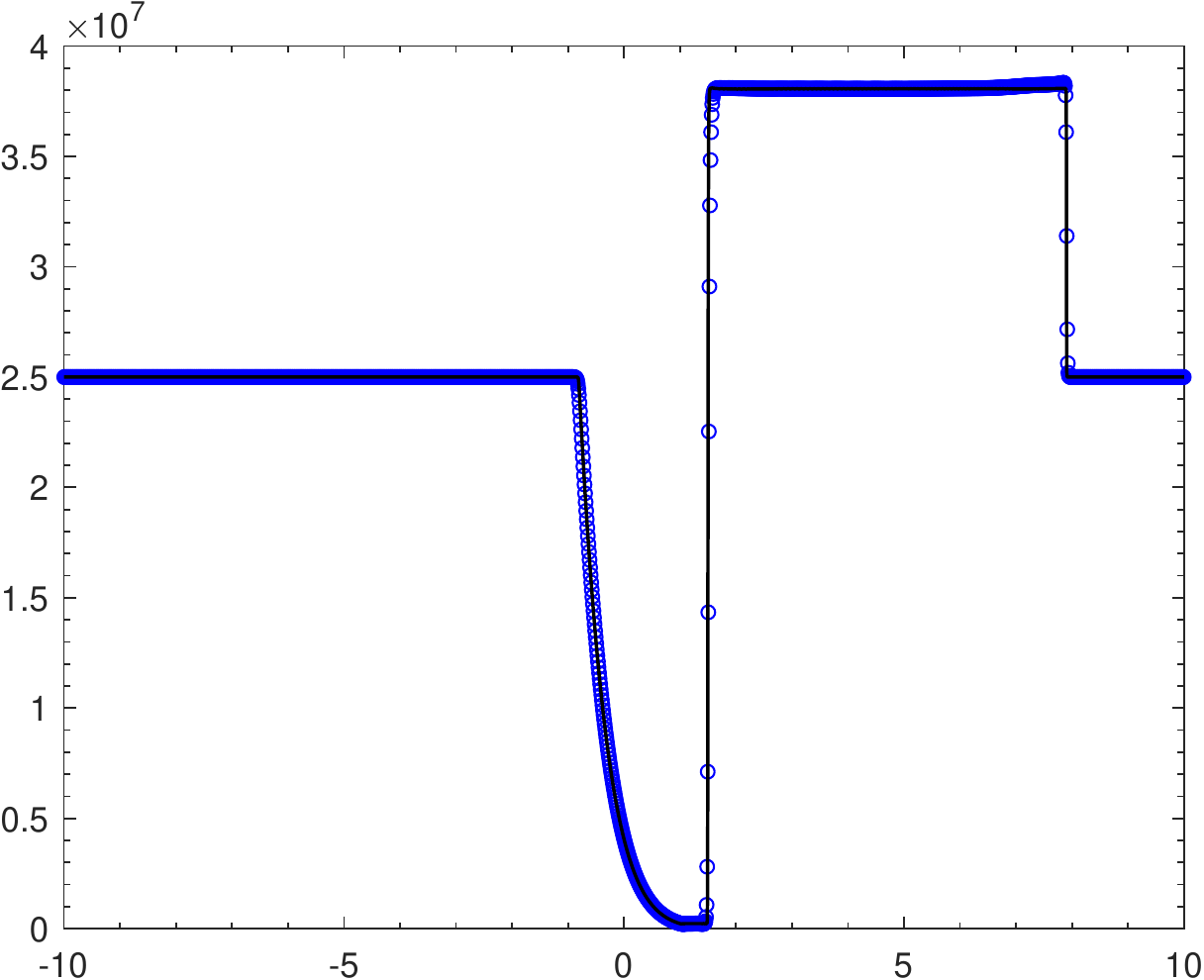}
		\caption{\small
			Numerical results at $t=0.00003$ obtained by the PP third-order DG method with $2000$ cells (symbols ``$\circ$'') and $10000$ cells (solid lines). 
			Left: log plot of density; 
			right: magnetic pressure. 
		}\label{fig:1DRP2}
	\end{figure}

	The second Riemann problem is a variant of the Leblanc problem (cf.~\cite{zhang2010b}) of gas dynamics by adding a strong magnetic field. The initial condition is  
	\begin{equation*}
	(\rho,{\bf v},p,{\bf B}) (x,0)
	=\begin{cases}
	(2,~0,~0,~0,~10^{9},~0,~5000,~5000), \quad & x<0,
	\\
	(0.001,~0,~0,~0,~1,~0,~5000,~5000), \quad & x>0.
	\end{cases}
	\end{equation*}
	The initial pressure has a very large jump, and the plasma-beta at the right state is extremely low 
	($\beta = 4 \times 10^{-8}$), making the successful simulation of 
	this problem a challenge. 
	The computational domain 
	is taken as $[-10,10]$. To fully resolve the wave structure of such a problem,  
	a fine mesh is often required \cite{zhang2010b}. 
	Fig.~\ref{fig:1DRP2} displays the numerical results  
	at $t=0.00003$ obtained by the PP third-order DG scheme with 
	$2000$ cells and $10000$ cells, respectively. It is observed  
	that the strong discontinuities are well resolved, and 
	the low
	resolution and high resolution are highly in agreement. 
	Fig.~\ref{fig:1DRP2b} gives a comparison of the numerical solutions resolved 
	by using the proposed HLL flux and the global LF flux of \cite{Wu2017a}, respectively. 
	As expected, the PP DG method with the HLL flux exhibits better resolution. 
	In this extreme test, it is also necessary to enforce the 
	condition \eqref{eq:1DDG:con2} by the PP limiting procedure, otherwise negative pressure will appear in the cell averages of the DG solution.

		\begin{figure}[htb]
		\centering
		\includegraphics[width=0.49\textwidth]{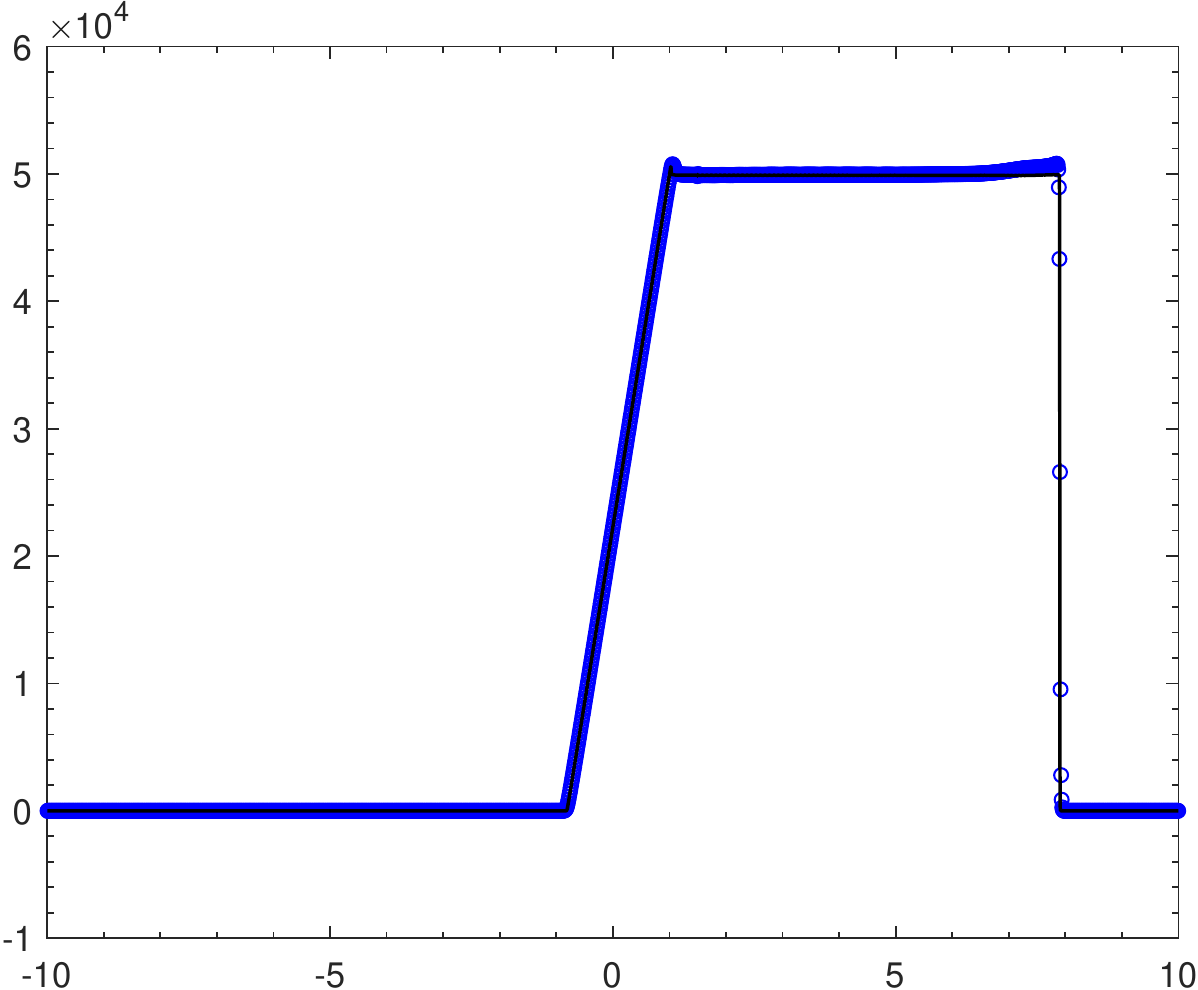}
		\includegraphics[width=0.49\textwidth]{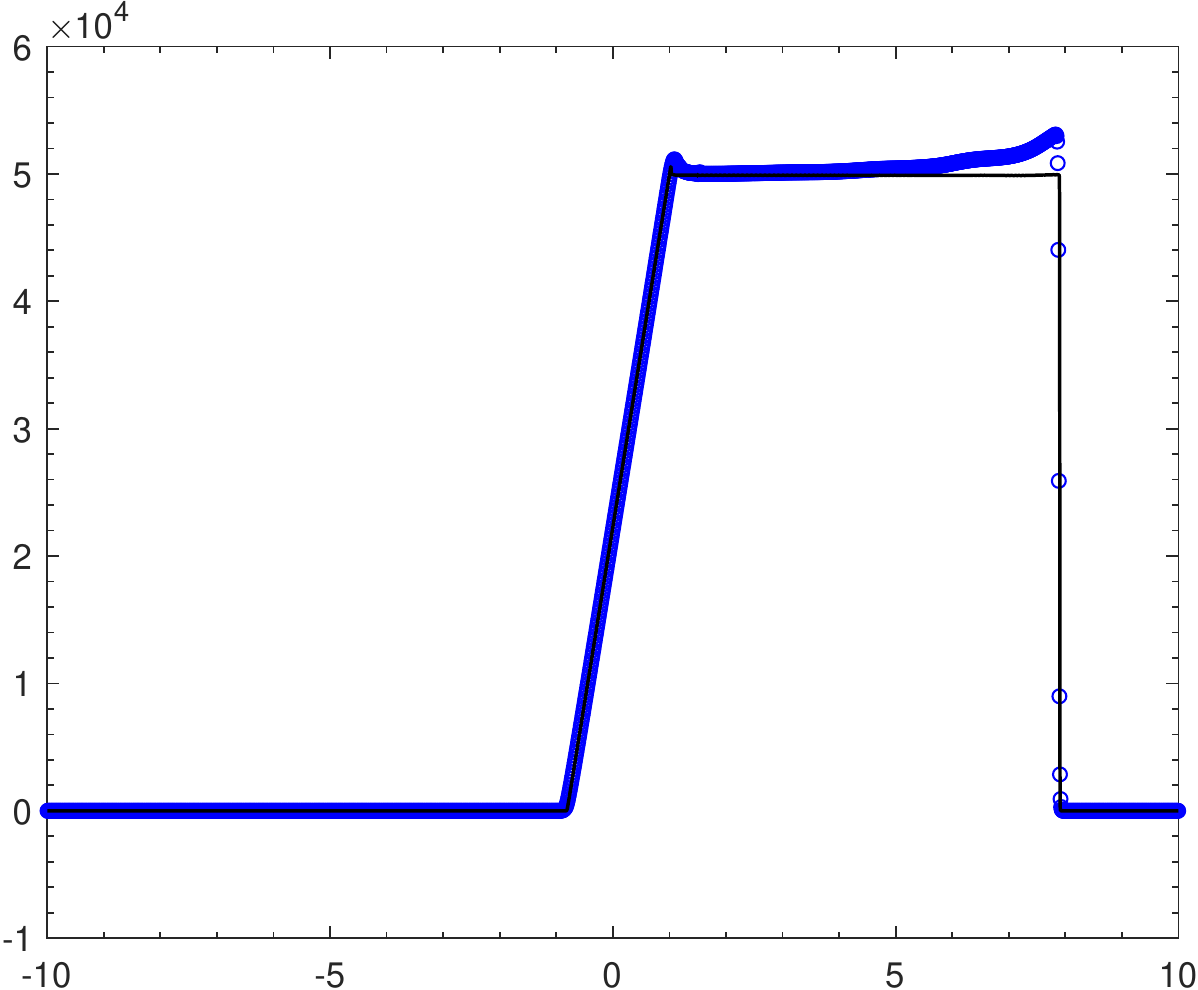}
		\caption{\small
			Same as Fig.~\ref{fig:1DRP2} except for the velocity $v_1$ obtained by using the proposed HLL flux (left) 
			and the global LF flux (right). 
		}\label{fig:1DRP2b}
	\end{figure}

\subsection{Blast problem}

This test was first introduced by Balsara and Spicer  \cite{BalsaraSpicer1999}, and has become 
 a benchmark for testing 2D MHD codes. 
If the low gas pressure, strong magnetic field or
small plasma-beta is involved, then simulating such MHD blast
problems can be very challenging. 
Therefore, it is often used to 
examine the robustness of MHD schemes; see e.g., \cite{cheng,Christlieb}. 

\begin{figure}[htb]
	\centering
	{\includegraphics[width=0.49\textwidth]{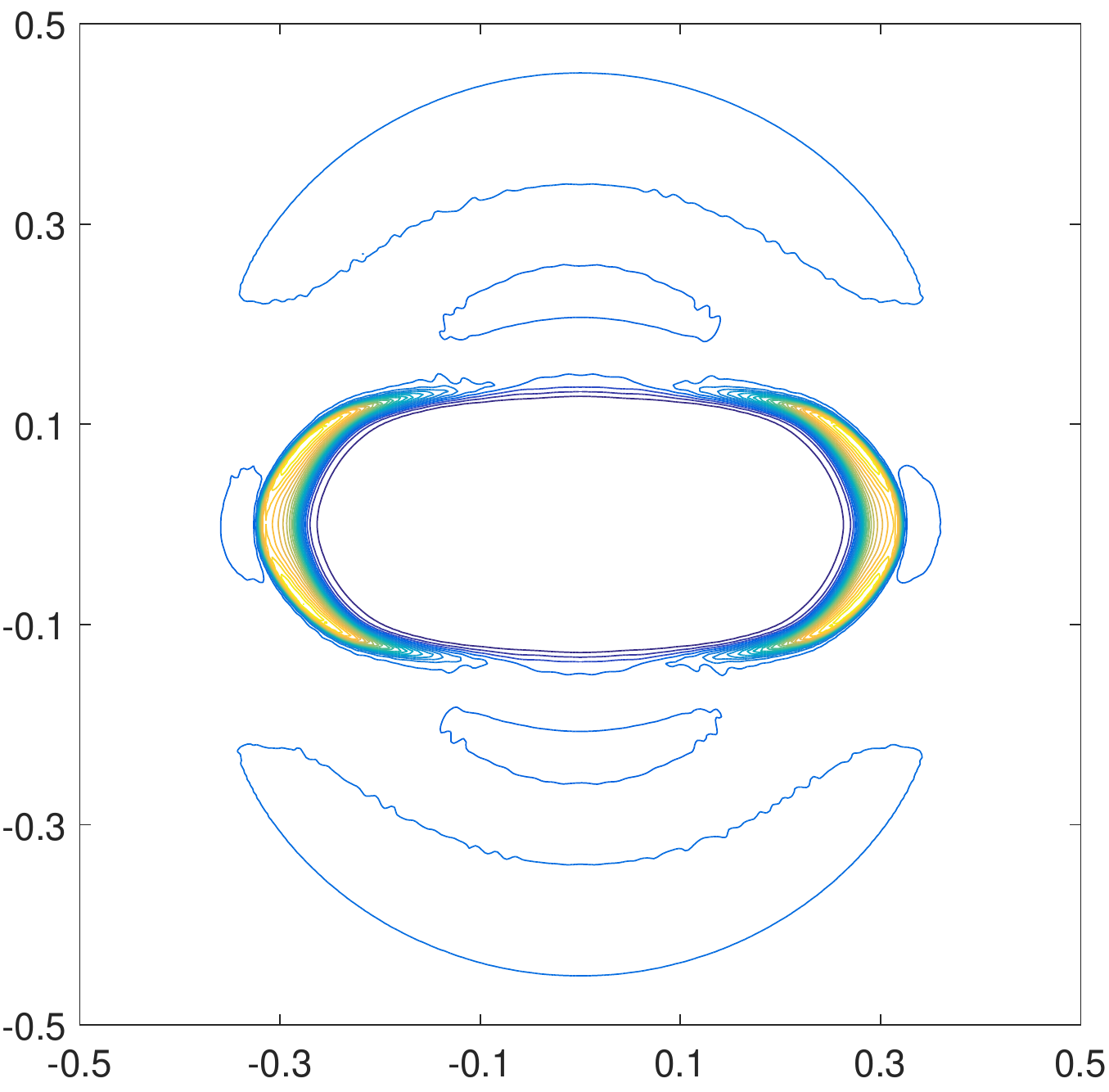}}
	{\includegraphics[width=0.49\textwidth]{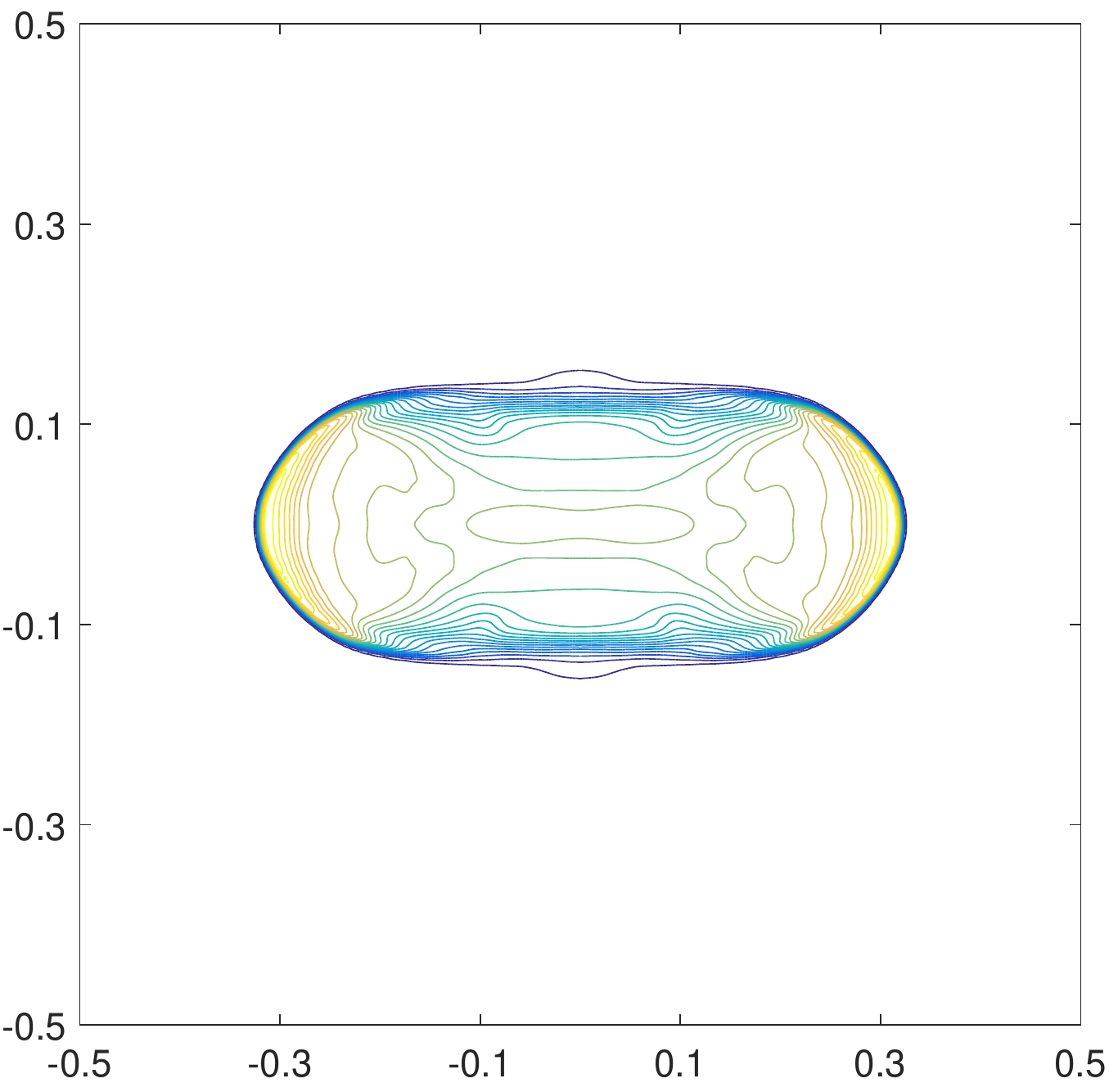}}
	{\includegraphics[width=0.49\textwidth]{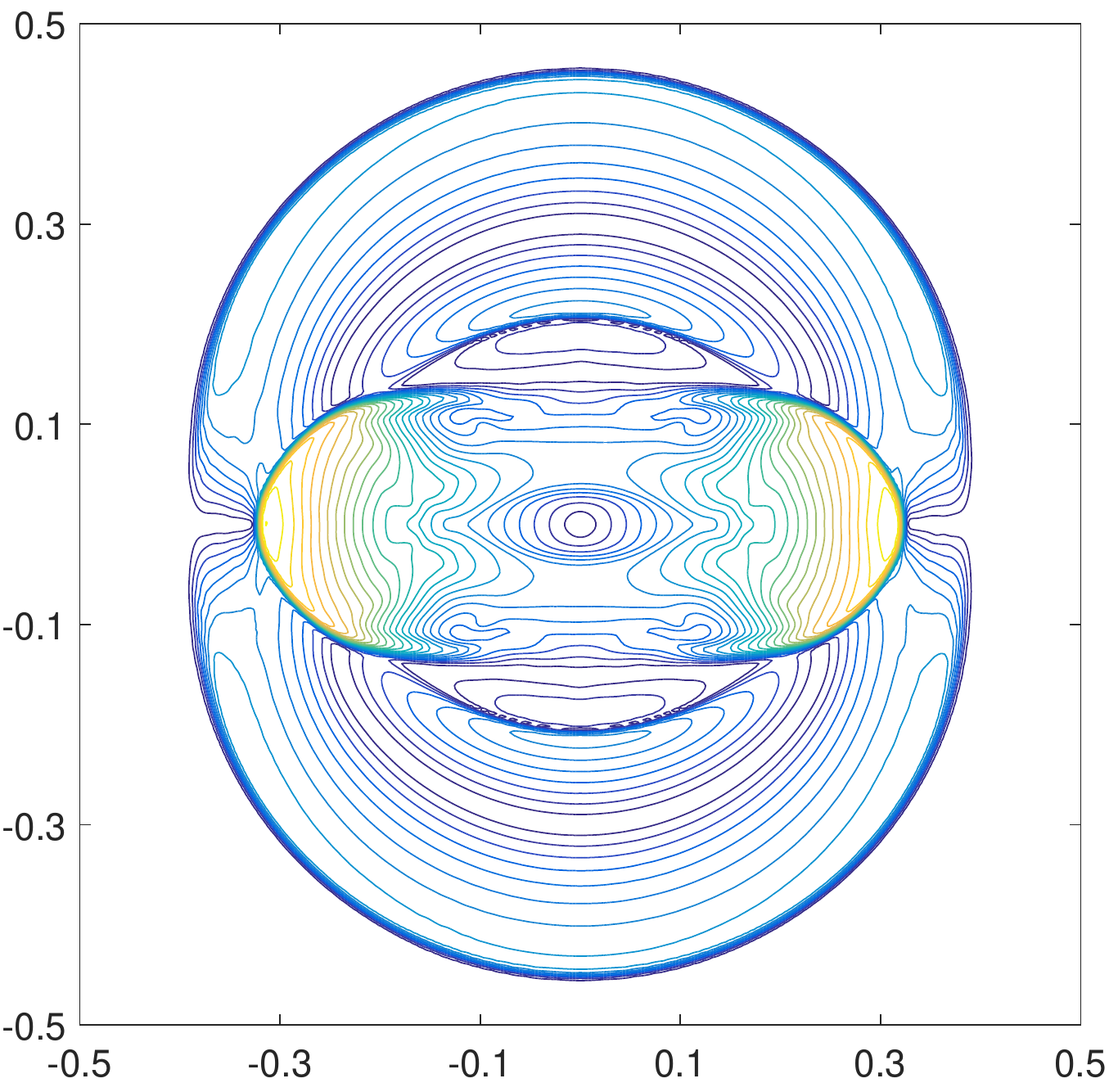}}
	{\includegraphics[width=0.49\textwidth]{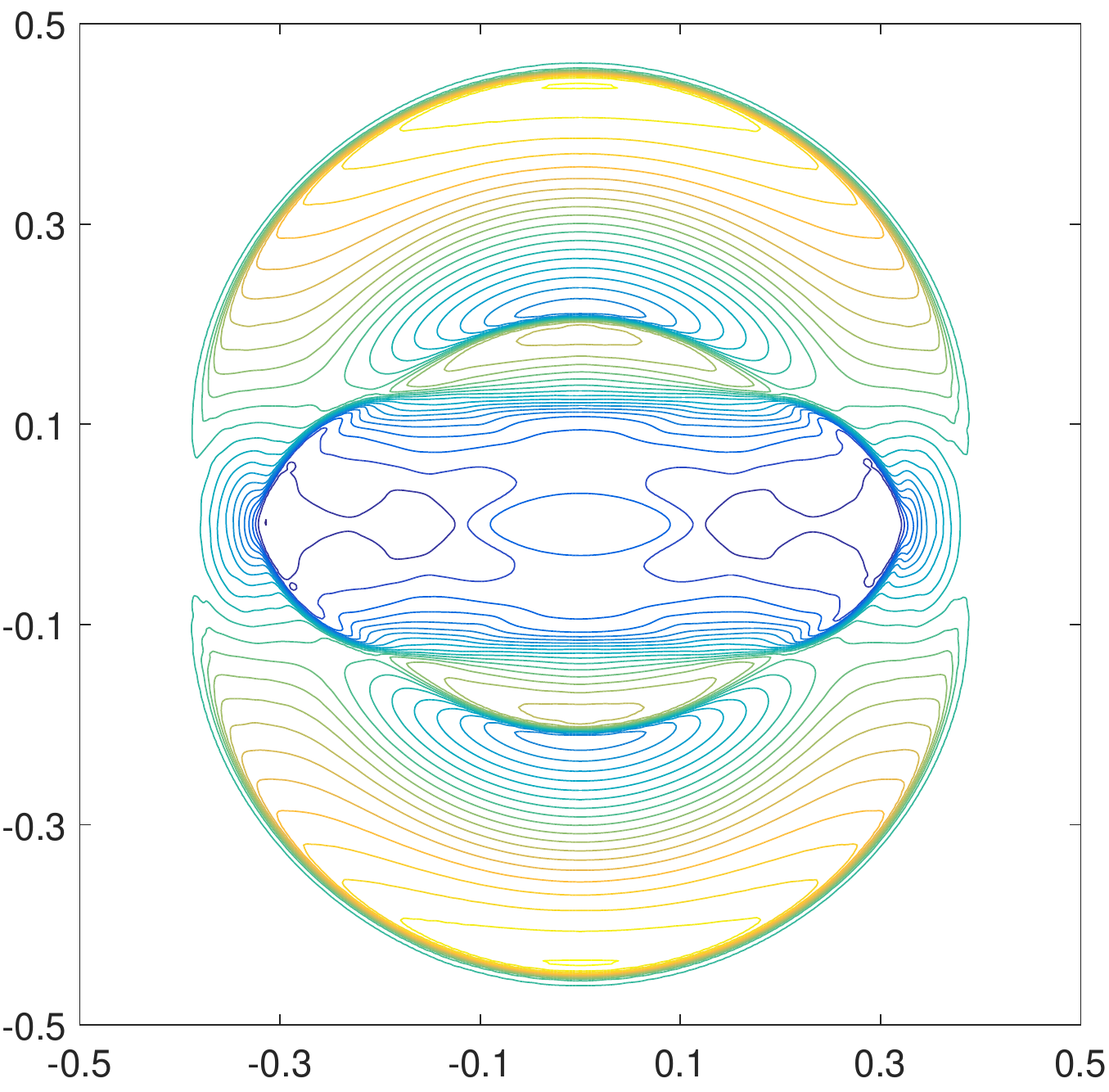}}
	\caption{\small The 
		contour plots of density (top left), 
		pressure (top right), velocity $|{\bf v}|$ (bottom left) 
		and magnetic pressure (bottom right) at time $t=0.01$ for the blast problem.} 
	\label{fig:BL1}
\end{figure}

The simulation is implemented in $[-0.5,0.5]^2$ with outflow boundary conditions. Our setup is the same as in \cite{BalsaraSpicer1999,cheng}. 
Initially, the domain 
is filled with fluid at rest with unit density. The explosion zone 
$(r<0.1)$ has a pressure of $1000$, while the ambient medium $(r>0.1)$ has a pressure of $0.1$, 
where $r=\sqrt{x^2+y^2}$. The magnetic field is initialized in the $x$-direction as $100/\sqrt{4\pi}$.  
For this setup, the ambient medium has a small 
plasma-beta (about $2.51\times 10^{-4}$).  
Our numerical results at $t=0.01$, obtained by the PP third-order DG method with $320\times320$ cells, are displayed in 
Fig.~\ref{fig:BL1}. 
Our results agree well with those 
in \cite{BalsaraSpicer1999,Li2011,Christlieb}, 
and the density profile is well resolved with much less oscillations than those shown in \cite{BalsaraSpicer1999,Christlieb}. The velocity profile clearly shows higher resolution than that in \cite{WuShu2018} obtained by the same DG method but with the global LF flux.  
We also notice that, if the PP limiter is turned off, 
the condition \eqref{eq:FVDGsuff} will be violated since   
$t \approx 2.24\times 10^{-4}$, and the method will fail due to negative numerical pressure.

%
%
%

\subsection{Shock cloud interaction}

	This test \cite{Dai1998} simulates the disruption of a high density cloud by a strong shock wave, and has been widely simulated in the literature (e.g., \cite{Toth2000,Balbas2006}). 
	We employ the same setup as in \cite{Toth2000,Balbas2006}.  
	The simulation is implemented in the domain $\Omega = [0,1]^2$ with the right boundary specified as 
	supersonic inflow condition and the others as outflow conditions. 
	The adiabatic index $\gamma=\frac53$, and the initial conditions are given by the two states 
	$$(\rho,{\bf v},p,{\bf B})=
	\begin{cases}
	(3.86859,0,0,0,167.345,0,2.1826182,-2.1826182),\quad & x<0.6,
	\\
	(1,-11.2536,0,0,1,0,0.56418958,0.56418958),\quad & x>0.6,
	\end{cases}$$ 
	separated by a discontinuity parallel to the $y$-axis at $x=0.6$. 
	To the right of the discontinuity there is a circular cloud of radius $0.15$, centered at $x=0.8$ and $y=0.5$. The cloud has the same states as the surrounding fluid except for a higher density of $10$. 
	
		\begin{figure}[htb]
		\centering
		{\includegraphics[width=0.99\textwidth]{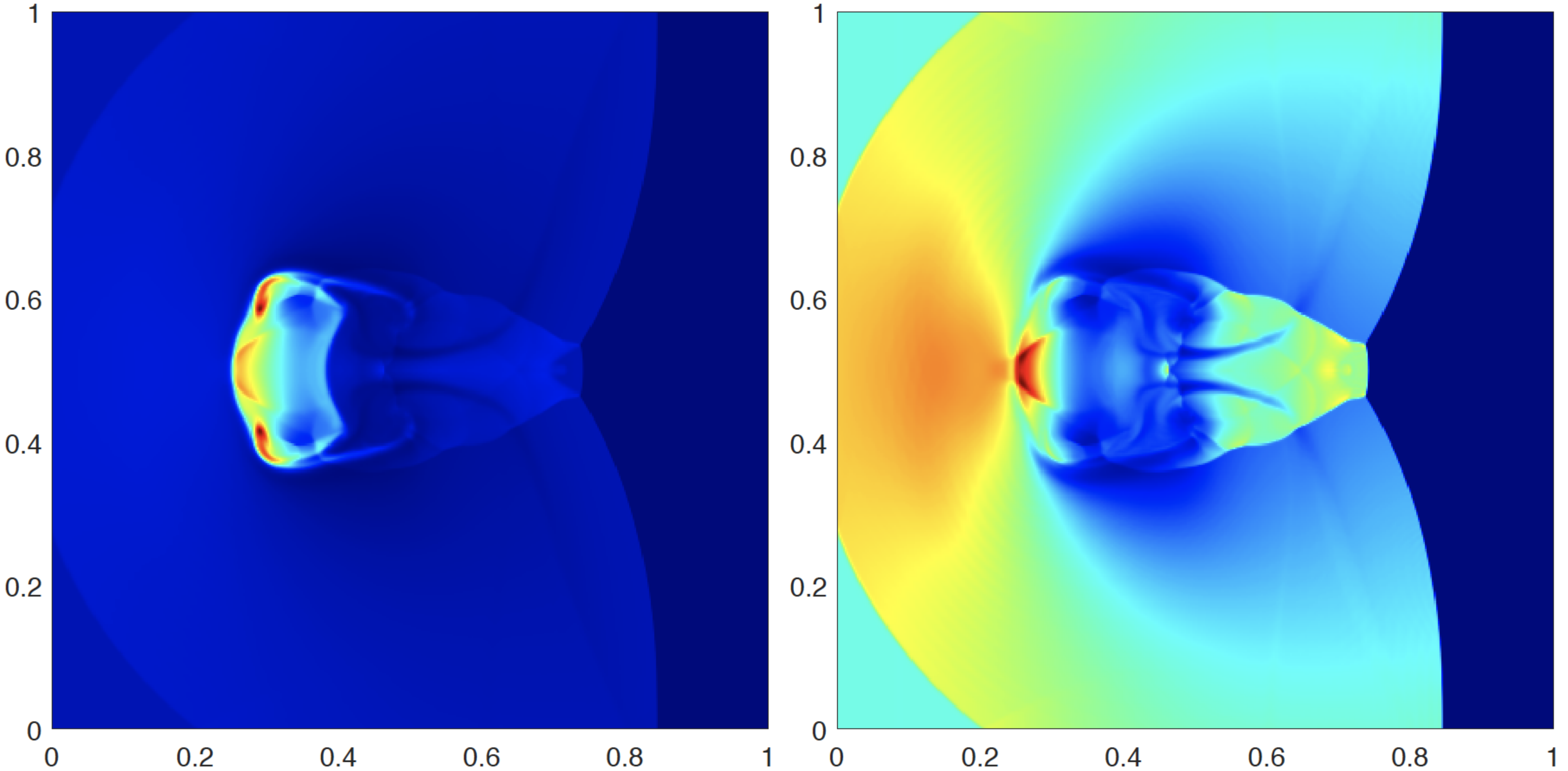}}
		\caption{\small The schlieren images of density (left) and  
			pressure (right) at time $t=0.06$ for the shock cloud interaction problem.} 
		\label{fig:sc}
	\end{figure}

	We simulate this problem by using our PP third-order DG 
	method with $400\times 400$ cells. 
	The numerical results at time $t=0.06$ 
	are shown in 
	Fig.~\ref{fig:sc}.  
	It is seen that 
	the complex flow structures and interactions are correctly captured, 
	and the results agree well with
	those in, for example, \cite{Toth2000,Balbas2006}. 
	In this test, it is also necessary to employ the PP limiter 
	to enforce the condition \eqref{eq:FVDGsuff}. 
	We also observe that, if the penalty term is dropped from our PP DG method, negative pressure will appear in the cell average of the DG solutions and the code breaks down at $t\approx 0.014$, because 
	the resulting scheme (namely the locally divergence-free DG method with the proposed HLL flux and the PP and WENO limiters) is not PP in general. This further confirms the importance of 
	the penalty term.

\subsection{Astrophysical jets}
The last test is to simulate jet flow, 
which is relevant in astrophysics. 
In a high Mach number jet with strong magnetic field, 
the internal energy is very
small compared to the huge magnetic and/or kinetic energies, thus 
negative pressure is very likely to be produced in the numerical simulations. 
Moreover, there may exist shear flows, strong shock waves, and  interface
instabilities in high-speed jet flows. 
Successfully simulating such jet flows is indeed a challenge, cf.~\cite{zhang2010b,Balsara2012,WuTang2015,WuTang2017ApJS}.

		\begin{figure}[htb]
	\centering
	{\includegraphics[width=0.32\textwidth]{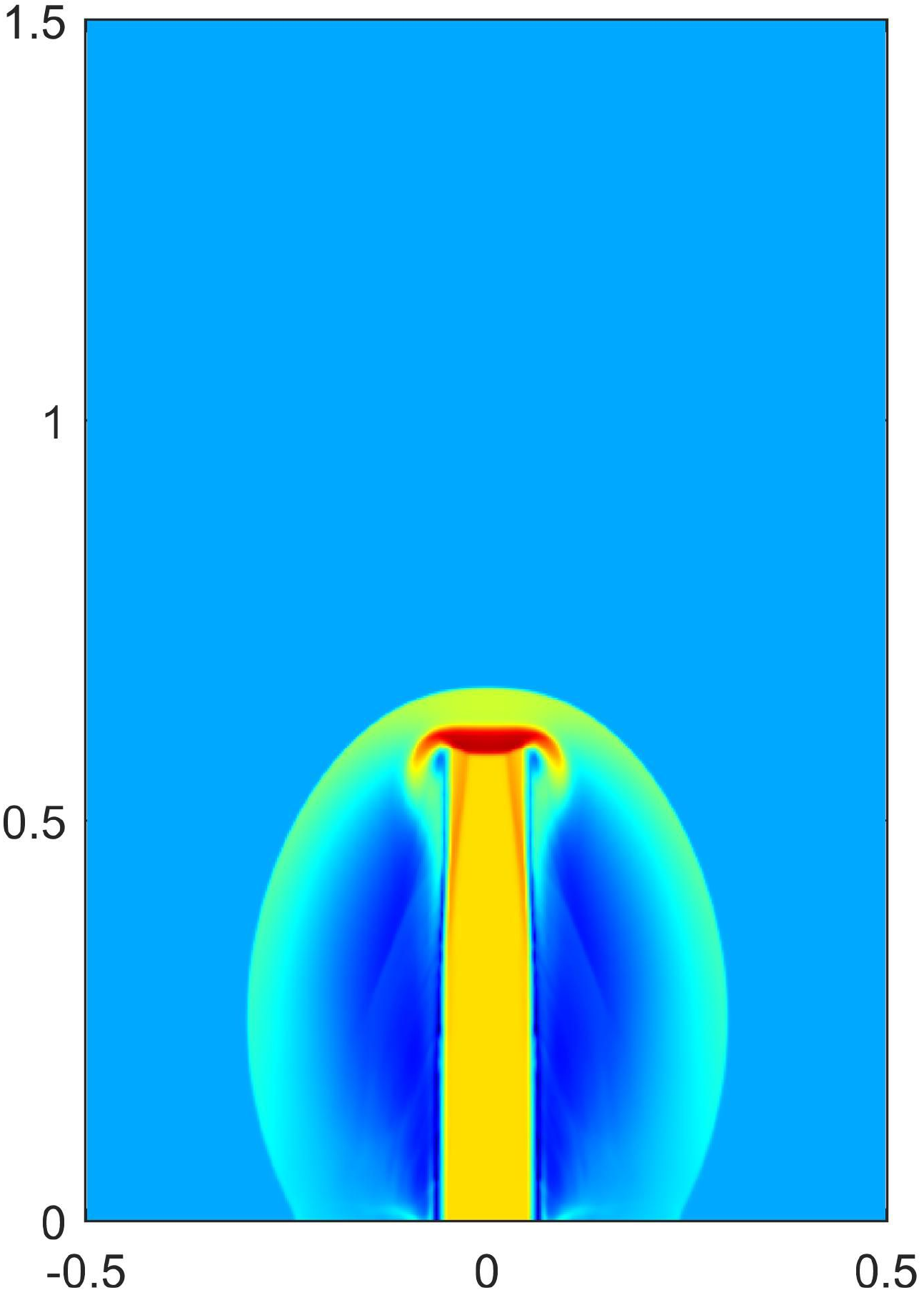}}
	{\includegraphics[width=0.32\textwidth]{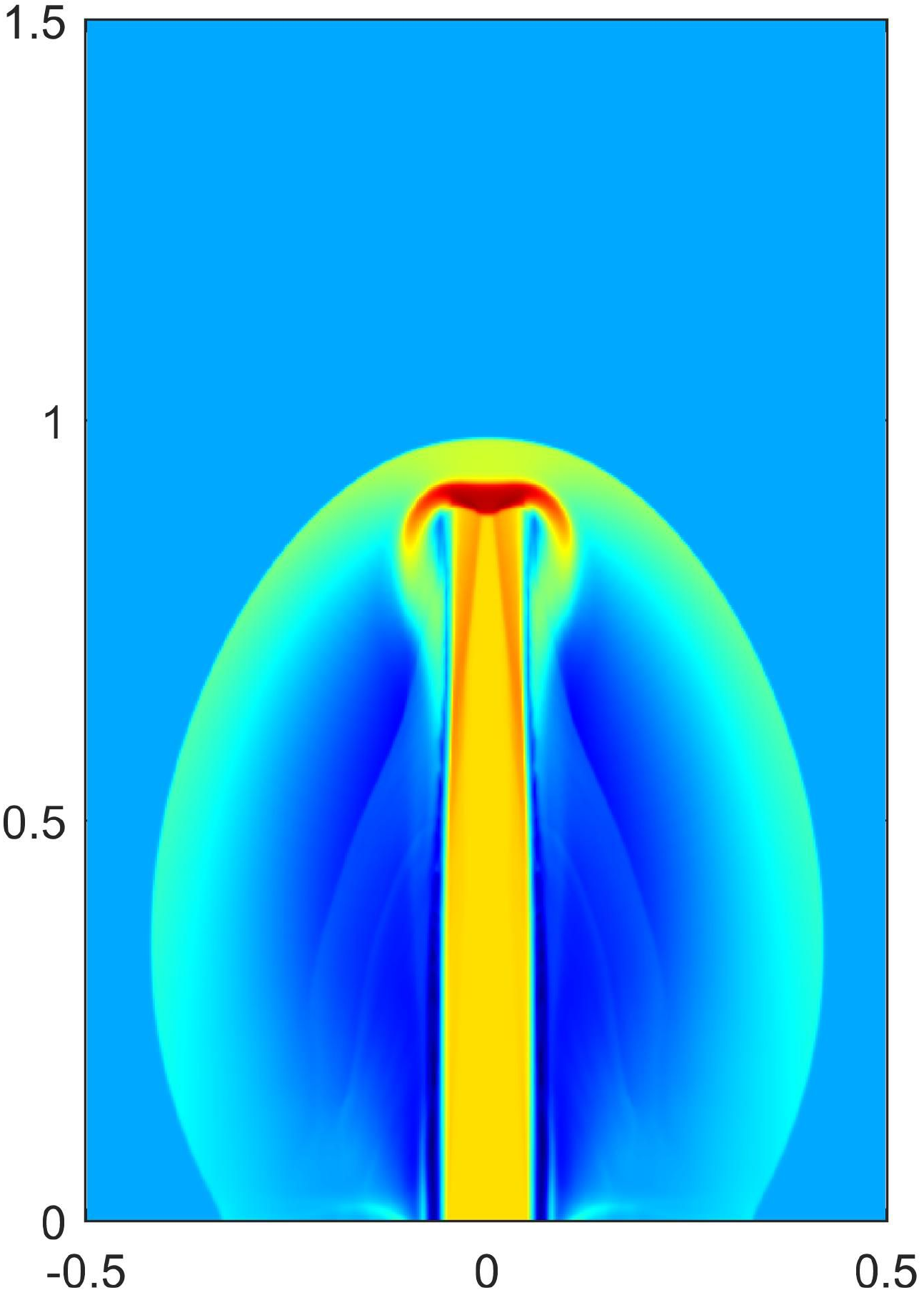}}
	{\includegraphics[width=0.32\textwidth]{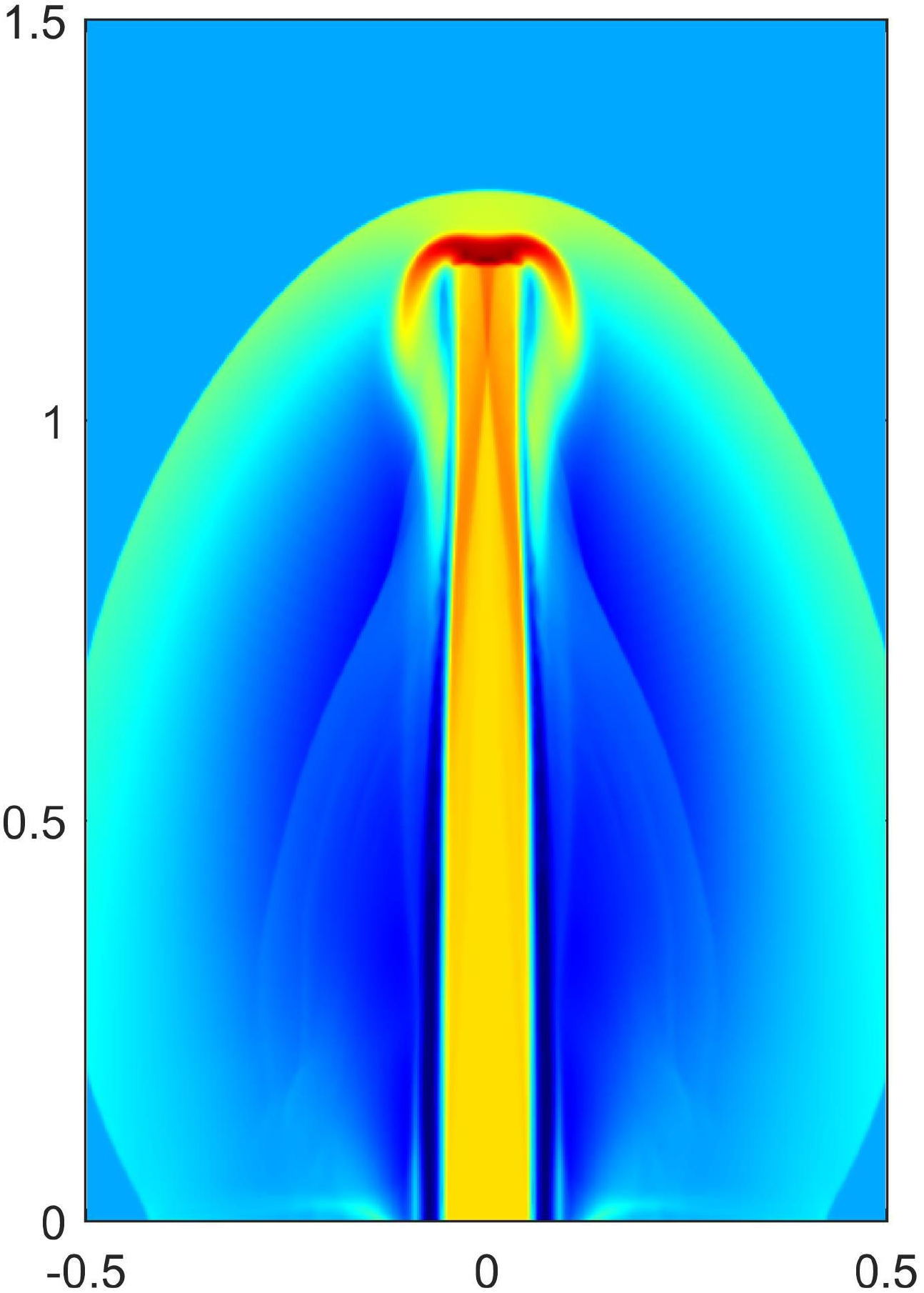}}
	{\includegraphics[width=0.32\textwidth]{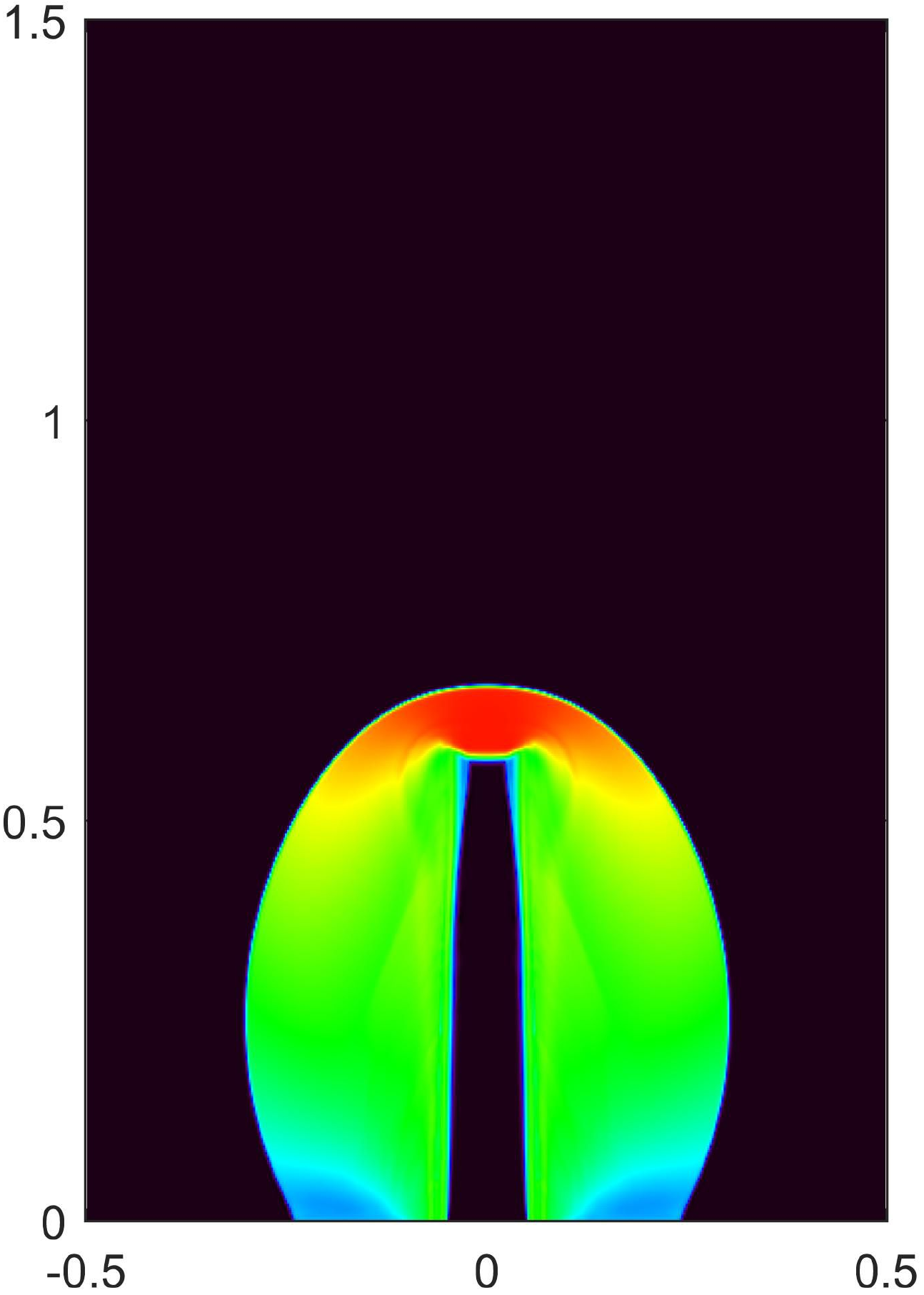}}
	{\includegraphics[width=0.32\textwidth]{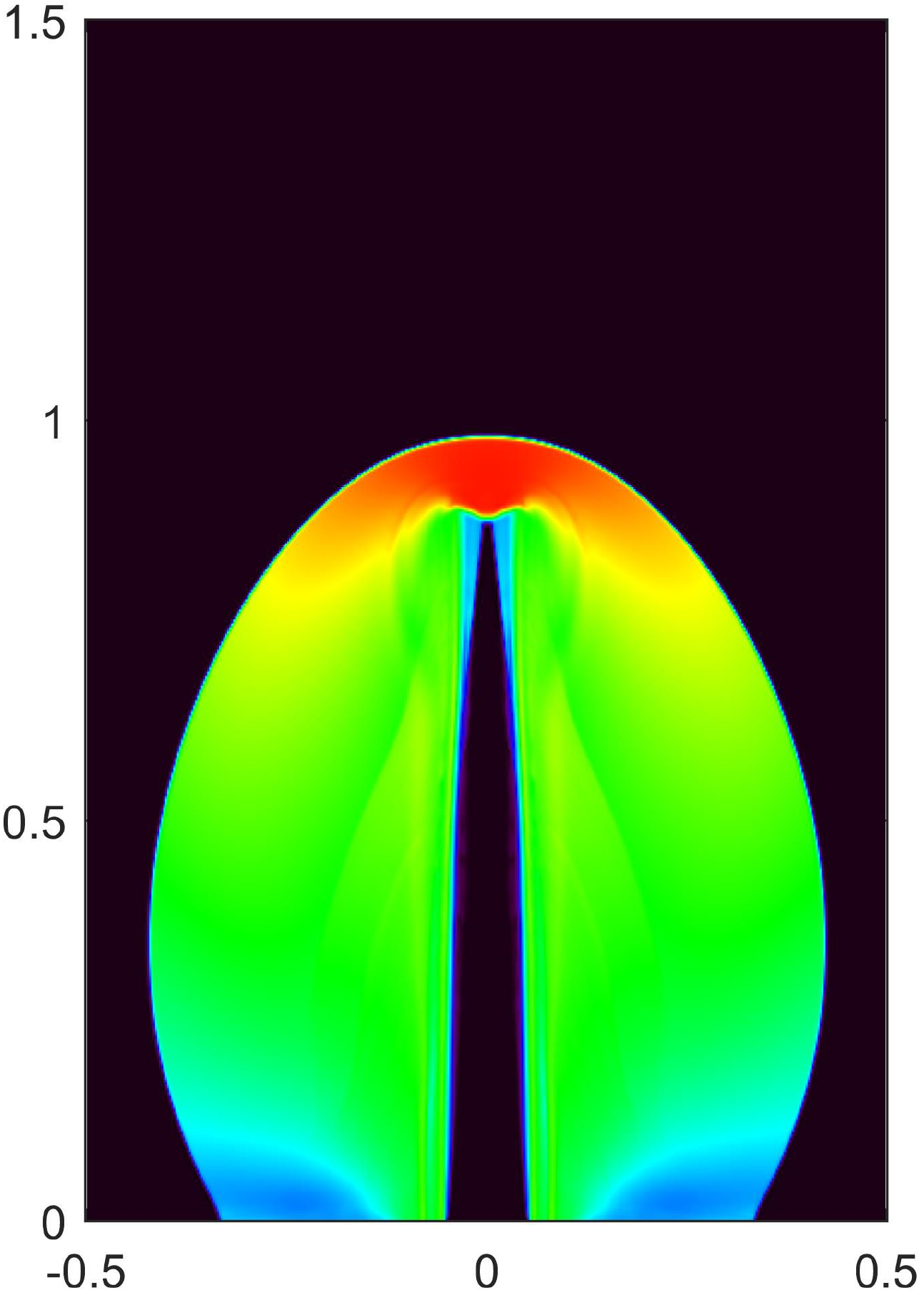}}
	{\includegraphics[width=0.32\textwidth]{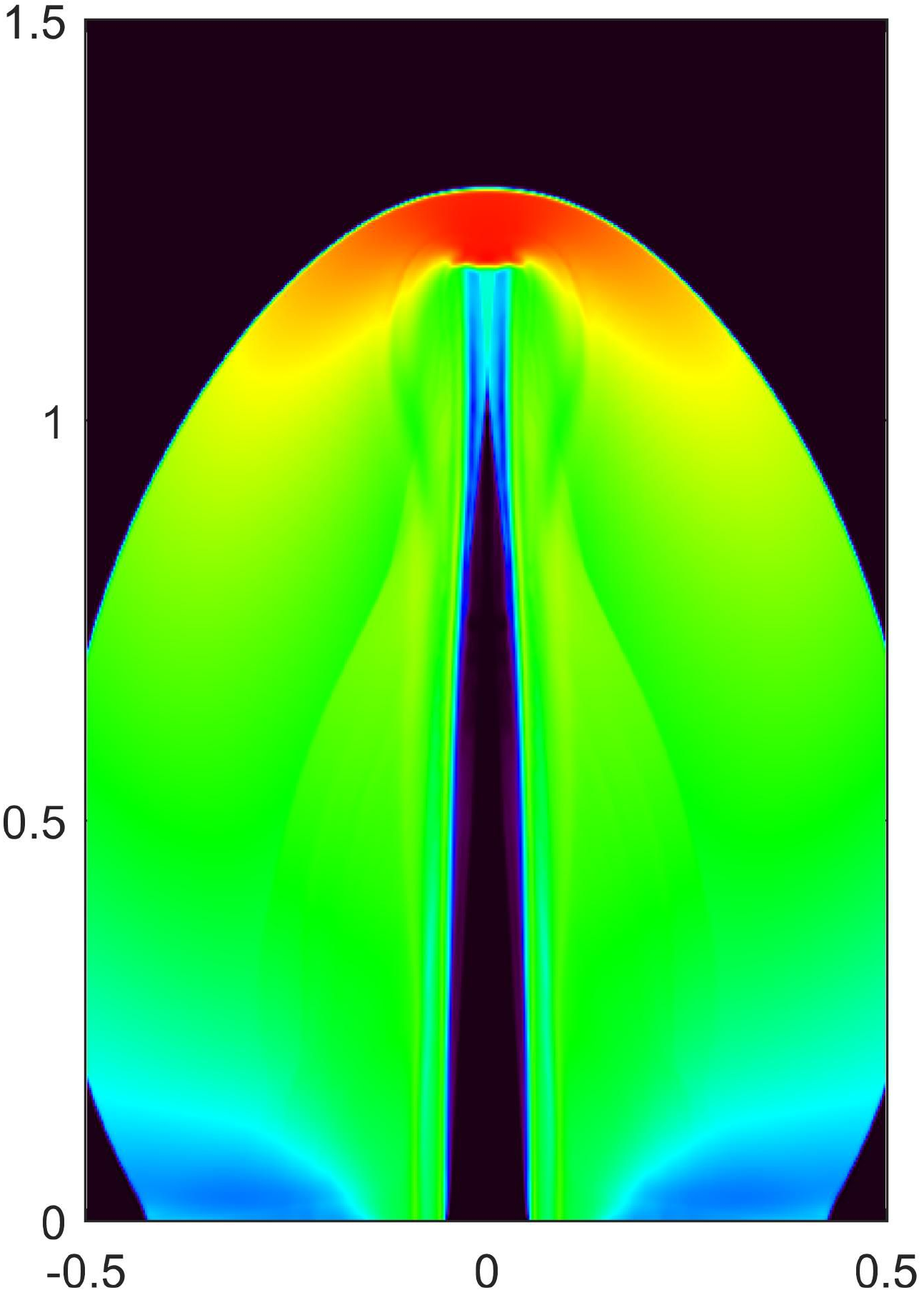}}
	\caption{\small The schlieren images of density logarithm (top) and  
		gas pressure
		logarithm (bottom) for the Mach 800 jet problem with $B_a=\sqrt{2000}$. From left to right: $t=0.001$, $0.0015$ and $0.002$.} 
	\label{fig:jet1}
\end{figure}

		\begin{figure}[htb]
	\centering
	{\includegraphics[width=0.32\textwidth]{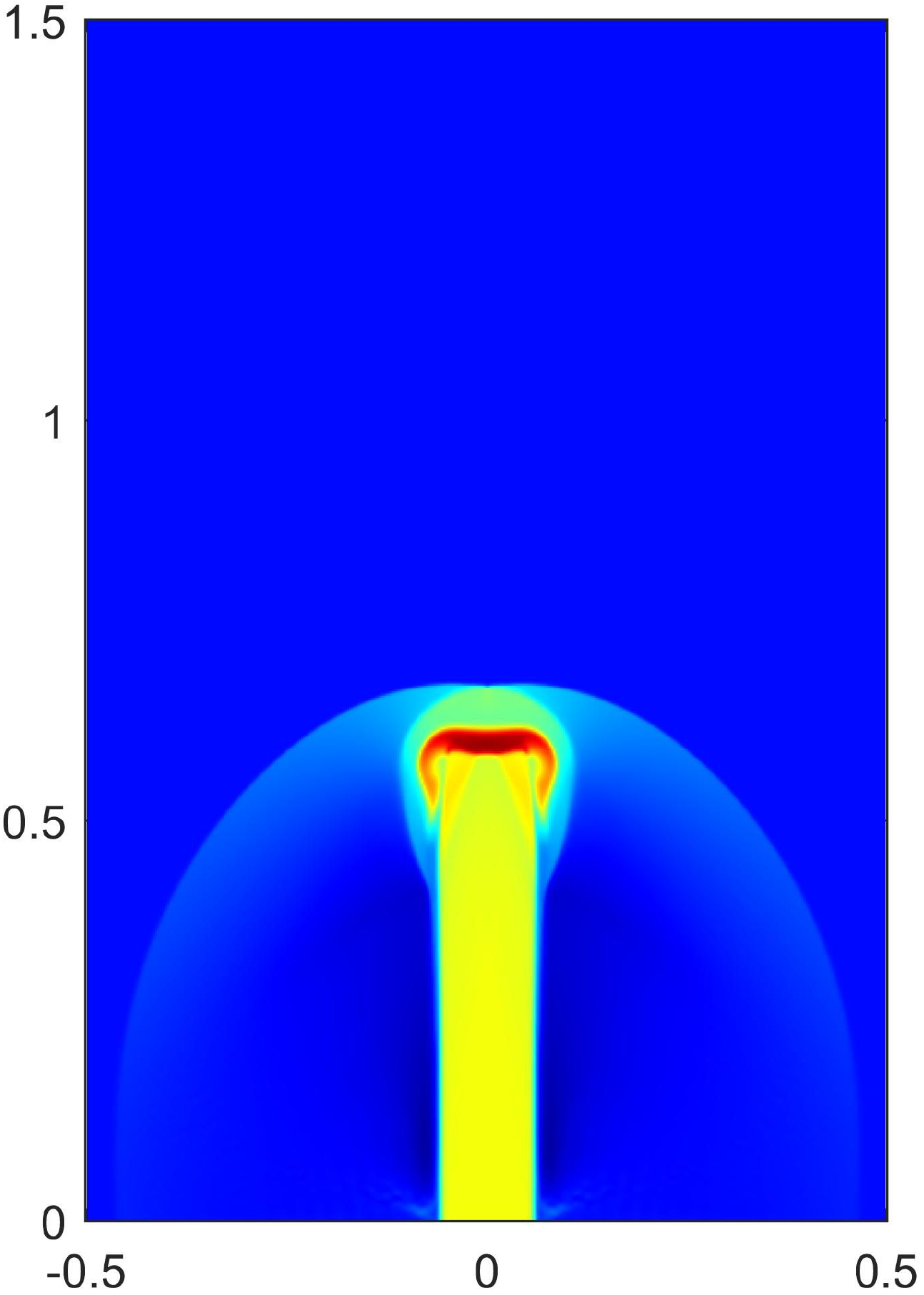}}
	{\includegraphics[width=0.32\textwidth]{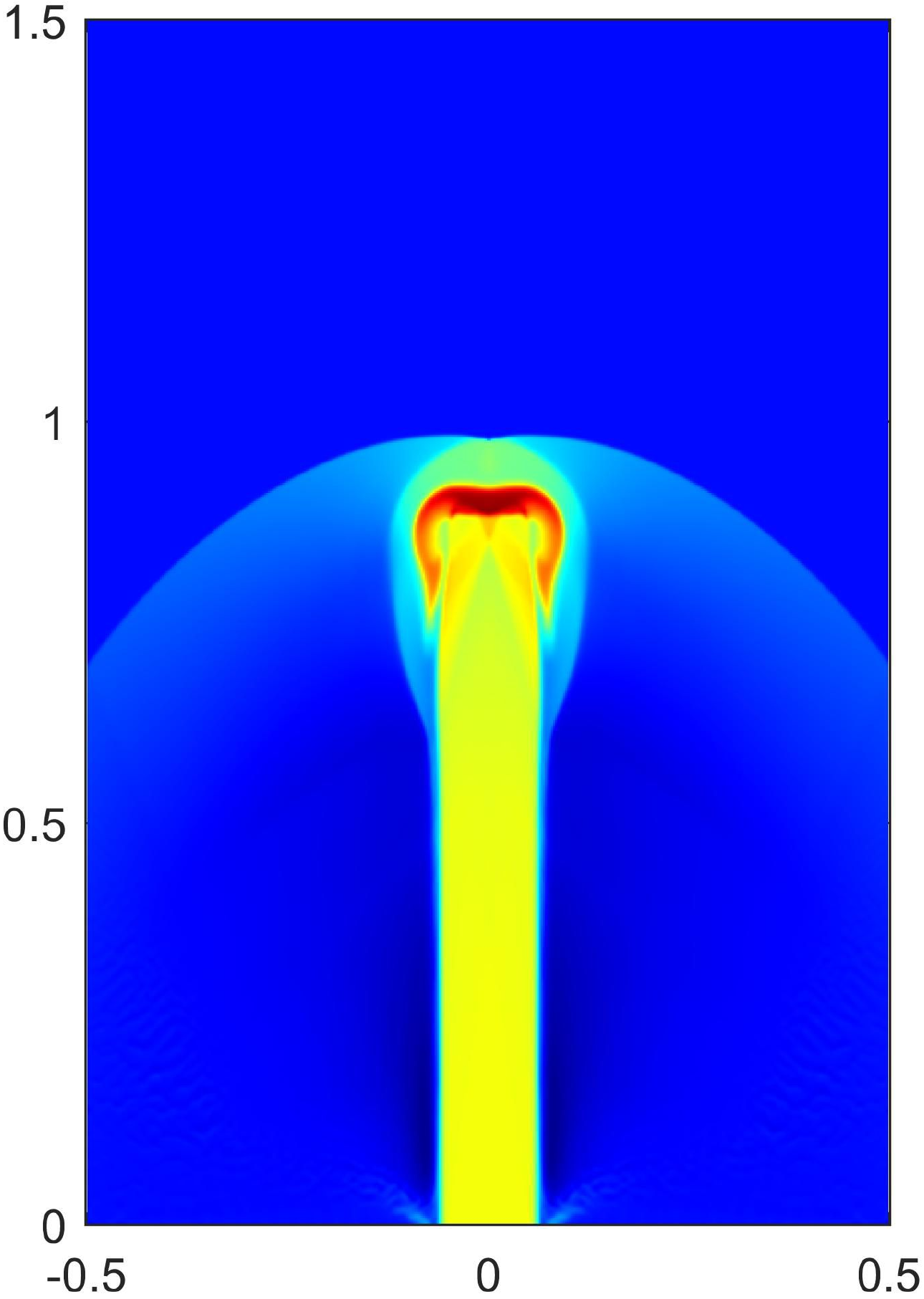}}
	{\includegraphics[width=0.32\textwidth]{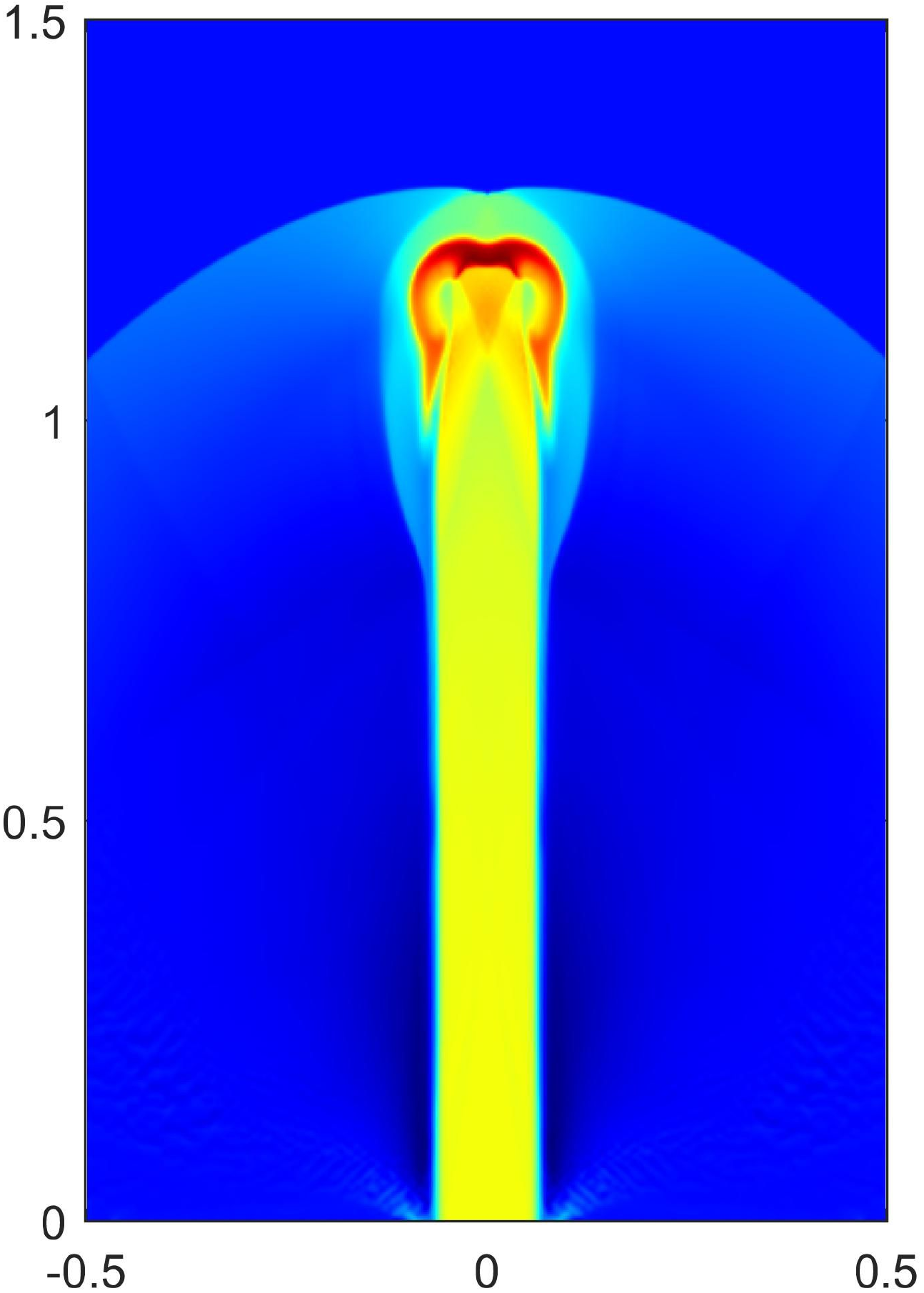}}
	{\includegraphics[width=0.32\textwidth]{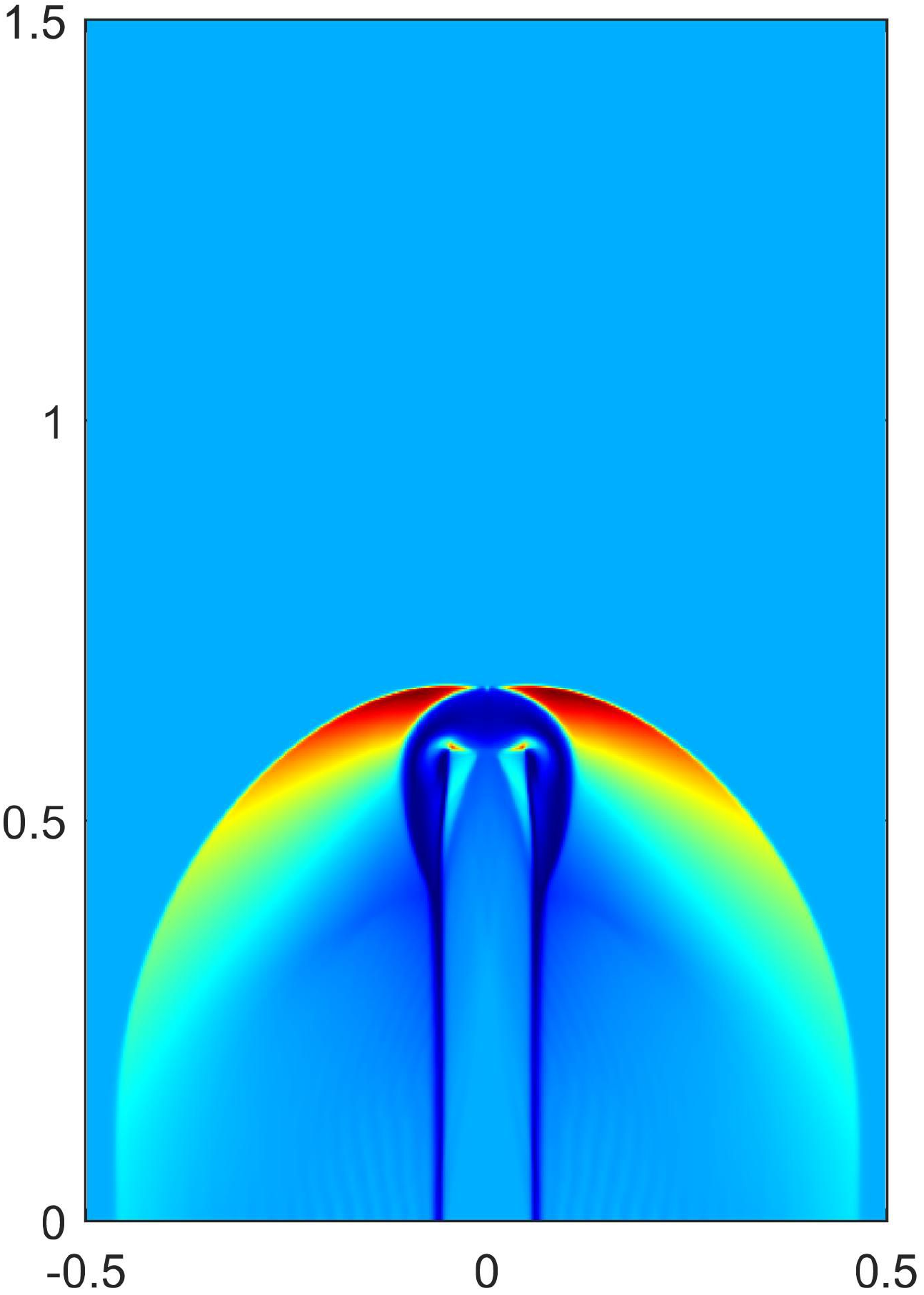}}
	{\includegraphics[width=0.32\textwidth]{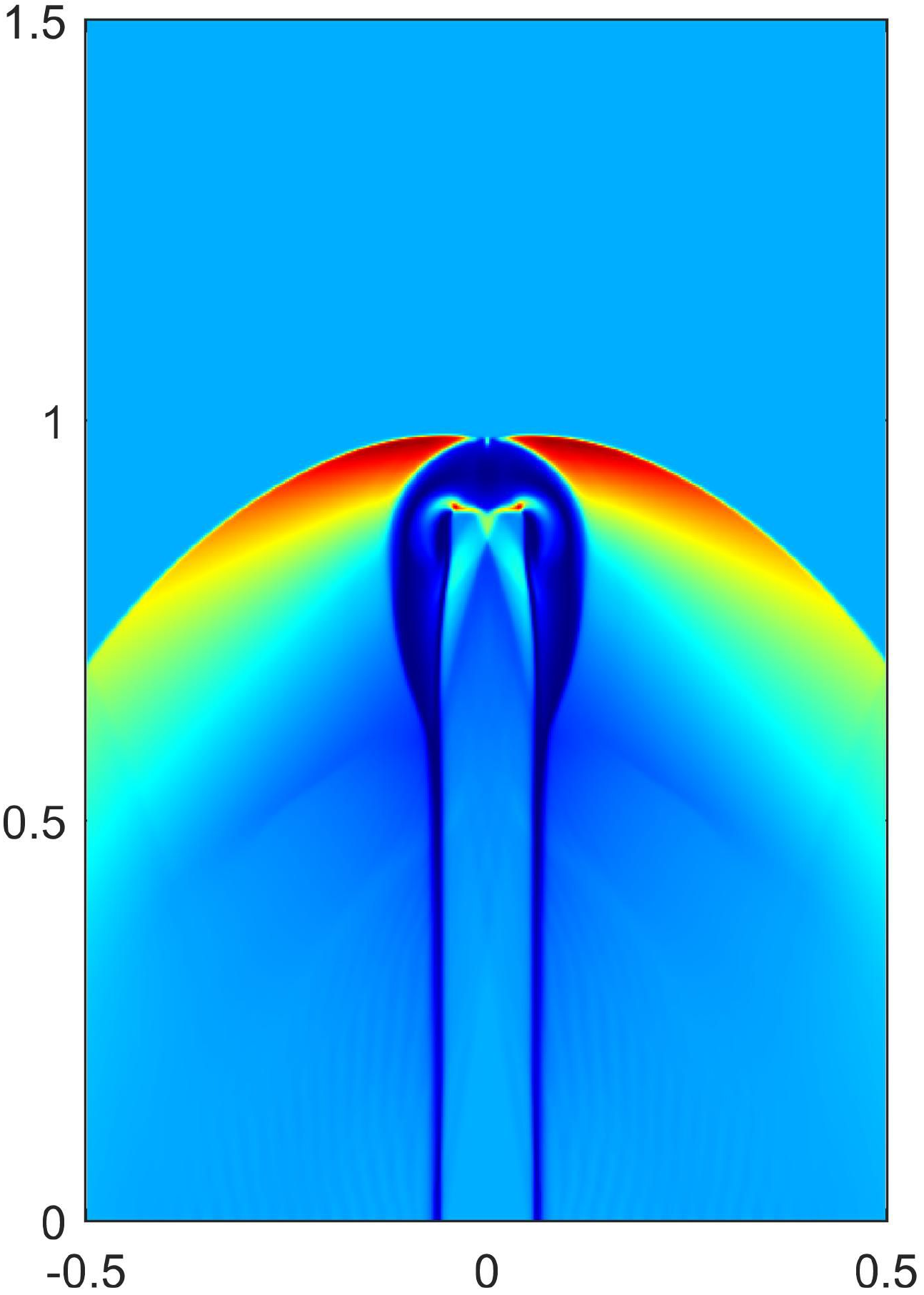}}
	{\includegraphics[width=0.32\textwidth]{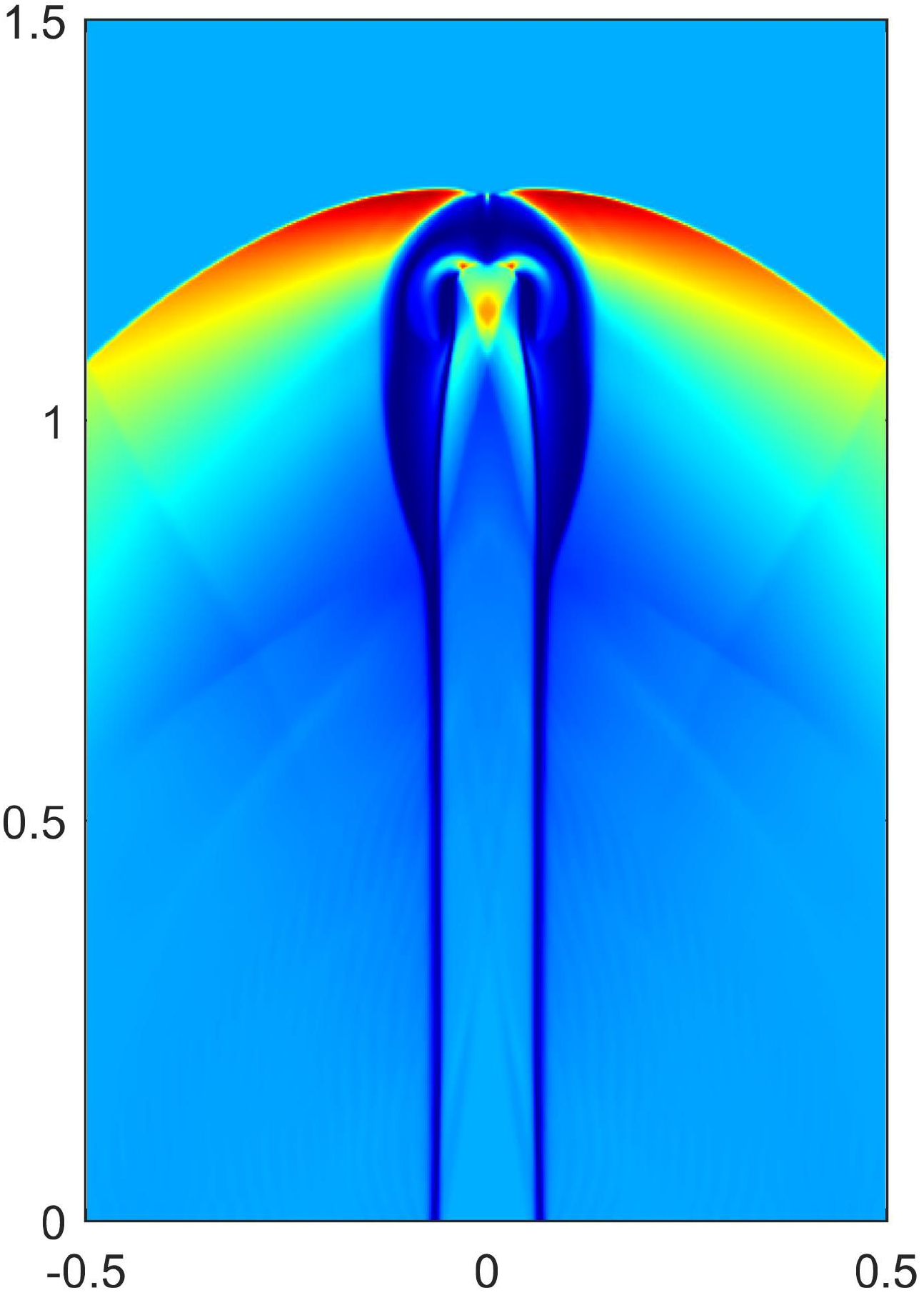}}
	\caption{\small The schlieren images of density logarithm (top) and  
		magnetic pressure
		(bottom) for the Mach 800 jet problem with $B_a=\sqrt{20000}$. From left to right: $t=0.001$, $0.0015$ and $0.002$.} 
	\label{fig:jet2}
\end{figure}

	\begin{figure}[htb]
	\centering
	{\includegraphics[width=0.32\textwidth]{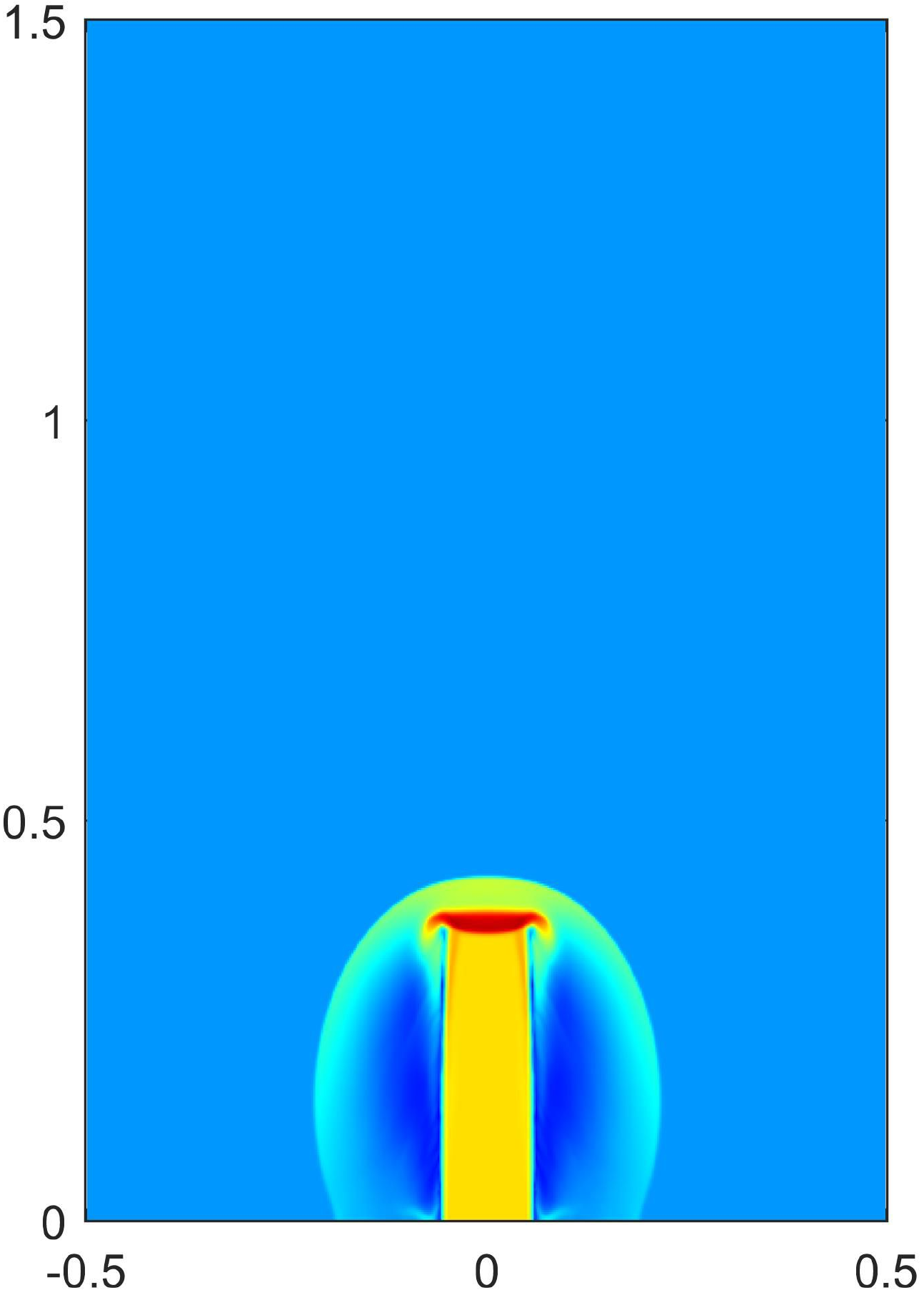}}
	{\includegraphics[width=0.32\textwidth]{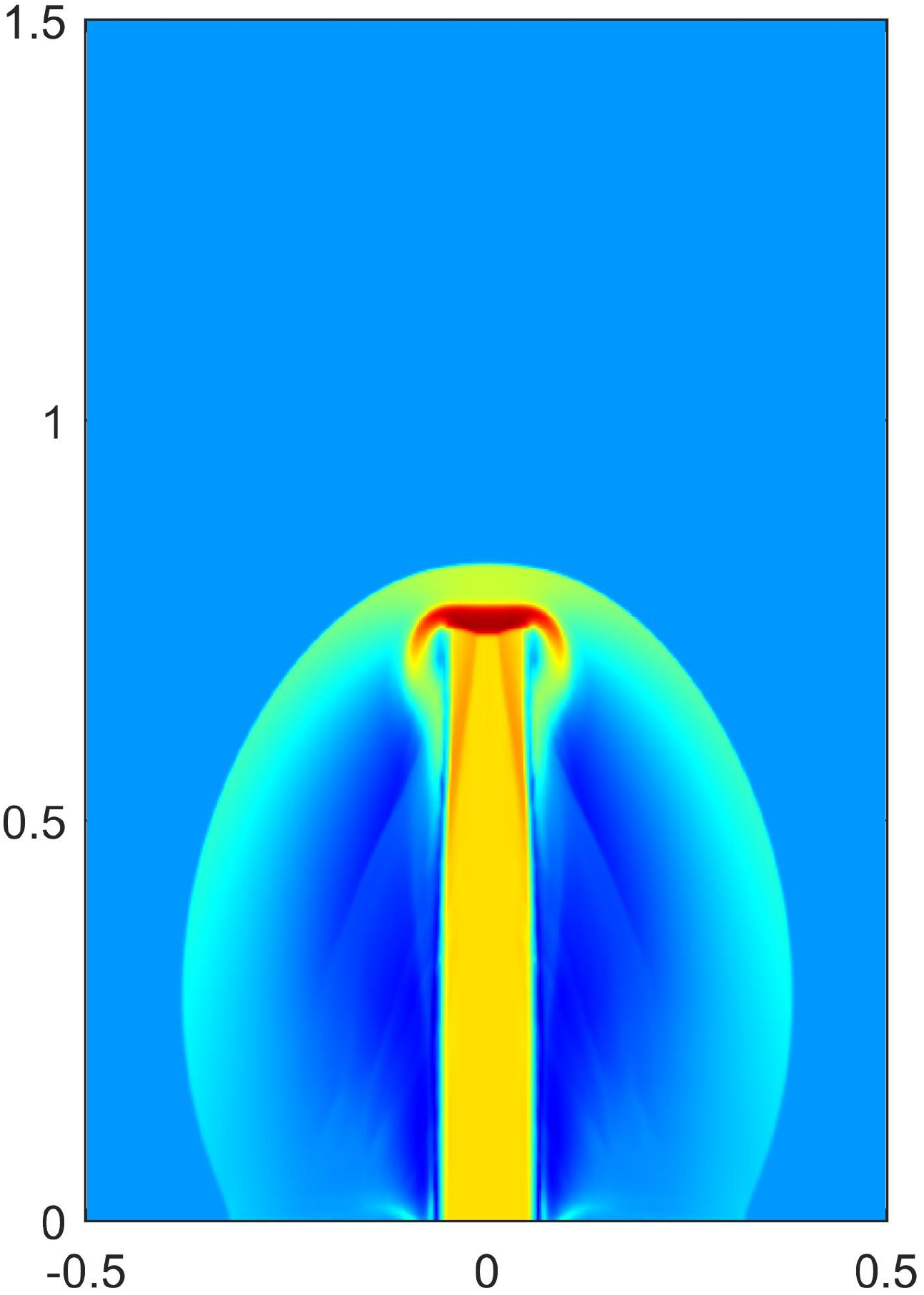}}
	{\includegraphics[width=0.32\textwidth]{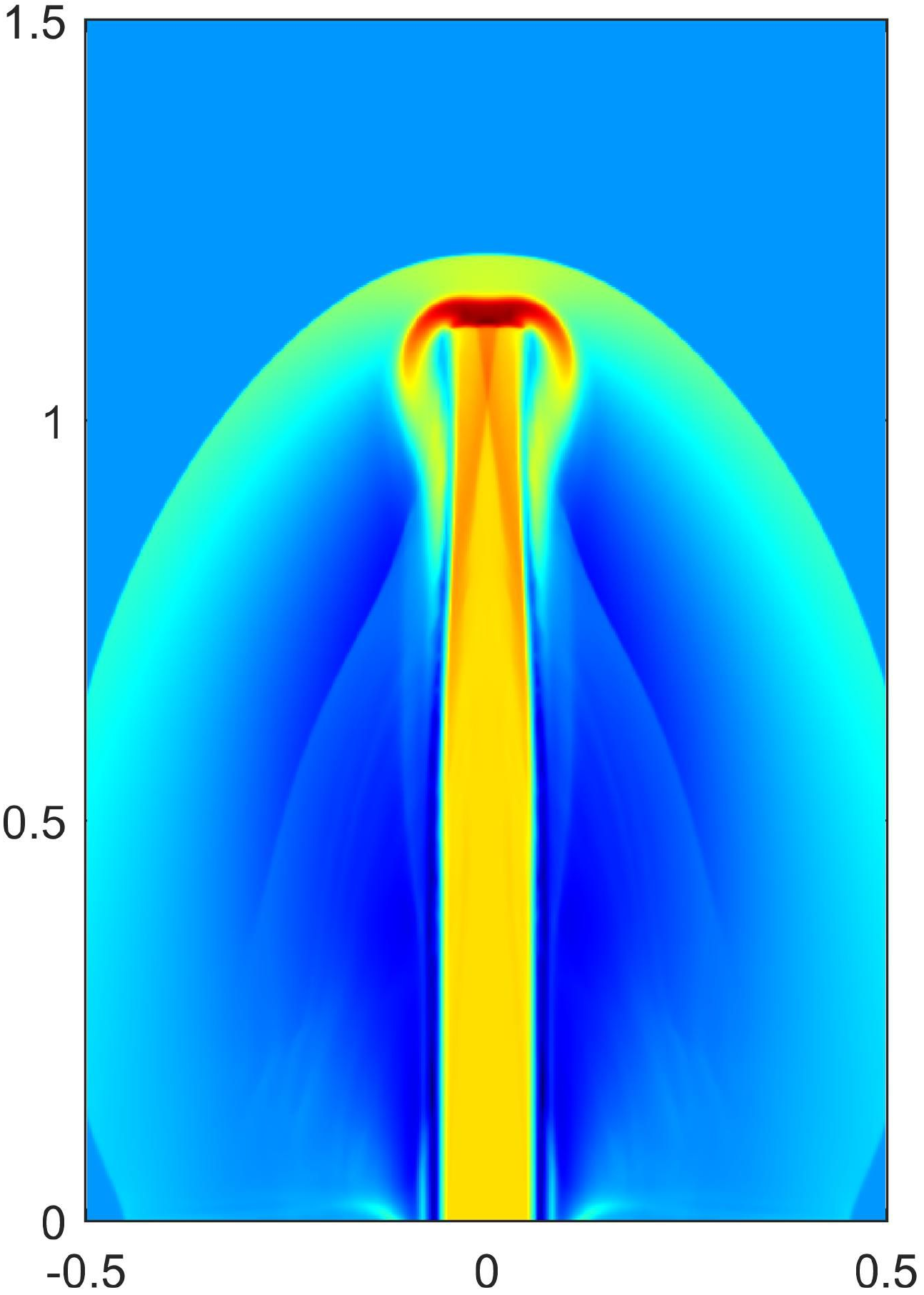}}
	\caption{\small The schlieren images of density logarithm 
		for the Mach 2000 jet problem with $B_a=\sqrt{20000}$. From left to right: $t=0.00025$, $0.0005$ and $0.00075$.} 
	\label{fig:jet4}
\end{figure}

\begin{figure}[htb]
	\centering
	{\includegraphics[width=0.32\textwidth]{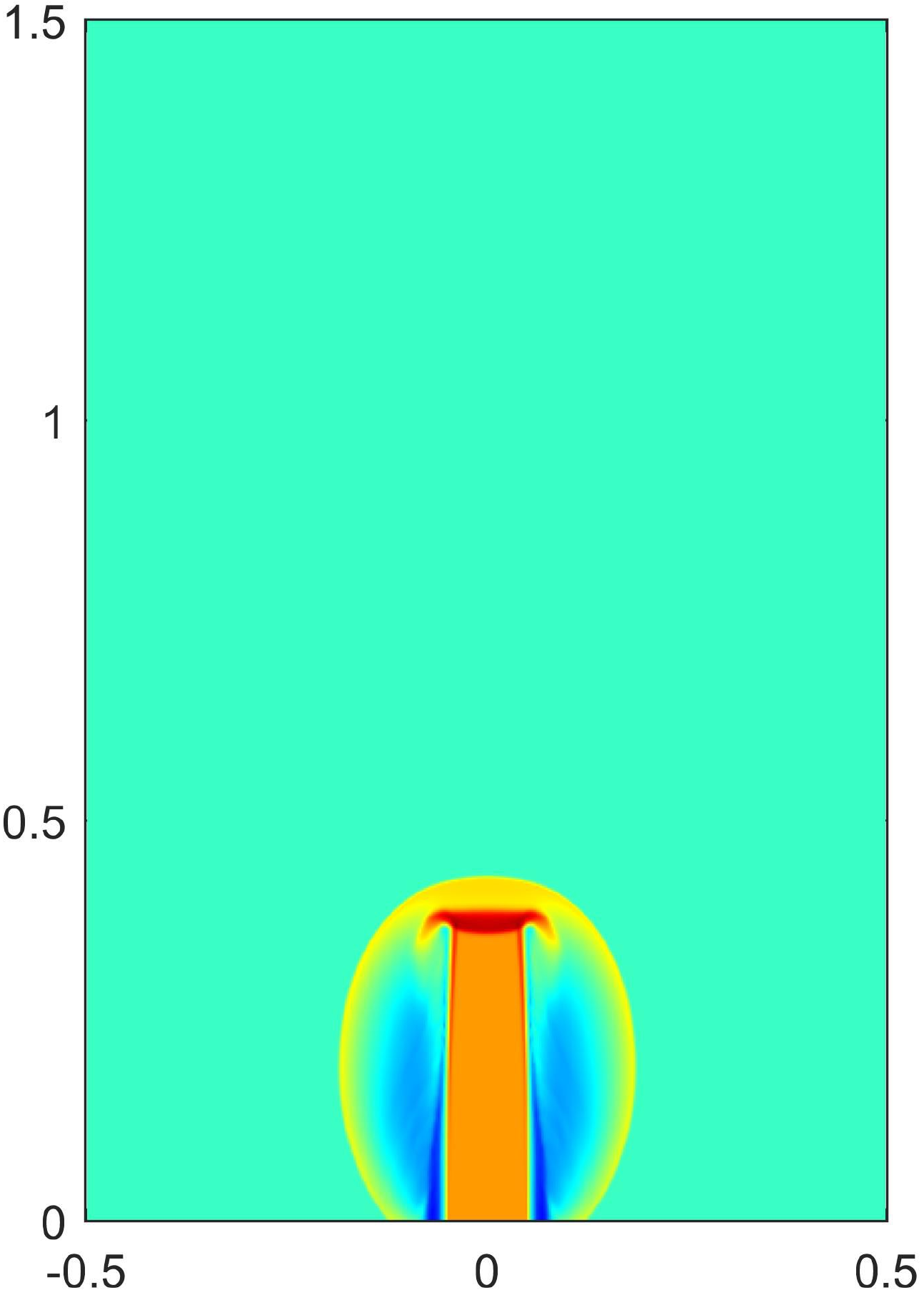}}
	{\includegraphics[width=0.32\textwidth]{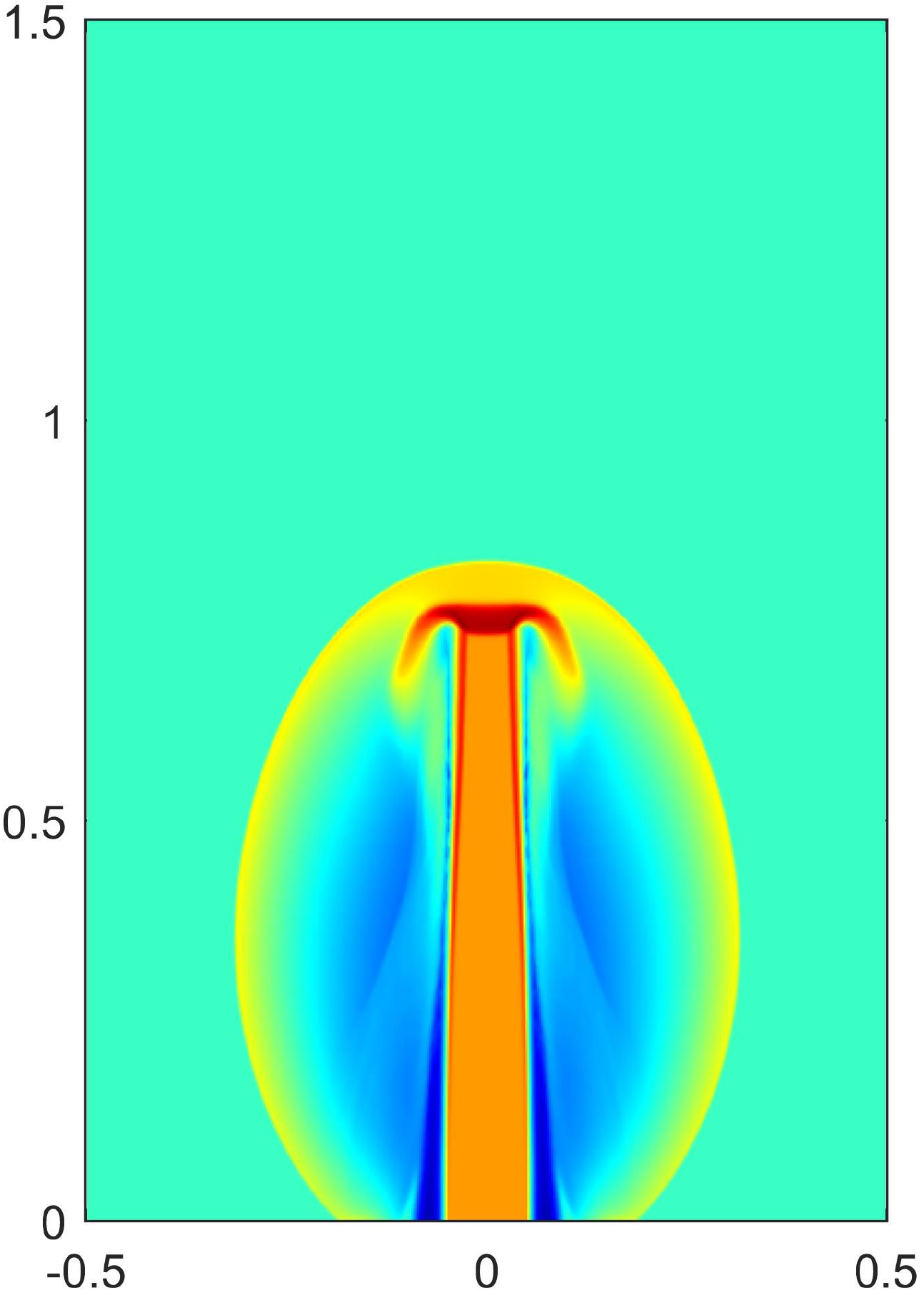}}
	{\includegraphics[width=0.32\textwidth]{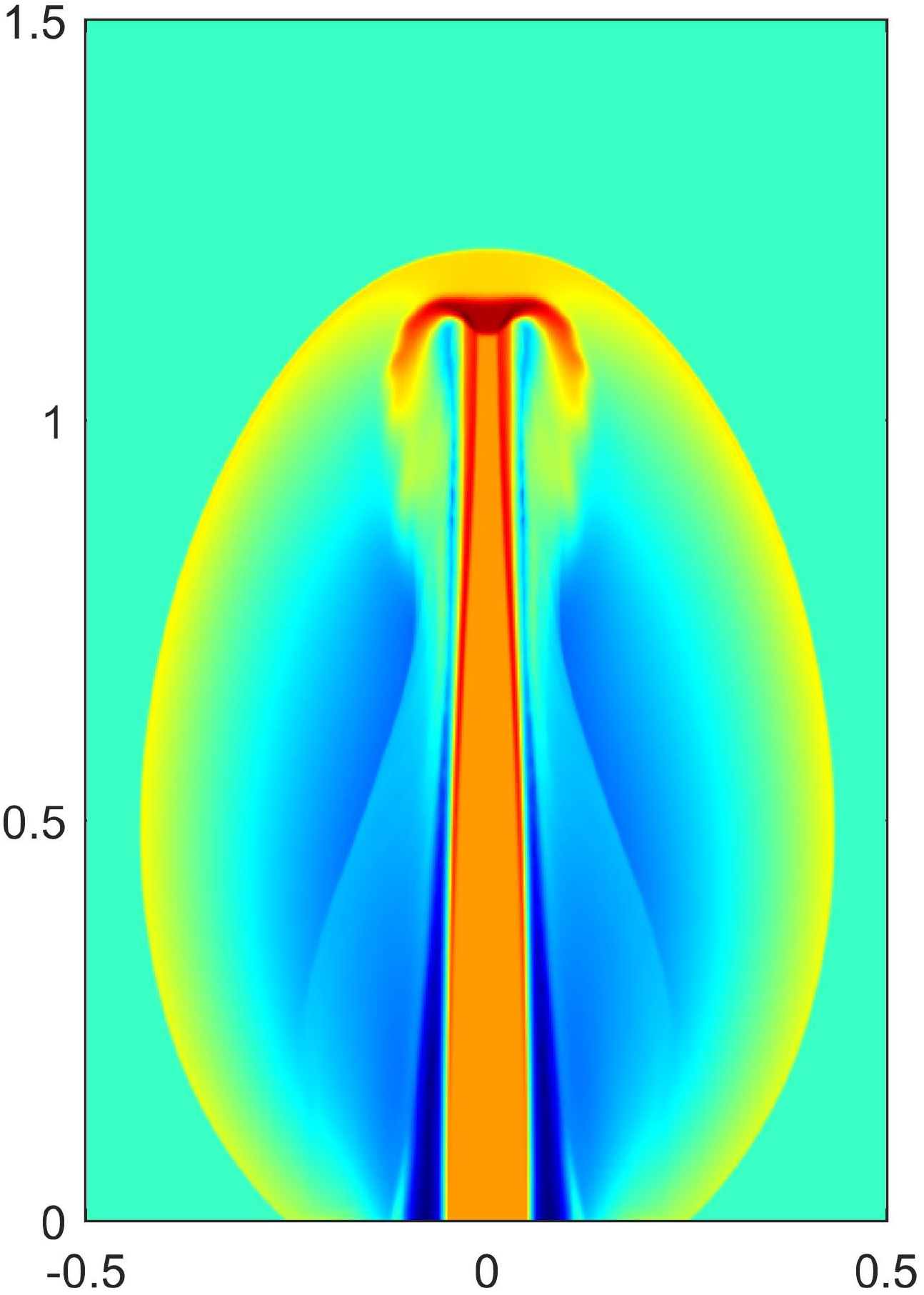}}
	\caption{\small The schlieren images of density logarithm 
		for the Mach 10000 jet problem with $B_a=\sqrt{20000}$. From left to right: $t=0.00005$, $0.0001$ and $0.00015$.} 
	\label{fig:jet5}
\end{figure}

We consider the Mach 800 MHD jets proposed in \cite{Wu2017a,WuShu2018} and extended from the gas dynamical jet of Balsara \cite{Balsara2012} by adding a magnetic field. 
Initially, the domain $[-0.5,0.5]\times[0,1.5]$ is 
full of the static ambient medium with $(\rho,p)=(0.1\gamma,1)$. The adiabatic index $\gamma=1.4$. 
A Mach 800 dense jet is injected in the $y$-direction through the inlet part ($\left|x\right|<0.05$) on the bottom boundary ($ y=0$). 
The fixed inflow condition with $(\rho,p,v_1,v_2,v_3)=(\gamma,1,0,800,0)$
is specified on the nozzle $\{y=0,\left|x\right|<0.05\}$, while the
other boundary conditions are outflow. 
A magnetic field $(0,B_a,0)$ is initialized along the $y$-direction.  
As $B_a$ is set larger, this test becomes more challenging. We set computational domain as  
$[0,0.5]\times[0,1.5]$ with the reflecting boundary
condition specified at $x=0$, and divided it into $200 \times 600$ cells.
We here show our numerical results in two strongly magnetized cases:  
(i) $B_a= \sqrt{2000}$, and the corresponding plasma-beta $\beta_a=10^{-3}$; 
(ii) $B_a= \sqrt{20000}$, and the corresponding plasma-beta $\beta_a=10^{-4}$. 
The schlieren images 
of the numerical solutions for these two cases   
are respectively displayed in 
Figs.~\ref{fig:jet1} and \ref{fig:jet2}   
within the domain $[-0.5,0.5]\times[0,1.5]$. 
Those plots clearly show the time evolution of the jets. 
It is seen that the flow structures in different magnetized cases  
are very different. 
The present method well captures the 
 Mach shock wave at the jet head and
other discontinuities with high resolution. 
The results agree with those in \cite{WuShu2018} computed by 
the PP DG method with a global LF flux. 
In these extreme tests, our PP method exhibits good robustness 
without using any
artificial treatment. 
We also perform the tests with varied Mach numbers, 
and the method also works very robustly. For example, the numerical result for a Mach 2000 jet with $B_a=\sqrt{20000}$ is displayed in Fig.~\ref{fig:jet4}. Interestingly, 
the flow structures are similar to those in Fig.~\ref{fig:jet1} of the Mach 800 jet with a weaker magnetic field $B_a=\sqrt{2000}$. This is probably due to the huge kinetic energy, which becomes dominant and weakens the effect of magnetic field. The dynamics of the Mach 2000 jet evolve much faster than the Mach 800 jet, as expected. 
A higher Mach (Mach 10000) jet with $B_a=\sqrt{20000}$ is further simulated and shown in Fig.~\ref{fig:jet5}. We see that this jet shape is thinner.

In the above simulations, it is necessary to employ the PP limiting procedure to meet the condition \eqref{eq:FVDGsuff}, which is not satisfied automatically. 
To confirm the importance of the proposed penalty term in our PP schemes, 
we have also performed the above tests by dropping the penalty term and keeping 
the PP and WENO limiters turned on. 
The resulting scheme is actually the locally divergence-free, conservative, third-order DG method with PP and WENO limiters. 
We find that this scheme with either the proposed HLL flux or the global LF flux, which is generally not PP in theory, cannot run the above jet tests. 
The failure results from negative numerical pressure produced in the cell averages of the DG solution. 
We observe that, without the proposed penalty term, the code also fails on a refined mesh, and also for 
more strongly magnetized cases. 
 This, again, demonstrates that the proposed penalty term is really crucial for guaranteeing the PP property.

\section{Conclusions}\label{sec:conclusion}

In this paper, we proposed and analyzed   
provably PP high-order DG and finite volume schemes for the ideal MHD on general meshes. 
The unified auxiliary theories were built for rigorous 
PP analysis of numerical schemes with HLL-type flux 
on an arbitrary polytopal mesh. 
A close relation was established between the PP property and the discrete divergence of magnetic field on general meshes. 
We also derived explicit estimates of the wave speeds in the HLL flux 
to ensure the provably PP property. 
In the 1D case, we proved that the standard finite volume and DG methods with the proposed HLL flux are PP, under a condition accessible by a PP limiter. 
In the multidimensional cases, we constructed provably PP high-order DG schemes based on suitable discretization of the modified MHD system \eqref{eq:MHD:GP}.
In addition to the proper wave speeds in the numerical flux and a standard PP limiter, 
we demonstrated that a coupling 
of two divergence-controlling techniques is also crucial 
for achieving the provably PP property.    
The two techniques are 
the locally divergence-free DG element and a properly discretized Godunov--Powell source term, 
which control the divergence error within each cell 
and across the cell interfaces, respectively.  
Our analysis clearly revealed that these two techniques exactly 
contribute the discrete divergence terms 
which are absent in a standard multidimensional DG schemes but 
very important for ensuring the PP property. 
We also proved in Appendix \ref{app:1} the positivity of 
the strong solution 
of the modified MHD system \eqref{eq:MHD}. 
Such a feature, 
not enjoyed by the conservative system \eqref{eq:MHD} (see \cite{WuShu2018}), can serve as a justification for designing provably PP multidimensional schemes 
based on the modified system \eqref{eq:MHD:GP}.
The analysis and findings in this paper provide a clear understanding,  at both discrete and continuous levels, of the relation 
between the PP property and the divergence-free constraint. 
The proposed framework and analysis techniques as well
as the provenly PP schemes can also be useful 
for investigating or designing other PP schemes for
the ideal MHD.

Several numerical tests were conducted 
on 1D mesh and 2D rectangular mesh, to confirm the provenly PP property and to demonstrate the effectiveness of the proposed PP techniques. The implementation of 
our PP DG schemes on unstructured triangular meshes 
is ongoing and will be reported separately in the future.

\appendix

\section{Positivity of strong solutions of the modified MHD system}\label{app:1}In \cite{WuShu2018}, we analytically demonstrated that 
the exact smooth solution of the 
conservative MHD system \eqref{eq:MHD} 
may fail to be PP  
if the divergence-free condition \eqref{eq:2D:BxBy0} is violated.  
Here we would like to show that the strong solutions of 
the modified MHD system \eqref{eq:MHD:GP} 
always retain the positivity of density and pressure even if the divergence-free condition \eqref{eq:2D:BxBy0} is not satisfied. It is reasonable to hope 
that such a claim may also hold for the 
weak entropy solutions of \eqref{eq:MHD:GP}.

Consider the initial-value problem of the system 
\eqref{eq:MHD:GP}, for ${\bf x}\in {\mathbb R}^d$ and $t>0$, with initial data 
\begin{equation}\label{eq:initialA}
(\rho,{\bf v}, p,{\bf B}) ( {\bf x}, 0 ) = 
(\rho_0,{\bf v}_0, p_0,{\bf B}_0) ({\bf x}),
\end{equation}
and the ideal EOS $p=(\gamma-1)\rho e$, where $\gamma>1$. 
Using the method of characteristics, one can show the following result.

\begin{proposition}
	Assume that the initial data \eqref{eq:initialA} 
	are in $C^1({\mathbb R}^d)$ with $\rho_0({\bf x})>0$ and $p_0({\bf x})>0,$ $\forall {\bf x} \in {\mathbb R}^d$. 
	If the initial-value problem of \eqref{eq:MHD:GP} with  \eqref{eq:initialA} has a $C^1$ solution $(\rho,{\bf v}, p,{\bf B}) ( {\bf x}, t )$ for ${\bf x} \in {\mathbb R}^d$ and $0\le t < T$, 
	then the solution satisfies $\rho({\bf x},t)>0$ and 
	$p({\bf x},t)>0$ for all ${\bf x} \in {\mathbb R}^d$ and $0\le t < T$.
\end{proposition}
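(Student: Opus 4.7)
The plan is to apply the method of characteristics to the primitive (non-conservative) form of the Godunov--Powell system \eqref{eq:MHD:GP}. The crucial observation is that, although the source term $-(\nabla\cdot{\bf B}){\bf S}({\bf U})$ genuinely modifies the conservation laws, it cancels out in the evolution equations for the density $\rho$ and the specific internal energy $e$; consequently these two quantities will satisfy the same non-conservative equations as for the ordinary ideal MHD with $\nabla\cdot{\bf B}=0$, and their positivity then propagates along particle trajectories by a direct ODE argument.

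First I would treat the density. Since the first component of ${\bf S}$ is zero, the continuity equation $\partial_t\rho+\nabla\cdot(\rho{\bf v})=0$ is untouched by the source, giving $\frac{D\rho}{Dt}=-\rho\,\nabla\cdot{\bf v}$ with $\frac{D}{Dt}:=\partial_t+{\bf v}\cdot\nabla$. Along any characteristic ${\bf X}(t;{\bf x}_0)$ defined by $\dot{\bf X}={\bf v}({\bf X},t)$, ${\bf X}(0)={\bf x}_0$, which is well defined on $[0,T)$ by the $C^1$ regularity of ${\bf v}$, this is a linear scalar ODE for $\rho$, yielding the closed-form positive expression
\[
\rho({\bf X}(t),t)=\rho_0({\bf x}_0)\exp\left(-\int_0^t(\nabla\cdot{\bf v})({\bf X}(s),s)\,ds\right)>0.
\]

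The main step, and the principal obstacle, is to derive the corresponding equation for $e$. I would start from the energy equation in \eqref{eq:MHD:GP} and subtract off the kinetic and magnetic energy equations obtained by dotting the momentum equation with ${\bf v}$ and the induction equation with ${\bf B}$, respectively. Each of these three equations carries its own Godunov--Powell contribution: the momentum source $-(\nabla\cdot{\bf B}){\bf B}$ produces $-(\nabla\cdot{\bf B})({\bf v}\cdot{\bf B})$ after dotting with ${\bf v}$, the induction source $-(\nabla\cdot{\bf B}){\bf v}$ produces $-(\nabla\cdot{\bf B})({\bf v}\cdot{\bf B})$ after dotting with ${\bf B}$, and the energy source already is $-(\nabla\cdot{\bf B})({\bf v}\cdot{\bf B})$. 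The delicate point, which needs patient verification using identities such as $\nabla\cdot(({\bf v}\cdot{\bf B}){\bf B})=({\bf v}\cdot{\bf B})\nabla\cdot{\bf B}+({\bf B}\cdot\nabla)({\bf v}\cdot{\bf B})$, is that these three source contributions combine with the correspondingly split flux divergences so that all $\nabla\cdot{\bf B}$ terms cancel pairwise. After the algebra settles, the internal energy equation will collapse to the familiar divergence-free form $\rho\,\frac{De}{Dt}=-p\,\nabla\cdot{\bf v}$.

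Finally, combining the two transport equations for $\rho$ and $e$ with the ideal EOS $p=(\gamma-1)\rho e$ yields $\frac{D}{Dt}(p/\rho^\gamma)=0$, so entropy is conserved along each characteristic. This gives
\[
p({\bf X}(t),t)=p_0({\bf x}_0)\bigl(\rho({\bf X}(t),t)/\rho_0({\bf x}_0)\bigr)^\gamma,
\]
which is strictly positive whenever $p_0({\bf x}_0)>0$. Since the $C^1$ velocity field guarantees that the flow map ${\bf x}_0\mapsto{\bf X}(t;{\bf x}_0)$ is a $C^1$ diffeomorphism of ${\mathbb R}^d$ for each $t\in[0,T)$, every $({\bf x},t)$ is reached by a unique characteristic issuing from some ${\bf x}_0$ with $\rho_0({\bf x}_0),\,p_0({\bf x}_0)>0$, and the claimed positivity propagates to all of ${\mathbb R}^d\times[0,T)$.
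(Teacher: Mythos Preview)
Your proposal is correct and follows essentially the same approach as the paper: the method of characteristics applied to the primitive form of the Godunov--Powell system, exploiting that the source term leaves the density and pressure transport equations unaffected. The only cosmetic difference is that the paper derives and integrates the pressure equation $\frac{Dp}{Dt}=-\gamma p\,\nabla\cdot{\bf v}$ directly, whereas you pass through the internal-energy equation $\rho\,\frac{De}{Dt}=-p\,\nabla\cdot{\bf v}$ and then entropy conservation $\frac{D}{Dt}(p/\rho^\gamma)=0$; these are equivalent reformulations of the same computation.
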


\begin{proof}
	Let $\frac{D}{Dt}:=\partial_t + {\bf v}({\bf x},t) \nabla \cdot $ be the directional derivative along the direction
	\begin{equation}\label{eq:curve}
	\frac{d {\bf x}}{d t} = {\bf v} ({\bf x},t).  
	\end{equation}
	For any $\left( \bar {\bf x}, \bar t \right) \in \mathbb R^d \times \mathbb R_+$, let ${\bf x}={\bf x}(t;\bar{\bf x},\bar t)$ be the integral curve of \eqref{eq:curve} through the point $\left( \bar {\bf x}, \bar t \right)$. 
	Denote ${\bf x}_0( \bar{\bf x},\bar t ):=  {\bf x}(0;\bar{\bf x},\bar t)$, then,  
	at $t=0$, the curve passes through the point $\left( {\bf x}_0( \bar{\bf x},\bar t ),0 \right)$. 
	Recall that, for smooth solutions, 
	the first equation of the system \eqref{eq:MHD:GP} can be reformulated as 
	\begin{equation}\label{eq:aa12}
	\frac{D \rho}{Dt} = - \rho \nabla \cdot {\bf v}. 
	\end{equation}
	Integrating Eq.~\eqref{eq:aa12} along the curve ${\bf x}={\bf x}(t;\bar{\bf x},\bar t)$ gives 
	$$
	\rho( \bar {\bf x}, \bar t  ) = \rho_0 ( {\bf x}_0( \bar{\bf x},\bar t ) ) \exp \left( - \int_{0}^{\bar t} \nabla \cdot {\bf v}( {\bf x}(t;\bar{\bf x},\bar t), t ) dt \right) > 0. 
	$$
	For smooth solutions, 
	we derive from the modified system \eqref{eq:MHD:GP} the pressure equation 
	\begin{equation}\label{eq:pppp}
	\frac{D p}{Dt} = - \gamma p \nabla \cdot {\bf v}, 
	\end{equation}
	which implies 
	$
	p( \bar {\bf x}, \bar t  ) = p_0 ( {\bf x}_0( \bar{\bf x},\bar t ) ) \exp \left( - \gamma \int_{0}^{\bar t} \nabla \cdot {\bf v}( {\bf x}(t;\bar{\bf x},\bar t), t ) dt \right) > 0. 
	$
\end{proof}

\begin{remark}
	By similar arguments one can show that the above proposition also holds for the modified MHD equations introduced by Janhunen \cite{Janhunen2000}, 
	because the corresponding equations for density and pressure are exactly also 
	\eqref{eq:aa12} and \eqref{eq:pppp}, respectively. This may explain why 
	it is also possible to develop PP schemes based on proper discretization of 
	Janhunen's MHD system, cf.~\cite{Janhunen2000,Bouchut2010,Waagan2009,waagan2011robust}.
\end{remark}

Recall that the pressure equation associated with the conservative system \eqref{eq:MHD} is 
\begin{equation*}
\frac{D p}{Dt} = - \gamma p \nabla \cdot {\bf v} - (\gamma-1)({\bf v}\cdot {\bf B}) \nabla \cdot {\bf B},
\end{equation*}
which, in comparison with \eqref{eq:pppp}, has an additional term proportional to $\nabla \cdot {\bf B}$. 
As shown in \cite{WuShu2018}, due to this term, negative pressure can appear in the exact smooth solution
of the conservative MHD system \eqref{eq:MHD} if $\nabla \cdot {\bf B} \neq 0$.

\section{Review of the positivity-preserving limiter}\label{app:PPlimit}

We employ a simple PP limiter to enforce the condition 
\eqref{eq:1DDG:con2} or \eqref{eq:FVDGsuff} for our 1D or 2D PP schemes. 
The limiter was originally proposed by  
Zhang and Shu \cite{zhang2010,zhang2010b,zhang2011} for scalar conservation laws and the compressible Euler equations. 
It was extended to the ideal MHD case in \cite{cheng}. 
For readers' convenience, we here briefly review this limiter. 
It is worth noting that the PP limiter works only when the cell averages of 
the numerical solutions always stay in ${\mathcal G}$. 
This is rigorously proved for our PP high-order schemes, but does not always hold for the standard multidimensional DG schemes without the proposed penalty term.

We perform the PP limiter separately for each cell. Let $K$ denote a cell, 
and ${\mathbb S}_K$ be the quadrature points involved in the condition \eqref{eq:1DDG:con2} or \eqref{eq:FVDGsuff} in $K$.  Let ${\bf U}_K^n({\bf x})$ be the  
approximate polynomial solution within $K$, and $\bar{\bf U}_K^n$ be the cell average 
which is always preserved in $\mathcal G$ by our PP schemes. If ${\bf U}_K^n({\bf x}) \notin {\mathcal G}$ for some ${\bf x} \in {\mathbb S}_K $, then we seek
 the modified polynomial $\widetilde {\bf U}_K^n({\bf x})$  with the same cell average such that $\widetilde {\bf U}_K^n({\bf x}) \in {\mathcal G}$ for all ${\bf x} \in {\mathbb S}_K $. 
To avoid the effect of the rounding
error, we introduce two sufficiently small positive numbers, $\epsilon_1$ and $\epsilon_2$, 
as the desired lower bounds for density and internal energy, respectively, such that 
$\bar{\bf U}_K^n \in {\mathcal G}_\epsilon= \{ {\bf U}: \rho \ge \epsilon_1,
{\mathcal E}(  {\bf U}  ) \ge \epsilon_2 \}$; e.g., take $\epsilon_1= \min\{10^{-13},\bar \rho_K^n \}$ and $\epsilon_2=\min\{ 10^{-13},{\mathcal E}(\bar{\bf U}_K^n)\}$. 

The PP limiting procedure consists of two steps. First, modify the density to enforce the positivity by 
		$$
	\widehat \rho_K({\bf x}) = \theta_1 ( \rho_K^n( {\bf x}) - \bar \rho_K^n ) + \bar \rho_K^n,\quad \theta_1 = \min\left\{1, \frac{\bar \rho_K^n - \epsilon_1}{ \bar \rho_K^n - \min_{{\bf x}\in {\mathbb S}_K} \rho_K^n({\bf x}) }\right\}.
	$$
Then modify $\widehat{\bf U}_K({\bf x}) := 
	( \widehat \rho_K({\bf x}), {\bf m}_K^n({\bf x}), {\bf B}_K^n({\bf x}), E_K^n({\bf x})  )^\top$ to enforce the positivity of internal energy by 
		$$
	\widetilde {\bf U}_K^n({\bf x}) = \theta_2 ( \widehat {\bf U}_K({\bf x}) -\bar{ \bf  U }_K^n ) + \bar{ \bf  U }_K^n,\quad \theta_2 = \min\left\{1, \frac{ {\mathcal E} ( \bar {\bf U}_K^n ) - \epsilon_2 }{ {\mathcal E} ( \bar {\bf U}_K^n ) - \min_{ {\bf x}\in {\mathbb S}_K} {\mathcal E} \big( \widehat {\bf U}_K({\bf x}) \big) }\right\}.
	$$
It is easy to verify that $\widetilde {\bf U}_K^n({\bf x})$ belongs 
to ${\mathcal G}_\epsilon$ for all ${\bf x} \in {\mathbb S}_K$ and 
has the cell average $\bar{\bf U}_K^n$. Such a limiter can also maintain the approximation accuracy; see  
\cite{zhang2010,zhang2010b,ZHANG2017301}.

\bibliographystyle{siamplain}
\bibliography{references}

\end{document}